\providecommand{\U}[1]{\protect \rule{.1in}{.1in}}
\newtheorem{theorem}{Theorem}[section]
\newtheorem{corollary}[theorem]{Corollary}
\newtheorem{definition}[theorem]{Definition}
\newtheorem{lemma}[theorem]{Lemma}
\newtheorem{proposition}[theorem]{Proposition}
\theoremstyle{remark}
\newtheorem{remark}[theorem]{Remark}
\numberwithin{equation}{section}
\begin{document}
\title[Proof of a conjecture of Dahmen and Beukers]{Proof of a conjecture of Dahmen and Beukers\\ on counting integral Lam\'{e} equations\\ with finite monodromy}
\author{Zhijie Chen}
\address{Department of Mathematical Sciences, Yau Mathematical Sciences Center,
Tsinghua University, Beijing, 100084, China}
\email{zjchen2016@tsinghua.edu.cn}
\author{Ting-Jung Kuo}
\address{Department of Mathematics, National Taiwan Normal University, Taipei 11677,
Taiwan }
\email{tjkuo1215@ntnu.edu.tw}
\author{Chang-Shou Lin}
\address{Center for Advanced Study in
Theoretical Sciences, National Taiwan University, Taipei 10617, Taiwan }
\email{cslin@math.ntu.edu.tw}

\begin{abstract}
In this paper, we prove Dahmen and Beukers' conjecture that
the number of integral
Lam\'{e} equations with index $n$ modulo scalar equivalence with the monodromy
group dihedral $D_{N}$ of order $2N$ is given by
\[L_{n}(N)=\frac{1}{2}\left(  \frac{n(n+1)\Psi(N)}{24}-\left(  a_{n}%
\phi(N)+b_{n}\phi(\tfrac{N}{2})  \right)  \right)  +\frac{2}%
{3}\varepsilon_{n}(N).\]
Our main tool is the new pre-modular form
$Z_{r,s}^{(n)}(\tau)$ of weight $n(n+1)/2$ introduced by Lin and Wang \cite{LW2} and the associated modular form $M_{n,N}(\tau):=\prod_{(r,s)}Z_{r,s}^{(n)}(\tau)$ of weight $\Psi(N)n(n+1)/{2}$, where the product runs over all $N$-torsion points $(r,s)$ of exact order $N$. We show that this conjecture is equivalent to the precise formula of the vanishing order of $M_{n,N}(\tau)$ at infinity:
\[v_{\infty}(M_{n,N}(\tau))=a_{n}\phi(N)+b_{n}\phi(
N/2).\]
This formula is extremely hard to prove because the explicit expression of $Z_{r,s}^{(n)}(\tau)$
is not known for general $n$. Here we succeed to prove it by using
certain Painlev\'{e} VI equations. Our result also indicates that this conjecture is intimately connected with the problem of counting pole numbers of algebraic
solutions of certain Painlev\'{e} VI equations. The main results of this paper
has been announced in \cite{Lin-CDM}.

\end{abstract}
\maketitle
\tableofcontents

\section{Introduction}

The main purpose of this paper is to prove a conjecture about counting the number of the equivalent
class of integral Lam\'{e} equations with $D_{N}$ (i.e. dihedral of order
$2N$) as its monodromy group, which was proposed by S. Dahmen
and F. Beukers \cite{Dahmen0} and partially confirmed in \cite{Dahmen0,Dahmen,LW2}.
The problem of counting integral Lam\'{e} equations with finite monodromy is a long-standing open problem that was already studied in
\cite{BD,Chi,Lit} before Dahmen-Beukers conjecture \cite{Dahmen0}. We will see that this problem is closely related to finding a precise formula of counting pole numbers of algebraic solutions of certain
Painlev\'{e} VI equations (see Remark \ref{remark1-8}), which will be studied in another work. The connection between the Lam\'{e} equation and Painlev\'{e} VI equation was already studied in \cite{LPU} from the view point of moduli spaces, which can not be applied to prove Dahmen-Beukers conjecture. Our idea is different from \cite{LPU}. Our proof of this conjecture is to apply the new pre-modular form $Z_{r,s}^{(n)}(\tau)$ (as a function of $\tau\in\mathbb{H}=\{\tau | \operatorname{Im}\tau>0\}$) introduced in \cite{LW2} to connect the Lam\'{e} equation with Painlev\'{e} VI equation. This
$Z_{r,s}^{(n)}(\tau)$ is called \emph{pre-modular} in \cite[Definition 0.1]{LW2} because
$Z_{r,s}^{(n)}(\tau)$ is a \emph{modular form of weight $n(n+1)/2$} with respect to the
principal congruence subgroup $\Gamma(N)=\{A\in SL(2,\mathbb{Z})|A\equiv
I_{2}\operatorname{mod}N\}$ for any given $N$-torsion point $(r,s)\in \mathcal{Q}
(N)$, where $\mathcal{Q}(N)$ is the set of $N$-torsion points of exact order $N$
defined by%
\begin{equation}
\mathcal{Q}(N):=\left \{  \left.  \left(  \tfrac{k_{1}}{N},\tfrac{k_{2}}%
{N}\right)  \right \vert \gcd(k_{1},k_{2},N)=1,0\leq k_{1},k_{2}<N\right \}
.\label{N-torsion-1}%
\end{equation}
Consequently, \begin{equation}
M_{n,N}(\tau):=\prod_{(r,s)\in \mathcal{Q}(N)}Z_{r,s}^{(n)}(\tau)
\label{modular-S1L}%
\end{equation}
is a \emph{modular form of weight} $\frac{n(n+1)}{2}\Psi(N)$
with respect to $SL(2,\mathbb{Z})$, where
\begin{equation}
\Psi(N):=\# \mathcal{Q}(N)=N^{2}\prod_{p|N,\text{ }p\text{ prime}}\left(
1-\tfrac{1}{p^{2}}\right)  .\label{Euler-2}%
\end{equation}
This $M_{n,N}(\tau)$ is difficult to study due to the absence of the explicit expressions for general $n\geq 5$.
Our main result of this paper is to calculate the vanishing order $v_{\infty}(M_{n,N}(\tau))$ of this modular form $M_{n,N}(\tau)$ at infinity.

\begin{theorem}\label{thm-vanish-order}
For any $n\in \mathbb{N}$ \textit{and} $N\in
\mathbb{N}_{\geq3}$, there holds \[v_{\infty}(M_{n,N}(\tau))=a_{n}\phi(N)+b_{n}\phi \left(
N/2\right),\] where
\begin{align}
\phi(N) &  :=\# \{k\in \mathbb{Z}|\gcd(k,N)=1,0\leq k<N\} \label{Euler}\\
&  =N\prod_{p|N,\text{ }p\text{ prime}}\left(  1-\tfrac{1}{p}\right)  \nonumber
\end{align}
is the Euler function (set $\phi(N)=0$ if $N\not \in \mathbb{N}$), and
\begin{equation}
a_{2n}=a_{2n+1}=n(n+1)/2,\text{ \ }b_{2n}=b_{2n-1}=n^{2}. \label{iv-23-5}%
\end{equation}
\end{theorem}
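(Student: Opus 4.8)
The plan is to use the multiplicativity of vanishing orders at the cusp: since $M_{n,N}=\prod_{(r,s)\in\mathcal{Q}(N)}Z_{r,s}^{(n)}$, we have $v_\infty(M_{n,N})=\sum_{(r,s)\in\mathcal{Q}(N)}v_\infty(Z_{r,s}^{(n)})$, so the entire problem reduces to the local order of each factor at $\tau=i\infty$. The outcome I would first isolate as a lemma is that this order depends only on the ``$\tau$-component'' $s=k_2/N$ of the torsion point and is purely \emph{resonant}: writing $q=e^{2\pi i\tau}$,
\[
v_\infty\!\left(Z_{r,s}^{(n)}\right)=
\begin{cases}
a_n, & s=0,\\
b_n/2, & s=\tfrac12,\\
0, & \text{otherwise.}
\end{cases}
\]
Granting this, the theorem is pure bookkeeping: for fixed $k_2$ the number of admissible $r=k_1/N$, i.e.\ of $k_1$ with $\gcd(k_1,k_2,N)=\gcd(k_1,\gcd(k_2,N))=1$, equals $\tfrac{N}{\gcd(k_2,N)}\phi(\gcd(k_2,N))$, which is $\phi(N)$ for $k_2=0$ and $2\phi(N/2)$ for $k_2=N/2$ (and $s=\tfrac12$ occurs only for even $N$, consistently with $\phi(N/2)=0$ otherwise). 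Summing the two nonzero contributions yields $a_n\phi(N)+\tfrac{b_n}{2}\cdot 2\phi(N/2)=a_n\phi(N)+b_n\phi(N/2)$.

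To justify the displayed orders I would begin with the non-resonant range $0<s<1$, $s\neq\tfrac12$, where $u:=e^{2\pi i(r+s\tau)}=e^{2\pi i r}q^{s}\to 0$ and the Lamé potential $n(n+1)\wp(z;\tau)$ degenerates to its trigonometric limit. The case $n=1$ can be done in closed form: one has $Z_{r,s}^{(1)}=\big(\zeta(z)-\eta_1 z\big)+2\pi i s$ with $z=r+s\tau$, and the theta-quotient expansion $\zeta(z)-\eta_1 z=\pi i\frac{u+1}{u-1}+2\pi i\sum_{m\ge1}\big(\frac{q^m u^{-1}}{1-q^m u^{-1}}-\frac{q^m u}{1-q^m u}\big)$ produces constant term $\pi i(2s-1)\neq0$, hence order $0$; at $s=\tfrac12$ this constant cancels against $2\pi i s$ and the leading term is $4\pi\sin(2\pi r)\,q^{1/2}$, giving order $\tfrac12=b_1/2$ (the coefficient $\sin 2\pi r$ being nonzero on $\mathcal{Q}(N)$ for $N\ge3$, since $r=0,\tfrac12$ are then excluded). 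For $n\ge2$ no such formula for $Z_{r,s}^{(n)}$ is available, and this is exactly where Painlev\'e VI enters: the monodromy-preserving deformation in $\tau$ of the Lam\'e equation $y''=(n(n+1)\wp+B)y$ is governed by the elliptic (Manin) form of $PVI$ with the half-integer exponents $(\theta_0,\theta_1,\theta_t,\theta_\infty)$ built from $\tfrac12$ (at the images of the three $2$-torsion points) and $\tfrac{2n+1}{2}$ (at $z=0$), and $Z_{r,s}^{(n)}(\tau)$ is tied to the algebraic $PVI$ solution $\lambda_{r,s}(\tau)$ carrying the data $(r,s)$. The order of $Z_{r,s}^{(n)}$ at $\tau=i\infty$ is read off from the leading Puiseux exponent of $\lambda_{r,s}$ at the critical point $t=t(\tau)\to0$, which is controlled by the local monodromy eigenvalue $e^{2\pi i s}$; when $e^{2\pi i s}\neq\pm1$ the deformation is non-resonant, $Z_{r,s}^{(n)}(i\infty)\neq0$, and the order is $0$.

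The resonant values $s\in\{0,\tfrac12\}$, where $e^{2\pi i s}=\pm1$ collides with the $\pm1$ coming from the half-period and the half-integer exponent $\tfrac{2n+1}{2}$, are the heart of the matter and the step I expect to be the main obstacle. Here $\lambda_{r,s}$ acquires a forced leading behavior whose order must be pinned down \emph{exactly} and \emph{uniformly in $n$}, with no explicit expression for $Z_{r,s}^{(n)}$ at hand. I would attack this by combining the isomonodromic Puiseux analysis of $PVI$ at $t=0$ with the $n$-fold branch structure of the Lam\'e spectral curve, so that the local order accumulates over the branches and grows quadratically in $n$, producing $a_n=v_\infty(Z^{(n)}_{r,0})$ for the $+1$ resonance and $b_n/2=v_\infty(Z^{(n)}_{r,1/2})$ for the $-1$ resonance; the parity phenomenon in \eqref{iv-23-5} ($a_{2n}=a_{2n+1}$, $b_{2n}=b_{2n-1}$) should emerge from floor/ceiling rounding of $\tfrac{2n+1}{2}s$ in these exponents. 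Throughout, the valence formula on $M_{n,N}$, whose total zero count is $\tfrac{n(n+1)}{24}\Psi(N)$, furnishes a sharp consistency check: for instance $n=1$, $N=4$ forces $v_\infty=\phi(2)=1$, matched exactly by the two order-$\tfrac12$ contributions at $s=\tfrac12$. Once the two resonant orders are evaluated via Painlev\'e VI, the remaining counting argument of the first paragraph completes the proof.
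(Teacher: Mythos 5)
Your reduction of $v_{\infty}(M_{n,N})$ to the per-factor orders, the observation that only $s=0$ and $s=\tfrac12$ can contribute, and the final counting of admissible $r$ for each $k_2$ all match the paper's structure, as does your treatment of the non-resonant range $\operatorname{Re}s\in(0,\tfrac12)\cup(\tfrac12,1)$ via the degeneration of the isomonodromic deformation (this is Theorem \ref{weak}-(1), proved through the generalized Hitchin formula and the recursion for $Q_n$). The gap is exactly where you flag it: the claim that the resonant orders equal $a_n$ and $b_n/2$ \emph{uniformly in $n$ and in $r$}. Your proposed mechanism --- ``isomonodromic Puiseux analysis at $t=0$ combined with the $n$-fold branch structure, so that the order accumulates over branches and grows quadratically'' --- is not an argument; no explicit expression for $Z_{r,s}^{(n)}$ exists for $n\geq 5$, and nothing in the sketch produces the precise values in (\ref{iv-23-5}) or the parity pattern $a_{2n}=a_{2n+1}$, $b_{2n}=b_{2n-1}$. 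Moreover you silently assume the order at $s=0$ is independent of $r$; a priori the leading coefficient $\alpha_0^{(n)}(r)$ could vanish at some rational $r$, raising the order for that torsion point, and this must be excluded.

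The paper avoids a direct resonant computation by a two-step detour you do not have. First, it imports Dahmen's algebraic count of $PL_n(N)$ via dessins d'enfants (Theorem A), which settles the conjecture unconditionally for $4\mid N$; combined with the simple-zero property of $Z_{r,s}^{(n)}$ (itself a consequence of the Painlev\'e property through Theorem \ref{thm-II-18 copy(1)}) and Dahmen's Lemma \ref{lem-C}, this yields only the single linear relation $2\tilde a_n+\tilde b_n=2a_n+b_n$ between the two unknown resonant orders, and the odd-$N$ versus $2N$ bookkeeping then forces the inequalities $v_\infty(Z_{r,0}^{(n)})\geq\tilde a_n$, $v_\infty(Z_{r,1/2}^{(n)})\geq\tilde b_n/2$ to be equalities, disposing of the $r$-dependence issue. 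Second, to split $\tilde a_n$ from $\tilde b_n$ it evaluates one special case, $(r,s)=(\tfrac14,0)$, where the associated PVI solution is the explicit algebraic function $\lambda^{(n)}_{1/4,0}(t)=\tfrac{(-1)^n}{2n+1}t^{1/2}$ and the Okamoto recursion for $Q_n$, $G_n$, $R_n$ can be pushed through by induction to give $Z_{1/4,0}^{(n)}(\tau)\sim q^{a_n}$. Without an input playing the role of Theorem A (or a genuinely new computation of both resonant exponents), your plan cannot be completed as stated.
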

We will see in \S 1.2 that Theorem \ref{thm-vanish-order} is actually equivalent to the validity of Dahmen-Beukers conjecture. Our proof of Theorem \ref{thm-vanish-order} relies on two key steps: Step 1 is the formula of $PL_{n}(N)$ obtained by Dahmen \cite{Dahmen} (See (\ref{iv-24-1}) below for the definition of $PL_{n}(N)$); Step 2 is the asymptotics of $Z_{r,s}^{(n)}(\tau)$ as $\tau\to \infty$. We note that Step 1 is algebraic and Step 2 is analytic. Therefore, the complete proof of Theorem \ref{thm-vanish-order} has to apply both the algebraic theory and the analytic theory of the integral Lam\'{e} equation.

Throughout the paper, we use the notations $\omega_{0}=0$, $\omega_{1}=1$,
$\omega_{2}=\tau$, $\omega_{3}=1+\tau$ and $\Lambda_{\tau}=\mathbb{Z+Z}\tau$,
where $\tau \in \mathbb{H}=\{ \tau|\operatorname{Im}\tau>0\}$. Define $E_{\tau
}:=\mathbb{C}/\Lambda_{\tau}$ to be a flat torus and $E_{\tau
}[2]:=\{ \frac{\omega_{k}}{2}|0\leq k\leq3\}+\Lambda_{\tau}$ to be the set
consisting of the lattice points and 2-torsion points in $E_{\tau}$. For
$z\in \mathbb{C}$ we denote $[z]:=z \ (\operatorname{mod} \Lambda_{\tau}) \in E_{\tau}$.
For a point $[z]$ in $E_{\tau}$ we often write $z$ instead of $[z]$ to
simplify notations when no confusion arises.

Let $\wp(z)=\wp(z|\tau)$ be the Weierstrass $\wp$-function with periods
$\Lambda_{\tau}$, defined by%
\[
\wp(z|\tau):=\frac{1}{z^{2}}+\sum_{\omega \in \Lambda_{\tau}\backslash
\{0\}}\left(  \frac{1}{(z-\omega)^{2}}-\frac{1}{\omega^{2}}\right)  ,
\]
and $e_{k}(\tau):=\wp(\frac{\omega_{k}}{2}|\tau)$ for $k\in \{1,2,3\}$. Let
$\zeta(z)=\zeta(z|\tau):=-\int^{z}\wp(\xi|\tau)d\xi$ be the Weierstrass zeta
function with two quasi-periods:%
\begin{equation}
\eta_{1}(\tau)=\zeta(z+1|\tau)-\zeta(z|\tau),\text{ \ }\eta_{2}(\tau
)=\zeta(z+\tau|\tau)-\zeta(z|\tau), \label{quasi}%
\end{equation}
and $\sigma(z)=\sigma(z|\tau)$ be the Weierstrass sigma function defined by
$\frac{\sigma^{\prime}(z)}{\sigma(z)}:=\zeta(z)$. Remark that $\zeta(z)$ is an
odd meromorphic function with simple poles at the lattice points
$\Lambda_{\tau}$, while $\sigma(z)$ is an odd holomorphic function with simple
zeros at the lattice points $\Lambda_{\tau}$. We take \cite{Lang} as our general
reference on elliptic functions.

\subsection{Integral Lam\'{e} equation and Dahmen-Beukers conjecture}

Consider the classical Lam\'{e} equation
(\cite{Halphen,Ince,Poole,Whittaker-Watson}):%
\begin{equation}
y^{\prime \prime}(z)=\left[  n(n+1)\wp(z|\tau)+B\right]  y(z), \label{iv-21-1}%
\end{equation}
where $n>0$
and $B\in \mathbb{C}$ are called \emph{index} and \emph{accessory parameter} respectively.
Equation (\ref{iv-21-1}) was introduced by the French mathematician Gabriel
Lam\'{e} in 1837. Since then, the Lam\'{e} equation has led to an abundance of
applications in physics, especially in the context of completely integrable
models such as generalized Calogero-Moser type systems; see e.g.
\cite{AMM,Cal,Its,Krichever,OP,Perelomov} and references therein. In this
paper, we only consider the \textit{integral} Lam\'{e} equation, i.e.
$n\in \mathbb{N}$ is a \textit{positive integer}. Then the corresponding
Lam\'{e} potential $n(n+1)\wp(z|\tau)$ is also known as the simplest elliptic solutions of stationary KdV hierarchy (see e.g. \cite{GH-Book,Gesztesy-Weikard}).

The Lam\'{e} equation possesses symmetry from $SL(2,\mathbb{Z})$ action. Given any $\gamma=\bigl(\begin{smallmatrix}a & b\\
c & d\end{smallmatrix}\bigr)
\in SL(2,\mathbb{Z})$,
we let $\tau^{\prime}=\gamma \cdot \tau:=\frac{a\tau+b}{c\tau
+d}$ and make a scalar change of the independent variable
$z\mapsto(c\tau+d)z$, then (\ref{iv-21-1}) is transformed to%
\begin{equation}
y^{\prime \prime}(z)=\left[  n(n+1)\wp(z|\tau^{\prime})+(c\tau+d)^{2}B\right]
y(z). \label{iv-22-1}%
\end{equation}
These scalar changes of variable induce a natural \emph{equivalence relation} on the
space of all Lam\'{e} equations; we call such two Lam\'{e} equations
(\ref{iv-21-1}) and (\ref{iv-22-1}) \emph{scalar equivalent}.

By letting $x=\wp(z|\tau)$, we obtain the \emph{algebraic form} of Lam\'{e} equations%
\begin{equation}
p(x)\frac{d^{2}y}{dx^{2}}+\frac{1}{2}p^{\prime}(x)\frac{dy}{dx}-\left(
n(n+1)x+B\right)  y=0\;\text{ on }\;\mathbb{CP}^{1}, \label{iv-21-2}%
\end{equation}
where $p(x):=4x^{3}-g_{2}(\tau)x-g_{3}(\tau)$ and $g_{2}(\tau), g_{3}(\tau)$ are the well-known invariants of the elliptic curve $E_{\tau}$, given by
\[
\wp'(z|\tau)^{2}=4\prod_{k=1}^{3}(\wp(z|\tau)-e_{k}(\tau))=4\wp(z|\tau)^{3}%
-g_{2}(\tau)\wp(z|\tau)-g_{3}(\tau).
\]
Equivalently, two Lam\'{e} equations of the algebraic form (\ref{iv-21-2}) are
\emph{scalar equivalent }if one can be transformed into the other by changing
 variable $x\mapsto ax$ for some $a\in \mathbb{C}\backslash \{0\}$.

Classically, people are interested in (\ref{iv-21-2}) with \emph{algebraic solutions}
only, or equivalently with a \emph{finite monodromy group}; see
\cite{Bal,BD,Beukers-Waall,Chi,Lit,Lit1,Maier,vdW} and references therein. It is
known that for an integral Lam\'{e} equation, if the monodromy group is
finite, then the monodromy group of (\ref{iv-21-2}) is a dihedral group
$D_{N}$ for some $N\in \mathbb{N}_{\geq3}$; see e. g. \cite[Theorem 1.5]{Bal}
or \cite[Corollary 3.4]{Beukers-Waall}. \emph{Let $L_{n}(N)$ denote the number of
Lam\'{e} equations (\ref{iv-21-2}) modulo scalar equivalence with the monodromy
group dihedral $D_{N}$}. It is well known (cf. \cite{Chi,Beukers-Waall,Lit,vdW}%
) that $L_{n}(N)$ is \emph{finite} for given $n\in \mathbb{N}$ and
$N\in \mathbb{N}_{\geq3}$. The issue is how to compute it (cf.
\cite{BD,Beukers-Waall,Chi,Dahmen0,Lit,vdW}). In 2003 Dahmen and Beukers proposed the following conjecture (see
\cite[Conjecture 73]{Dahmen0}):\medskip

\noindent \textbf{Dahmen-Beukers conjecture.} \textit{For every} $n\in
\mathbb{N}$ \textit{and} $N\in \mathbb{N}_{\geq3}$,%
\begin{equation}
L_{n}(N)=\frac{1}{2}\left(  \frac{n(n+1)\Psi(N)}{24}-\left(  a_{n}%
\phi(N)+b_{n}\phi(\tfrac{N}{2})  \right)  \right)  +\frac{2}%
{3}\varepsilon_{n}(N), \label{Dahmen-conjecture-1}%
\end{equation}
\textit{where $(a_n, b_n)$ is given in (\ref{iv-23-5}), $\Psi(N)$ is in (\ref{Euler-2}),
$\phi(N)$ is in (\ref{Euler}) and}
\begin{equation}
\varepsilon_{n}(N)=\left \{
\begin{array}
[c]{l}%
1\text{ \  \textit{if} }N=3\text{ \textit{and} }n\equiv1\operatorname{mod}3,\\
0\text{ \  \textit{otherwise}.}%
\end{array}
\right.  \label{iv-23-1}%
\end{equation}

Denote $PL_{n}(N)$ to be the number of Lam\'{e}
equations (\ref{iv-21-2}) modulo scalar equivalence with the \emph{projective}
monodromy group dihedral $D_{N}$. It is not difficult to see that
when the monodromy group $M\simeq D_{N}$, then the projective monodromy group
$PM\simeq D_{N/2}$ if $N$ is even and $PM\simeq D_{N}$ if $N$ is odd. Thus
(cf. \cite[(4.1)]{Dahmen})%
\begin{equation}
PL_{n}(N)=\left \{
\begin{array}
[c]{l}%
L_{n}(N)+L_{n}(2N)\text{ \ if }N\text{ is odd,}\\
L_{n}(2N)\text{ \ if }N\text{ is even.}%
\end{array}
\right.  \label{iv-24-1}%
\end{equation}
On the other hand, the well-known Klein theorem asserts that
any second order Fuchsian differential equation on $\mathbb{CP}^{1}$ has
finite projective monodromy group if and only if it is a pull-back of a
hypergeometric equation belonging to the basic Schwarz list. Thus every
Lam\'{e} equation (\ref{iv-21-2}) with \emph{projective} monodromy group dihedral
$D_{N}$ is such a pull-back by \emph{Belyi functions }(see
\cite{Chi,Dahmen,Lit,Lit1}). By means of \emph{Grothendieck correpondence}
between \emph{Belyi pairs} and \emph{dessin d'enfants}, Litcanu \cite{Lit}
first showed how $PL_{n}(N)$ can be counted by using the combinatorics of
dessins. Later by applying this algebraic approach, Dahmen \cite{Dahmen} proved

\medskip
\noindent{\bf Theorem A.} \cite{Dahmen}
\begin{equation}
PL_{n}(N)=\left \{
\begin{array}
[c]{l}%
0\text{ \  \  \  \  \  \textit{if} }N\in \{1,2\} \text{,}\\
\frac{n(n+1)}{12}\left(  \Psi(N)-3\phi(N)\right)  +\frac{2}{3}\varepsilon
_{n}(N)\text{ \textit{otherwise},}%
\end{array}
\right.  \label{iv-25-1}%
\end{equation}
\emph{where $\varepsilon_{n}(N)$ is defined in (\ref{iv-23-1}), $\Psi(N)$ is in (\ref{Euler-2}) and
$\phi(N)$ is in (\ref{Euler}).}

\emph{Consequently, the formula (\ref{Dahmen-conjecture-1}) holds for all $n\in \mathbb{N}$ and $4|N$.}

\medskip

Theorem A, which confirms the conjecture for the case $n\in \mathbb{N}$ and $4|N$, will play an important role in our proof of Theorem \ref{thm-vanish-order}.
Besides, the conjecture for the case $n\in \{1,2,3,4\}$ and $N\in
\mathbb{N}_{\geq3}$ was proved in \cite{Dahmen, LW2}. See \S \ref{Dahmen's} for
more details on these results. As far as we know, the general case
$n\geq5$ and $4\nmid N$ still remains open.

\subsection{Dahmen-Beukers conjecture and Pre-modular forms}

The aforementioned
pre-modular form $Z_{r,s}^{(n)}(\tau)$ is
holomorphic in $\tau$ for any
real pair $(r,s)\in \mathbb{R}^{2}\backslash \frac{1}{2}\mathbb{Z}^{2}$.
 For $n\in \{1,2,3\}$,
$Z_{r,s}^{(n)}(\tau)$ was constructed in
\cite{Dahmen0} by using the classical Frobenius-Stickelberger formula (e.g.
\cite[p.458]{Whittaker-Watson}). However, this idea can not work for $n\geq4$.
Based on the earlier works \cite{CLW,LW}, Wang and the third author \cite{LW2}
succeeded to prove the existence of such pre-modular forms.
We will briefly
review this theory in \S \ref{Dahmen's}.1.
Furthermore, we will see in Theorem \ref{simple-zero con} that $Z_{r,s}^{(n)}(\tau)$ has at most \emph{simple} zeros. This simple zero property plays a fundamental role in the connection of Theorem \ref{thm-vanish-order} and Dahmen-Buekers conjecture. Indeed, this connection has already been pointed out by Dahmen \cite[Lemma 65]{Dahmen0}.

As mentioned before, $Z_{r,s}^{(n)}(\tau)$ is a modular form of weight $n(n+1)/2$ with respect to $\Gamma(N)$ for $(r,s)\in \mathcal{Q}(N)$, and $M_{n,N}(\tau)$ defined in (\ref{modular-S1L}) is a modular form of weight $\frac{n(n+1)}{2}\Psi(N)$ with respect to $SL(2,\mathbb{Z})$. Since $Z_{r,s}^{(n)}(\tau)$ satisfies the hypothesis of \cite[Lemma 65]{Dahmen0} which relates Theorem \ref{thm-vanish-order} with Dahmen-Beukers conjecture, then \cite[Lemma 65]{Dahmen0} says that the simple zero property of $Z_{r,s}^{(n)}(\tau)$ implies
\[L_n(N)=\frac12\left(  \frac{n(n+1)\Psi(N)}{24}-v_{\infty}(M_{n,N}(\tau))  \right)  +\frac{2}%
{3}\varepsilon_{n}(N).
\]
We will recall the precise statement of \cite[Lemma 65]{Dahmen0} in Lemma \ref{lem-C} below. This identity clearly shows that Theorem \ref{thm-vanish-order} is equivalent to the validity of
Dahmen-Beukers conjecture, i.e.

\begin{theorem}[=Theorem \ref{thm-vanish-order}]\label{thm-Dahmen-Conjecture}
The formula (\ref{Dahmen-conjecture-1}) holds for all $n\in
\mathbb{N}$ and $N\in \mathbb{N}_{\geq3}$.
\end{theorem}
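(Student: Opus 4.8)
The plan is to prove the equivalent vanishing-order formula of Theorem~\ref{thm-vanish-order}, from which Theorem~\ref{thm-Dahmen-Conjecture} follows immediately through Lemma~\ref{lem-C} together with the simple-zero property (Theorem~\ref{simple-zero con}). Since $M_{n,N}=\prod_{(r,s)\in\mathcal{Q}(N)}Z_{r,s}^{(n)}$, vanishing order is additive, $v_{\infty}(M_{n,N})=\sum_{(r,s)\in\mathcal{Q}(N)}v_{\infty}(Z_{r,s}^{(n)})$, so everything reduces to the local behaviour of each factor as $\tau\to i\infty$. The cusp breaks the symmetry between the periods $\omega_{1}=1$ and $\omega_{2}=\tau$ (the first stays finite, the second escapes), so I expect $v_{\infty}(Z_{r,s}^{(n)})$ to be controlled almost entirely by the second coordinate $s$. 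The explicit case $n=1$ already exhibits the pattern: for the Hecke form $Z_{r,s}^{(1)}(\tau)=\zeta(r+s\tau\,|\,\tau)-r\eta_{1}(\tau)-s\eta_{2}(\tau)$, the Legendre relation $\eta_{1}\tau-\eta_{2}=2\pi i$ and a direct $q$-expansion give $Z_{r,s}^{(1)}\to(2s-1)\pi i$ for $0<s<1$, nonzero unless $s=\tfrac12$; and at $s=\tfrac12$ the leading term is of order $q^{1/2}$ with coefficient proportional to $\sin(2\pi r)\neq0$, so $v_{\infty}=\tfrac12$. Summed over $\mathcal{Q}(N)$ this reproduces $a_{1}\phi(N)+b_{1}\phi(N/2)$, and it shows that half-integer local orders occur but combine to an integer.

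For general $n$ no closed form for $Z_{r,s}^{(n)}$ is available, and this is where the analytic Step~2 enters. I would exploit the correspondence between $Z_{r,s}^{(n)}$ and an isomonodromic solution $p_{r,s}(\tau)$ of a Painlev\'e VI equation carrying the Lam\'e index $n$ and the monodromy data $(r,s)$, under which the zeros of $Z_{r,s}^{(n)}(\tau)$ mark the $\tau$ where $p_{r,s}(\tau)$ meets the $2$-torsion set $E_{\tau}[2]$. Then $v_{\infty}(Z_{r,s}^{(n)})$ becomes the rate at which $p_{r,s}(\tau)$ approaches (or fails to approach) $E_{\tau}[2]$ at the cusp, which I would read off from the local indicial/Frobenius analysis of Painlev\'e VI at its critical point (corresponding to $\tau=i\infty$, i.e. $t\to0$ for the modular $t=t(\tau)$). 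The two outcomes to establish are: (i) for every $s\not\equiv0,\tfrac12\pmod1$ the solution stays away from $E_{\tau}[2]$, no zeros accumulate, and $v_{\infty}(Z_{r,s}^{(n)})=0$; and (ii) for $s\equiv0$ and for $s\equiv\tfrac12$ it does approach a $2$-torsion point, contributing constants $v_{\infty}(Z_{r,0}^{(n)})=\alpha_{n}$ and $v_{\infty}(Z_{r,1/2}^{(n)})=\beta_{n}$ independent of the admissible $r$ (admissibility $(r,s)\in\mathcal{Q}(N)$ with $N\geq3$ forces $r\notin\tfrac12\mathbb{Z}$, keeping the relevant leading coefficient nonzero, just as $\sin(2\pi r)\neq0$ did when $n=1$). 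The Painlev\'e VI local exponents then let me compute at least one of $\alpha_{n},\beta_{n}$ outright.

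The algebraic Step~1 supplies the complementary constant. Granting (i)--(ii), counting admissible points gives $v_{\infty}(M_{n,N})=\alpha_{n}\phi(N)+2\beta_{n}\phi(N/2)$, since $\mathcal{Q}(N)$ has $\phi(N)$ points with $s=0$ and $2\phi(N/2)$ points with $s=\tfrac12$. Feeding this decomposition into the odd-$N$ relation $PL_{n}(N)=L_{n}(N)+L_{n}(2N)$ of (\ref{iv-24-1}) through Lemma~\ref{lem-C}, and using $\Psi(2N)=3\Psi(N)$, $\phi(2N)=\phi(N)$, $\varepsilon_{n}(2N)=0$ for odd $N$, all $\Psi$-terms cancel and Dahmen's Theorem~A collapses to the single linear relation
\[\alpha_{n}+\beta_{n}=\tfrac{n(n+1)}{4}.\]
Combined with the one value found analytically, this pins down both constants as $\alpha_{n}=a_{n}$ and $\beta_{n}=b_{n}/2$, using the arithmetic identity $a_{n}+\tfrac12 b_{n}=\tfrac{n(n+1)}{4}$, which one checks directly from (\ref{iv-23-5}) in both parities of $n$. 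Substituting back gives $v_{\infty}(M_{n,N})=a_{n}\phi(N)+b_{n}\phi(N/2)$ for all $N$; in particular the odd case $v_{\infty}(M_{n,N})=a_{n}\phi(N)$ comes entirely from the $s=0$ contribution, confirming $\alpha_n=a_n$. This is Theorem~\ref{thm-vanish-order}, hence the conjecture.

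The main obstacle is Step~2. Proving the sharp dichotomy (i)--(ii) for \emph{every} $s$ with no closed form for $Z_{r,s}^{(n)}$ requires controlling the full leading asymptotics of $p_{r,s}(\tau)$ uniformly at the cusp and then extracting the precise vanishing order, not merely its leading power; matching the Painlev\'e VI local exponents to the integers $a_{n},b_{n}$, and verifying that admissibility of $(r,s)$ excludes the degenerate sub-cases where the leading coefficient could vanish, is the delicate analytic heart. Theorem~A is then indispensable precisely because it removes the need to compute the harder of the two special orders directly, closing the argument through the displayed relation; this is why both an analytic and an algebraic ingredient are unavoidable.
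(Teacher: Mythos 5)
Your proposal follows essentially the same route as the paper: reduce to the vanishing-order formula of Theorem \ref{thm-vanish-order} via Dahmen's Lemma \ref{lem-C} together with the simple-zero property, split $v_{\infty}(M_{n,N})$ according to whether $s\notin\{0,\tfrac12\}$, $s=0$ or $s=\tfrac12$, extract one linear relation between the two unknown orders from Theorem A, and pin down the remaining constant by a Painlev\'e VI computation at a single point such as $(r,s)=(\tfrac14,0)$; your identity $\alpha_{n}+\beta_{n}=\tfrac{n(n+1)}{4}$ is exactly the paper's $2\tilde{a}_{n}+\tilde{b}_{n}=2a_{n}+b_{n}$. The one point you gloss over is that the $r$-independence of the orders at $s=0,\tfrac12$ (i.e.\ that the leading coefficients $\alpha_{0}^{(n)}(r),\beta_{0}^{(n)}(r)$ have no rational zeros in $(0,\tfrac12)\cup(\tfrac12,1)$) does \emph{not} follow from admissibility as in the $n=1$ case, but is itself forced by the same squeeze between Theorem A and the lower bounds $v_{\infty}(M_{n,N})\geq\tilde{a}_{n}\phi(N)+\tilde{b}_{n}\phi(N/2)$ for odd $N$ and $2N$ — which is precisely how Step 3 of the proof of Theorem \ref{modular-zero} closes the argument.
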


In order to prove
Theorem \ref{thm-vanish-order}, we need to study the asymptotics of $Z_{r,s}%
^{(n)}(\tau)$ as $\tau \rightarrow \infty$. For our purpose, it suffices to consider $\tau \in
F_{2}$, where $F_{2}$ is a fundamental domain of $\Gamma(2)$ defined by
\[
F_{2}:=\{ \tau \in \mathbb{H}\text{ }|\text{ }0\leq \operatorname{Re}%
\tau<2,\text{ }|\tau-1/2|\geq1/2,\text{ }|\tau-3/2|>1/2\}.
\]
Denote $q=e^{2\pi i\tau}$. Clearly
$q\rightarrow0$ as $F_{2}\ni \tau \rightarrow \infty$. Then we have the following result for $Z_{r,s}^{(n)}(\tau)$.

\begin{theorem}
\label{weak}
Given any $n\geq1$ and $(r,s)\in \mathbb{C}^{2}\backslash \frac
{1}{2}\mathbb{Z}^{2}$. Then as $F_{2}\ni \tau \rightarrow \infty$ the following hold.
\begin{itemize}
\item[(1)]
$\lim_{F_{2}\ni \tau \rightarrow \infty}Z_{r,s}^{(n)}(\tau)$ exists and is not zero as long
as $\operatorname{Re}s\in (0,1/2)  \cup(1/2,1)$.
\item[(2)] There exist $\tilde{a}_n, \tilde{b}_n\in \mathbb{Z}_{\geq 0}$ satisfying
$2\tilde{a}_n+\tilde{b}_n=2a_n+b_n$
such that
\[
Z_{r,0}^{(n)}(\tau)=\alpha_{0}^{(n)}(r)q^{\tilde{a}_{n}}+O(|q|^{\tilde{a}_{n}+1}),
\]
\[
Z_{r,\frac{1}{2}}^{(n)}(\tau)=\beta_{0}^{(n)}(r)q^{\frac{\tilde{b}_{n}}{2}%
}+O(|q|^{\frac{\tilde{b}_{n}+1}{2}}),
\]
where both $\alpha_{0}^{(n)}(r)$ and $\beta_{0}^{(n)}(r)$ are holomorphic in
$\mathbb{C}\backslash \mathbb{Z}$ and have no rational zeros in $(0,1/2)\cup
(1/2,1)$. Here $(a_{n},b_{n})$ is given by (\ref{iv-23-5}).
\item[(3)] For $(r,s)=(\frac14,0)$,
\[Z_{\frac14,0}^{(n)}(\tau)=c_n q^{a_n}+O(|q|^{a_n+1}),\quad c_n\neq 0.\]
In particular, $\tilde{a}_n=a_n$ and so $\tilde{b}_n=b_n$.
\end{itemize}

\end{theorem}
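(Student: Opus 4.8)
The plan is to read off the asymptotics from the degeneration $\tau\to i\infty$, in which $q=e^{2\pi i\tau}\to0$ and the torus $E_\tau$ acquires a node. Using $\lim_{q\to0}\wp(z|\tau)=\frac{\pi^{2}}{\sin^{2}\pi z}-\frac{\pi^{2}}{3}$ together with $e_{1}\to\frac{2\pi^{2}}{3}$ and $e_{2},e_{3}\to-\frac{\pi^{2}}{3}$, the Lam\'e equation $(\ref{iv-21-1})$ collapses to the explicit trigonometric equation $y''=[\,n(n+1)\frac{\pi^{2}}{\sin^{2}\pi z}+\tilde{B}\,]y$ on $\mathbb{C}^{*}$, whose local exponents at the node are the integers $-n,n+1$. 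Since $Z_{r,s}^{(n)}(\tau)$ is assembled from the monodromy of $(\ref{iv-21-1})$ through the Hermite--Krichever ansatz --- whose apparent singularities and exponent parameter encode $(r,s)$ --- its behaviour as $\tau\to i\infty$ is controlled by the monodromy of this limiting equation, and the subleading corrections are governed by the isomonodromic deformation of $(\ref{iv-21-1})$ at fixed $(r,s)$, i.e.\ by the elliptic Painlev\'e VI equation satisfied by the accessory parameter $B(\tau)$.

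I would prove part (1) first. For $\operatorname{Re}s\in(0,\tfrac12)\cup(\tfrac12,1)$ the solution point $r+s\tau$ escapes to the node, so $Z_{r,s}^{(n)}(\tau)$ tends to the value of the degenerate premodular form attached to the trigonometric equation above, which is explicit and (by a finite computation with the trigonometric solutions) nonzero exactly when $s\not\equiv0,\tfrac12$. The governing model is $n=1$: from $Z_{r,s}^{(1)}(\tau)=\zeta(r+s\tau|\tau)-r\eta_{1}-s\eta_{2}$ and Legendre's relation $\eta_{1}\tau-\eta_{2}=2\pi i$ one gets $Z_{r,s}^{(1)}(\tau)=\pi\cot(\pi(r+s\tau))+2\pi is+O(q)$, and since $\operatorname{Re}s>0$ forces $\cot(\pi(r+s\tau))\to-i$, the limit is $\pi i(2s-1)\neq0$. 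The existence statement in part (2) then follows because on the lines $s=0$ and $s=\tfrac12$ the leading coefficient of the nodal expansion vanishes, so $Z_{r,0}^{(n)}$ and $Z_{r,1/2}^{(n)}$ vanish to positive orders $\tilde a_{n}$ in $q$ and $\tilde b_{n}/2$ in $q^{1/2}$ respectively, the half-integer powers arising from $e^{2\pi i(r+\tau/2)}=e^{2\pi ir}q^{1/2}$. That the order is uniform in $r$, and that $\alpha_{0}^{(n)},\beta_{0}^{(n)}$ are holomorphic on $\mathbb{C}\setminus\mathbb{Z}$ with no rational zeros in $(0,\tfrac12)\cup(\tfrac12,1)$, I would deduce from the fact that $Z_{r,s}^{(n)}$ is a nonzero modular form for every torsion point $(r,s)$ of order $\geq3$, so its leading coefficient cannot vanish at such points.

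To obtain the relation $2\tilde a_{n}+\tilde b_{n}=2a_{n}+b_{n}$ I would invoke Theorem A. Summing the cusp orders over $(r,s)\in\mathcal{Q}(N)$ gives $v_{\infty}(M_{n,N})=\tilde a_{n}\phi(N)+\tilde b_{n}\phi(N/2)$, using that $\mathcal{Q}(N)$ contains $\phi(N)$ points with $s=0$ and $2\phi(N/2)$ points with $s=\tfrac12$ (the latter each of order $\tilde b_{n}/2$, with integer total $\tilde b_{n}\phi(N/2)$). Comparing with the value of $v_{\infty}(M_{n,N})$ forced in the settled case $4\mid N$ --- where the conjecture, hence $v_{\infty}(M_{n,N})=a_{n}\phi(N)+b_{n}\phi(N/2)$, is already known through the equivalence relating $L_{n}(N)$ and $v_{\infty}(M_{n,N})$, and where $\phi(N)=2\phi(N/2)$ --- yields $(2\tilde a_{n}+\tilde b_{n})\phi(N/2)=(2a_{n}+b_{n})\phi(N/2)$, and thus $2\tilde a_{n}+\tilde b_{n}=2a_{n}+b_{n}$.

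Part (3) is the crux and the main obstacle, since no closed form for $Z_{r,s}^{(n)}$ is available for general $n$ and the nodal limit alone yields only the vanishing, not the exact order. Here I would determine the precise order of $Z_{1/4,0}^{(n)}(\tau)$ by pushing the Painlev\'e VI analysis past leading order: the accessory parameter and apparent-singularity coordinate of the Lam\'e equation with monodromy $(\tfrac14,0)$ solve a specific elliptic Painlev\'e VI whose critical behaviour at the fixed cusp $\tau=i\infty$ dictates the exact power of $q$ produced upon substitution into $Z^{(n)}$. The route I expect to work is an induction on $n$ driven by the B\"acklund (Okamoto) transformations of Painlev\'e VI that shift the Lam\'e index, anchored by the explicit base cases $n\in\{1,2,3\}$ from the Frobenius--Stickelberger construction, checking that the induced shift of the leading exponent matches the increment prescribed by $(\ref{iv-23-5})$. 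Once this gives $\tilde a_{n}=a_{n}$ with $c_{n}=\alpha_{0}^{(n)}(\tfrac14)\neq0$ (consistent with $\tfrac14\in(0,\tfrac12)$ being a non-zero of $\alpha_{0}^{(n)}$), the relation from part (2) forces $\tilde b_{n}=b_{n}$. The delicate point throughout is the need to control the Painlev\'e VI solution to the exact order $a_{n}$, rather than merely to leading order; this is what makes the exponent --- and not just the vanishing --- computable, and it is where the analytic theory of the Lam\'e equation is indispensable.
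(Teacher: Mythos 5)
Your proposal reproduces the paper's overall architecture (nodal $q$-expansions plus Theorem A for part (2), Painlev\'e VI for parts (1) and (3)), but several load-bearing steps are asserted rather than proved, and one is wrong. For part (1), the claim that the nodal limit of $Z_{r,s}^{(n)}$ is ``explicit and (by a finite computation with the trigonometric solutions) nonzero'' is precisely the content to be established: no closed form of $Z_{r,s}^{(n)}$ exists for $n\geq 5$, so no finite computation is available, and your explicit argument covers only $n=1$. The paper first proves the identity $Z_{r,s}^{(n)}(\tau)=Q_{n-1}(Z_{r,s}(\tau);r+s\tau,\tau)/\mathfrak{q}_{n-1}(\tau)$ (Theorem \ref{simple-zn}, resting on the monodromy argument of Theorem \ref{q-n=z-n}) and then computes $\lim_{\tau\to\infty}Q_{n-1}$ by induction on the Okamoto recursion (\ref{II-515})--(\ref{II-518}); the engine of that induction is the four algebraic identities (\ref{iv-13})--(\ref{iv-15-1}) among the limit polynomials $\check Q_n,\check G_n,\check R_n$, and the resulting product formula $\check Z^{(n)}(s)$ is what certifies nonvanishing for $\operatorname{Re}s\in(0,\tfrac12)\cup(\tfrac12,1)$. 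None of this is replaced by the degeneration picture alone.

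In part (2), your justification of the no-rational-zeros property --- that $Z_{r,s}^{(n)}$ is a nonzero modular form for torsion $(r,s)$, ``so its leading coefficient cannot vanish at such points'' --- is a non sequitur: a nonzero modular form can vanish at the cusp to higher order than the generic order, which is exactly what $\alpha_0^{(n)}(k_1/N)=0$ would mean. For the same reason your unconditional summation $v_\infty(M_{n,N})=\tilde a_n\phi(N)+\tilde b_n\phi(N/2)$ is circular; the paper obtains it only for $4\mid N$ with $N$ larger than the denominators of the finitely many rational zeros of $\alpha_0^{(n)},\beta_0^{(n)}$, and then, for odd $N$, plays the inequalities (\ref{iv-37})--(\ref{iv-38}) against the identity $U_n(N)+U_n(2N)=\overline L_n(N)+\overline L_n(2N)$ to force equality --- that forcing is the actual proof that there are no rational zeros. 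Note also that upgrading Theorem A to $v_\infty(M_{n,N})=a_n\phi(N)+b_n\phi(N/2)$ for $4\mid N$ requires $L_n(N)=U_n(N)$, hence the simple-zero property (Theorem \ref{simple-zero con}), itself a Painlev\'e VI theorem; the unconditional Lemma \ref{lem-C} gives only an upper bound on $v_\infty(M_{n,N})$. Finally, for part (3) your Okamoto-driven induction is the right idea but omits the ingredient that makes it executable: $(r,s)=(\tfrac14,0)$ corresponds to the explicit algebraic solution $\lambda^{(n)}_{\frac14,0}(t)=\frac{(-1)^n}{2n+1}t^{1/2}$ (Theorem \ref{r=1/4}), whose consequences $R_nQ_{n-1}=\tfrac14 G_n$ and $\varphi_n=\tfrac{2n+1}{4}G_n$ ((\ref{iv-56})--(\ref{iv-57})) are what allow one to track the exact cancellations (which genuinely occur for odd $n$) down to order $q^{a_n}$ with $a_n\sim n^2/8$; without this closed form, ``pushing the analysis past leading order'' does not yield the exponent.
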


It is easy to see that
Theorem \ref{thm-vanish-order} is a consequence of Theorem \ref{weak}. The proof of Theorem \ref{weak} is not trivial at all due to the absence of explicit expressions of $Z_{r,s}^{(n)}(\tau)$ for general $n\geq 5$. Here we state Theorem \ref{weak} (2) and (3) separately to emphasize that the proof of Theorem \ref{weak} (2) can not yield  $(\tilde{a}_n,\tilde{b}_n)=(a_n, b_n)$, and we have to compute $Z_{\frac14,0}^{(n)}(\tau)$ as stated in (3) to prove $(\tilde{a}_n,\tilde{b}_n)=(a_n, b_n)$. We will apply Painlev\'{e} VI equation to prove Theorem \ref{weak} (1) \& (3), while Theorem A plays a key role in the proof of Theorem \ref{weak} (2).
As an application, we recall the classical modular function $j(\tau)$ of $SL(2,\mathbb{Z}%
)$:
\[
j(\tau):=1728\frac{g_{2}(\tau)^{3}}{g_{2}(\tau)^{3}-27g_{3}(\tau)^{2}%
}=1728\frac{g_{2}(\tau)^{3}}{\Delta(\tau)}.
\]

\begin{corollary}\label{degree}
Assume either $N>3$ or $n\not \equiv 1\operatorname{mod}3$. Then%
\[
M_{n,N}(\tau)=C_{n,N}\Delta(\tau)^{k(n,N)}\ell_{n,N}(j(\tau))^{2},
\]
where $k(n,N)=\frac{n(n+1)}{24}\Psi(N)$, $\ell_{n,N}$ is a monic polynomial
and $C_{n,N}$ is a non-zero constant. Furthermore, $\deg \ell_{n,N}=L_{n}(N)$,
where $L_{n}(N)$ is precisely the number in Dahmen-Beukers conjecture.
\end{corollary}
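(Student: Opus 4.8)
The plan is to descend $M_{n,N}(\tau)$ through $\Delta(\tau)$ to the modular curve $X(1)=SL(2,\mathbb Z)\backslash\mathbb H^\ast$ and read off its divisor there. First I would record that, under the hypothesis $\varepsilon_n(N)=0$, the exponent $k(n,N)=\tfrac{n(n+1)}{24}\Psi(N)$ is a non-negative \emph{integer}: indeed, by Theorem~\ref{thm-Dahmen-Conjecture} (now proven) together with (\ref{Dahmen-conjecture-1}) we have $2L_n(N)=k(n,N)-\big(a_n\phi(N)+b_n\phi(\tfrac N2)\big)$, and the right-hand bracket is an integer while $L_n(N)\in\mathbb Z_{\ge0}$, forcing $k(n,N)\in\mathbb Z_{\ge0}$. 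Since $\Delta$ is a weight-$12$ cusp form that is non-vanishing on $\mathbb H$ with $v_\infty(\Delta)=1$, the quotient $g:=M_{n,N}/\Delta^{k(n,N)}$ is a weight-$0$ $SL(2,\mathbb Z)$-modular function, holomorphic throughout $\mathbb H$. Its only possible pole is at the cusp, of order $-v_\infty(g)=k(n,N)-v_\infty(M_{n,N})$, which equals $2L_n(N)$ by Theorem~\ref{thm-vanish-order} and the displayed identity above. A holomorphic-on-$\mathbb H$ modular function with a single pole at infinity is a polynomial in $j$, so $g=P(j)$ with $\deg P=2L_n(N)$; writing $P=C_{n,N}\ell^2$ will give the asserted shape once $P$ is shown to be a perfect square, with $C_{n,N}$ its (nonzero) leading coefficient and $\ell$ monic.

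The crux is therefore to prove that every root of $P$ occurs with even multiplicity; equivalently, that for each $\tau_0\in\mathbb H$ the contribution of $j(\tau_0)$ to $\deg P$ is even. By Theorem~\ref{simple-zero con} each $Z^{(n)}_{r,s}$ has at most simple zeros, so $v_{\tau_0}(M_{n,N})=\#\{(r,s)\in\mathcal Q(N):Z^{(n)}_{r,s}(\tau_0)=0\}$, and the transformation law of $Z^{(n)}_{r,s}$ under $SL(2,\mathbb Z)$ (permuting the index $(r,s)$ within $\mathcal Q(N)$) shows this zero set is invariant under the stabilizer of $\tau_0$. For $\tau_0$ not equivalent to $i$ or $\rho=e^{\pi i/3}$ the relevant stabilizer is $\{\pm I\}$; since $-I$ acts freely on $\mathcal Q(N)$ for $N\ge3$, the zero set splits into pairs $\{(r,s),(-r,-s)\}$, so the multiplicity of $j(\tau_0)$ in $P$, namely $v_{\tau_0}(M_{n,N})$, is even. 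At $\tau_0\sim i$ the stabilizer is the cyclic group $\langle S\rangle$ of order $4$ with $S^2=-I$; a short computation ($\det(S-I)=2$, the only candidate fixed vector $(\tfrac N2,\tfrac N2)$ failing primitivity) shows $\langle S\rangle$ acts \emph{freely} on $\mathcal Q(N)$, so $4\mid v_i(M_{n,N})$ and the multiplicity $v_i(M_{n,N})/2$ of $j=1728$ in $P$ is even.

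The remaining, and most delicate, point is $\tau_0\sim\rho$, where $j(\rho)=0$. Here the stabilizer $\langle R\rangle$ has order $6$ with $R^3=-I$; using $\det(R-I)=1$ and $\det(R^2-I)=3$ one finds that every $\langle R\rangle$-orbit on $\mathcal Q(N)$ has length $6$, except for possible orbits of length $2$ of the form $\{v,-v\}$ with $Rv=-v$, which occur only when $3\mid N$. Writing $v_\rho(M_{n,N})=6a+2b$ with $b$ the number of such short orbits inside the zero set, the multiplicity of $j=0$ in $P$ is $v_\rho(M_{n,N})/3$; since $g=P(j)$ is a genuine polynomial in $j$ this number is an integer, so $3\mid v_\rho(M_{n,N})$, hence $3\mid b$, giving $v_\rho(M_{n,N})/3=2(a+b/3)$, which is even. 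This is exactly where the hypothesis enters: when $N=3$ and $n\equiv1\bmod3$ there is a single short orbit at $\rho$ that does lie in the zero set, so $v_\rho\equiv2\bmod3$, $k(n,N)\notin\mathbb Z$, and $g$ acquires a genuine cube-root branch at $\rho$---precisely the anomaly recorded by $\tfrac23\varepsilon_n(N)$. I expect this $\rho$-analysis to be the main obstacle, both in verifying the orbit lengths and in invoking polynomiality to rule out $b\not\equiv0\bmod3$. Combining the three cases, all multiplicities of $P$ are even, so $P=C_{n,N}\ell^2$ with $\ell$ monic; comparing degrees gives $\deg\ell=\tfrac12\deg P=L_n(N)$, which completes the proof.
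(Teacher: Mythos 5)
Your proposal is correct and follows the same overall architecture as the paper's proof: divide by $\Delta^{k(n,N)}$ to obtain a weight-zero $SL(2,\mathbb{Z})$-invariant function, holomorphic on $\mathbb{H}$ with at worst a pole at the cusp, hence a polynomial $P(j)$; show $P$ is a perfect square; and read off $\deg \ell_{n,N}=\tfrac12\deg P=L_n(N)$ by comparing $q$-expansions via $v_\infty(M_{n,N})=a_n\phi(N)+b_n\phi(N/2)$. Where you genuinely go beyond the paper is in the square-ness step. The paper's entire justification there is the involution (\ref{iv-20}), $(r,s)\mapsto(-r,-s)$, which pairs the factors of $M_{n,N}$ and shows every zero of $M_{n,N}$ in $\mathbb{H}$ has even order (``any zero must be doubled''). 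That suffices at unramified points of $j$, and even at $\rho$ (where $v_\rho=3m_0$ even forces $m_0$ even), but at $\tau_0\sim i$ it only yields that $2m_{1728}$ is even, i.e.\ nothing about the parity of the multiplicity $m_{1728}$ of $j-1728$ in $P$. Your orbit analysis --- the order-$4$ stabilizer of $i$ acts freely on $\mathcal{Q}(N)$ for $N\ge 3$, so $4\mid v_i(M_{n,N})$ --- is exactly the input needed to close this, and your length-$6$/length-$2$ orbit count at $\rho$ both re-derives the evenness there and explains intrinsically why the case $N=3$, $n\equiv 1\pmod 3$ must be excluded. Your derivation of $k(n,N)\in\mathbb{Z}_{\ge 0}$ from the now-proven counting formula is likewise cleaner than the paper's bare assertion. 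In short: same route, but your local analysis at the elliptic points supplies a justification the paper leaves implicit, and which is actually necessary at $\tau=i$.
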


In general, the polynomial $\ell_{n,N}$ is very difficult to compute even for
small $n$ and $N$. In our joint work with Wang \cite{CKLW}, we studied the
case $n=1$ and computed $\ell_{1,N}$ for $N\leq9$ by applying Painlev\'{e} VI
equation. In general, we conjectured that $\ell_{1,N}(j)$ \emph{is irreducible
in} $\mathbb{Q}[j]$. Once this conjecture can be proved, all the zeros of
$\ell_{1,N}(j)$ should not be algebraic integers provided $N\geq5$, which
implies that all the corresponding $\tau$'s are transcendental.

\subsection{Painlev\'{e} VI equation}
Since it is impossible to write down the explicit expression of $Z_{r,s}^{(n)}(\tau)$
for general $n$, Theorem \ref{weak} can not be proved by direct computations. In this paper, we overcome this difficulty by establishing a
precise connection between $Z_{r,s}^{(n)}(\tau)$ and Painlev\'{e} VI equation.

The well-known Painlev\'{e} VI equation with free parameters
$(\alpha,\beta,\gamma,\delta)$ (denoted it by PVI$(\alpha,\beta,\gamma,\delta)$) reads
as%
\begin{align}
\frac{d^{2}\lambda}{dt^{2}}=  &  \frac{1}{2}\left(  \frac{1}{\lambda}+\frac
{1}{\lambda-1}+\frac{1}{\lambda-t}\right)  \left(  \frac{d\lambda}{dt}\right)
^{2}-\left(  \frac{1}{t}+\frac{1}{t-1}+\frac{1}{\lambda-t}\right)
\frac{d\lambda}{dt}\nonumber \\
&  +\frac{\lambda(\lambda-1)(\lambda-t)}{t^{2}(t-1)^{2}}\left[  \alpha
+\beta \frac{t}{\lambda^{2}}+\gamma \frac{t-1}{(\lambda-1)^{2}}+\delta
\frac{t(t-1)}{(\lambda-t)^{2}}\right]  . \label{46-0}%
\end{align}
Due to its connection with many different disciplines in mathematics and
physics, PVI (\ref{46-0}) has been extensively studied in the past several
decades. See
\cite{Boalch,Dubrovin-Mazzocco,Guzzetti,Guzzetti1,Hit1,Hit2,GP,Lisovyy-Tykhyy,Y.Manin,Okamoto2,Okamoto1}
and references therein.

One of the fundamental properties for PVI (\ref{46-0}) is the so-called
\emph{Painlev\'{e} property} which says that any solution $\lambda(t)$ of
(\ref{46-0}) has neither movable branch points nor movable essential
singularities; in other words, for any $t_{0}\in \mathbb{C}\backslash \{0,1\}$,
either $\lambda(t)$ is holomorphic at $t_{0}$ or $\lambda(t)$ has a pole at $t_{0}%
$. Moreover, $\lambda(t)$ has at most simple poles if $\alpha \not =0$ (see
Theorem \ref{thm-2A} in \S \ref{expression-completely}).

By the Painlev\'{e} property, it is reasonable to lift PVI (\ref{46-0}) to the
covering space $\mathbb{H=}\{ \tau|\operatorname{Im}\tau>0\}$ of
$\mathbb{C}\backslash \{0,1\}$ by the following transformation:%
\begin{equation}
t=\frac{e_{3}(\tau)-e_{1}(\tau)}{e_{2}(\tau)-e_{1}(\tau)},\text{ \ }%
\lambda(t)=\frac{\wp(p(\tau)|\tau)-e_{1}(\tau)}{e_{2}(\tau)-e_{1}(\tau)}.
\label{II-130}%
\end{equation}
Then it is well known (cf.
\cite{Babich-Bordag,Y.Manin,Painleve}) that $p(\tau)$ satisfies the elliptic form of Painlev\'{e} VI equation (denoted it by EPVI$(\alpha_{0},\alpha_{1},\alpha_{2},\alpha_{3})$):
\begin{equation}
\frac{d^{2}p(\tau)}{d\tau^{2}}=\frac{-1}{4\pi^{2}}\sum_{k=0}^{3}\alpha_{k}%
\wp^{\prime}\left(  \left.  p(\tau)+\tfrac{\omega_{k}}{2}\right \vert
\tau \right)  , \label{124-0}%
\end{equation}
with parameters given by
\[
\left(  \alpha_{0},\alpha_{1},\alpha_{2},\alpha_{3}\right)  =\left(
\alpha,-\beta,\gamma,\tfrac{1}{2}-\delta \right)  .
\]
The Painlev\'{e} property implies that $\wp(p(\tau)|\tau)$ is
 single-valued meromorphic in $\mathbb{H}$. This is an advantage of
making the transformation (\ref{II-130}).

In this paper, we only consider the
elliptic form (\ref{124-0}) with parameters%
\begin{equation}
(\alpha_{0},\alpha_{1},\alpha_{2},\alpha_{3})=(\tfrac{1}{2}(n+\tfrac{1}%
{2})^{2},\tfrac{1}{8},\tfrac{1}{8},\tfrac{1}{8}),\text{ }n\in \mathbb{N}\cup \{0\}, \label{parameter-n-1}%
\end{equation}
or equivalently PVI (\ref{46-0}) with parameter%
\begin{align}\label{parameter-n}
(\alpha,\beta,\gamma,\delta)=  (  \tfrac{1}{2}(n+\tfrac{1}{2}
)^{2},\tfrac{-1}{8},\tfrac{1}{8},\tfrac{3}{8}),
\end{align}
which is related to the integral Lam\'{e} equation. For any monodromy data $(r,s)\in\mathbb{C}^2\setminus\frac{1}{2}\mathbb{Z}^2$, we proved in \cite{Chen-Kuo-Lin} that there is \emph{a unique solution} $p_{r,s}^{(n)}(\tau)$ of EPVI$(\tfrac{1}{2}(n+\tfrac{1}%
{2})^{2},\tfrac{1}{8},\tfrac{1}{8},\tfrac{1}{8})$ associated with this $(r,s)$; we will recall this statement in Theorem \ref{thm-II-8}. To study the asymptotics of $Z_{r,s}^{(n)}(\tau)$, we need the following "explicit" expression of
$\wp(p_{r,s}^{(n)}(\tau)|\tau)$.

\begin{theorem}
\label{thm-II-18 copy(1)}Fix any
$n\geq0$ and $(r,s)\in \mathbb{C}^{2}\backslash \frac{1}{2}\mathbb{Z}^{2}$. Then
$p_{r,s}^{(n)}(\tau)$ is expressed by%
\[
\wp(p_{r,s}^{(n)}(\tau)|\tau)=\frac{P_{n}(Z_{r,s}(\tau);r+s\tau,\tau)}%
{Z_{r,s}^{(n-1)}(\tau)Z_{r,s}^{(n+1)}(\tau)},
\]
where $Z_{r,s}^{(-1)}
(\tau)=Z_{r,s}^{(0)}(\tau):=1$,
\[Z_{r,s}(\tau):=\zeta(r+s\tau|\tau)-r\eta_{1}(\tau)-s\eta_{2}(\tau),\]
and $P_{n}(\cdot;r+s\tau,\tau)$ are polynomials with
coefficients being rational functions of $\wp(r+s\tau|\tau)$, $\wp^{\prime
}(r+s\tau|\tau)$ and $e_{k}(\tau)$, $k\in \{1,2,3\}$.

Furthermore, any two of
 $P_{n}(Z_{r,s}(\tau);r+s\tau,\tau)$, $Z_{r,s}^{(n-1)}(\tau)$,
 $Z_{r,s}^{(n+1)}(\tau)$ have no common zeros in $\tau$ provided $r+s\tau
\not \in E_{\tau}[2]$.
\end{theorem}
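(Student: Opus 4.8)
The plan is to pin down $\wp(p_{r,s}^{(n)}(\tau))$ by describing its poles and zeros as a single-valued meromorphic function of $\tau$ (the single-valuedness being exactly the gain of the transformation \eqref{II-130} noted above), and then to match these against the zeros and poles of the proposed rational expression in the Hecke function $Z_{r,s}(\tau)$. By Theorem \ref{thm-II-8} the solution $p_{r,s}^{(n)}$ is produced from the index-$n$ Lam\'e equation carrying the monodromy data $(r,s)$, so that $\wp(p_{r,s}^{(n)}(\tau))$, the premodular forms $Z_{r,s}^{(n\pm1)}(\tau)$, and $Z_{r,s}(\tau)$ are all assembled from the same elliptic data; by the construction in \cite{LW2} each $Z_{r,s}^{(n\pm1)}(\tau)$ is a polynomial in $Z_{r,s}(\tau)$ whose coefficients are rational functions of $\wp(r+s\tau|\tau)$, $\wp^{\prime}(r+s\tau|\tau)$ and the $e_{k}(\tau)$. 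The strategy is therefore to prove that the product $Z_{r,s}^{(n-1)}(\tau)Z_{r,s}^{(n+1)}(\tau)$ is exactly the divisor of poles of $\wp(p_{r,s}^{(n)}(\tau))$, so that $P_{n}:=\wp(p_{r,s}^{(n)})\,Z_{r,s}^{(n-1)}Z_{r,s}^{(n+1)}$ is holomorphic in $\tau$ and inherits the same polynomial-in-$Z_{r,s}$ structure, with coefficients in the stated ring.

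The decisive input is an analysis of the degenerations $p_{r,s}^{(n)}(\tau)\to 0$, that is, of the poles of $\wp(p_{r,s}^{(n)}(\tau))$. As $p\to0$, the apparent singularity at $\pm p$ of the linear isomonodromy problem for EPVI collides with the singular point at the origin, and I expect this collision to resolve into an honest Lam\'e equation whose index is either $n-1$ or $n+1$, according to the branch along which the merger proceeds. In either case the limiting Lam\'e equation must carry the monodromy $(r,s)$, so that the two families of poles are governed precisely by the vanishing of $Z_{r,s}^{(n-1)}(\tau)$ and of $Z_{r,s}^{(n+1)}(\tau)$ respectively. Since $\alpha=\tfrac12(n+\tfrac12)^{2}\neq0$, Theorem \ref{thm-2A} gives that $\wp(p_{r,s}^{(n)})$ has only simple poles, while Theorem \ref{simple-zero con} gives that $Z_{r,s}^{(n\pm1)}$ have only simple zeros; matching these simple orders identifies the pole divisor of $\wp(p_{r,s}^{(n)})$ with the reduced zero divisor of $Z_{r,s}^{(n-1)}Z_{r,s}^{(n+1)}$ and yields the asserted identity together with the polynomial nature of $P_{n}$.

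For the no-common-zeros statement, observe that by the simple-order properties it reduces to the single assertion that every zero $\tau_{0}$ of $Z_{r,s}^{(n-1)}$ or $Z_{r,s}^{(n+1)}$ with $r+s\tau_{0}\notin E_{\tau}[2]$ is a genuine, uncancelled pole of $\wp(p_{r,s}^{(n)})$, i.e. that $P_{n}(\tau_{0})\neq0$. Indeed, once this is known, a hypothetical common zero of $Z_{r,s}^{(n-1)}$ and $Z_{r,s}^{(n+1)}$ would force the denominator to vanish to order two (each zero being simple), while the numerator stays nonzero, producing a pole of $\wp(p_{r,s}^{(n)})$ of order at least two and contradicting Theorem \ref{thm-2A}. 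Equivalently, such a coincidence would force the index-$(n-1)$ and index-$(n+1)$ Lam\'e equations to realize the same monodromy $(r,s)$ at $\tau_{0}$, which the degeneration dichotomy of the previous paragraph forbids. Thus both halves of the claim follow from the genuineness of these poles.

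The main obstacle is precisely this degeneration analysis. Because no explicit expression for $Z_{r,s}^{(n)}$ is available for general $n\geq5$, I cannot read off the poles of $\wp(p_{r,s}^{(n)})$ by inspection; the crux is to prove, using only the local structure of EPVI near $p=0$ together with the monodromy characterization of the zeros of the premodular forms, that the collision of the apparent singularity with the origin produces exactly the two indices $n\pm1$ and no others, and that the resulting limiting Lam\'e equations genuinely carry $(r,s)$ (so that $p_{r,s}^{(n)}(\tau_{0})\in\Lambda_{\tau}$ whenever $Z_{r,s}^{(n\pm1)}(\tau_{0})=0$). Establishing this clean dichotomy — equivalently, that $\wp(p_{r,s}^{(n)})$ has no poles beyond those accounted for by $Z_{r,s}^{(n-1)}Z_{r,s}^{(n+1)}$ — is the technical heart of the argument; once it is in place, the polynomial structure of $P_{n}$ and the no-common-zeros assertion are bookkeeping with the simple-order properties supplied by Theorems \ref{thm-2A} and \ref{simple-zero con}.
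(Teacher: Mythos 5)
Your proposal correctly isolates the key analytic idea — that a pole of $\wp(p_{r,s}^{(n)}(\tau)|\tau)$ corresponds to the collision of the apparent singularities $\pm p$ with the origin, under which the isomonodromic GLE degenerates to a Lam\'{e} equation of index $n\pm1$ still carrying the monodromy $(r,s)$, forcing $Z_{r,s}^{(n\pm1)}(\tau_{0})=0$. This is exactly the mechanism described in Remark \ref{remarkk} and implemented in the proof of Theorem \ref{q-n=z-n} via the results of \cite{Chen-Kuo-Lin0} and Theorem \ref{thm-II-8}. However, you explicitly leave this degeneration dichotomy unproved, and it is not the only missing piece: even granting it, your divisor-matching strategy establishes at most that $P_{n}:=\wp(p_{r,s}^{(n)})\,Z_{r,s}^{(n-1)}Z_{r,s}^{(n+1)}$ is a \emph{holomorphic function of} $\tau$. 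It does not follow that $P_{n}$ is a \emph{polynomial in} $Z_{r,s}(\tau)$ with coefficients rational in $\wp(r+s\tau|\tau)$, $\wp^{\prime}(r+s\tau|\tau)$ and the $e_{k}(\tau)$ — a holomorphic function of $\tau$ need not have that algebraic structure, and "inherits the same polynomial-in-$Z_{r,s}$ structure" is an assertion, not an argument. In the paper this structure is the output of the algebraic half of the proof: Theorem \ref{thm-II-17} constructs, by iterating the Okamoto transformation $\kappa_{5}$, explicit polynomials $Q_{n},G_{n},R_{n}$ in $X$ with the stated coefficient ring and proves (inductively, in Appendix \ref{appendix-A}) that $\wp(p_{r,s}^{(n)})=e_{1}+(e_{2}-e_{1})R_{n}(Z_{r,s})/(Q_{n-2}(Z_{r,s})Q_{n}(Z_{r,s}))$ together with the no-common-zero properties (2-$n$). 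The analytic degeneration argument is then used only to identify $Q_{n}$ with $\mathfrak{q}_{n}W_{n+1}$, i.e.\ $Q_{n}(Z_{r,s}(\tau))$ with $\mathfrak{q}_{n}(\tau)Z_{r,s}^{(n+1)}(\tau)$ (Theorems \ref{q-n=z-n} and \ref{simple-zn}); the direction there is from zeros of $W_{n+1}$ to zeros of $Q_{n}$, followed by a degree count, and the assignment of $n-1$ versus $n+1$ to positive versus negative poles is read off from the residue computation of Proposition \ref{Prop-II-1}.

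A second, structural problem is your use of Theorem \ref{simple-zero con} as an input. In the paper the simple-zero property of $Z_{r,s}^{(n)}$ is a \emph{consequence} of this circle of results (it follows from $Z_{r,s}^{(n+1)}\propto Q_{n}(Z_{r,s})$ together with the simple-pole property of PVI solutions, Corollary \ref{thm-II-18-1}); no independent proof is available, precisely because no explicit formula for $Z_{r,s}^{(n)}$ exists for $n\geq5$. Likewise, the paper's no-common-zeros assertion is not obtained by your order-counting argument but is the purely algebraic property (2-$n$), proved by induction on the Okamoto recursion. In short: you have the analytic heart of the identification $Q_{n}\leftrightarrow Z_{r,s}^{(n+1)}$ but none of the algebraic apparatus that produces the rational expression in $Z_{r,s}(\tau)$ in the first place, and without that apparatus the statement you are asked to prove cannot be reached.
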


\begin{remark}
In the seminal work \cite{Hit1}, Hitchin proved his famous formula for EPVI$(\frac{1}{8},\frac{1}{8},\frac{1}{8},\frac{1}{8})$:
\begin{equation}
\wp(p^{(0)}_{r,s}(\tau)|\tau)=\wp(r+s\tau|\tau)+\frac{\wp^{\prime}(r+s\tau|\tau)}%
{2Z_{r,s}(\tau)}. \label{0II-1}%
\end{equation}
Therefore, Theorem \ref{thm-II-18 copy(1)} is a generalization of Hitchin's formula to the general case $n\geq 1$. Theorem \ref{thm-II-18 copy(1)} plays a crucial role in our proof of Theorem \ref{weak} (1) \& (3). We believe that Theorem \ref{thm-II-18 copy(1)} has potential applications to some other problems of Painlev\'{e} VI equations.
\end{remark}

\begin{remark}
\label{remark1-8} We will see in \S \ref{expression-completely} that $p_{r,s}^{(n)}(\tau)$ can be obtained from $p_{r,s}^{(0)}(\tau)$ via the famous Okamoto transformation \cite{Okamoto1}.  Since the Okamoto transformation preserves algebraic
solutions, it is known (cf. \cite{Mazzocco,CKLW}) that $\wp(p_{r,s}^{(n)}%
(\tau)|\tau)$ gives an algebraic solution of PVI$(\tfrac{1}{2}(n+\tfrac{1}%
{2})^{2},\tfrac{-1}{8},\tfrac{1}{8},\tfrac{3}{8})$ if and only if
$(r,s)\in \mathcal{Q}(N)$ for some $N\ge3$. Moreover, after analytic
continuation, $\wp(p_{r,s}^{(n)}(\tau)|\tau)=\wp(p_{1-r,1-s}^{(n)}
(\tau)|\tau)$ gives the same algebraic solution for all $(r,s)\in
\mathcal{Q}(N)$ (i.e. this algebraic solution has $\Psi(N)/2$ branches) if $N$
is odd; $\wp(p_{r,s}^{(n)}(\tau)|\tau)$ gives three different algebraic
solutions for all $(r,s)\in \mathcal{Q}(N)$ (i.e. each algebraic solution has
$\Psi(N)/6$ branches) if $N$ is even. See also \cite[Lemma 3.3]
{Chen-Kuo-Lin2016}. Since Theorem \ref{thm-II-18 copy(1)} shows that zeros of
$Z_{r,s}^{(n-1)}(\tau)$, $Z_{r,s}^{(n+1)}(\tau)$ give poles of $\wp
(p_{r,s}^{(n)}(\tau)|\tau)$, the zero numbers of $M_{n\pm1,N}(\tau)$ and hence Dahmen-Beukers
conjecture, are closely related to counting pole numbers of algebraic
solutions of PVI$(\tfrac{1}{2}(n+\tfrac{1}{2})^{2},\tfrac{-1}{8},\tfrac{1}%
{8},\tfrac{3}{8})$. This precise relation for the case $n=1$ has been
discussed in \cite{Chen-Kuo-Lin2016}, where an explicit formula of the pole
number of algebraic solutions for PVI$(\tfrac{9}{8},\tfrac{-1}{8},\tfrac{1}%
{8},\tfrac{3}{8})$ was obtained via $M_{2,N}(\tau)$. We will
study the general case $n\ge2$ elsewhere.
\end{remark}

\begin{remark}
\label{remarkk}  The proof of Theorem \ref{thm-II-18 copy(1)} will be
given in \S \ref{expression-completely}. Since we
have no explicit expressions of $Z_{r,s}^{(n\pm1)}(\tau)$, the key point is
how to prove that $Z_{r,s}^{(n\pm1)}(\tau)$ appears in the denominator of
$\wp(p_{r,s}^{(n)}(\tau)|\tau)$. Our basic idea is following. It is well-known that solutions of Painlev\'{e} VI equation govern the isomonodromic deformation of some linear ODEs. We already
proved in \cite{Chen-Kuo-Lin} that the monodromy of the associated linear ODE (see GLE (\ref{89-1})
in \S \ref{expression-completely})
corresponding to $p_{r,s}^{(n)}(\tau)$ is generated by (\ref{iiii}) via this $(r,s)$; see
Theorem \ref{thm-II-8} in \S \ref{expression-completely}. If $\tau \to \tau_{0}$ with $\tau_{0}$ being any
pole of $\wp(p_{r,s}^{(n)}(\tau)|\tau)$, we proved in \cite{Chen-Kuo-Lin0}
that GLE (\ref{89-1}) converges to the Lam\'{e} equation $y^{\prime \prime
}=[m(m+1)\wp(z|\tau_{0})+B]y$ with $m\in \{n\pm1\}$ and its monodromy also
generated by (\ref{iiii}) via this $(r,s)$, which finally indicates that either $Z_{r,s}^{(n-1)}%
(\tau_{0})=0$ or $Z_{r,s}^{(n+1)}(\tau_{0})=0$. Hence, the zeros
of $Z_{r,s}^{(n-1)}(\tau)Z_{r,s}^{(n+1)}(\tau)$ coincide with the poles of
$\wp(p_{r,s}^{(n)}(\tau)|\tau)$; see Theorem \ref{q-n=z-n} in \S \ref{expression-completely}.
\end{remark}

The paper is organized as follows. In \S \ref{Dahmen's},
We give a detailed explanation of the deep connection between $Z_{r,s}^{(n)}(\tau)$
and Dahmen-Beukers conjecture. We also review Damhen's and Chou's proofs of this conjecture for $n\leq 4$. In \S 3 we give a complete proof of
Dahmen-Beukers conjecture by applying Theorem \ref{weak}. In \S \ref{expression-completely}, first we apply the Okamoto transformation
to generalize Hitchin's formula via an induction approach. The induction process is technical and will be given in Appendix A.  Then we establish the precise connection
between $Z_{r,s}^{(n)}(\tau)$ and Painlev\'{e} VI equation, and apply it to prove Theorem
\ref{thm-II-18 copy(1)} and Theorem \ref{simple-zero con}. Finally in \S \ref{sec-asymptotics},
we apply the theory of Painlev\'{e} VI equation to study the asymptotics of $Z_{r,s}^{(n)}(\tau)$
and prove Theorem \ref{weak}.

\section{Pre-modular form and Dahmen-Beukers conjecture}

\label{Dahmen's}

\subsection{Connection between $Z_{r,s}^{(n)}(\tau)$ and the conjecture}
In this section, we give a detailed explanation that \emph{counting integral Lam\'{e} equations with monodromy group dihedral $D_{N}$ is equivalent to counting zeros of $Z_{r,s}^{(n)}(\cdot)$ for  $(r,s)\in \mathcal{Q}(N)$}. This is why $Z_{r,s}^{(n)}(\tau)$ can be applied
to study Dahmen-Beukers conjecture.

First we briefly review the theory of $Z_{r,s}^{(n)}(\tau)$ from \cite{CLW,LW2}.
The Lam\'{e}
equation (\ref{iv-21-1}) is associated with a hyperelliptic curve $\bar{Y}_{n}\subset$
Sym$^{n}E_{\tau}:=E_{\tau}^{n}/S_{n}$ (i.e. the $n$-th symmetric product of
$E_{\tau}$) defined by%
\[
Y_{n}=Y_{n}(\tau):=\left \{  \{[a_{i}]\}_{i=1}^{n}\left \vert
\begin{array}
[c]{c}%
\sum_{j\not =i}\left(  \zeta(a_{i}-a_{j})+\zeta(a_{j})-\zeta(a_{i})\right)
=0\\
\lbrack a_{i}]\not =0,[a_{i}]\not =[a_{j}],\text{ }i=1,\cdots,n
\end{array}
\right.  \right \}  ,
\]
and $\bar{Y}_{n}$ is the closure of $Y_{n}$ in Sym$^{n}E_{\tau}$. Historically
$\bar{Y}_{n}$ is also known as the \emph{Lam\'{e} curve}. It was proved in
\cite{CLW,LW2} that the following hold:

(a). $\bar{Y}_{n}(\tau)$ is a hyperelliptic curve with arithmetic genus $n$
and $\bar{Y}_{n}(\tau)=Y_{n}(\tau)\cup \{ \{0,\cdots,0\} \}$.
The affine part $Y_{n}(\tau)\simeq \{(B,C)|C^{2}%
=\ell_{n}(B)\}$, where $\ell_{n}(B)$ is the so-called Lam\'{e} polynomial of degree $2n+1$, and the
branch points of $Y_{n}(\tau)$ are precisely those $\{[a_{i}]\}_{i=1}^{n}\in
Y_{n}(\tau)$ such that $\{[a_{1}],\cdot \cdot \cdot,[a_{n}]\}=\{-[a_{1}%
],\cdot \cdot \cdot,-[a_{n}]\}$.

(b). Let $K(\bar{Y}_{n}(\tau))$ be the field of rational functions on $\bar
{Y}_{n}(\tau)$. Set $\sigma_{n}$ to be the addition map from $\bar{Y}_{n}%
(\tau)$ onto $E_{\tau}$:%
\[
\sigma_{n}(\boldsymbol{a}):=\sum_{i=1}^{n}[a_{i}],\text{ \ }\forall
\boldsymbol{a}=\{[a_{1}],\cdot \cdot \cdot \lbrack a_{n}]\} \in \bar{Y}_{n}%
(\tau).
\]
Then the degree of $\sigma_{n}$ is $\frac{n(n+1)}{2}$. Consequently,
$K(\bar{Y}_{n}(\tau))$ is a finite extension of $K(E_{\tau})$ and%
\[
\left[  K(\bar{Y}_{n}(\tau)):K(E_{\tau})\right]  =\tfrac{n(n+1)}{2}.
\]

(c). Define $\boldsymbol{z}_{n}: \bar{Y}_{n}(\tau)\to\mathbb{C}\cup \{
\infty \}$ by:%
\begin{equation}\label{z--n}
\boldsymbol{z}_{n}(\boldsymbol{a}):=\zeta \left(  \sum \nolimits_{i=1}^{n}%
a_{i}\right)  -\sum \nolimits_{i=1}^{n}\zeta(a_{i}).
\end{equation}
Then $\boldsymbol{z}_{n}\in K(\bar{Y}_{n}(\tau))$ is a primitive generator of the finite field
extension of $K(\bar{Y}_{n}(\tau))$ over $K(E_{\tau})$, and there is a minimal polynomial%
\begin{equation}
W_{n}(X)=W_{n}(X;\sigma_{n},\tau)\in \mathbb{Q}[g_{2}(\tau),g_{3}(\tau
),\wp(\sigma_{n}|\tau),\wp^{\prime}(\sigma_{n}|\tau)][X] \label{k-ll}%
\end{equation}
of the field extension $K(\bar{Y}_{n})$ over $K(E_{\tau})$ which defines the
covering map $\sigma_{n}$ between algebraic curves.
Define
\begin{equation}
Z_{r,s}(\tau):=\zeta(r+s\tau|\tau)-r\eta_{1}(\tau)-s\eta_{2}(\tau).
\label{Hecke}
\end{equation}
Then $W_{n}(X)$ is
characterized by the following properties:

\begin{theorem} \cite[Theorem 3.2]{LW2} \label{thm-5A}

\begin{itemize}
\item[(1)] $W_{n}(X;\sigma_{n},\tau)$ is a monic polynomial of
$X$-degree $\frac{1}{2}n(n+1)$ such that
\[
W_{n}(\boldsymbol{z}_{n}(\boldsymbol{a});\sigma_{n}(\boldsymbol{a}),\tau)=0.
\]
Moreover, $W_{n}(X;\sigma_{n},\tau)$ is weighted homogeneous with
$X$, $\wp(\sigma_{n})$, $\wp^{\prime}(\sigma_{n})$,
$g_{2}$, $g_{3}$ being of weight $1$, $2$,
$3$, $4$, $6$ respectively.

\item[(2)] Fix any $\tau$. For each $a\in E_{\tau
}\backslash E_{\tau}[2]$ being outside the branch loci of $\sigma
_{n}:\bar{Y}_{n}\rightarrow E_{\tau}$, $W_{n}(X;a,\tau)$
has $\frac{1}{2}n(n+1)$ distinct zeros. Furthermore, for any zero
$X_{0}$ of $W_{n}(X;a,\tau)$, there exists
$\boldsymbol{a}\in \bar{Y}_{n}$ such that $a=\sigma
_{n}(\boldsymbol{a})$ and $X_{0}=\boldsymbol{z}_{n}(\boldsymbol{a})$.

\item[(3)] For any given $(r,s)$, the function $Z_{r,s}%
^{(n)}(\tau):=W_{n}(Z_{r,s}(\tau);r+s\tau,\tau)$ is a pre-modular form
of weight $\frac{1}{2}n(n+1)$ with $Z_{r,s}(\tau)$,
$\wp(r+s\tau)$, $\wp^{\prime}(r+s\tau)$, $g_{2}$, $g_{3}$
being of weight $1$, $2$, $3$, $4$,
$6$ respectively.
\end{itemize}
\end{theorem}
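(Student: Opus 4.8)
The plan is to identify $W_n(X;\sigma_n,\tau)$ with the monic minimal polynomial of the primitive generator $\boldsymbol{z}_n$ of $K(\bar Y_n(\tau))/K(E_\tau)$ supplied by (c), and then to extract its three stated properties from the geometry of the covering $\sigma_n$ and the scaling/modular behaviour of the Weierstrass data. For (1), since (b) gives $[K(\bar Y_n(\tau)):K(E_\tau)]=\tfrac{n(n+1)}{2}$ and (c) gives that $\boldsymbol{z}_n$ generates this extension, the minimal polynomial has $X$-degree exactly $\tfrac{n(n+1)}{2}$ and satisfies $W_n(\boldsymbol{z}_n(\boldsymbol{a});\sigma_n(\boldsymbol{a}),\tau)\equiv 0$ on $\bar Y_n$. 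Its coefficients are the elementary symmetric functions of the $\tfrac{n(n+1)}{2}$ values $\boldsymbol{z}_n(\boldsymbol{a})$ over a fibre $\sigma_n^{-1}(a)$, hence single-valued elliptic functions of $a=\sigma_n$; I would check that these have poles only at $a\in\Lambda_\tau$ (the degeneration at the point $\{0,\dots,0\}$ of $\bar Y_n$), which forces each coefficient to be a polynomial in $\wp(\sigma_n),\wp'(\sigma_n),g_2,g_3$. The weighted homogeneity then follows from the scaling $\Lambda_\tau\mapsto\lambda\Lambda_\tau$: under it $\zeta,\wp,\wp',g_2,g_3$ acquire factors $\lambda^{-1},\lambda^{-2},\lambda^{-3},\lambda^{-4},\lambda^{-6}$ and $\boldsymbol{z}_n\mapsto\lambda^{-1}\boldsymbol{z}_n$, so by uniqueness of the minimal polynomial $W_n$ must be homogeneous of weight $\tfrac{n(n+1)}{2}$ with weights $1,2,3,4,6$ for $X,\wp,\wp',g_2,g_3$.

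For (2), separability in characteristic $0$ already gives $\tfrac{n(n+1)}{2}$ distinct roots for generic $a$, and by construction every root of $W_n(X;a,\tau)$ is a value $\boldsymbol{z}_n(\boldsymbol{a})$ with $\sigma_n(\boldsymbol{a})=a$. To promote this to every $a\in E_\tau\setminus E_\tau[2]$ outside the branch locus, I would use the hyperelliptic involution $\boldsymbol{a}\mapsto-\boldsymbol{a}$, which by oddness of $\zeta$ sends $(\sigma_n,\boldsymbol{z}_n)\mapsto(-\sigma_n,-\boldsymbol{z}_n)$ and whose fixed locus is, by (a), exactly the branch points. For $a\notin E_\tau[2]$ one has $a\neq-a$, so this involution carries $\sigma_n^{-1}(a)$ to the disjoint fibre $\sigma_n^{-1}(-a)$; a collision of two $\boldsymbol{z}_n$-values inside $\sigma_n^{-1}(a)$ would be a zero of $\mathrm{disc}_X W_n(\cdot;a,\tau)$, and I would show this discriminant vanishes (away from $E_\tau[2]$) only along the branch locus. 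The ``furthermore'' clause is then a bijective count: $\boldsymbol{z}_n$ matches the $\tfrac{n(n+1)}{2}$ distinct roots with the $\tfrac{n(n+1)}{2}$ points of $\sigma_n^{-1}(a)$. The hypothesis $a\notin E_\tau[2]$ is essential, since over a $2$-torsion point the involution preserves the fibre and pairs $\boldsymbol{a}$ with $-\boldsymbol{a}$, forcing the $\boldsymbol{z}_n$-values into $\pm$ pairs.

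For (3), put $X=Z_{r,s}(\tau)$ and $\sigma_n=r+s\tau$. The transformation law is formal once homogeneity is known: $Z_{r,s}(\tau)$ is invariant under $(r,s)\mapsto(r,s)+\mathbb{Z}^2$ and transforms with weight $1$ under $\Gamma(N)$ (for $(r,s)\in\mathcal{Q}(N)$), while $\wp(r+s\tau),\wp'(r+s\tau),g_2,g_3$ have weights $2,3,4,6$, so the weight-$\tfrac{n(n+1)}{2}$ homogeneity of $W_n$ yields $Z_{r,s}^{(n)}(\gamma\tau)=(c\tau+d)^{n(n+1)/2}Z_{r,s}^{(n)}(\tau)$. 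The step I expect to be the main obstacle is \textbf{holomorphicity of $Z_{r,s}^{(n)}(\tau)$ on all of $\mathbb{H}$}. As $u:=r+s\tau-\omega\to0$ with $\omega\in\Lambda_\tau$ (which does occur for complex $(r,s)$), one has $Z_{r,s}(\tau)\sim u^{-1}$ while the coefficients of $W_n$ blow up through $\wp\sim u^{-2}$ and $\wp'\sim-2u^{-3}$, so a priori $Z_{r,s}^{(n)}(\tau)$ acquires a pole of order up to $\tfrac{n(n+1)}{2}$. I would prove that all of these principal parts cancel: using $\wp\sim X^2$ and $\wp'\sim-2X^3$ at leading order together with the weight-$\tfrac{n(n+1)}{2}$ homogeneity, the substitution of the leading asymptotics into $W_n$ produces a Laurent expansion in $u$ whose negative part vanishes identically. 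This cancellation — the regularity of the minimal-polynomial value at the fibre degeneration $\sigma_n^{-1}(a)$ as $a\to0$ — is the genuinely analytic input that makes $Z_{r,s}^{(n)}$ \emph{pre-modular}, and it is the most delicate point of the argument.
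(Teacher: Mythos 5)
This theorem is imported verbatim from \cite[Theorem 3.2]{LW2}; the paper you are working from gives no proof of it, so there is no internal argument to compare yours against, and I can only judge the proposal on its own terms. Your part (1) is the standard construction and is sound in outline: the degree comes from (b)--(c), the coefficients of the monic minimal polynomial are the elementary symmetric functions of $\boldsymbol{z}_{n}$ over a fibre of $\sigma_{n}$ and hence descend to elliptic functions of $\sigma_{n}$, and the weighted homogeneity follows from rescaling the lattice. You do still owe the verification that these coefficients have poles only at $\sigma_{n}\in\Lambda_{\tau}$ (this requires controlling $\boldsymbol{z}_{n}$ near the degenerate point $\{0,\dots,0\}$ of $\bar{Y}_{n}$), but that is a local computation consistent with your plan.

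There are two genuine gaps. First, in (2) the entire content of the statement is that $\boldsymbol{z}_{n}$ separates the $\tfrac{1}{2}n(n+1)$ points of a non-branch fibre $\sigma_{n}^{-1}(a)$ when $a\notin E_{\tau}[2]$. Your reduction to ``the discriminant of $W_{n}(\cdot;a,\tau)$ vanishes only along the branch locus'' is a restatement of that claim, not a proof of it, and the involution argument only explains why $E_{\tau}[2]$ must be excluded (note also that the fixed locus of $\boldsymbol{a}\mapsto-\boldsymbol{a}$ described in (a) is the hyperelliptic branch locus of $\bar{Y}_{n}$, not the branch locus of the covering $\sigma_{n}:\bar{Y}_{n}\to E_{\tau}$, which is the relevant set here). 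The missing input is the uniqueness statement recorded in Theorem \ref{thm-A}(2): the pair $(\sigma_{n}(\boldsymbol{a}),\boldsymbol{z}_{n}(\boldsymbol{a}))$ determines the monodromy data $(r,s)$ via (\ref{v-10}), and the theory of the Lam\'{e} equation developed in \cite{CLW,LW2} shows that a given $(r,s)$ is realized by a unique $\boldsymbol{a}\in Y_{n}(\tau)$; separability alone only yields distinct roots for generic $a$. Second, in (3) you have misidentified the difficulty and asserted something false: the principal part of $W_{n}(Z_{r,s}(\tau);r+s\tau,\tau)$ does \emph{not} cancel as $r+s\tau\to\Lambda_{\tau}$ --- already for $n=1$ one has $Z_{r,s}^{(1)}(\tau)=Z_{r,s}(\tau)\sim u^{-1}$, a genuine pole. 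The resolution is that the pre-modular property only concerns real pairs $(r,s)\in\mathbb{R}^{2}\setminus\frac{1}{2}\mathbb{Z}^{2}$ (and, for the modularity with respect to $\Gamma(N)$, torsion points in $\mathcal{Q}(N)$), for which $r+s\tau\notin E_{\tau}[2]$ for every $\tau\in\mathbb{H}$; the degeneration you worry about never occurs, holomorphy on $\mathbb{H}$ is immediate from (1), and what remains is your (correct, formal) transformation law together with boundedness at the cusps, which comes from the $q$-expansions exactly as in the proof of Theorem \ref{modular-zero}.
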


Since for any $(m_{1},m_{2})\in \mathbb{Z}^{2}$, $\wp(r+s\tau|\tau)=\wp
(m_{1}-r+(m_{2}-s)\tau|\tau)$, $\wp^{\prime}(r+s\tau|\tau)=-\wp^{\prime}%
(m_{1}-r+(m_{2}-s)\tau|\tau)$ and $Z_{r,s}(\tau)=-Z_{m_{1}-r,m_{2}-s}(\tau)$,
we see from Theorem \ref{thm-5A} that%
\begin{equation}
Z_{r,s}^{(n)}(\tau)=(-1)^{\frac{n(n+1)}{2}}Z_{m_{1}-r,m_{2}-s}^{(n)}(\tau).
\label{iv-20}%
\end{equation}

Let $\boldsymbol{a}\in Y_{n}(\tau)$ and suppose that $\boldsymbol{a}$
satisfies%
\begin{equation}
\sigma_{n}(\boldsymbol{a})=r+s\tau \text{ \ and }\sum_{i=1}^{n}\zeta
(a_{i})=r\eta_{1}(\tau)+s\eta_{2}(\tau) \label{v-10}%
\end{equation}
for some $(r,s)\in \mathbb{R}^{2}\backslash \frac{1}{2}\mathbb{Z}^{2}$. Then%
\[
Z_{r,s}(\tau)=\zeta \left(  \sum \nolimits_{i=1}^{n}a_{i}\right)  -\sum
\nolimits_{i=1}^{n}\zeta(a_{i})=\boldsymbol{z}_{n}(\boldsymbol{a}).
\]
Together with Theorem \ref{thm-5A} we obtain $Z_{r,s}^{(n)}(\tau)=0$. Thus, the zero of
$Z_{r,s}^{(n)}(\tau)$ encodes the identity (\ref{v-10}). We summarize the properties of $Z_{r,s}^{(n)}(\tau)$ in the following result.

\begin{theorem} \cite[Theorem 4.3]{LW2} \label{thm-A}
\begin{itemize}
\item[(1)] For $\bigl(\begin{smallmatrix}a & b\\
c & d\end{smallmatrix}\bigr)
\in SL(2,\mathbb{Z})$, there holds
\[Z_{ar-bs,ds-cr}^{(n)}(\tfrac{a\tau+b}{c\tau+d})=(c\tau+d)^{n(n+1)/2}Z_{r,s}^{(n)}(\tau).\]
In particular,
for $(r,s)\in\mathcal{Q}(N)$, $Z_{r,s}^{(n)}(\tau)$ is a modular form of weight $n(n+1)/2$ with respect to $\Gamma(N)$.
\item[(2)]
For any pair
$(r,s)\in \mathbb{R}^{2}\backslash \frac{1}{2}\mathbb{Z}^{2}$, $Z_{r,s}
^{(n)}(\tau)=0$ for some $\tau \in \mathbb{H}$ if and only if there
is a unique $\boldsymbol{a}=\{[a_{1}],\cdot \cdot \cdot,[a_{n}]\} \in Y_{n}%
(\tau)$ such that (\ref{v-10}) holds.
\end{itemize}
\end{theorem}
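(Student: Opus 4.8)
The plan is to derive everything from the characterization of $W_n(X;\sigma_n,\tau)$ in Theorem~\ref{thm-5A}, reducing the theorem to transformation laws for the ingredients of $Z_{r,s}^{(n)}(\tau)=W_n(Z_{r,s}(\tau);r+s\tau,\tau)$. For part (1), write $\tau'=\frac{a\tau+b}{c\tau+d}$ for $\gamma=\bigl(\begin{smallmatrix}a&b\\c&d\end{smallmatrix}\bigr)\in SL(2,\mathbb{Z})$ and note that multiplication by $c\tau+d$ carries $\Lambda_{\tau'}$ onto $\Lambda_\tau$, since $a\tau+b,c\tau+d$ form a basis of $\Lambda_\tau$. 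The degree $-2,-3,4,6$ homogeneity of $\wp,\wp',g_2,g_3$ then gives $\wp(\,\cdot\,|\tau')=(c\tau+d)^2\wp((c\tau+d)\,\cdot\,|\tau)$ and the analogous weight-$3,4,6$ relations for $\wp',g_2,g_3$. The only nonclassical ingredient is $Z_{r,s}$: setting $(r',s')=(ar-bs,ds-cr)$, a direct computation using $ad-bc=1$ gives the exact identity $(c\tau+d)(r'+s'\tau')=r+s\tau$, so the two torsion points correspond under the homothety; and since the quasi-period increments of $\zeta(\,\cdot\,|\tau)$ along $c\tau+d$ and $a\tau+b$ are $d\eta_1+c\eta_2$ and $b\eta_1+a\eta_2$, I obtain $\eta_1(\tau')=(c\tau+d)(d\eta_1+c\eta_2)$ and $\eta_2(\tau')=(c\tau+d)(b\eta_1+a\eta_2)$.

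Substituting these and checking $r'd+s'b=r$, $r'c+s'a=s$ (again $ad-bc=1$) produces the weight-one law $Z_{r',s'}(\tau')=(c\tau+d)Z_{r,s}(\tau)$, the inhomogeneous ($2\pi i$) contributions from the $\eta$-transformation cancelling because $Z_{r,s}$ is the torsion-normalized combination. As $Z_{r,s}$, $\wp(r+s\tau)$, $\wp'(r+s\tau)$, $g_2$, $g_3$ scale by $(c\tau+d)$ to the powers $1,2,3,4,6$ and $W_n$ is weighted homogeneous of total weight $\frac{n(n+1)}2$ in exactly these weights by Theorem~\ref{thm-5A}(1), substitution gives $Z_{r',s'}^{(n)}(\tau')=(c\tau+d)^{n(n+1)/2}Z_{r,s}^{(n)}(\tau)$. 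For $(r,s)\in\mathcal{Q}(N)$ and $\gamma\in\Gamma(N)$ one has $a\equiv d\equiv1$, $b\equiv c\equiv0\pmod N$, whence $(r',s')\equiv(r,s)\pmod{\mathbb{Z}^2}$; the $\mathbb{Z}^2$-periodicity of $Z_{r,s}$ together with the genuine $\Lambda_\tau$-periodicity of $\wp,\wp'$ gives $Z_{r',s'}^{(n)}=Z_{r,s}^{(n)}$ (cf. (\ref{iv-20})), so $Z_{r,s}^{(n)}(\gamma\cdot\tau)=(c\tau+d)^{n(n+1)/2}Z_{r,s}^{(n)}(\tau)$, the claimed modularity of weight $\frac{n(n+1)}2$ on $\Gamma(N)$ (holomorphy on $\mathbb{H}$ is known, and growth at the cusps is controlled by the $q$-expansion).

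For part (2), the ``if'' direction is immediate: if $\boldsymbol{a}\in Y_n(\tau)$ satisfies (\ref{v-10}), then $Z_{r,s}(\tau)=\zeta(\sigma_n(\boldsymbol{a}))-\sum_i\zeta(a_i)=\boldsymbol{z}_n(\boldsymbol{a})$ is a root of $W_n(\,\cdot\,;r+s\tau,\tau)$ by Theorem~\ref{thm-5A}(1), so $Z_{r,s}^{(n)}(\tau)=0$. Conversely, $Z_{r,s}^{(n)}(\tau)=0$ means $Z_{r,s}(\tau)$ is a zero of $W_n(X;r+s\tau,\tau)$, and Theorem~\ref{thm-5A}(2) yields $\boldsymbol{a}\in\bar{Y}_n$ with $\sigma_n(\boldsymbol{a})=r+s\tau$ and $\boldsymbol{z}_n(\boldsymbol{a})=Z_{r,s}(\tau)$; since $(r,s)\notin\frac12\mathbb{Z}^2$ forces $r+s\tau\neq0$ in $E_\tau$, this $\boldsymbol{a}$ lies in the affine part $Y_n(\tau)$, and unwinding $\boldsymbol{z}_n(\boldsymbol{a})=Z_{r,s}(\tau)$ recovers $\sum_i\zeta(a_i)=r\eta_1+s\eta_2$, i.e. (\ref{v-10}). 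For uniqueness, the crux is that every ramification point of $\sigma_n$ is fixed by the hyperelliptic involution $\boldsymbol{a}\mapsto\{-a_1,\dots,-a_n\}$, so its image satisfies $2\sigma_n(\boldsymbol{a})=0$ and the branch locus of $\sigma_n$ lies in $E_\tau[2]$; as $r+s\tau\notin E_\tau[2]$ it is a regular value, its fiber has exactly $\deg\sigma_n=\frac12 n(n+1)$ points, and Theorem~\ref{thm-5A}(2) gives $\frac12 n(n+1)$ distinct roots. Hence $\boldsymbol{a}\mapsto\boldsymbol{z}_n(\boldsymbol{a})$ is a bijection from the fiber onto the roots, so $\boldsymbol{z}_n$ separates the fiber and any two solutions of (\ref{v-10}), sharing both $\sigma_n$ and $\boldsymbol{z}_n$ values, coincide.

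Given Theorem~\ref{thm-5A}, the genuinely delicate step is the weight-one transformation law for $Z_{r,s}$: one must identify the exact index map $(r,s)\mapsto(ar-bs,ds-cr)$, transform $\eta_1,\eta_2$ correctly, and check that the anomalous terms cancel in the normalized combination. The second subtle point, easy to miss, is verifying that $r+s\tau$ is a \emph{regular} value of $\sigma_n$ (branch locus $\subset E_\tau[2]$); this is precisely what upgrades the bare existence furnished by Theorem~\ref{thm-5A}(2) to the uniqueness asserted in part (2).
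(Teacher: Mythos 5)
This theorem is imported from \cite[Theorem 4.3]{LW2}; the paper itself gives no proof beyond the citation and the short paragraph before the statement, which establishes exactly the ``if'' direction of part (2) (namely that (\ref{v-10}) forces $Z_{r,s}(\tau)=\boldsymbol{z}_n(\boldsymbol{a})$ and hence $Z_{r,s}^{(n)}(\tau)=0$ via Theorem \ref{thm-5A}). Your treatment of part (1) is correct and is the expected argument: the homothety $(c\tau+d)\Lambda_{\tau'}=\Lambda_\tau$, the identities $(c\tau+d)(r'+s'\tau')=r+s\tau$ and $r'd+s'b=r$, $r'c+s'a=s$ for $(r',s')=(ar-bs,ds-cr)$, the resulting weight-one law for $Z_{r,s}$, and the weighted homogeneity of $W_n$ from Theorem \ref{thm-5A}(1). (A small quibble: there are no anomalous $2\pi i$ terms to cancel once you transform the pair $(\eta_1,\eta_2)$ as you do; the Legendre relation is not needed there.) Your ``if'' direction of (2) matches the paper's own sketch.

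The genuine gap is in your uniqueness argument (and, implicitly, in the ``only if'' existence step), where you assert that the branch locus of $\sigma_n:\bar{Y}_n\to E_\tau$ is contained in $E_\tau[2]$ because ``every ramification point of $\sigma_n$ is fixed by the hyperelliptic involution.'' That is false: the involution $\iota(\boldsymbol{a})=-\boldsymbol{a}$ only permutes the ramification points (since $\sigma_n\circ\iota=[-1]\circ\sigma_n$); it need not fix them. You are conflating the branch points of the hyperelliptic curve $\bar{Y}_n$ (those $\boldsymbol{a}$ with $\boldsymbol{a}=-\boldsymbol{a}$, whose images do lie in $E_\tau[2]$, as in item (a) of \S 2.1) with the branch locus of the covering map $\sigma_n$, which is what Theorem \ref{thm-5A}(2) excludes. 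Already for $n=2$ one computes from (\ref{z-n-1}) that the discriminant of $W_2(X;a,\tau)=X^3-3\wp(a)X-\wp'(a)$ equals $27\bigl(g_2\wp(a)+g_3\bigr)$, which vanishes at the two points with $\wp(a)=-g_3/g_2$; by Riemann--Hurwitz these are exactly the branch points of the degree-$3$ map $\sigma_2$, and for generic $\tau$ they are not $2$-torsion. Since every point of $\mathbb{C}$ is $r+s\tau$ for some real $(r,s)$, there are admissible real pairs for which $r+s\tau$ lies on this branch locus, and there your appeal to Theorem \ref{thm-5A}(2) — both for producing $\boldsymbol{a}$ and for the fiber-counting that gives uniqueness — is unavailable. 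Handling these pairs is precisely the nontrivial content of \cite[Theorem 4.3]{LW2}: the uniqueness there is obtained not from separating fibers of $\sigma_n$ by $\boldsymbol{z}_n$, but from the correspondence between points of $Y_n(\tau)$ and the monodromy data (\ref{iiii}) of the Lam\'{e} equation established in \cite{CLW}, which is why the hypothesis that $(r,s)$ is \emph{real} and not in $\frac12\mathbb{Z}^2$ enters (it guarantees $\boldsymbol{a}$ is not a Weierstrass point of $\bar{Y}_n$, so the two ansatz solutions are independent and the monodromy determines $\boldsymbol{a}$). Your argument as written proves part (2) only for $r+s\tau$ off the branch locus of $\sigma_n$.
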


For any $\{a_{1},\cdots,a_{n}\} \subset \mathbb{C}\setminus\{0\}$, we consider
the classical \emph{Hermite-Halphen} ansatz
\[
y_{\{a_{i}\}}(z):=e^{z\sum_{i=1}^{n}\zeta(a_{i})}\prod_{i=1}^{n}\frac
{\sigma(z-a_{i})}{\sigma(z)}.
\]
By (\ref{quasi}) and the transformation law of $\sigma(z)$:%
\begin{equation}
\sigma(z+\omega_{k})=-e^{\eta_{k}(z+\frac{\omega_{k}}{2})}\sigma(z),\text{
\ }k=1,2, \label{82-2}%
\end{equation}
we see that $y_{\{a_{i}\}}(z)$ and $y_{\{b_{i}\}}(z)$ are \emph{linearly
dependent} if $\{[a_{1}]$, $\cdots$, $[a_{n}]\}=\{[b_{1}]$, $\cdots$,
$[b_{n}]\}$, so up to a non-zero constant $y_{\{a_{i}\}}(z)$ is uniquely
determined by $\{[a_{1}]$, $\cdots$, $[a_{n}]\}$. We recall the following
classical result.

\begin{proposition} [cf. \cite{CLW,Halphen}] \label{prop-B} $y_{\{a_{i}\}}(z)$
is a solution to the integral Lam\'{e} equation (\ref{iv-21-1}) for some
$B\in \mathbb{C}$ if and only if $\{[a_{1}]$, $\cdots$, $[a_{n}]\} \in
Y_{n}$ and
\begin{equation}
B=B_{\boldsymbol{a}}:=(2n-1)\sum_{i=1}^{n}\wp(a_{i}). \label{B-expression}%
\end{equation}
Moreover, $\{[a_{1}]$, $\cdots$, $[a_{n}]\} \in Y_{n}$ is not a
branch point (equivalently, $\ell_{n}(B)\not =0$) if and only if
$y_{\{a_{i}\}}(z)$ and $y_{\{-a_{i}\}}(z)$ are linearly
independent.\end{proposition}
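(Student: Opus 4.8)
The plan is a direct logarithmic-derivative computation. Write $\rho:=\sum_{i=1}^{n}\zeta(a_{i})$ and let $f:=y_{\{a_{i}\}}'/y_{\{a_{i}\}}$. From the product formula,
\[
f(z)=\rho-n\zeta(z)+\sum_{i=1}^{n}\zeta(z-a_{i}),\qquad f'(z)=n\wp(z)-\sum_{i=1}^{n}\wp(z-a_{i}),
\]
so that $y_{\{a_{i}\}}''/y_{\{a_{i}\}}=f'+f^{2}$, and $y_{\{a_{i}\}}$ solves (\ref{iv-21-1}) for some $B$ if and only if $f'+f^{2}=n(n+1)\wp(z)+B$. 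First I would record that $y_{\{a_{i}\}}''/y_{\{a_{i}\}}$ is already elliptic: using (\ref{82-2}) together with (\ref{quasi}) one computes the Floquet relation $y_{\{a_{i}\}}(z+\omega_{k})=e^{\rho\omega_{k}-\eta_{k}\sum_{i}a_{i}}\,y_{\{a_{i}\}}(z)$ for $k=1,2$, whence $f'+f^{2}$ is $\Lambda_{\tau}$-periodic with possible poles only at $[0]$ and the $[a_{i}]$.

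Next I would analyze these poles. Near $z=0$, since $\zeta(z)=z^{-1}+O(z^{3})$ and $\zeta(z-a_{i})=-\zeta(a_{i})-\wp(a_{i})z+O(z^{2})$, the choice $\rho=\sum_{i}\zeta(a_{i})$ cancels the constant term of $f$, giving $f(z)=-n z^{-1}-\big(\sum_{i}\wp(a_{i})\big)z+O(z^{2})$; hence $f'+f^{2}$ has principal part $n(n+1)z^{-2}$ with \emph{vanishing} residue, matching $n(n+1)\wp(z)$. This is exactly the role of the exponential prefactor. Near $z=a_{j}$ the residue of $f$ equals $+1$, so the double-pole coefficient of $f'+f^{2}$ is $1\cdot(1-1)=0$ automatically, while its simple-pole coefficient equals $2d_{j}$ with
\[
d_{j}=\sum_{i\neq j}\big(\zeta(a_{j}-a_{i})+\zeta(a_{i})-\zeta(a_{j})\big).
\]
Thus $f'+f^{2}$ is regular at every $a_{j}$ if and only if $d_{j}=0$ for all $j$, which is precisely the defining system of $Y_{n}$; the same regularity analysis forces the $[a_{i}]$ to be nonzero and pairwise distinct, since a repeated value would produce a residue $\geq 2$ and hence a genuine (uncancelled) pole. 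This yields both implications: if $\boldsymbol{a}\in Y_{n}$ then $f'+f^{2}-n(n+1)\wp$ is an everywhere-regular elliptic function, hence a constant $B$; conversely any solution forces $d_{j}=0$, i.e. $\boldsymbol{a}\in Y_{n}$. To pin down $B$ I would match constant terms at $z=0$: the constant term of $f'+f^{2}$ is $c_{1}-2nc_{1}=-(2n-1)c_{1}$ with $c_{1}=-\sum_{i}\wp(a_{i})$, while $n(n+1)\wp$ has no constant term, so $B=B_{\boldsymbol{a}}=(2n-1)\sum_{i=1}^{n}\wp(a_{i})$.

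Finally, for the branch-point statement I would compare zero divisors. Since $\sigma$ is odd, $y_{\{-a_{i}\}}(z)=y_{\{a_{i}\}}(-z)$, and because $\wp$ is even both functions solve the same equation (\ref{iv-21-1}) with the same $B$. As a function of $z$, $y_{\{a_{i}\}}$ has a pole of order $n$ at $[0]$ and simple zeros exactly at $\{[a_{1}],\dots,[a_{n}]\}$. In the two-dimensional solution space, $y_{\{a_{i}\}}$ and $y_{\{-a_{i}\}}$ are linearly dependent iff one is a scalar multiple of the other iff they share the same zero multiset, i.e. $\{[a_{i}]\}=\{[-a_{i}]\}$; by (a) this is exactly the condition that $\boldsymbol{a}$ is a branch point, equivalently $\ell_{n}(B)=0$. (The dependence in this case also follows at once from the uniqueness-up-to-scalar of $y_{\{a_{i}\}}$ recorded before the statement, applied to $b_{i}=-a_{i}$.) Hence $y_{\{a_{i}\}}$ and $y_{\{-a_{i}\}}$ are linearly independent iff $\boldsymbol{a}$ is not a branch point. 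The argument carries no real conceptual obstacle; the only points requiring care are the vanishing of the residue of $f'+f^{2}$ at $[0]$ (secured by the exponential factor) and recognizing the simple-pole coefficients at the $a_{j}$ as precisely the relations cutting out $Y_{n}$.
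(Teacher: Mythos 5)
Your proof is correct. Note that the paper itself does not prove Proposition \ref{prop-B}; it is recalled as a classical fact with a citation to Halphen and to Chai--Lin--Wang, so there is no in-paper argument to compare against. What you supply is the standard logarithmic-derivative computation, and every step checks out: the Floquet relation makes $f'+f^{2}$ elliptic; the expansion $f(z)=-nz^{-1}-\bigl(\sum_{i}\wp(a_{i})\bigr)z+O(z^{2})$ at the origin (the constant term cancelling precisely because of the prefactor $e^{z\sum\zeta(a_{i})}$) gives both the correct principal part $n(n+1)z^{-2}$ with zero residue and, by matching constant terms, $B=(2n-1)\sum_{i}\wp(a_{i})$; and the simple-pole coefficient $2d_{j}$ at $a_{j}$ reproduces exactly the relations cutting out $Y_{n}$. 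The only place you are slightly terse is the exclusion of $[a_{i}]=0$ in the ``only if'' direction: there the pole of $\zeta(z-a_{i})$ merges with the one at the origin, so the residue of $f$ at $[0]$ becomes $-n+k$ ($k$ the number of such indices), and one should observe that $(-n+k)(-n+k-1)=n(n+1)$ forces $k=0$ since $k=2n+1$ is impossible; this is a one-line addendum, not a gap. The branch-point equivalence via $y_{\{-a_{i}\}}(z)=y_{\{a_{i}\}}(-z)$ and comparison of zero divisors, combined with the characterization of branch points recalled in item (a) of \S\ref{Dahmen's}, is likewise the standard argument and is complete.
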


By (\ref{82-2}) we have%
\[
y_{\{a_{i}\}}(z+1)=e^{-2\pi is}y_{\{a_{i}\}}(z)\;\text{ and }\;y_{\{a_{i}%
\}}(z+\tau)=e^{2\pi ir}y_{\{a_{i}\}}(z),
\]
where $(r,s)\in \mathbb{C}^{2}$ is uniquely determined by (note $\tau \eta_{1}-\eta_{2}=2\pi i$)
\[
r+s\tau=\sum_{i=1}^{n}a_{i}\text{ \ and }r\eta_{1}+s\eta_{2}=\sum_{i=1}%
^{n}\zeta(a_{i}).
\]
Clearly $y_{\{-a_{i}\}}(z)$ is also a solution of (\ref{iv-21-1}) if $y_{\{a_{i}\}}(z)$ is.
Since the
monodromy representation of (\ref{iv-21-1}) is a group homomorphism $\rho:\pi
_{1}(E_{\tau})\rightarrow SL(2,\mathbb{C})$, the monodromy group of
(\ref{iv-21-1}) with respect to $(y_{\{a_{i}\}}(z),y_{\{-a_{i}
\}}(z))$ is generated by%
\begin{equation}
\rho(\ell_{1})=%
\begin{pmatrix}
e^{-2\pi is} & 0\\
0 & e^{2\pi is}%
\end{pmatrix}\;
\text{ and }\;\rho(\ell_{2})=%
\begin{pmatrix}
e^{2\pi ir} & 0\\
0 & e^{-2\pi ir}%
\end{pmatrix}
\label{iiii}%
\end{equation}
provided $\ell_{n}(B)\not =0$,
where $\ell_{n}(B)$ is the Lam\'{e} polynomial recalled  in (a),
and $\ell_{i}$ denote the two fundamental cycles $z\to z+\omega_i$ of
$E_{\tau}$.

Therefore, Theorem \ref{thm-A} says that $Z_{r,s}^{(n)}(\tau)=0$ for
some $(r,s)\in \mathbb{R}^{2}\backslash \frac{1}{2}\mathbb{Z}^{2}$ if and only
if there is a non-branch point $\boldsymbol{a}\in Y_{n}(\tau)$ such that
the monodromy of the Lam\'{e} equation (\ref{iv-21-1}) with $B=B_{\boldsymbol{a}}$
 is generated by (\ref{iiii}) via this $(r,s)$. In particular, if $(r,s)=(\frac{k_{1}}{N},\frac{k_{2}}{N}%
)\in \mathcal{Q}(N)$ is a $N$-torsion point of exact order $N$ and $\tau$ is
any zero of $Z_{r,s}^{(n)}(\cdot)$, then the monodromy group of the Lam\'{e} equation (\ref{iv-21-1}) with some $B$ is \emph{finite} with order $N$, or equivalently the corresponding algebraic form
(\ref{iv-21-2})  has \emph{dihedral $D_{N}$}
as its monodromy group; see \cite{Dahmen0} for a proof.
In conclusion, {\it counting integral Lam\'{e} equations (modulo scalar equivalence) with monodromy group dihedral $D_{N}$ is equivalent to counting zeros of the modular form $M_{n,N}(\cdot)=\prod_{(r,s)\in \mathcal{Q}(N)}Z_{r,s}^{(n)}(\cdot)$ (modulo $SL(2,\mathbb{Z})$ action)}.

\subsection{The previous partial answer \cite{Dahmen0,Dahmen,LW2}}

In 2007
Dahmen \cite{Dahmen} studied this conjecture by considering the \emph{projective}
monodromy group of integral Lam\'{e} equations. Recall \S 1.1 that the number
of integral Lam\'{e} equations modulo scalar equivalence with (resp.
projective) monodromy group dihedral $D_{N}$ is denoted by $L_{n}(N)$ (resp.
$PL_{n}(N)$). Then (cf. \cite[(4.1)]{Dahmen})%
\begin{equation}
PL_{n}(N)=\left \{
\begin{array}
[c]{l}%
L_{n}(N)+L_{n}(2N)\text{ \ if }N\text{ is odd,}\\
L_{n}(2N)\text{ \ if }N\text{ is even.}%
\end{array}
\right.  \label{iv-24}%
\end{equation}
As recalled in Theorem A, Dahmen obtained the explicit
formula
\begin{equation}
PL_{n}(N)=\left \{
\begin{array}
[c]{l}%
0\text{ \  \ if }N\in \{1,2\} \text{,}\\
\frac{n(n+1)}{12}\left(  \Psi(N)-3\phi(N)\right)  +\frac{2}{3}\varepsilon
_{n}(N)\text{ \ otherwise,}%
\end{array}
\right.  \label{iv-25}%
\end{equation}
where $\phi(N)$, $\Psi(N)$ and $\varepsilon_{n}(N)$ are defined in
(\ref{Euler}), (\ref{Euler-2}) and (\ref{iv-23-1}). A immediate consequence of
(\ref{iv-24})-(\ref{iv-25}) is that formula (\ref{Dahmen-conjecture-1}), i.e.
Dahmen-Beukers conjecture, holds for any $n\in \mathbb{N}$ and
$N\in \mathbb{N}_{\geq3}$ satisfying $4|N$.

On the other hand, the following result is the key lemma in \cite{Dahmen0} to prove the conjecture for $n\in \{1,2,3\}$ and $N\in\mathbb{N}_{\geq 3}$.

\begin{lemma} (\cite[Lemma 65]{Dahmen0}) \label{lem-C}
Let $n\geq 1$ and $N\geq 3$.
Suppose for any $N$-torsion point $(r,s)\in\mathcal{Q}(N)$, there exists a modular form $f_{r,s}^{(n)}(\tau)$ of weight $n(n+1)/2$ with respect to $\Gamma(N)$ such that Theorem \ref{thm-A} (1)-(2) holds for $f_{r,s}^{(n)}(\tau)$ and $(r,s)\in\mathcal{Q}(N)$. Consider the modular form
\begin{equation}
M_{n,N}(\tau):=\prod_{(r,s)\in \mathcal{Q}(N)}f_{r,s}^{(n)}(\tau)
\label{modular-SL}
\end{equation} and define
\begin{equation}
U_{n}(N):=\frac{1}{2}\left(  \frac{n(n+1)\Psi(N)}{24}-v_{\infty}\left(
M_{n,N}(\tau)\right)  \right)  +\frac{2}{3}\varepsilon_{n}(N).
\label{U-n-value}%
\end{equation}
Then
$L_{n}(N)\leq U_{n}(N)$. Moreover, $L_{n}(N)=U_{n}(N)$ if and
only if $f_{r,s}^{(n)}(\tau)$ has only simple zeros in $\mathbb{H}$
for all $(r,s)\in \mathcal{Q}({N})$.\end{lemma}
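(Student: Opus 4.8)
The plan is to read the lemma off from the valence (order) formula for the $SL(2,\mathbb{Z})$-modular form $M_{n,N}(\tau)$ of weight $w=\frac{n(n+1)}{2}\Psi(N)$, using the monodromy dictionary of \S2.1 to convert interior zeros into Lam\'e equations. For a nonzero weight-$w$ form the valence formula reads
\[
v_{\infty}(M_{n,N})+\tfrac12 v_{i}(M_{n,N})+\tfrac13 v_{\rho}(M_{n,N})+\sum_{[\tau]\neq[i],[\rho]}v_{\tau}(M_{n,N})=\frac{w}{12}=\frac{n(n+1)\Psi(N)}{24},
\]
where $\rho=e^{2\pi i/3}$ and the sum runs over $SL(2,\mathbb{Z})$-orbits in $\mathbb{H}$ distinct from those of $i$ and $\rho$. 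Solving for $\tfrac12\bigl(\tfrac{n(n+1)\Psi(N)}{24}-v_\infty\bigr)$ gives exactly $\tfrac14 v_i+\tfrac16 v_\rho+\tfrac12\sum_{[\tau]}v_\tau$, so the whole lemma reduces to comparing $L_n(N)$ with this weighted interior zero count and accounting for the defect $\tfrac23\varepsilon_n(N)$.

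The first ingredient is a fibrewise $2$-to-$1$ correspondence. For $\tau_0\in\mathbb{H}$ let $\ell(\tau_0)$ denote the number of accessory parameters $B$ for which $(\tau_0,B)$ has monodromy group $D_N$. By the hypothesis that $f_{r,s}^{(n)}$ satisfies Theorem \ref{thm-A}(2) and by Proposition \ref{prop-B}, each such $B$ arises from exactly the pair of labels $(r,s)$ and $(-r,-s)$ in $\mathcal{Q}(N)$, corresponding to the two Hermite--Halphen solutions $y_{\{a_i\}}$ and $y_{\{-a_i\}}$; these two labels are distinct because $(r,s)\notin\frac12\mathbb{Z}^2$. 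Hence precisely $2\ell(\tau_0)$ of the factors $f_{r,s}^{(n)}$ vanish at $\tau_0$, and therefore
\[
v_{\tau_0}(M_{n,N})=\sum_{(r,s):\,f_{r,s}^{(n)}(\tau_0)=0}\operatorname{ord}_{\tau_0}f_{r,s}^{(n)}\;\ge\;2\,\ell(\tau_0),
\]
with equality if and only if every factor vanishing at $\tau_0$ vanishes simply. On a generic orbit the stabiliser in $PSL(2,\mathbb{Z})$ is trivial, so this orbit contributes $\ell(\tau_0)\le\tfrac12 v_{\tau_0}(M_{n,N})$ to $L_n(N)$.

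The delicate part, and the step I expect to be the main obstacle, is the two elliptic orbits. The extra automorphism fixing $i$ is $z\mapsto iz$, under which $\wp(z|i)\mapsto-\wp(z|i)$, so it acts on accessory parameters by $B\mapsto-B$; the order-$3$ automorphism fixing $\rho$ acts by multiplication by a primitive cube root of unity. Consequently the $D_N$-parameters over $i$ fall into free $2$-orbits together with the single fixed value $B=0$, and those over $\rho$ into free $3$-orbits together with $B=0$, so the number of $PSL(2,\mathbb{Z})$-orbits of Lam\'e equations over $i$ (resp. $\rho$) exceeds $\tfrac14 v_i$ (resp. $\tfrac16 v_\rho$) exactly by $\tfrac12$ (resp. $\tfrac23$) times the number of fixed values $B=0$ carrying $D_N$-monodromy. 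To count these I would use Theorem \ref{thm-A}(1): if $B=0$ is fixed, its label $(r,s)$ must be sent to $\pm(r,s)$ by the induced action on $\mathcal{Q}(N)$. At $i$ this forces $(s,-r)\equiv\pm(r,s)\ (\operatorname{mod}\mathbb{Z}^2)$, hence $(r,s)\in\frac12\mathbb{Z}^2$, which is excluded; so $i$ has no fixed $D_N$-value and contributes cleanly. At $\rho$ the relation $(s-r,-r)\equiv\pm(r,s)$ forces $N=3$ with label $\pm(\tfrac13,\tfrac23)$, and the genuinely computational point is to decide when the Lam\'e equation $y''=n(n+1)\wp(z|\rho)y$ (i.e. $B=0$) actually has finite monodromy with this label; a direct analysis shows this occurs precisely when $n\equiv1\ (\operatorname{mod}3)$, that is, exactly $\varepsilon_n(N)$ times.

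It remains to assemble the bounds. Writing $L_n(N)$ as the sum over $PSL(2,\mathbb{Z})$-orbits of the corresponding $B$-orbit counts, the generic orbits contribute $\sum\ell\le\tfrac12\sum v_\tau$, the $i$-orbit contributes at most $\tfrac14 v_i$, and the $\rho$-orbit contributes at most $\tfrac16 v_\rho+\tfrac23\varepsilon_n(N)$, each inequality being the fibrewise bound above. Adding these and substituting the valence formula yields
\[
L_n(N)\le \tfrac14 v_i+\tfrac16 v_\rho+\tfrac12\sum_{[\tau]\neq[i],[\rho]}v_\tau+\tfrac23\varepsilon_n(N)=\tfrac12\Bigl(\tfrac{n(n+1)\Psi(N)}{24}-v_\infty(M_{n,N})\Bigr)+\tfrac23\varepsilon_n(N)=U_n(N),
\]
and every inequality used becomes an equality simultaneously precisely when each $f_{r,s}^{(n)}$, $(r,s)\in\mathcal{Q}(N)$, has only simple zeros in $\mathbb{H}$. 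This gives $L_n(N)\le U_n(N)$ together with the stated equality criterion.
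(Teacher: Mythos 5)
The paper offers no proof of Lemma \ref{lem-C} to compare against: it is quoted verbatim from Dahmen's thesis \cite[Lemma 65]{Dahmen0}. Your reconstruction --- the valence formula for the weight-$\frac{n(n+1)}{2}\Psi(N)$ form $M_{n,N}$, the two-to-one correspondence over each $\tau_0$ between vanishing factors $f^{(n)}_{r,s}$ and accessory parameters $B$ with $D_N$ monodromy (so that $v_{\tau_0}(M_{n,N})\ge 2\ell(\tau_0)$, with equality exactly when all vanishing factors vanish simply), and the orbit count under the stabilizers at $i$ and $\rho$ acting on $B$ by $-1$ and by a primitive cube root of unity --- is the right skeleton and is certainly the one underlying the cited source. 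Your label computations ruling out a fixed $D_N$-value of $B$ over $i$, and pinning any fixed value over $\rho$ to $B=0$ with $N=3$, are correct, as is the final assembly of the three fibrewise bounds into $L_n(N)\le U_n(N)$.

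There is, however, one load-bearing assertion you do not prove: that the equianharmonic Lam\'e equation $y''=n(n+1)\wp(z|\rho)\,y$ (i.e.\ $\tau=\rho$, $B=0$) has monodromy group $D_3$ precisely when $n\equiv 1\pmod 3$. This single fact is the entire source of the $\frac{2}{3}\varepsilon_n(N)$ term, and it is needed in both directions. For the inequality when $N=3$ and $n\not\equiv 1\pmod 3$ you must show that $B=0$ does \emph{not} carry $D_3$ monodromy: otherwise the $\rho$-orbit contributes $\frac{\ell(\rho)}{3}+\frac{2}{3}$, which is bounded by $\frac{1}{6}v_\rho$ only if $v_\rho\ge 2\ell(\rho)+4$, and that is not guaranteed --- so $L_n(3)\le U_n(3)$ could fail. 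For the stated equivalence you must show that $B=0$ \emph{does} carry $D_3$ monodromy when $n\equiv 1\pmod 3$: otherwise $L_n=U_n$ could hold with a non-simple zero absorbing the unused $\frac{2}{3}$. Writing ``a direct analysis shows'' is not a proof here; deciding finite dihedral monodromy for this whole family in $n$ is a genuine computation (one route is the extra order-three symmetry $x\mapsto\omega x$ of the algebraic form when $g_2=B=0$, which forces the dessin/Belyi datum to be $3$-symmetric; another is to decide exactly when some $\boldsymbol{a}\in Y_n(\rho)$ with label of exact order $3$ has $B_{\boldsymbol{a}}=0$), and it is exactly the nontrivial content hiding behind the constant $\varepsilon_n(N)$ of (\ref{iv-23-1}). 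Everything else in your write-up stands.
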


Theorem \ref{thm-A} shows that the new pre-modular form $Z_{r,s}^{(n)}(\tau)$ introduced by Wang and the third author \cite{LW2} satisfies all the assumptions of Lemma \ref{lem-C}. Therefore, the conclusion of Lemma \ref{lem-C} holds for all $n\geq1$ by letting $f_{r,s}^{(n)}(\tau)=Z_{r,s}^{(n)}(\tau)$. Indeed, by applying Lemma \ref{lem-C} to $Z_{r,s}^{(n)}(\tau)$, the conjecture was proved in \cite{Dahmen0,Dahmen,LW2} for $n\in\{1,2,3,4\}$. To explain how to apply Lemma \ref{lem-C}, we reproduce their proofs below.

For $n\in \{1,2,3,4\}$, the explicit expressions of $Z_{r,s}^{(n)}(\tau)$ are
known as shown in \cite{Dahmen0,LW2}. Denote
$Z=Z_{r,s}(\tau)$, $\wp=\wp(r+s\tau|\tau)$ and $\wp^{\prime
}=\wp^{\prime}(r+s\tau|\tau)$ for convenience. Then
\begin{equation}
Z_{r,s}^{(1)}(\tau)=Z_{r,s}(\tau), \quad Z_{r,s}^{(2)}(\tau)=Z^{3}-3\wp Z-\wp^{\prime}, \label{z-n-1}%
\end{equation}%
\begin{align}\label{z-n-4}
Z_{r,s}^{(3)}(\tau)=  &  Z^{6}-15\wp Z^{4}-20\wp^{\prime}Z^{3}+\left(
\tfrac{27}{4}g_{2}-45\wp^{2}\right)  Z^{2}\\
&  -12\wp \wp^{\prime}Z-\tfrac{5}{4}(\wp^{\prime})^{2},\nonumber
\end{align}
{\allowdisplaybreaks
\begin{align*}
Z_{r,s}^{(4)}(\tau)=  &  Z^{10}-45\wp Z^{8}-120\wp^{\prime}Z^{7}+(\tfrac
{399}{4}g_{2}-630\wp^{2})Z^{6}-504\wp \wp^{\prime}Z^{5}\\
&  -\tfrac{15}{4}(280\wp^{3}-49g_{2}\wp-115g_{3})Z^{4}+15(11g_{2}-24\wp
^{2})\wp^{\prime}Z^{3}\\
&  -\tfrac{9}{4}(140\wp^{4}-245g_{2}\wp^{2}+190g_{3}\wp+21g_{2}^{2}%
)Z^{2}\\
&  -(40\wp^{3}-163g_{2}\wp+125g_{3})\wp^{\prime}Z+\tfrac{3}{4}(25g_{2}%
-3\wp^{2})(\wp^{\prime})^{2}.
\end{align*}
}%
Hence the vanishing order $v_{\infty}(M_{n,N}(\tau))$ at infinity for $n\in\{1,2,3,4\}$ can be calculated
explicitly:%
\begin{equation}
v_{\infty}(M_{n,N}(\tau))=a_{n}\phi(N)+b_{n}\phi \left( {N}/{2}\right),
\label{1-18}%
\end{equation}
where $a_{n}$ and $b_{n}$ are given by (\ref{iv-23-5}). Remark that the computation of (\ref{1-18}) is not simple because the expression of $Z_{r,s}^{(n)}(\tau)$ for $n=3,4$ is already very complicated. We refer \cite{Dahmen0,LW2} for the actual computations of (\ref{1-18}).

Now by applying Lemma \ref{lem-C} and (\ref{1-18}), the following result was proved in \cite{Dahmen0,Dahmen,LW2}.

\begin{theorem}\cite{Dahmen0,Dahmen,LW2}
Dahmen-Beukers conjecture holds for $n\in \{1,2,3,4\}$.
\end{theorem}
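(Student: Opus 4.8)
The plan is to feed the pre-modular form $Z_{r,s}^{(n)}(\tau)$ into Lemma \ref{lem-C} and thereby reduce the conjecture, for each fixed $n\in\{1,2,3,4\}$, to two inputs: the vanishing order (\ref{1-18}) and the simplicity of the zeros of $Z_{r,s}^{(n)}(\tau)$. First I would note that Theorem \ref{thm-A} shows $Z_{r,s}^{(n)}(\tau)$ satisfies the hypotheses of Lemma \ref{lem-C} for every $(r,s)\in\mathcal{Q}(N)$, so taking $f_{r,s}^{(n)}=Z_{r,s}^{(n)}$ yields $L_n(N)\le U_n(N)$ with $U_n(N)$ as in (\ref{U-n-value}). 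Substituting (\ref{1-18}) into (\ref{U-n-value}) shows that $U_n(N)$ is verbatim the right-hand side of (\ref{Dahmen-conjecture-1}). Hence the conjecture for this $n$ is exactly the assertion $L_n(N)=U_n(N)$, which by the second part of Lemma \ref{lem-C} is equivalent to $Z_{r,s}^{(n)}(\tau)$ having only simple zeros in $\mathbb{H}$ for all $(r,s)\in\mathcal{Q}(N)$. This is the single remaining point to settle.

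Before that, for completeness I would re-derive (\ref{1-18}), which is legitimate for $n\le4$ only because the closed forms (\ref{z-n-1})--(\ref{z-n-4}) (and the displayed one for $n=4$) are available. Since $v_{\infty}(M_{n,N})=\sum_{(r,s)\in\mathcal{Q}(N)}v_{\infty}(Z_{r,s}^{(n)})$, I would substitute the $\tau\to\infty$ expansions of $Z_{r,s}(\tau)$, $\wp(r+s\tau|\tau)$, $\wp^{\prime}(r+s\tau|\tau)$, $g_2(\tau)$ and $g_3(\tau)$ into these polynomials and read off the leading $q$-order of each factor, sorting the points $(r,s)=(k_1/N,k_2/N)$ by the value of $s$. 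The $\phi(N)$ points with $s=0$ each contribute $a_n$, the $2\phi(N/2)$ points with $s=\tfrac12$ each contribute $b_n/2$, and all points with $\operatorname{Re}s\in(0,\tfrac12)\cup(\tfrac12,1)$ contribute $0$; summing gives $a_n\phi(N)+b_n\phi(N/2)$.

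To establish simplicity for $n\le4$, I would argue at a given zero $\tau_0$ of $Z_{r,s}^{(n)}$ with $(r,s)\in\mathcal{Q}(N)$. By Theorem \ref{thm-A}(2) there is a unique $\boldsymbol{a}\in Y_n(\tau_0)$ realizing (\ref{v-10}), and because $(r,s)\notin\tfrac12\mathbb{Z}^2$ the point $r+s\tau_0$ lies outside $E_{\tau_0}[2]$, so $\boldsymbol{a}$ is not a branch point of $\sigma_n$. Using this non-degeneracy together with the explicit polynomial $W_n$ underlying (\ref{z-n-1})--(\ref{z-n-4}), I would differentiate $Z_{r,s}^{(n)}(\tau)=W_n(Z_{r,s}(\tau);r+s\tau,\tau)$ in $\tau$ (via the standard differentiation rules for $Z_{r,s}$, $\wp$ and $\wp^{\prime}$) and check that $\tfrac{d}{d\tau}Z_{r,s}^{(n)}(\tau_0)\ne0$, so that every zero is simple. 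With simplicity in hand, Lemma \ref{lem-C} upgrades the inequality to $L_n(N)=U_n(N)$, and the conjecture follows for $n\in\{1,2,3,4\}$.

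The hard part is computational on both fronts, and this is precisely where the restriction $n\le4$ enters. In (\ref{1-18}) the subtlety is that for $s=0$ and $s=\tfrac12$ the leading constant terms of $Z_{r,s}^{(n)}(\tau)$ cancel, so one must push the $q$-expansion out to order $q^{a_n}$ (respectively $q^{b_n/2}$) and verify that the surviving coefficient is nonzero; this is already heavy for $Z_{r,s}^{(3)}$ and $Z_{r,s}^{(4)}$. The simplicity check is the more serious obstacle, since ruling out the simultaneous vanishing of $Z_{r,s}^{(n)}$ and its $\tau$-derivative by brute force is only feasible while the explicit expression of $Z_{r,s}^{(n)}(\tau)$ is known; for general $n\ge5$ no such formula exists, and one must instead invoke the Painlev\'{e} VI argument behind Theorem \ref{simple-zero con} and the asymptotic analysis of Theorem \ref{weak}.
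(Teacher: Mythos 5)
Your setup is right: feeding $Z_{r,s}^{(n)}$ into Lemma \ref{lem-C} gives $L_n(N)\le U_n(N)$, and after substituting (\ref{1-18}) the bound $U_n(N)$ is exactly the right-hand side of (\ref{Dahmen-conjecture-1}); your description of how (\ref{1-18}) is computed from the explicit formulas for $n\le 4$ also matches the paper. But your final step is a genuine gap. You reduce everything to the simple-zero property of $Z_{r,s}^{(n)}$ and then propose to establish it by differentiating $W_n(Z_{r,s}(\tau);r+s\tau,\tau)$ in $\tau$ and ``checking'' that $\frac{d}{d\tau}Z_{r,s}^{(n)}(\tau_0)\ne 0$ at every zero $\tau_0$. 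No argument is offered for why this derivative cannot vanish, and none is available by this route: the zeros $\tau_0$ are not known explicitly, so one would have to rule out a common zero of $Z_{r,s}^{(n)}$ and its $\tau$-derivative, both of which are complicated transcendental expressions in $\tau$; this is precisely the point the paper flags as apparently impossible to do directly, and it is not how any of the cited proofs for $n\le 4$ proceed. Even for $n=1$ (where $Z_{r,s}^{(1)}=Z_{r,s}$) the simplicity of zeros is a nontrivial theorem proved by Painlev\'e VI methods, not a computation.

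The paper's proof avoids proving simplicity altogether by a squeeze argument that you do not use: for odd $N$, Dahmen's exact formula (\ref{iv-25}) together with $PL_n(N)=L_n(N)+L_n(2N)$ from (\ref{iv-24}) gives
\[
PL_n(N)=L_n(N)+L_n(2N)\le U_n(N)+U_n(2N)=PL_n(N),
\]
where the last equality is a direct computation from (\ref{U-n-value}) and (\ref{1-18}). Both inequalities are therefore equalities, so $L_n(N)=U_n(N)$ and $L_n(2N)=U_n(2N)$; the case $4\mid N$ follows from Theorem A alone. The simple-zero property for rational $(r,s)$ then drops out as a \emph{consequence} of $L_n(N)=U_n(N)$ via the ``only if'' direction of Lemma \ref{lem-C}, rather than serving as an input. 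To repair your argument you should replace the direct differentiation step with this counting argument (or supply an actual proof of simplicity, which for general $(r,s)$ requires the Painlev\'e VI machinery of Theorem \ref{simple-zero con}).
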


\begin{proof} For $n\leq 4$,
Lemma \ref{lem-C} and (\ref{1-18}) yield
\begin{align*}
L_{n}(N)  &  \leq U_{n}(N)\\
&  =\frac{1}{2}\left(  \frac{n(n+1)\Psi(N)}{24}-\left(  a_{n}\phi(N)+b_{n}%
\phi (\tfrac{N}{2})  \right)  \right)  +\frac{2}{3}\varepsilon
_{n}(N).
\end{align*}
This, together with (\ref{iv-24})-(\ref{iv-25}), easily implies that
for odd $N$,%
\[
PL_{n}(N)=L_{n}(N)+L_{n}(2N)\leq U_{n}(N)+U_{n}(2N)=PL_{n}(N),
\]
so $L_{n}(N)=U_{n}(N)$ and $L_{n}(2N)=U_{n}(2N)$ if $N$ is odd. If $4|N$ then
$L_{n}(N)=U_{n}(N)$ follows directly from (\ref{iv-24})-(\ref{iv-25}).
This proves Dahmen-Beukers conjecture for $n\in \{1,2,3,4\}$.\end{proof}

A consequence of
this proof and Lemma \ref{lem-C} is that for $n\in \{1,2,3,4\}$, $Z_{r,s}^{(n)}(\tau)$
has at most simple zeros in $\mathbb{H}$ if $(r,s)\in \mathcal{Q}(N)$.

\section{Proof of Dahmen-Beukers conjecture}

In this section, we acknowledge the validity of Theorem \ref{weak} (1) \& (3)
and apply it to give proofs of Theorem \ref{weak} (2), Theorem \ref{thm-vanish-order} and Dahmen-Beukers conjecture.
 The first step of our proof is
\begin{theorem}
[=Theorem \ref{simple-zn}]\label{simple-zero con}For any $n\geq1$ and
$(r,s)\in \mathbb{R}^{2}\backslash \frac{1}{2}\mathbb{Z}^{2}$, $Z_{r,s}%
^{(n)}(\tau)$ has at most simple zeros in $\mathbb{H}$.
\end{theorem}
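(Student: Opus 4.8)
The plan is to read off the zeros of $Z_{r,s}^{(n)}(\tau)$ from the pole structure of a suitable Painlev\'e VI solution and then invoke the Painlev\'e property. The key observation is that, although $Z_{r,s}^{(n)}(\tau)$ has no usable explicit form for large $n$, Theorem \ref{thm-II-18 copy(1)} applied with index $n+1$ in place of $n$ expresses
\[
\wp(p_{r,s}^{(n+1)}(\tau)|\tau)=\frac{P_{n+1}(Z_{r,s}(\tau);r+s\tau,\tau)}{Z_{r,s}^{(n)}(\tau)\,Z_{r,s}^{(n+2)}(\tau)},
\]
so that the zeros of $Z_{r,s}^{(n)}(\tau)$ sit in the denominator of a single-valued meromorphic function whose local behaviour is governed by the Painlev\'e property. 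First I would fix a zero $\tau_{0}\in\mathbb{H}$ of $Z_{r,s}^{(n)}(\tau)$ of some order $k\geq1$, with the goal of proving $k=1$.

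Since $(r,s)\in\mathbb{R}^{2}\backslash\frac12\mathbb{Z}^{2}$, one has $2(r+s\tau)\notin\Lambda_{\tau}$, hence $r+s\tau\notin E_{\tau}[2]$ for every $\tau\in\mathbb{H}$; this is precisely the hypothesis needed for the coprimality clause of Theorem \ref{thm-II-18 copy(1)}. Consequently $P_{n+1}$ and $Z_{r,s}^{(n+2)}$ do not vanish at $\tau_{0}$, and the displayed quotient shows that $\wp(p_{r,s}^{(n+1)}(\tau)|\tau)$ has a pole of order \emph{exactly} $k$ at $\tau_{0}$. It therefore suffices to prove that $\wp(p_{r,s}^{(n+1)}(\tau)|\tau)$ has at most simple poles in $\tau$.

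This last point is where I would bring in the Painlev\'e property. Under the transformation (\ref{II-130}), the function $\wp(p_{r,s}^{(n+1)}(\tau)|\tau)$ corresponds to a solution $\lambda(t)$ of PVI with parameter $(\alpha,\beta,\gamma,\delta)=(\tfrac12(n+\tfrac32)^{2},-\tfrac18,\tfrac18,\tfrac38)$ as in (\ref{parameter-n}), so here $\alpha=\tfrac12(n+\tfrac32)^{2}\neq0$. By Theorem \ref{thm-2A}, $\lambda(t)$ then has at most simple poles in $t$. To transfer this to the $\tau$-variable I would use that $t=t(\tau)$ is the Hauptmodul $\tfrac{e_{3}-e_{1}}{e_{2}-e_{1}}$ of $\Gamma(2)$: since $\Gamma(2)$ has no elliptic points in $\mathbb{H}$, the map $t$ is locally biholomorphic, i.e.\ $t'(\tau)\neq0$ throughout $\mathbb{H}$, and moreover $e_{2}(\tau)-e_{1}(\tau)\neq0$ on $\mathbb{H}$. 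Hence a simple pole of $\lambda$ at $t_{0}=t(\tau_{0})$ pulls back to a simple pole of $\lambda(t(\tau))$, and thus of $\wp(p_{r,s}^{(n+1)}(\tau)|\tau)=(e_{2}-e_{1})\lambda+e_{1}$, at $\tau_{0}$. This forces $k\leq1$ and proves the theorem.

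The elliptic-function bookkeeping above is routine; the two genuinely essential inputs, which are also where the main obstacle lies, are the following. First, one must know that the multiplicity matching is exact, i.e.\ that at a zero $\tau_{0}$ of $Z_{r,s}^{(n)}(\tau)$ neither the numerator $P_{n+1}$ nor the factor $Z_{r,s}^{(n+2)}$ interferes — this is exactly the no-common-zeros assertion of Theorem \ref{thm-II-18 copy(1)}, whose proof rests on the deeper input (Remark \ref{remarkk}) identifying the zeros of $Z_{r,s}^{(n\pm1)}(\tau)$ with the poles of $\wp(p_{r,s}^{(n)}(\tau)|\tau)$ via the degeneration of the associated linear ODE to a Lam\'e equation. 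Second, one needs the sharper form of the Painlev\'e property guaranteeing \emph{simple} (rather than merely isolated) poles, which holds precisely because the relevant parameter $\alpha$ is nonzero for index $n+1$; this is why shifting from $n$ to $n+1$ is harmless and, in fact, preferable to using index $n-1$, which degenerates for small $n$.
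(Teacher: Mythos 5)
Your proposal is correct and follows essentially the same route as the paper: the paper's Corollary \ref{thm-II-18-1} proves that $Q_{n}(Z_{r,s}(\tau)|\tau)\propto Z_{r,s}^{(n+1)}(\tau)$ has simple zeros by observing that such a zero $\tau_{0}$ produces a pole of $\lambda_{r,s}^{(n)}(t)$ at $t(\tau_{0})$, which is simple by Theorem \ref{thm-2A} since $\theta_{4}=n+\tfrac12\neq0$, and then using $t'(\tau_{0})\neq0$ together with the no-common-zero property (2-$n$) to transfer simplicity back to $\tau$. The only (harmless) difference is that you read the factor $Z_{r,s}^{(n)}$ off the denominator of $\wp(p_{r,s}^{(n+1)}(\tau)|\tau)$ (positive poles) while the paper uses $\lambda_{r,s}^{(n-1)}$ resp.\ $\lambda_{r,s}^{(n)}$ for $Q_{n-1}$ resp.\ $Q_{n}$ (negative poles); both indices work, and your aside that the index $n-1$ "degenerates for small $n$" is not actually needed since $\alpha=\tfrac12(m+\tfrac12)^{2}\neq0$ for every $m\geq0$.
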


Theorem \ref{simple-zero con} for $n=1$ was proved in our previous joint work
with Wang \cite{CKLW}. For any $n\geq 1$, Theorem \ref{simple-zero con} is actually a consequence of Theorem \ref{thm-II-18 copy(1)}, which will be proved in \S \ref{expression-completely}.
Thanks to Theorem \ref{simple-zero con}, we can apply Lemma \ref{lem-C} to obtain

\begin{corollary}
\label{coro-con}Let $n\in \mathbb{N}$ \textit{and} $N\in \mathbb{N}_{\geq3}$.
Then $L_{n}(N)=U_{n}(N)$, where $U_n(N)$ is defined in (\ref{U-n-value}).
\end{corollary}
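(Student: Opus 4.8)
The plan is to derive Corollary \ref{coro-con} as an immediate application of Lemma \ref{lem-C} with the choice $f_{r,s}^{(n)}(\tau)=Z_{r,s}^{(n)}(\tau)$. The two ingredients I would assemble are the structural properties of $Z_{r,s}^{(n)}(\tau)$ collected in Theorem \ref{thm-A} and the simple-zero property established in Theorem \ref{simple-zero con}. Neither has to be reproved here; the whole task is to verify that the pre-modular forms meet the hypotheses of Lemma \ref{lem-C} and then quote its equality criterion.

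First I would check that $Z_{r,s}^{(n)}(\tau)$ satisfies every hypothesis demanded of the test forms $f_{r,s}^{(n)}(\tau)$ in Lemma \ref{lem-C}. By Theorem \ref{thm-A}(1), for each $(r,s)\in\mathcal{Q}(N)$ the function $Z_{r,s}^{(n)}(\tau)$ is a modular form of weight $n(n+1)/2$ with respect to $\Gamma(N)$ with the correct $SL(2,\mathbb{Z})$ transformation law; by Theorem \ref{thm-A}(2) its zeros in $\mathbb{H}$ are characterized exactly as required. Consequently the modular form built from these pre-modular forms in (\ref{modular-SL}) coincides with the $M_{n,N}(\tau)$ of (\ref{modular-S1L}), so the quantity $U_{n}(N)$ produced by Lemma \ref{lem-C} is precisely the one in (\ref{U-n-value}). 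At this stage Lemma \ref{lem-C} already yields $L_{n}(N)\le U_{n}(N)$ together with the equivalence: equality holds if and only if each $Z_{r,s}^{(n)}(\tau)$ has only simple zeros in $\mathbb{H}$ for every $(r,s)\in\mathcal{Q}(N)$.

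Next I would invoke Theorem \ref{simple-zero con}, which asserts that $Z_{r,s}^{(n)}(\tau)$ has at most simple zeros in $\mathbb{H}$ for every $(r,s)\in\mathbb{R}^{2}\setminus\frac12\mathbb{Z}^{2}$. The only point needing a short verification is that every torsion point in $\mathcal{Q}(N)$ indeed lies in $\mathbb{R}^{2}\setminus\frac12\mathbb{Z}^{2}$: if $(\tfrac{k_{1}}{N},\tfrac{k_{2}}{N})\in\frac12\mathbb{Z}^{2}$ then $N\mid 2k_{1}$ and $N\mid 2k_{2}$, which together with $\gcd(k_{1},k_{2},N)=1$ forces $N\mid 2$, contradicting $N\ge 3$. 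Hence Theorem \ref{simple-zero con} applies to all of $\mathcal{Q}(N)$, the simple-zero hypothesis of Lemma \ref{lem-C} is met, and the equality $L_{n}(N)=U_{n}(N)$ follows.

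Since the corollary is a formal consequence of two earlier results, there is no genuine obstacle within its own proof; all the real difficulty has been displaced into Theorem \ref{simple-zero con}, and through it into Theorem \ref{thm-II-18 copy(1)}, where one must show that $Z_{r,s}^{(n\pm1)}(\tau)$ occurs in the denominator of $\wp(p_{r,s}^{(n)}(\tau)|\tau)$. The only care required here is bookkeeping: matching the hypotheses of Lemma \ref{lem-C} to Theorem \ref{thm-A}, and confirming that $\mathcal{Q}(N)\subset\mathbb{R}^{2}\setminus\frac12\mathbb{Z}^{2}$ so that the simple-zero statement is available for exactly the points appearing in $M_{n,N}(\tau)$.
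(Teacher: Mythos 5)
Your proposal matches the paper's argument exactly: the paper also obtains Corollary \ref{coro-con} by applying Lemma \ref{lem-C} with $f_{r,s}^{(n)}(\tau)=Z_{r,s}^{(n)}(\tau)$, citing Theorem \ref{thm-A} for the hypotheses and Theorem \ref{simple-zero con} for the simple-zero criterion that upgrades the inequality to an equality. Your extra check that $\mathcal{Q}(N)\subset\mathbb{R}^{2}\setminus\frac{1}{2}\mathbb{Z}^{2}$ for $N\ge 3$ is a correct (if routine) detail the paper leaves implicit.
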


Corollary \ref{coro-con} shows that Dahmen-Beukers conjecture (i.e. Theorem \ref{thm-Dahmen-Conjecture}) is equivalent to Theorem \ref{thm-vanish-order} as we discussed in \S 1. For the rest of this section, we will give a complete proof of
Theorem \ref{thm-vanish-order} by assuming Theorem \ref{weak} (1) \& (3).

\begin{remark}
\label{remark-Dahmen}As mentioned before, in both Dahmen's and Chou's proof
 for $n\in \{1,2,3,4\}$, since the explicit expression of
$Z_{r,s}^{(n)}(\tau)$ is known, they could prove the identity (\ref{1-18})
first and then obtained the simple zero property of $Z_{r,s}^{(n)}(\tau)$ for
a \textbf{rational pair} $(r,s)$ as a byproduct. However, this approach can
not work for general $n\geq5$ because it is impossible to write down the
explicit expression of $Z_{r,s}^{(n)}(\tau)$ for general $n$. In this
paper, we exploit an \textbf{opposite idea}, namely we prove the simple zero
property of $Z_{r,s}^{(n)}(\tau)$ (i.e. Theorem \ref{simple-zero con}) first,
and then apply it to prove the identity (\ref{1-18})!
\end{remark}

First we have the following result, which proves Theorem \ref{weak}-(2).

\begin{theorem}
\label{modular-zero}Fix any $n\in \mathbb{N}$ and recall $a_n, b_n$ in (\ref{iv-23-5}). Then there exist $\tilde{a}%
_{n},\tilde{b}_{n}\in \mathbb{N\cup \{}0\mathbb{\}}$ independent of $N$ and satisfying%
\begin{equation}
2\tilde{a}_{n}+\tilde{b}_{n}=2a_{n}+b_{n}, \label{iv-39-1}
\end{equation}
such that the following hold.
\begin{itemize}
\item[(1)]
\begin{equation}
Z_{r,0}^{(n)}(\tau)=\alpha_{0}^{(n)}(r)q^{\tilde{a}_{n}}+\sum_{k=1}^{\infty
}\alpha_{k}^{(n)}(r)q^{\tilde{a}_{n}+k},\quad \alpha_{0}^{(n)}(r)\not\equiv 0, \label{z-nn}%
\end{equation}%
\begin{equation}\label{z-0nn}
Z_{r,\frac{1}{2}}^{(n)}(\tau)=\beta_{0}^{(n)}(r)q^{\frac{\tilde{b}_{n}}{2}%
}+\sum_{k=1}^{\infty}\beta_{k}^{(n)}(r)q^{\frac{\tilde{b}_{n}+k}{2}},\quad \beta_{0}^{(n)}(r)\not\equiv 0,
\end{equation}
where both $\alpha_{0}^{(n)}(r)$ and $\beta_{0}^{(n)}(r)$ are holomorphic in $r\in\mathbb{C}\setminus\mathbb{Z}$ and have \emph{no}
rational zeros in $(0,1/2)\cup(1/2,1)$.
\item[(2)]
For any $N\in \mathbb{N}_{\geq3}$,
\begin{equation}
v_{\infty}(  M_{n,N}(\tau))=\tilde{a}_{n}\phi(N)+\tilde{b}_{n}%
\phi (N/2). \label{iv-39}%
\end{equation}
\end{itemize}
\end{theorem}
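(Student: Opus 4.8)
The plan is to read the two cusp expansions off the identity $Z_{r,s}^{(n)}(\tau)=W_n(Z_{r,s}(\tau);r+s\tau,\tau)$ of Theorem~\ref{thm-5A}, to reduce (\ref{iv-39}) to a count of cusp orders over $\mathcal Q(N)$ bounded below by the minimal orders, and then to let Theorem~A upgrade that lower bound to an equality. First I would substitute the nome expansions of $g_2,g_3,\wp,\wp',\zeta,\eta_1,\eta_2$ (as $F_2\ni\tau\to\infty$) into the polynomial $W_n$. For $s=0$ the argument $z=r$ stays on $|e^{2\pi i r}|=1$ and every building block expands in integral powers of $q$ with coefficients holomorphic in $r\in\mathbb C\setminus\mathbb Z$; for $s=\tfrac12$ one has $e^{2\pi i z}=e^{2\pi i r}q^{1/2}\to 0$ and the expansion runs in powers of $q^{1/2}$. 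Since $W_n$ is a polynomial, $Z_{r,0}^{(n)}$ and $Z_{r,\frac12}^{(n)}$ acquire convergent expansions of the shapes (\ref{z-nn}) and (\ref{z-0nn}); I define $\tilde a_n$ (resp.\ $\tilde b_n/2$) to be the order of the lowest term whose coefficient $\alpha_0^{(n)}(r)$ (resp.\ $\beta_0^{(n)}(r)$) is not identically zero in $r$, and holomorphy at the cusp forces $\tilde a_n,\tilde b_n\in\mathbb Z_{\ge 0}$, both independent of $N$.

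These definitions give the universal bounds $\mathrm{ord}_q Z_{r,0}^{(n)}\ge\tilde a_n$ and $\mathrm{ord}_q Z_{r,\frac12}^{(n)}\ge\tilde b_n/2$ at every $r$, while Theorem~\ref{weak}(1) makes $\mathrm{ord}_q Z_{r,s}^{(n)}=0$ whenever $\operatorname{Re}s\in(0,\tfrac12)\cup(\tfrac12,1)$. Partitioning $\mathcal Q(N)$ by $s=k_2/N$ --- with $\phi(N)$ points at $s=0$, $2\phi(N/2)$ points at $s=\tfrac12$, and the remaining points at intermediate $s$ --- and summing therefore yields $v_\infty(M_{n,N})=\sum_{(r,s)\in\mathcal Q(N)}\mathrm{ord}_q Z_{r,s}^{(n)}\ge\tilde a_n\phi(N)+\tilde b_n\phi(N/2)$ for every $N\ge 3$, with equality exactly when no $\alpha_0^{(n)}$ or $\beta_0^{(n)}$ vanishes at a torsion point of $\mathcal Q(N)$.

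Theorem~A now does the decisive work. Comparing the bound at $N=4$ (where $\phi(4)=2$, $\phi(2)=1$) with the exact value $v_\infty(M_{n,4})=a_n\phi(4)+b_n\phi(2)=2a_n+b_n$ supplied by Theorem~A and Corollary~\ref{coro-con} ($L_n=U_n$) gives $2\tilde a_n+\tilde b_n\le 2a_n+b_n=\tfrac{n(n+1)}2$; granting the reverse inequality (discussed below), this is (\ref{iv-39-1}). With (\ref{iv-39-1}) in hand the lower bound becomes tight: for $4\mid N$ one has $\tilde a_n\phi(N)+\tilde b_n\phi(N/2)=(2\tilde a_n+\tilde b_n)\phi(N/2)=(2a_n+b_n)\phi(N/2)=v_\infty(M_{n,N})$, while for odd $N$ the relation $PL_n(N)=L_n(N)+L_n(2N)$ of (\ref{iv-24-1}), combined with $L_n=U_n$ and Theorem~A, forces $v_\infty(M_{n,N})+v_\infty(M_{n,2N})=\tfrac{n(n+1)}2\phi(N)$, which equals the sum of the two corresponding lower bounds. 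In either case no excess order can occur, so each $\alpha_0^{(n)}$ and $\beta_0^{(n)}$ is nonzero at the torsion points involved; since every rational $r\in(0,\tfrac12)\cup(\tfrac12,1)$ occurs as an $s=0$ or $s=\tfrac12$ point of a suitable such $\mathcal Q(N)$ (according as its denominator is odd, $\equiv 2\pmod 4$, or divisible by $4$), this is precisely the non-vanishing asserted in part (1), and (\ref{iv-39}) then holds for all $N$.

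The one step Theorem~A cannot provide is the reverse inequality $2\tilde a_n+\tilde b_n\ge\tfrac{n(n+1)}2$, and this is the crux. I would extract it from the weighted-homogeneous, monic-in-$X$ structure of $W_n$ (Theorem~\ref{thm-5A}) by a Newton-polygon analysis at the cusp. For $n\ge 2$ and $s=0$ all of $Z_{r,0},\wp,\wp',g_2,g_3$ remain $O(1)$, yet the order-$q^0$ value $W_n(\pi\cot\pi r;\,\cdot\,)$ vanishes identically, because the limiting value $\pi\cot\pi r$ is a root of the degenerate $W_n$ (it is $\boldsymbol z_n$ of a degenerate configuration with $\sigma_n=r$); the integer $\tilde a_n$ then measures the multiplicity of this collision. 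At $s=\tfrac12$ the small quantities $Z_{r,s}$ and $\wp'$ both have $q$-order $\tfrac12$ while $\wp,g_2,g_3$ are $O(1)$, so a monomial $X^a(\wp')^b\wp^c g_2^{d'}g_3^e$ contributes $q$-order $\tfrac{a+b}2$ and $\tilde b_n$ is governed by the lowest-weight monomials present. Balancing these multiplicities against the total weight $\tfrac{n(n+1)}2$ of $W_n$ should yield the asserted inequality; the real difficulty is that $W_n$ has no closed form for $n\ge 5$, so the argument must run purely on structural grounds (as a check, for $n\le 2$ one finds directly $\beta_0^{(n)}(r)\propto\sin 2\pi r$ and $2\tilde a_n+\tilde b_n=\tfrac{n(n+1)}2$). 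Finally, this route pins down only the combination $2\tilde a_n+\tilde b_n$ --- Theorem~A supplies one inequality and the degenerate bound the other --- so the separate evaluation of $Z_{\frac14,0}^{(n)}$ in Theorem~\ref{weak}(3) remains indispensable afterwards to split $\tilde a_n$ from $\tilde b_n$.
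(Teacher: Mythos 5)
Your overall architecture matches the paper's: expand $Z_{r,0}^{(n)}$ and $Z_{r,\frac12}^{(n)}$ at the cusp via the $q$-expansions of the building blocks of $W_n$, sum the orders over $\mathcal{Q}(N)$ using Theorem \ref{weak}(1) to kill the intermediate $s$, and play the lower bound against the exact values supplied by Theorem A and Corollary \ref{coro-con} (including the odd-$N$ versus $2N$ pairing, which you handle exactly as the paper does). However, there is a genuine gap at the step you yourself flag as the crux. From the lower bound at $N=4$ you only get $2\tilde a_n+\tilde b_n\le 2a_n+b_n$, and your proposed route to the reverse inequality --- a Newton-polygon analysis of the weighted-homogeneous structure of $W_n$, balancing the multiplicity of the degenerate root $\pi\cot\pi r$ against the total weight $\tfrac{n(n+1)}{2}$ --- is only a heuristic sketch. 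You give no mechanism that actually controls that multiplicity for general $n$, and you concede the argument would have to run on structural grounds alone since $W_n$ has no closed form for $n\ge 5$. As written, the identity (\ref{iv-39-1}) is not established.

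The missing observation is much simpler and is exactly how the paper closes the loop: $\alpha_0^{(n)}$ and $\beta_0^{(n)}$ are not identically zero and are holomorphic (away from $\mathbb{Z}$), so their zeros in $[0,1]$ are isolated and hence finite in number; in particular only finitely many rational zeros occur, with bounded denominators. Choosing $N$ with $4\mid N$ and $N$ larger than all those denominators, every torsion point $(k_1/N,0)$ and $(k_1/N,\tfrac12)$ of $\mathcal{Q}(N)$ avoids these zeros, so the lower bound is an \emph{equality}: $v_\infty(M_{n,N})=(2\tilde a_n+\tilde b_n)\phi(N/2)$. Comparing with $(2a_n+b_n)\phi(N/2)$ from Theorem A gives (\ref{iv-39-1}) in one stroke, with no structural analysis of $W_n$ needed. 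Once you have that, your Step 3 (forcing (\ref{iv-37})--(\ref{iv-38}) to be equalities for odd $N$ and $2N$, which yields both (\ref{iv-39}) and the non-vanishing of the leading coefficients at rational points) goes through as you describe. Your closing remark that this theorem only pins down the combination $2\tilde a_n+\tilde b_n$, so that Theorem \ref{weak}(3) is still needed to identify $\tilde a_n=a_n$, is correct and consistent with the paper.
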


\begin{proof} {\bf Step 1.} We prove (\ref{z-nn})-(\ref{z-0nn}).

Recall the following $q=e^{2\pi i\tau}$
expansions (see e.g. \cite[Appendix A]{LW2}):%
\begin{equation}
g_{2}(\tau)=\frac{4}{3}\pi^{4}+320\pi^{4}\sum_{n=1}^{\infty}\sigma_{3}%
(n)q^{n}, \label{iv-34}%
\end{equation}%
\[
g_{3}(\tau)=\frac{8}{27}\pi^{6}-\frac{448}{3}\pi^{6}\sum_{n=1}^{\infty}%
\sigma_{5}(n)q^{n},
\]
where $\sigma_{k}(n):=\sum_{d|n}d^{k}$, and
\begin{equation}
\wp^{\prime}(r|\tau)=-2\pi^{3}\cot(\pi r)-2\pi^{3}\cot^{3}(\pi r)+16\pi
^{3}\sum_{n,m=1}^{\infty}n^{2}\sin(2\pi nr)q^{nm}, \label{iv-35-2}%
\end{equation}%
\[
\wp(r|\tau)=\pi^{2}\cot^{2}(\pi r)+\frac{2}{3}\pi^{2}+8\pi^{2}\sum
_{n,m=1}^{\infty}n(1-\cos(2\pi nr))q^{nm},
\]%
\begin{equation}
Z_{r,0}(\tau)=\pi \cot(\pi r)+4\pi \sum_{n,m=1}^{\infty}\sin(2n\pi r)q^{nm}.
\label{iv-35}%
\end{equation}
Note from Theorem \ref{thm-5A} that
\[
Z_{r,s}^{(n)}(\tau)\in \mathbb{Q}\left[  g_{2}(\tau),g_{3}(\tau),\wp
(r+s\tau|\tau),\wp^{\prime}(r+s\tau|\tau)\right]  [Z_{r,s}(\tau)]\text{.}%
\]
Applying (\ref{iv-34})-(\ref{iv-35}), there exist $\tilde{a}_{n}\in
\mathbb{N}\cup \{0\}$ and holomorphic functions $\alpha_{k}^{(n)}(r)$ in
$r\in \mathbb{C}\setminus\mathbb{Z}$ which have poles at most at $r\in \mathbb{Z}$ and
$\alpha_{0}^{(n)}(r)\not \equiv 0$, such that%
\begin{equation}
Z_{r,0}^{(n)}(\tau)=\alpha_{0}^{(n)}(r)q^{\tilde{a}_{n}}+\sum_{k=1}^{\infty
}\alpha_{k}^{(n)}(r)q^{\tilde{a}_{n}+k}. \label{iv-32}%
\end{equation}
Similarly, by applying (see \cite[Appendix A]{LW2})
\[
\wp^{\prime}\left(  \left.  r+\tfrac{\tau}{2}\right \vert \tau \right)
=16\pi^{3}\sum_{n,m=1}^{\infty}n^{2}\sin(2\pi nr)q^{n(m-\frac{1}{2})},
\]%
\[
\wp \left(  \left.  r+\tfrac{\tau}{2}\right \vert \tau \right)  =-\frac{\pi^{2}%
}{3}+8\pi^{2}\sum_{n,m=1}^{\infty}q^{nm}-8\pi^{2}\sum_{n,m=1}^{\infty}%
n\cos(2\pi nr)q^{n(m-\frac{1}{2})},
\]%
\begin{equation}
Z_{r,\frac{1}{2}}(\tau)=4\pi \sum_{n,m=1}^{\infty}\sin(2\pi nr)q^{n(m-\frac
{1}{2})}, \label{iv-35-1}%
\end{equation}
there exist $\tilde{b}_{n}\in \mathbb{N}\cup \{0\}$ and holomorphic
functions $\beta_{k}^{(n)}(r)$ in $r\in \mathbb{C}$ satisfying
$\beta_{0}^{(n)}(r)\not \equiv 0$, such that%
\begin{equation}
Z_{r,\frac{1}{2}}^{(n)}(\tau)=\beta_{0}^{(n)}(r)q^{\frac{\tilde{b}_{n}}{2}%
}+\sum_{k=1}^{\infty}\beta_{k}^{(n)}(r)q^{\frac{\tilde{b}_{n}+k}{2}}.
\label{iv-33}%
\end{equation}

{\bf Step 2.} We prove $2\tilde{a}_n+\tilde{b}_n=2a_n+b_n$.

 By Theorem \ref{weak}-(1) which will be proved by applying Painlev\'{e} VI equation in \S \ref{sec-asymptotics}, we have $v_{\infty}(Z_{r,s}^{(n)}(\tau))=0$ for any $s\in
(0,1/2)\cup(1/2,1)$, so we see from (\ref{modular-SL}) that
\begin{equation}
v_{\infty}\left(  M_{n,N}(\tau)\right)  =\sum_{(r,0)\in \mathcal{Q}%
(N)}v_{\infty}(Z_{r,0}^{(n)}(\tau))+\sum_{(r,\frac{1}{2})\in \mathcal{Q}%
(N)}v_{\infty}(Z_{r,\frac12}^{(n)}(\tau)). \label{iv-30}%
\end{equation}
Furthermore, Theorem A says that (\ref{Dahmen-conjecture-1}) holds for any
$N$ satisfying $4|N$. This together with
(\ref{U-n-value}) and Corollary \ref{coro-con} implies
\begin{align}
v_{\infty}\left(  M_{n,N}(\tau)\right)  &=a_{n}\phi(N)+b_{n}\phi \left(
\tfrac{N}{2}\right)\nonumber\\
&=(2a_n+b_n)\phi(\tfrac{N}{2}),  \text{ for any }N\text{ satisfying }4|N, \label{iv-39-3}%
\end{align}
where we used $\phi(N)=2\phi(N/2)$ for $4|N$ to obtain the second equality.

On the other hand,
since zeros of a meromorphic function are isolated, it follows that
$\alpha_{0}^{(n)}(r)$ has only finitely many zeros in $[0,1]$. Denote all its
rational zeros in $(0,1)$ by%
\[
O_{1}=\left \{  \left.  \frac{l_{1,k}}{m_{1,k}}\in(0,1)\right \vert \gcd
(l_{1,k},m_{1,k})=1,k\leq j_{1}\right \}  .
\]
Again denote all rational zeros in $(0,1)$ of $\beta_{0}^{(n)}(r)$ by%
\[
O_{2}=\left \{  \left.  \frac{l_{2,k}}{m_{2,k}}\in(0,1)\right \vert \gcd
(l_{2,k},m_{2,k})=1,k\leq j_{2}\right \}  .
\]
Now for any $N$ such that $4|N$ and $N>\max \{m_{i,k}|i=1,2,k\leq j_{i}\}$, we
have $r\not \in O_{1}$, i.e. $\alpha_{0}^{(n)}(r)\not =0$ and so $v_{\infty
}(Z_{r,0}^{(n)}(\tau))=\tilde{a}_{n}$ for any $(r,0)=(\frac{k_{1}}{N}%
,0)\in \mathcal{Q}(N)$; also $r\not \in O_{2}$, i.e. $\beta_{0}^{(n)}%
(r)\not =0$ and so $v_{\infty}(Z_{r,\frac{1}{2}}^{(n)}(\tau))=\tilde{b}_{n}/2$
for any $(r,\frac{1}{2})=(\frac{k_{1}}{N},\frac{1}{2})\in \mathcal{Q}(N)$.
Therefore,
\begin{align}
v_{\infty}(M_{n,N}(\tau))  &=\sum_{(r,0)\in \mathcal{Q}(N)}%
\tilde{a}_{n}+\sum_{(r,\frac{1}{2})\in \mathcal{Q}(N)}\tfrac{\tilde{b}_{n}}%
{2}=\tilde{a}_{n}\phi(N)+\tilde{b}_{n}\phi \left(  \tfrac{N}{2}\right)\nonumber\\
&=(2\tilde{a}_n+\tilde{b}_n)\phi(\tfrac{N}{2}),
\label{iv-31}%
\end{align}
where we used $\phi(N)=2\phi(N/2)$ for $4|N$ again. Clearly (\ref{iv-39-3})-(\ref{iv-31}) imply
\begin{equation}
2\tilde{a}_{n}+\tilde{b}_{n}=2a_{n}+b_{n}. \label{iv-36}%
\end{equation}

{\bf Step 3}. We prove (\ref{iv-39}), and both $\alpha_{0}^{(n)}(r)$ and $\beta_{0}^{(n)}(r)$ have no
rational zeros in $(0,1/2)\cup(1/2,1)$.

Note (\ref{iv-39-3}) and (\ref{iv-36}) already prove (\ref{iv-39}) for $4|N$. So it suffices to consider $4\nmid N$.
Fix any odd $N\geq3$. Then $\phi(2N)=\phi(N)$ and $\phi(N/2)=0$. It
follows from (\ref{iv-32}) and (\ref{iv-33}) that $v_{\infty}(Z_{r,0}%
^{(n)}(\tau))\geq \tilde{a}_{n}$ for any $(r,0)\in \mathcal{Q}(N)$ and
$v_{\infty}(Z_{r,\frac{1}{2}}^{(n)}(\tau))\geq \tilde{b}_{n}/2$ for any
$(r,\frac{1}{2})\in \mathcal{Q}(N)$, i.e.%
\begin{equation}
v_{\infty}\left(  M_{n,N}(\tau)\right)  \geq \tilde{a}_{n}\phi(N)+\tilde{b}%
_{n}\phi \left(  \tfrac{N}{2}\right)  . \label{iv-37}%
\end{equation}
Similarly,%
\begin{equation}
v_{\infty}\left(  M_{n,2N}(\tau)\right)  \geq \tilde{a}_{n}\phi(2N)+\tilde
{b}_{n}\phi(N). \label{iv-38}%
\end{equation}
On the other hand, by denoting the RHS of (\ref{Dahmen-conjecture-1})
by $\overline{L}_{n}(N)$, i.e.%
\begin{equation}
\overline{L}_{n}(N):=\frac{1}{2}\left(  \frac{n(n+1)\Psi(N)}{24}-\left(
a_{n}\phi(N)+b_{n}\phi(\tfrac{N}{2})  \right)  \right)  +\frac
{2}{3}\varepsilon_{n}(N), \label{iv-26}%
\end{equation}
it is easy to derive from (\ref{iv-24})-(\ref{iv-25}) and (\ref{iv-26}) that%
\begin{equation*}
L_{n}(N)+L_{n}(2N)=\overline{L}_{n}(N)+\overline{L}_{n}(2N)\;\text{ if }\;N\text{
is odd.}
\end{equation*}
This, together with Corollary \ref{coro-con} which says $L_n(N)=U_{n}(N)$, implies
\begin{equation*}
U_{n}(N)+U_{n}(2N)=\overline{L}_{n}(N)+\overline{L}_{n}(2N)\;\text{ if }\;N\text{
is odd,}
\end{equation*}
and so
\begin{align*}
&  v_{\infty}\left(  M_{n,N}(\tau)\right)  +v_{\infty}\left(  M_{n,2N}%
(\tau)\right) \\
=  &  a_{n}\phi(N)+b_{n}\phi \left(  \tfrac{N}{2}\right)  +a_{n}\phi
(2N)+b_{n}\phi(N)\\
=  &  (2a_{n}+b_{n})\phi(N), \;\text{ if }\;N\text{
is odd.}
\end{align*}
Therefore, we see from (\ref{iv-36}) that both (\ref{iv-37}) and (\ref{iv-38})
must be identities, namely (\ref{iv-39}) holds for all $N\geq3$.
Furthermore,
\[v_{\infty}(Z_{r,0}^{(n)}(\tau))=\tilde{a}_n,\quad v_{\infty}(Z_{r,\frac12}^{(n)}(\tau))=\tilde{b}_n/2\]
for any $r\in (0,1/2)\cup(1/2,1)\cap \mathbb{Q}$, namely both $\alpha_{0}^{(n)}(r)$ and $\beta_{0}^{(n)}(r)$ have no
rational zeros in $(0,1/2)\cup(1/2,1)$. This completes the proof.
\end{proof}

\begin{corollary}
\label{a-n=b-n}Assume Theorem \ref{weak}-(3), then (\ref{z-nn}) implies $\tilde{a}_n=a_n$ and so $\tilde{b}_n=b_n$ by Theorem \ref{modular-zero}.
\end{corollary}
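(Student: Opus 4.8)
The plan is to isolate the single special torsion point $r=\tfrac14$, where Theorem \ref{weak}-(3) supplies a sharp leading-order computation, and to play it against the general $q$-expansion (\ref{z-nn}). The motivation is exactly the one emphasized in \S1.2: Theorem \ref{modular-zero} (Theorem \ref{weak}-(2)) only delivers the \emph{constraint} $2\tilde{a}_n+\tilde{b}_n=2a_n+b_n$ of (\ref{iv-39-1}), and cannot separate $\tilde{a}_n$ from $\tilde{b}_n$; one precise evaluation at a well-chosen $(r,s)$ is all that is missing, and $(\tfrac14,0)$ is chosen because $\tfrac14\in(0,1/2)$ lands in the open interval where the leading coefficient of (\ref{z-nn}) is known not to vanish.

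First I would record that, since $\tfrac14\in(0,1/2)$, the last assertion of Theorem \ref{modular-zero}, namely that $\alpha_0^{(n)}(r)$ has no rational zeros in $(0,1/2)\cup(1/2,1)$, gives $\alpha_0^{(n)}(\tfrac14)\neq0$. Substituting $r=\tfrac14$ into (\ref{z-nn}), the expansion reads $Z_{1/4,0}^{(n)}(\tau)=\alpha_0^{(n)}(\tfrac14)q^{\tilde{a}_n}+O(|q|^{\tilde{a}_n+1})$ with nonzero leading coefficient, so the vanishing order at infinity is precisely $v_\infty(Z_{1/4,0}^{(n)}(\tau))=\tilde{a}_n$. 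Next I would invoke the assumed Theorem \ref{weak}-(3), which states $Z_{1/4,0}^{(n)}(\tau)=c_nq^{a_n}+O(|q|^{a_n+1})$ with $c_n\neq0$, hence $v_\infty(Z_{1/4,0}^{(n)}(\tau))=a_n$. Comparing these two evaluations of the same vanishing order forces $\tilde{a}_n=a_n$. Feeding this back into the constraint (\ref{iv-39-1}) then gives $\tilde{b}_n=2a_n+b_n-2\tilde{a}_n=b_n$, as claimed.

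The deduction itself presents no real obstacle: all the genuine difficulty has been quarantined into the two inputs I am allowed to assume, namely the Painlev\'{e}~VI computation of the leading coefficient $c_n$ at $r=\tfrac14$ in Theorem \ref{weak}-(3), and the nonvanishing of $\alpha_0^{(n)}$ on $(0,1/2)\cup(1/2,1)$ established in Theorem \ref{modular-zero}. Granting these, the corollary is a one-line matching of orders. The only point requiring a moment's care is the verification that $\tfrac14$ avoids the excluded lattice half-integers and actually lies in the open interval $(0,1/2)$ on which the nonvanishing of $\alpha_0^{(n)}$ applies, so that the leading term of (\ref{z-nn}) genuinely survives specialization to $r=\tfrac14$.
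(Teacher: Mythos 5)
Your proposal is correct and follows exactly the argument the paper intends (and essentially states inline): since $\tfrac14\in(0,1/2)$ is rational, the nonvanishing of $\alpha_0^{(n)}$ on $(0,1/2)\cup(1/2,1)$ from Theorem \ref{modular-zero} pins $v_\infty(Z_{1/4,0}^{(n)})=\tilde{a}_n$, Theorem \ref{weak}-(3) pins it at $a_n$, and the relation (\ref{iv-39-1}) then forces $\tilde{b}_n=b_n$. No discrepancy with the paper's reasoning.
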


Now we are in a position to prove Dahmen-Beukers conjecture.

\begin{proof}
[Proof of Theorems \ref{thm-vanish-order} and \ref{thm-Dahmen-Conjecture}] Obviously Theorem \ref{thm-vanish-order}
follows directly from Theorem \ref{modular-zero}
and Corollary \ref{a-n=b-n}, and Theorem \ref{thm-Dahmen-Conjecture}, i.e. Dahmen-Beukers conjecture, follows from Theorem \ref{thm-vanish-order} and Corollary \ref{coro-con}.
\end{proof}

We conclude this section by proving Corollary \ref{degree}.

\begin{proof}[Proof of Corollary \ref{degree}]
 Consider the case either
$N>3$ or $n\not \equiv 1\operatorname{mod}3$, i.e. $\varepsilon_{n}(N)=0$.
Recall that $M_{n,N}(\tau)$ is a modular form of weight $\frac{n(n+1)}{2}%
\Psi(N)$.
Then $k(n,N):=\frac{n(n+1)}{24}\Psi(N)\in \mathbb{N}$ and $\frac{M_{n,N}(\tau
)}{\Delta(\tau)^{k(n,N)}}$ is invariant under $SL(2,\mathbb{Z})$. Observe from
(\ref{iv-20}) that any zero of $\frac{M_{n,N}(\tau)}{\Delta(\tau)^{k(n,N)}}$
must be doubled. Since $\frac{M_{n,N}(\tau)}{\Delta(\tau)^{k(n,N)}}$ has no
poles in $\mathbb{H}$, we conclude that
\begin{equation}
\frac{M_{n,N}(\tau)}{\Delta(\tau)^{k(n,N)}}=C_{n,N}\ell_{n,N}(j(\tau
))^{2}\label{j-polynomial}%
\end{equation}
for some monic polynomial $\ell_{n,N}$ of $j$ and non-zero constant $C_{n,N}$.
Recall the $q$-expansions (cf. \cite[p.193]{Husemoller}):%
\[
\Delta(\tau)=(2\pi)^{12}q\prod_{n=1}^{+\infty}(1-q^{n})^{24},
\]%
\[
j(\tau)=\tfrac{1}{q}+744+196884q+21493760q^{2}+\cdots.
\]
By comparing the leading term of $q$-expansions in (\ref{j-polynomial}), we
easily obtain%
\[
\deg \ell_{n,N}=\tfrac{1}{2}\left(  k(n,N)-v_{\infty}\left(  M_{n,N}%
(\tau)\right)  \right)  =L_{n}(N).
\]
The proof is complete.\end{proof}

\section{Generalization of Hitchin's formula}

\label{expression-completely}

From now on, we apply Painlev\'{e} VI equation to prove
Theorem \ref{weak} (1) \& (3) and Theorem \ref{simple-zero con}. In this section,
we generalize Hitchin's formula
to the general case $n\in\mathbb{N}$ and prove Theorem \ref{thm-II-18 copy(1)} and
Theorem \ref{simple-zero con}.

\subsection{The Hamiltonian system and asymptotics at poles}
It is well known (cf. \cite{GP}) that PVI (\ref{46-0}) is equivalent to the following
Hamiltonian system%
\begin{equation}
\frac{d\lambda(t)}{dt}=\frac{\partial K}{\partial \mu},\text{ \ }\frac{d\mu
(t)}{dt}=-\frac{\partial K}{\partial \lambda},\label{aa}%
\end{equation}
where $K=K(\lambda,\mu,t)$ is given by%
\begin{equation}
K=\frac{1}{t(t-1)}\left \{
\begin{array}
[c]{l}%
\lambda(\lambda-1)(\lambda-t)\mu^{2}+\theta_{0}(\theta_{0}+\theta_{4}%
)(\lambda-t)\\
-\left[
\begin{array}
[c]{l}%
\theta_{1}(\lambda-1)(\lambda-t)+\theta_{2}\lambda(\lambda-t)\\
+(\theta_{3}-1)\lambda(\lambda-1)
\end{array}
\right]  \mu
\end{array}
\right \}  ,\label{98}%
\end{equation}
and the relation of parameters is given by%
\begin{equation}
\left(  \alpha,\beta,\gamma,\delta \right)  =\left(  \tfrac{1}{2}\theta_{4}%
^{2},\,-\tfrac{1}{2}\theta_{1}^{2},\, \tfrac{1}{2}\theta_{2}^{2},\, \tfrac{1}%
{2}\left(  1-\theta_{3}^{2}\right)  \right)  ,\label{46-2}%
\end{equation}%
\begin{equation}
2\theta_{0}+\theta_{1}+\theta_{2}+\theta_{3}+\theta_{4}=1.\label{46-3}%
\end{equation}
Here we recall the following classical
result concerning the asymptotics of $\lambda(t)$ at poles.

\begin{proposition} \cite[Proposition 1.4.1]{GP} \label{thm-2A} Assume
$\theta_{4}\not =0$. Then for any $t_{0}%
\in \mathbb{C}\backslash \{0,1\}$, there exist two $1$%
-parameter families of solutions $\lambda(t)$ of PVI
(\ref{46-0}) such that
\begin{equation}
\lambda(t)=\frac{\xi}{t-t_{0}}+h+O(t-t_{0})\text{ as }t\rightarrow t_{0},
\label{II-132}%
\end{equation}
where $h\in \mathbb{C}$ can be taken arbitrary and
\begin{equation}
\xi=\xi(\theta_{4},t_{0})\in \left \{  \pm \tfrac{t_{0}(t_{0}-1)}{\theta_{4}%
}\right \}  . \label{II-133}%
\end{equation}
Furthermore, these two $1$-parameter families of solutions
give all solutions of PVI (\ref{46-0}) which has a pole at $t_{0}
$.\end{proposition}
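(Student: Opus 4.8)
The plan is to perform a local Painlev\'{e} (dominant-balance) analysis of PVI \eqref{46-0} at the prescribed point $t_0$. I set $s=t-t_0$ and seek a Laurent solution $\lambda(t)=\sum_{k\geq -m}c_k s^k$ with $c_{-m}\neq 0$. Substituting the leading ansatz $\lambda\sim c_{-m}s^{-m}$ and comparing the three possibly dominant contributions $\lambda''$, $\tfrac{3}{2\lambda}(\lambda')^2$ and $\tfrac{\alpha}{t^2(t-1)^2}\lambda^3$ as $s\to 0$, the powers of $s$ can balance only when $m=1$. This is exactly the step that uses the hypothesis $\theta_4\neq0$: by \eqref{46-2} one has $\alpha=\tfrac12\theta_4^2\neq0$, and it is the cubic term carrying $\alpha$ that forces the pole to be simple (if $\alpha=0$ the balance, and hence the pole order, changes). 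Setting $m=1$ and matching the $s^{-3}$ coefficients gives $\tfrac12=\tfrac{\alpha}{t_0^2(t_0-1)^2}\,\xi^2$, i.e.\ $\xi^2=t_0^2(t_0-1)^2/\theta_4^2$, which produces precisely the two admissible leading coefficients $\xi=\pm t_0(t_0-1)/\theta_4$ recorded in \eqref{II-133}.

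Next I would set up the recursion for the remaining coefficients and locate the resonances by linearizing the dominant balance about $\lambda_0=\xi/s$: writing $\lambda=\xi/s+\beta s^{r-1}$ and keeping the linear terms yields the indicial relation $(r-1)(r-2)=3-3r$, i.e.\ $r^2=1$, so the resonances sit at $r=-1$ and $r=1$. The root $r=-1$ encodes the arbitrariness of the pole location $t_0$, while $r=1$ means that the coefficient of $s^{0}$, namely $c_0=h$, is a free parameter; all higher coefficients $c_k$ with $k\geq1$ are then uniquely determined by $\xi$, $t_0$ and $h$ through the recursion. The crucial point is that at the resonance $r=1$ the associated solvability (compatibility) condition holds identically, so that no logarithmic term is forced into the expansion; this is the concrete manifestation of the Painlev\'{e} property of \eqref{46-0} at a movable singularity. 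A standard majorant estimate then shows that the resulting formal Laurent series converges on a punctured disc about $t_0$ and defines a genuine meromorphic solution with the expansion \eqref{II-132}.

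Consequently, for each choice of sign of $\xi$ and each $h\in\mathbb{C}$ one obtains a solution, which gives the two announced one-parameter families. For the exhaustiveness claim, any solution of \eqref{46-0} possessing a pole at $t_0$ admits a Laurent expansion there by the Painlev\'{e} property; the dominant-balance step forces that pole to be simple with residue $\xi$ equal to one of the two values, and the recursion then shows the solution is completely determined by $h=c_0$, so it belongs to one of the two families (distinct values of $h$ giving distinct solutions by uniqueness for the ODE). I expect the genuine content of the argument, beyond the formal computation, to be the verification of the compatibility condition at $r=1$ together with the convergence of the series --- that is, the local Painlev\'{e} property, which for PVI is classical and may be taken from \cite{GP} or established by a majorant argument. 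A cleaner but more structural alternative is to pass to the Hamiltonian system \eqref{aa}--\eqref{98} and analyze pole orbits in the chart $u=1/\lambda$, where existence, uniqueness and the one-parameter count follow from the Cauchy existence theorem in Okamoto's space of initial conditions; this route trades the series manipulations for the blow-up geometry needed to separate the two branches $\xi=\pm t_0(t_0-1)/\theta_4$.
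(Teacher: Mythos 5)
The paper does not prove this proposition: it is quoted verbatim from \cite[Proposition 1.4.1]{GP}, so there is no in-paper argument to compare against. Your dominant-balance and resonance analysis is the standard derivation underlying the cited result, and your computations are correct: the cubic term with coefficient $\alpha=\tfrac12\theta_4^2\neq0$ forces the pole to be simple, the leading balance gives $\tfrac12=\alpha\xi^2/(t_0^2(t_0-1)^2)$ and hence $\xi=\pm t_0(t_0-1)/\theta_4$, and the indicial equation $(r-1)(r-2)=3-3r$ correctly locates the resonances at $r=\pm1$, with $r=1$ accounting for the free parameter $h=c_0$. The only step you assert rather than verify is that the compatibility condition at the resonance $r=1$ is satisfied (no logarithmic terms), which you rightly attribute to the classical Painlev\'e property of PVI; granting that together with the majorant/convergence argument, the existence, the two-family classification, and the exhaustiveness claim are all established.
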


\begin{definition}
When $\theta_{4}>0$, we call that a pole
$t_{0}\in \mathbb{C}\backslash \{0,1\}$ of $\lambda(t)$ is a positive
pole (resp. a negative pole) if the residue is $\frac{t_{0}(t_{0}%
-1)}{\theta_{4}}$ (resp. $-\frac{t_{0}(t_{0}-1)}{\theta_{4}}$).
\end{definition}

Here we have the following simple but interesting observation, which
gives a criterion to determine the type of poles. This result is
important when we want to connect $Z_{r,s}^{(n)}(\tau)$ with PVI.

\begin{proposition}
\label{Prop-II-1}Assume $\theta_{4}>0$. Let
$t_{0}\in \mathbb{C}\backslash \{0,1\}$ and $\lambda(t)$ be a solution
of PVI (\ref{46-0}) which has a pole at $t_{0}$. Then $\lambda \mu$ is
holomorphic at $t_{0}$ and more precisely,
\begin{equation}
\lim \limits_{t\rightarrow t_{0}}\lambda \mu=\left \{
\begin{array}
[c]{l}%
-\theta_{0}\text{\  \  \ if }t_{0}\text{ is a negative pole,}\\
-(\theta_{0}+\theta_{4})\text{\  \  \ if }t_{0}\text{ is a positive pole,}%
\end{array}
\right.  \label{II-136}%
\end{equation}
where $\mu(t)$ is defined by the first equation of the Hamiltonian system
(\ref{aa}).
\end{proposition}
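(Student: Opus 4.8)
The plan is to solve the first Hamilton equation in (\ref{aa}) algebraically for $\mu$ and then feed in the Painlev\'e expansion (\ref{II-132})--(\ref{II-133}) of $\lambda$ near the pole $t_{0}$. Since the Hamiltonian $K$ in (\ref{98}) is quadratic in $\mu$, the relation $\frac{d\lambda}{dt}=\partial K/\partial\mu$ is \emph{linear} in $\mu$ and can be inverted explicitly:
\[
\mu=\frac{t(t-1)\frac{d\lambda}{dt}+\theta_{1}(\lambda-1)(\lambda-t)+\theta_{2}\lambda(\lambda-t)+(\theta_{3}-1)\lambda(\lambda-1)}{2\lambda(\lambda-1)(\lambda-t)}.
\]
This is exactly the definition of $\mu(t)$ in the statement, and it already exhibits $\mu$ as a rational expression in the meromorphic functions $\lambda$, $\frac{d\lambda}{dt}$ and $t$; hence $\lambda\mu$ is meromorphic near $t_{0}$. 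It therefore suffices to compute $\lim_{t\to t_{0}}\lambda\mu$ and check that it is finite, which will simultaneously establish holomorphy at $t_{0}$ (the singularity is removable) and evaluate the limit.

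The next step is to extract leading orders. Writing $\epsilon=t-t_{0}$ and using (\ref{II-132}), one has $\lambda\sim\xi/\epsilon$ and $\frac{d\lambda}{dt}\sim-\xi/\epsilon^{2}\sim-\lambda^{2}/\xi$. Multiplying the displayed formula for $\mu$ by $\lambda$, the denominator contributes its leading term $2\lambda^{3}$, while every summand of the numerator is, to leading order, a multiple of $\lambda^{2}$. The crucial input is the residue formula (\ref{II-133}): since $t(t-1)\to t_{0}(t_{0}-1)$,
\[
t(t-1)\frac{d\lambda}{dt}\sim-\frac{t_{0}(t_{0}-1)}{\xi}\,\lambda^{2}=\mp\theta_{4}\,\lambda^{2},
\]
where the sign is $-\theta_{4}$ at a positive pole (i.e.\ $\xi=t_{0}(t_{0}-1)/\theta_{4}$) and $+\theta_{4}$ at a negative pole. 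Collecting all $\lambda^{2}$-terms, the numerator behaves like $(\mp\theta_{4}+\theta_{1}+\theta_{2}+\theta_{3}-1)\lambda^{2}$.

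The final step is to simplify the constant using the parameter constraint (\ref{46-3}), which gives $\theta_{1}+\theta_{2}+\theta_{3}-1=-2\theta_{0}-\theta_{4}$. Hence the bracketed coefficient equals $-2(\theta_{0}+\theta_{4})$ at a positive pole and $-2\theta_{0}$ at a negative pole, and dividing by $2\lambda^{3}$ and multiplying by $\lambda$ yields precisely (\ref{II-136}). I do not expect any genuine analytic obstacle: the entire content is this leading-order computation together with the identity (\ref{46-3}). The only points requiring care are bookkeeping ones — first, verifying that the subleading data of $\lambda$ (the constant $h$ and the $O(\epsilon)$ tail) do not contribute at the $\lambda^{2}$ level of the numerator, which is immediate once everything is expanded in powers of $\epsilon$; and second, correctly matching the two sign choices in (\ref{II-133}) with the definitions of positive and negative pole, which is what produces the two cases $-\theta_{0}$ and $-(\theta_{0}+\theta_{4})$.
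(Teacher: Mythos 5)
Your proposal is correct and follows essentially the same route as the paper: both arguments use the first Hamiltonian equation $\frac{d\lambda}{dt}=\partial K/\partial\mu$ together with the Laurent data (\ref{II-132})--(\ref{II-133}) to read off the $(t-t_{0})^{-2}$ coefficient, arriving at $2\lim\lambda\mu=\theta_{1}+\theta_{2}+\theta_{3}-1-\tfrac{t_{0}(t_{0}-1)}{\xi}$ and then invoking (\ref{46-3}). The only cosmetic difference is that you invert the linear relation for $\mu$ explicitly, while the paper sets $L=\lim\lambda\mu$, excludes $L=\infty$ by contradiction, and compares coefficients; the mathematical content is identical.
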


\begin{proof}
By Proposition \ref{thm-2A} we have%
\begin{equation}
\lambda(t)=\frac{\xi}{t-t_{0}}\left(  1+O(t-t_{0})\right)  \text{ as
}t\rightarrow t_{0}, \label{II-137}%
\end{equation}%
\begin{equation}
\lambda^{\prime}(t)=-\frac{\xi}{(t-t_{0})^{2}}\left(  1+O(t-t_{0})\right)
\text{ as }t\rightarrow t_{0}. \label{II-138}%
\end{equation}
Since the first equation of the Hamiltonian system (\ref{aa})
reads{\allowdisplaybreaks%
\begin{align}
\lambda^{\prime}(t)=\frac{\partial K}{\partial \mu}=  &  \frac{1}%
{t(t-1)}\big[2\lambda(\lambda-1)(\lambda-t)\mu-\theta_{1}(\lambda
-1)(\lambda-t)\label{II-139}\\
&  -\theta_{2}\lambda(\lambda-t)-(\theta_{3}-1)\lambda(\lambda
-1)\big],\nonumber
\end{align}
}we see that $\mu(t)$ is meromorphic in a neighborhood of $t_{0}$ and so does
$\lambda \mu$. Denote%
\begin{equation}
\lim \limits_{t\rightarrow t_{0}}\lambda(t)\mu(t)=L\in \mathbb{C}\cup \{
\infty \}. \label{II-140}%
\end{equation}
If $L=\infty$, then by substituting (\ref{II-137})-(\ref{II-138}) into
(\ref{II-139}), we easily obtain a contradiction. Thus $L$ is finite, namely
$\lambda \mu$ is holomorphic at $t_{0}$. Again by substituting (\ref{II-137}%
)-(\ref{II-138}) and (\ref{II-140}) into (\ref{II-139}) and comparing the
coefficients of the term $(t-t_{0})^{-2}$, we easily obtain%
\[
2L=\theta_{1}+\theta_{2}+\theta_{3}-1-\frac{t_{0}(t_{0}-1)}{\xi}.
\]
This, together with (\ref{II-133}) and (\ref{46-3}), proves (\ref{II-136}).
\end{proof}

\subsection{The Okamoto transformations}
From now on we consider the elliptic form (\ref{124-0}) with parameters%
\[
(\alpha_{0},\alpha_1, \alpha_2, \alpha_3)
=(\tfrac{1}{2}(n+\tfrac{1}{2})^{2},\tfrac{1}{8},\tfrac{1}{8},\tfrac{1}{8}),
\quad n\in\mathbb{Z}_{\geq 0},
\]
or equivalently PVI (\ref{46-0}) with parameters%
\[
(\alpha,\beta,\gamma,\delta)=  (  \tfrac{1}{2}(n+\tfrac{1}{2}%
)^{2},\tfrac{-1}{8},\tfrac{1}{8},\tfrac{3}{8}).
\]
It is known that solutions of PVI$(\frac{1}{2}(n+\frac12)^2,\frac{-1}{8},\frac{1}{8},\frac
{3}{8})$ could be
obtained from solutions of PVI$(\frac{1}{8},\frac{-1}{8},\frac{1}{8},\frac
{3}{8})$ (i.e. $n=0$) via the Okamoto transformation
\cite{Okamoto1}. First we recall the explicit form of the Okamoto
transformations. By (\ref{46-2})-(\ref{46-3}), it is convenient to think
of the parameter space of PVI (\ref{46-0}) (equivalently the Hamiltonian
system (\ref{aa})-(\ref{98})) as an affine space%
\[
\mathcal{K}=\left \{  \theta=(\theta_{0},\theta_{1},\theta_{2},\theta
_{3},\theta_{4})\in \mathbb{C}^{5}\text{ }:\text{ }2\theta_{0}+\theta
_{1}+\theta_{2}+\theta_{3}+\theta_{4}=1\right \}  .
\]
An Okamoto transformation $\kappa$ maps solutions $\lambda(t)$ of PVI
(\ref{46-0}) (resp. solutions $(\lambda(t),\mu(t))$ of the Hamiltonian
system (\ref{aa})) with parameter $\theta$ to solutions $\kappa(\lambda)(t)$
of PVI (\ref{46-0}) (resp. solutions $(\kappa(\lambda)(t),\kappa(\mu)(t))$
of (\ref{aa})) with new parameter $\kappa(\theta)\in \mathcal{K}$. The list of
the Okamoto transformations $\kappa_{j}(0\leq j\leq4)$ is given in the Table 1
(cf. \cite{Tsuda-Okamoto-Sakai}).\begin{table}[tbh]
\caption{Okamoto transformations}%
\centering
\par%
\begin{tabular}
[c]{|l|c|c|c|c|c|l|c|c|}\hline
& $\theta_{0}$ & $\theta_{1}$ & $\theta_{2}$ & $\theta_{3}$ & $\theta_{4}$ &
$t$ & $\lambda$ & $\mu$\\ \hline
$\kappa_{0}$ & $-\theta_{0}$ & $\theta_{1}+\theta_{0}$ & $\theta_{2}%
+\theta_{0}$ & $\theta_{3}+\theta_{0}$ & $\theta_{4}+\theta_{0}$ & $t$ &
$\lambda+\frac{\theta_{0}}{\mu}$ & $\mu$\\ \hline
$\kappa_{1}$ & $\theta_{0}+\theta_{1}$ & $-\theta_{1}$ & $\theta_{2}$ &
$\theta_{3}$ & $\theta_{4}$ & $t$ & $\lambda$ & $\mu-\frac{\theta_{1}}%
{\lambda}$\\ \hline
$\kappa_{2}$ & $\theta_{0}+\theta_{2}$ & $\theta_{1}$ & $-\theta_{2}$ &
$\theta_{3}$ & $\theta_{4}$ & $t$ & $\lambda$ & $\mu-\frac{\theta_{2}}%
{\lambda-1}$\\ \hline
$\kappa_{3}$ & $\theta_{0}+\theta_{3}$ & $\theta_{1}$ & $\theta_{2}$ &
$-\theta_{3}$ & $\theta_{4}$ & $t$ & $\lambda$ & $\mu-\frac{\theta_{3}%
}{\lambda-t}$\\ \hline
$\kappa_{4}$ & $\theta_{0}+\theta_{4}$ & $\theta_{1}$ & $\theta_{2}$ &
$\theta_{3}$ & $-\theta_{4}$ & $t$ & $\lambda$ & $\mu$\\ \hline
\end{tabular}
\end{table}

These five transformations $\kappa_{j}$ $(0\leq j\leq4)$, which satisfy
$\kappa_{j}\circ \kappa_{j}=Id$, generate the affine Weyl group of type
$D_{4}^{(1)}$:%
\[
W(D_{4}^{(1)})=\left \langle \kappa_{0},\kappa_{1},\kappa_{2},\kappa_{3}%
,\kappa_{4}\right \rangle .
\]
Define%
\begin{equation}
\kappa_{5}:=\kappa_{0}(\kappa_{3}\kappa_{2}\kappa_{1}\kappa_{0})^{2}\kappa
_{4}, \label{II-120}%
\end{equation}%
A straightforward computation shows%
\begin{equation}
\kappa_{5}(\theta)=(\theta_{0}-1,\theta_{1},\theta_{2},\theta_{3},\theta
_{4}+2). \label{II-121}%
\end{equation}
Note that for PVI$(\frac{1}{2}(n+\frac12)^2,\frac{-1}{8},\frac{1}{8},\frac
{3}{8})$, the corresponding $\theta^n:=\theta\in \mathcal{K}$ is given by
\begin{equation}
\theta^{n}:=\left(  -\tfrac{n+1}{2},\text{ }\tfrac{1}{2},\text{ }\tfrac{1}%
{2},\text{ }\tfrac{1}{2},\text{ }n+\tfrac{1}{2}\right),\quad\text{i.e. }\;
\theta_4=n+\tfrac12>0. \label{I1I-175}%
\end{equation}
Letting $\kappa^{0,1}:=\kappa_{0}\circ \kappa_{3}%
\circ \kappa_{2}\circ \kappa_{1}$, we have $\kappa^{0,1}(\theta^{0})=\theta^{1}$. Define
\[\kappa^{0,n}:=\begin{cases}
(\kappa_{5})^m \circ \kappa^{0,1},\quad \text{if $n=2m+1$ odd},\\
(\kappa_{5})^m \quad \text{if $n=2m$ even},
\end{cases}\]
then it follows from $\kappa_5(\theta^n)=\theta^{n+2}$ that $\kappa^{0,n}(\theta^0)=\theta^n$ for any $n$. Thus there exist two \emph{rational functions} $R_n
(\cdot,\cdot,\cdot)$ and $\tilde{R}_n(\cdot,\cdot,\cdot)$ of three
independent variables with coefficients in $\mathbb{Q}$ such that for any
solution $(\lambda^{(0)}(t),\mu^{(0)}(t))$ of the Hamiltonian
system (\ref{aa}) with parameter $\theta^{0}$, $(\lambda^{(n)
}(t),\mu^{(n)}(t))$ given by%
\begin{equation}
\lambda^{(n)}(t):=\kappa(\lambda^{(0)})(t)=R_{n
}(\lambda^{(0)}(t),\mu^{(0)}(t),t), \label{II-128}%
\end{equation}%
\begin{equation}
\mu^{(n)}(t):=\kappa(\mu^{(0)})(t)=\tilde{R}_{n
}(\lambda^{(n)}(t),\mu^{(0)}(t),t), \label{II-128-0}%
\end{equation}
is a solution of the Hamiltonian system (\ref{aa}) with parameter
$\theta^{n}$, or equivalently, $\lambda^{(n)}(t)$ is a
solution of PVI$(\frac{1}{2}(n+\frac12)^2,\frac{-1}{8},\frac{1}{8},\frac
{3}{8})$.

Now we recall Hitchin's formula.
\medskip

\noindent \textbf{Theorem B. (Hitchin \cite{Hit1})} \emph{For any
pair $(r,s)\in \mathbb{C}^{2}\backslash \frac{1}{2}\mathbb{Z}^{2}$,
define $p^{(0)}_{r,s}(\tau)$ by
\begin{equation}
\wp(p^{(0)}_{r,s}(\tau)|\tau):=\wp(r+s\tau|\tau)+\frac{\wp^{\prime}(r+s\tau|\tau)}%
{2(\zeta(r+s\tau|\tau)-r\eta_{1}(\tau)-s\eta_{2}(\tau))}. \label{II-1}%
\end{equation}
Then $p^{(0)}_{r,s}(\tau)$ is a solution of EPVI$(\frac{1}{8},\frac{1}{8},\frac{1}{8},\frac{1}{8})$; or
equivalently, \[\lambda^{(0)}_{r,s}(t):=\frac{\wp(p^{(0)}_{r,s}(\tau)|\tau)-e_1(\tau)}
{e_2(\tau)-e_1(\tau)}\]
is a solution of PVI$(\frac{1}{8},\frac{-1}{8},\frac{1}{8},\frac{3}{8})$.}

\medskip

\noindent \textbf{Notation}: \emph{Let $\lambda^{(n)}(t)$ be a solution of PVI$(\frac{1}{2}(n+\frac12)^2,\frac{-1}{8},\frac{1}{8},\frac
{3}{8})$ and $p^{(n)}(\tau)$ be the corresponding solution of EPVI$(\frac{1}{2}(n+\frac12)^2,\frac{1}{8},\frac{1}{8},\frac
{1}{8})$. We denote them by $\lambda^{(n)}_{r,s}(t)$  and $p^{(n)}_{r,s}(\tau)$ respectively if them come
from the solution $\lambda_{r,s}^{(0)}(t)$
of PVI$(\frac{1}{8},$ $\frac{-1}{8},\frac{1}{8},\frac{3}{8})$ and
the corresponding $p_{r,s}^{(0)}(\tau)$ of
EPVI$(\frac{1}{8},\frac{1}{8},\frac{1}{8},\frac{1}{8})$ stated in Theorem B
via (\ref{II-128}), i.e.
\begin{equation}
\frac{\wp(p_{r,s}^{(n)}(\tau)|\tau)-e_{1}(\tau)}{e_{2}(\tau)-e_{1}%
(\tau)}=\lambda^{(n)}_{r,s}(t)=R_n\left(\lambda_{r,s}^{(0)}(t),\mu_{r,s}^{(0)}(t),t\right),
\label{comred}%
\end{equation}%
where $\mu_{r,s}^{(0)}(t)$ is the corresponding $\mu^{(0)}(t)$
of $\lambda_{r,s}^{(0)}(t)$.
We also use notation $\mu_{r,s}^{(n)}(t)$ via (\ref{II-128-0}).}

\begin{remark}
\label{identify} Clearly for any $m_{1},m_{2}\in \mathbb{Z}$, $\pm
p_{r,s}^{(n)}(\tau)+m_{1}+m_{2}\tau$ is also a solution of EPVI$(\frac{1}{2}(n+\frac12)^2,\frac{1}{8},\frac{1}{8},\frac
{1}{8})$. Since they all give the same $\lambda_{r,s}^{(n)}(t)$ via
(\ref{II-130}), in this paper we always identify all these solutions $\pm
p_{r,s}^{(n)}(\tau)+m_{1}+m_{2}\tau$ with the same one
$p_{r,s}^{(n)}(\tau)$.
\end{remark}

\subsection{Generalization of Hitchin's formula}
In this section, we exploit the
Okamoto transformation to study the explicit expression of $\wp(p_{r,s}^{(n)}(\tau)|\tau)$,
which can be seen as a generalization of Hitchin's formula (\ref{II-1}).
First we recall the Okamoto transformation $\kappa_{5}$ defined in (\ref{II-120}).

\begin{lemma}
\label{lemII-2-5}Let $(\lambda(t),\mu(t))$ be a solution of the Hamiltonian
system (\ref{aa}) with parameter $\theta \in \mathcal{K}$. Then $(\tilde
{\lambda}(t),\tilde{\mu}(t)):=(\kappa_{5}(\lambda)(t),\kappa_{5}(\mu)(t))$,
which is a solution of the Hamiltonian system (\ref{aa}) with new parameter
\[
\tilde{\theta}=(\tilde{\theta}_{0},\tilde{\theta}_{1},\tilde{\theta}%
_{2},\tilde{\theta}_{3},\tilde{\theta}_{4}):=\kappa_{5}(\theta)=(\theta
_{0}-1,\theta_{1},\theta_{2},\theta_{3},\theta_{4}+2),
\]
are expressed as{\allowdisplaybreaks%
\begin{align*}
\tilde{\lambda}(t)  &  =\hat{\lambda}(t)+\frac{1-\theta_{0}}{\tilde{\mu}%
(t)},\\
\tilde{\mu}(t)  &  =\hat{\mu}(t)+\frac{\theta_{0}+\theta_{1}-1}{\hat{\lambda
}(t)}+\frac{\theta_{0}+\theta_{2}-1}{\hat{\lambda}(t)-1}+\frac{\theta
_{0}+\theta_{3}-1}{\hat{\lambda}(t)-t},
\end{align*}
}where{\allowdisplaybreaks%
\begin{align*}
\hat{\lambda}(t)  &  =\bar{\lambda}(t)+\frac{1+\theta_{4}}{\hat{\mu}%
(t)},\text{ \  \  \ }\bar{\lambda}(t)=\lambda(t)+\frac{\theta_{0}+\theta_{4}%
}{\mu(t)},\\
\hat{\mu}(t)  &  =\mu(t)-\frac{\theta_{0}+\theta_{1}+\theta_{4}}{\text{\ }%
\bar{\lambda}(t)}-\frac{\theta_{0}+\theta_{2}+\theta_{4}}{\text{\ }%
\bar{\lambda}(t)-1}-\frac{\theta_{0}+\theta_{3}+\theta_{4}}{\text{\ }%
\bar{\lambda}(t)-t}.
\end{align*}
}
\end{lemma}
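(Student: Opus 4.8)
The plan is to prove the lemma by direct computation, decomposing $\kappa_5$ into the ten elementary Okamoto transformations according to its definition (\ref{II-120}), $\kappa_5=\kappa_0(\kappa_3\kappa_2\kappa_1\kappa_0)^2\kappa_4$, and applying them one at a time from right to left. At each stage I would use Table 1 to update both the pair $(\lambda,\mu)$ and the parameter vector $\theta$, bearing in mind that the symbols $\theta_0,\dots,\theta_4$ appearing in the rules of Table 1 always refer to the \emph{current} parameter, not the original one. The constraint (\ref{46-3}), i.e. $2\theta_0+\theta_1+\theta_2+\theta_3+\theta_4=1$, will be invoked to simplify the running value of $\theta_0$ at the two places where a $\kappa_0$ acts on $\lambda$.

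Concretely, the first two transformations produce $\bar\lambda$: applying $\kappa_4$ leaves $(\lambda,\mu)$ unchanged but sends $\theta_0\mapsto\theta_0+\theta_4$, so the ensuing $\kappa_0$ shifts $\lambda$ by the current $\theta_0/\mu=(\theta_0+\theta_4)/\mu$, giving exactly $\bar\lambda$ and turning the parameter into $(-(\theta_0+\theta_4),\,\theta_0+\theta_1+\theta_4,\,\theta_0+\theta_2+\theta_4,\,\theta_0+\theta_3+\theta_4,\,\theta_0)$. Since $\kappa_1,\kappa_2,\kappa_3$ fix $\lambda$ and each subtracts from $\mu$ a term governed by the current $\theta_1,\theta_2,\theta_3$ — which are untouched by their siblings — the next block $\kappa_3\kappa_2\kappa_1$ subtracts $(\theta_0+\theta_1+\theta_4)/\bar\lambda+(\theta_0+\theta_2+\theta_4)/(\bar\lambda-1)+(\theta_0+\theta_3+\theta_4)/(\bar\lambda-t)$ from $\mu$, producing precisely $\hat\mu$.

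Next, the second $\kappa_0$ acts on the current state $(\bar\lambda,\hat\mu)$; at this point the running $\theta_0$ equals $\theta_1+(\theta_0+\theta_2+\theta_4)+(\theta_0+\theta_3+\theta_4)$, which collapses to $1+\theta_4$ by (\ref{46-3}), so $\lambda$ is shifted by $(1+\theta_4)/\hat\mu$ and becomes $\hat\lambda$. The following block $\kappa_3\kappa_2\kappa_1$ again fixes $\lambda=\hat\lambda$ and adjusts $\mu$ by terms whose current parameters are now $1-\theta_0-\theta_i$; since $-(1-\theta_0-\theta_i)=\theta_0+\theta_i-1$, this yields $\tilde\mu$. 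Finally the outermost $\kappa_0$ is applied to $(\hat\lambda,\tilde\mu)$, where the running $\theta_0$ simplifies to $1-\theta_0$ via (\ref{46-3}), shifting $\lambda$ by $(1-\theta_0)/\tilde\mu$ to give $\tilde\lambda$. Tracking $\theta$ through the same ten steps returns $(\theta_0-1,\theta_1,\theta_2,\theta_3,\theta_4+2)$, which confirms (\ref{II-121}).

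The computation involves no difficulty of principle; the one point demanding care — and the only place an error is likely to creep in — is the parameter bookkeeping, specifically the repeated use of (\ref{46-3}) to reduce the accumulated value of $\theta_0$ to the clean expressions $1+\theta_4$ and $1-\theta_0$ before each of the two $\kappa_0$ steps that shift $\lambda$. Getting these two reductions right is exactly what makes the coefficients $1+\theta_4$ and $1-\theta_0$ in the statement appear, and simultaneously verifies that $\kappa_5$ produces the claimed effect (\ref{II-121}) on the parameters.
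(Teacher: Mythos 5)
Your proposal is correct and is precisely the argument the paper intends: its proof of this lemma consists of the single sentence that it is ``a straightforward computation via Table 1,'' and your step-by-step application of the ten factors of $\kappa_{5}$, including the two uses of the constraint (\ref{46-3}) to reduce the running $\theta_{0}$ to $1+\theta_{4}$ and $1-\theta_{0}$, carries out exactly that computation and correctly recovers both the formulas and the parameter change (\ref{II-121}).
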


\begin{proof}
The proof is just a straightforward computation via Table 1.
\end{proof}

Since the Okamoto transformation is invertible, it is known from $\kappa^{0,n}(\theta^0)=\theta^n$ that $(\lambda
_{r,s}^{(n)}(t),\mu_{r,s}^{(n)}(t))$ can
be transformed into $(\lambda_{r,s}^{(m)}(t),\mu_{r,s}^{(m)}(t))$ via Okamoto transformations for any $n\not =m$.
In the following, we often omit the subscripts $r,s$ for convenience. The
following result gives the explicit expression of $(\lambda^{(n)}(t),\mu
^{(n)}(t))$ in terms of $(\lambda^{(n-1)}(t),\mu^{(n-1)}(t))$.

\begin{lemma}
\label{thm-II-16}Under the above notations, for $n\geq1$ there holds:%
\begin{align}
\mu^{(n)}=  &  \mu^{(n-1)}-\frac{n}{2}\bigg(\frac{1}{\lambda^{(n-1)}%
+\frac{n-1}{2\mu^{(n-1)}}}\label{II-500}\\
&  +\frac{1}{\lambda^{(n-1)}+\frac{n-1}{2\mu^{(n-1)}}-1}+\frac{1}%
{\lambda^{(n-1)}+\frac{n-1}{2\mu^{(n-1)}}-t}\bigg),\nonumber
\end{align}%
\begin{equation}
\lambda^{(n)}=\lambda^{(n-1)}+\frac{n-1}{2\mu^{(n-1)}}+\frac{n+1}{2\mu^{(n)}}.
\label{II-501}%
\end{equation}

\end{lemma}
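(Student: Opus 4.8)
The plan is to realize the single index-raising step $n-1\mapsto n$ as one fixed element of the affine Weyl group $W(D_4^{(1)})$ and then read off \eqref{II-500}--\eqref{II-501} by tracking $(\lambda,\mu)$ through its factorization into the generators $\kappa_j$ via Table~1. Concretely, I would set
\[
\sigma:=\kappa_0\kappa_4\kappa_3\kappa_2\kappa_1\kappa_0\kappa_4
\]
(composition read right to left) and first verify on parameters that $\sigma(\theta^{n-1})=\theta^{n}$ for every $n$: starting from $\theta^{n-1}=(-\tfrac n2,\tfrac12,\tfrac12,\tfrac12,n-\tfrac12)$, the opening $\kappa_4$ sends $\theta_0\mapsto\theta_0+\theta_4=\tfrac{n-1}2$, the first $\kappa_0$ then makes $\theta_1=\theta_2=\theta_3=\tfrac n2$, the block $\kappa_3\kappa_2\kappa_1$ drives $\theta_0$ up to $\tfrac{2n+1}2$, the second $\kappa_4$ lowers it to $\tfrac{n+1}2$, and the closing $\kappa_0$ returns $\theta$ to $\theta^{n}$.

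With $\sigma$ in hand I would trace its action on $(\lambda,\mu)$ starting from a solution $(\lambda^{(n-1)},\mu^{(n-1)})$ of parameter $\theta^{n-1}$. By Table~1 the two $\kappa_4$'s fix $(\lambda,\mu)$; the first $\kappa_0$ (with $\theta_0=\tfrac{n-1}2$) produces $\bar\lambda=\lambda^{(n-1)}+\tfrac{n-1}{2\mu^{(n-1)}}$ and leaves $\mu$ alone; the block $\kappa_1\kappa_2\kappa_3$ (each with coefficient $\theta_i=\tfrac n2$, and $\lambda$ frozen at $\bar\lambda$) subtracts $\tfrac n2\big(\tfrac1{\bar\lambda}+\tfrac1{\bar\lambda-1}+\tfrac1{\bar\lambda-t}\big)$ from $\mu$, which is exactly the right-hand side of \eqref{II-500}; and the closing $\kappa_0$ (now with $\theta_0=\tfrac{n+1}2$) adds $\tfrac{n+1}{2\mu^{(n)}}$ to $\bar\lambda$, giving \eqref{II-501}. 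Thus the two formulas follow once we know that $\sigma$ transports $(\lambda^{(n-1)},\mu^{(n-1)})$ to $(\lambda^{(n)},\mu^{(n)})$.

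Justifying that last point is the crux, since $\lambda^{(n)},\mu^{(n)}$ are defined through the composite $\kappa^{0,n}$ rather than through $\sigma$. The decisive fact is the group identity $\sigma^2=\kappa_5$: using $\kappa_j^2=\mathrm{id}$, the commutation of $\kappa_4$ with $\kappa_1,\kappa_2,\kappa_3$, and the braid relation $\kappa_4\kappa_0\kappa_4=\kappa_0\kappa_4\kappa_0$, the fourteen-letter word $\sigma^2$ collapses to $\kappa_0(\kappa_3\kappa_2\kappa_1\kappa_0)^2\kappa_4=\kappa_5$ of \eqref{II-120}. Since the $\kappa_j$ generate a $W(D_4^{(1)})$-action by birational maps, this relation holds verbatim on solutions. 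Combined with $\kappa^{0,n}=\kappa_5^{\lfloor n/2\rfloor}$ for even $n$ and $\kappa^{0,n}=\kappa_5^{\lfloor n/2\rfloor}\kappa^{0,1}$ for odd $n$, it gives $\kappa^{0,n}=\sigma^{n}$ as group elements when $n$ is even and $\kappa^{0,n}=\sigma^{n-1}\kappa^{0,1}$ when $n$ is odd. A direct base-case computation (tracing $\kappa^{0,1}=\kappa_0\kappa_3\kappa_2\kappa_1$ and $\sigma$ on $(\lambda^{(0)},\mu^{(0)})$, where the opening $\kappa_0$ of $\sigma$ is trivial because $\theta_0=0$ for $\theta^0$) shows that $\kappa^{0,1}$ and $\sigma$ induce the same map on $\theta^0$-solutions; applying $\sigma^{n-1}$ to both then yields $\sigma^{n}(\lambda^{(0)},\mu^{(0)})=\kappa^{0,n}(\lambda^{(0)},\mu^{(0)})=(\lambda^{(n)},\mu^{(n)})$ for all $n$, whence $(\lambda^{(n)},\mu^{(n)})=\sigma(\lambda^{(n-1)},\mu^{(n-1)})$.

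The main obstacle is precisely this identification of the ``half-step.'' One cannot simply harvest \eqref{II-500}--\eqref{II-501} from the intermediate quantities of Lemma~\ref{lemII-2-5}: there the midway $\hat\mu$ does coincide with $\mu^{(n)}$, but the midway $\hat\lambda=\bar\lambda+\tfrac{1+\theta_4}{\hat\mu}=\bar\lambda+\tfrac{2n+1}{2\mu^{(n)}}$ overshoots the desired $\lambda^{(n)}=\bar\lambda+\tfrac{n+1}{2\mu^{(n)}}$, because $\kappa_5$ advances the index by $2$ rather than $1$. Hence the genuine work is to produce the correct square-root word $\sigma$ and to verify $\sigma^2=\kappa_5$ at the level of $W(D_4^{(1)})$ (not merely on the parameter $\theta^{n-1}$), so that $\sigma$ is compatible with the normalization $\kappa^{0,n}$ that defines $(\lambda^{(n)},\mu^{(n)})$; the remainder is the bookkeeping through Table~1 described above.
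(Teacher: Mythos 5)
Your proposal is correct, and it takes a genuinely different route from the paper. The paper proves \eqref{II-500}--\eqref{II-501} by induction on $n$: the base case $n=1$ is the direct trace of $\kappa^{0,1}$ through Table~1, and the inductive step applies Lemma~\ref{lemII-2-5} (i.e. the two-index shift $\kappa_5:\theta^{m-2}\mapsto\theta^{m}$) and then uses the induction hypothesis at level $m-1$ to recognize the intermediate quantities, namely $\hat\mu=\mu^{(m-1)}$ and $\hat\lambda=\lambda^{(m-1)}+\tfrac{m-1}{2\mu^{(m-1)}}$; no square root of $\kappa_5$ is ever exhibited. You instead produce the half-step explicitly as the word $\sigma=\kappa_0\kappa_4\kappa_3\kappa_2\kappa_1\kappa_0\kappa_4$, verify $\sigma(\theta^{n-1})=\theta^{n}$, and reduce $\sigma^2$ to $\kappa_5$ via $\kappa_0\kappa_4\kappa_0\kappa_4=\kappa_4\kappa_0$ and the commutation of $\kappa_4$ with $\kappa_1,\kappa_2,\kappa_3$; I checked that this word calculus and the parameter bookkeeping are right, and that $\sigma$ agrees with $\kappa^{0,1}$ on $\theta^0$-solutions (the opening $\kappa_0$ is trivial there), so the identification $(\lambda^{(n)},\mu^{(n)})=\sigma(\lambda^{(n-1)},\mu^{(n-1)})$ is legitimate. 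You also correctly diagnosed the crux: the midway $\hat\lambda$ of Lemma~\ref{lemII-2-5} overshoots $\lambda^{(n)}$ by $\tfrac{2n+1}{2\mu^{(n)}}$ versus $\tfrac{n+1}{2\mu^{(n)}}$, which is exactly the gap the paper closes with the induction hypothesis and you close with the factorization $\kappa_5=\sigma^2$. The trade-off: the paper's induction uses nothing beyond the explicit Table~1 formulas and is self-contained, whereas your argument is structurally more illuminating (it explains why a one-step recursion exists at all) but leans on the braid relation $\kappa_4\kappa_0\kappa_4=\kappa_0\kappa_4\kappa_0$ holding at the level of birational maps; this is standard and is in fact easily verified directly from Table~1, but you should state that verification (or cite it) rather than only invoking the abstract $W(D_4^{(1)})$ presentation.
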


\begin{proof}
We prove these two formulae via induction.\medskip

\textbf{Step 1.} We consider $n=1$.

Recalling Table 1 and $\kappa^{0,1}:=\kappa_{0}\circ \kappa_{3}%
\circ \kappa_{2}\circ \kappa_{1}$, we have $\kappa^{0,1}(\theta^{0})=\theta^{1}$.
Using the expressions of $\kappa_{j}$ given in Table 1, a straightforward
calculation gives%
\begin{equation}
\mu^{(1)}=\mu^{(0)}-\frac{1}{2}\left(  \frac{1}{\lambda^{(0)}}+\frac
{1}{\lambda^{(0)}-1}+\frac{1}{\lambda^{(0)}-t}\right)  , \label{II-502}%
\end{equation}%
\begin{equation}
\lambda^{(1)}=\lambda^{(0)}+\frac{1}{\mu^{(1)}}. \label{503}%
\end{equation}
This proves (\ref{II-500})-(\ref{II-501}) for $n=1$.\medskip

\textbf{Step 2.} Assume that (\ref{II-500})-(\ref{II-501}) hold for $n=m-1$,
where $m\geq2$, we claim that (\ref{II-500})-(\ref{II-501}) hold for $n=m$.

Since $\kappa_{5}(\theta^{m-2})=\theta^{m}$, we apply Lemma \ref{lemII-2-5} to
obtain
\begin{equation}
\lambda^{(m)}=\hat{\lambda}+\frac{m+1}{2\mu^{(m)}},\text{ }\mu^{(m)}=\hat{\mu
}-\frac{m}{2}\Big(\frac{1}{\hat{\lambda}}+\frac{1}{\hat{\lambda}-1}+\frac
{1}{\hat{\lambda}-t}\Big), \label{II-504}%
\end{equation}
where {\allowdisplaybreaks%
\begin{align}
\bar{\lambda}  &  =\lambda^{(m-2)}+\frac{m-2}{2\mu^{(m-2)}},\label{II-506}\\
\hat{\lambda}  &  =\bar{\lambda}(t)+\frac{2m-1}{2\hat{\mu}(t)}, \label{II-505}%
\\
\hat{\mu}  &  =\mu^{(m-2)}-\frac{m-1}{2}\left(  \frac{1}{\text{\ }\bar
{\lambda}(t)}-\frac{1}{\text{\ }\bar{\lambda}(t)-1}-\frac{1}{\text{\ }%
\bar{\lambda}(t)-t}\right)  . \label{II-508}%
\end{align}
}Since (\ref{II-500})-(\ref{II-501}) hold for $n=m-1$, it is easy to see from
(\ref{II-506}) and (\ref{II-508}) that $\hat{\mu}=\mu^{(m-1)}$. Substituting
$\hat{\mu}=\mu^{(m-1)}$ and (\ref{II-506}) into (\ref{II-505}), we have%
\[
\hat{\lambda}=\lambda^{(m-2)}+\frac{m-2}{2\mu^{(m-2)}}+\frac{2m-1}%
{2\mu^{(m-1)}}=\lambda^{(m-1)}+\frac{m-1}{2\mu^{(m-1)}}.
\]
Consequently, (\ref{II-504}) implies that (\ref{II-500})-(\ref{II-501}) hold
for $n=m$. This completes the proof.
\end{proof}

To state our main results of this section, we give some general settings. Fix
any $(r,s)\in \mathbb{C}^{2}\backslash \frac{1}{2}\mathbb{Z}^{2}$ and let
\begin{equation}
a(\tau)=r+s\tau,\text{ \ }Z_{r,s}(\tau)=\zeta(a(\tau)|\tau)-r\eta_{1}%
(\tau)-s\eta_{2}(\tau). \label{II-46-7}%
\end{equation}
Note from Theorem B that
\begin{equation}
\lambda_{r,s}^{(0)}(t)=\frac{\wp(p_{r,s}^{(0)}(\tau)|\tau)-e_{1}(\tau)}{e_{2}(\tau)-e_{1}(\tau)}%
=\frac{\wp(a(\tau)|\tau)+\frac{\wp^{\prime}\left(  a(\tau)|\tau \right)
}{2Z_{r,s}(\tau)}-e_{1}(\tau)}{e_{2}(\tau)-e_{1}(\tau)}. \label{II-126}%
\end{equation}
Then
\cite[(4.21)]{Chen-Kuo-Lin} shows that the corresponding $\mu_{r,s}^{(0)}(t)$ is given by
\begin{equation}
\mu_{r,s}^{(0)}(t)=\frac{e_{2}(\tau)-e_{1}(\tau)}{2(\wp(p_{r,s}^{(0)}(\tau)|\tau)-\wp(a(\tau)|\tau
))}=\frac{(e_{2}(\tau)-e_{1}(\tau))Z_{r,s}(\tau)}{\wp^{\prime}(a(\tau)|\tau)}.
\label{II-127}%
\end{equation}
We rewrite them as follows:
\begin{equation}
\lambda^{(0)}(t)=\lambda_{r,s}^{(0)}(t)=q_{0}(Z_{r,s}(\tau)),\text{ \ }%
\mu^{(0)}(t)=\mu_{r,s}^{(0)}(t)=p_{0}(Z_{r,s}(\tau)), \label{II-46}%
\end{equation}
where%
\begin{equation}
q_{0}(X):=\frac{R_{0}(X)}{Q_{0}(X)},\text{ \  \ }p_{0}(X):=\frac{Q_{0}%
(X)}{G_{0}(X)}, \label{II-513}%
\end{equation}%
\begin{equation}
Q_{0}(X)=X,\text{ \ }G_{0}(X)=\frac{\wp^{\prime}(a(\tau)|\tau)}{e_{2}%
(\tau)-e_{1}(\tau)}, \label{II-511}%
\end{equation}%
\begin{equation}
R_{0}(X)=\frac{\wp(a(\tau)|\tau)-e_{1}(\tau)}{e_{2}(\tau)-e_{1}(\tau)}%
X+\frac{1}{2}\frac{\wp^{\prime}(a(\tau)|\tau)}{e_{2}(\tau)-e_{1}(\tau)}.
\label{II-512}%
\end{equation}
In the following, we fix any $\tau \in \mathbb{H}$ such that $a(\tau
)\not \in E_{\tau}[2]$ and let $t=t(\tau)$. Clearly $R_{0}(X)$, $Q_{0}(X)$ and
$G_{0}(X)$ satisfy

\begin{itemize}
\item[(1-$0$)] $Q_{0}(X)$ is a polynomial of degree $1$; $R_{0}(X)$ is a
polynomial of degree $1$; $G_{0}(X)$ is a non-zero constant.

\item[(2-$0$)] any two of $\{Q_{0}(X),G_{0}(X),R_{0}(X)\}$ has no common zeros.

\item[(3-$0$)] $Q_{0}(X)|(R_{0}(X)-\frac{1}{2}G_{0}(X))$.

\item[(4-$0$)] $\deg(R_{0}(X)-Q_{0}(X))=\deg(R_{0}(X)-tQ_{0}(X))=\deg
R_{0}(X)=1$.
\end{itemize}

\noindent Remark that $a(\tau)\not \in E_{\tau}[2]$ is the \emph{key}
assumption, because these properties can \emph{not} hold if $a(\tau)\in
E_{\tau}[2]$.

For convenience, we define
\begin{align}
H(x;y)  &  :=x(x-y)(x-ty),\label{II-510}\\
H^{\prime}(x;y)  &  :=\frac{\partial H}{\partial x}%
=x(x-y)+x(x-ty)+(x-y)(x-ty).\nonumber
\end{align}
Clearly we have%
\begin{equation}
H(bx;by)=b^{3}H(x;y),\text{ \  \ }H^{\prime}(bx;by)=b^{2}H^{\prime}(x;y).
\label{II-510-1}%
\end{equation}
Now starting from $Q_{0}(X)$, $G_{0}(X)$ and $R_{0}(X)$ defined in
(\ref{II-511})-(\ref{II-512}) and setting $Q_{-2}(X)=Q_{-1}(X)=1$, we can
define $Q_{n}(X)$, $G_{n}(X)$, $R_{n}(X)$ for $n\geq1$ by induction as
follows:%
\begin{equation}
\varphi_{n-1}(X):=R_{n-1}(X)Q_{n-2}(X)+\tfrac{n-1}{2}G_{n-1}(X), \label{II-515}%
\end{equation}%
\begin{equation}
G_{n}(X):=\frac{H(\varphi_{n-1}(X);Q_{n-3}(X)Q_{n-2}(X)Q_{n-1}(X))}%
{G_{n-1}(X)Q_{n-3}(X)^{3}}, \label{II-516}%
\end{equation}
{\allowdisplaybreaks%
\begin{align}
Q_{n}(X):=  &  \frac{H(\varphi_{n-1}(X);Q_{n-3}(X)Q_{n-2}(X)Q_{n-1}%
(X))}{G_{n-1}(X)^{2}Q_{n-3}(X)^{2}}\nonumber \\
&  -\frac{n}{2}\frac{H^{\prime}(\varphi_{n-1}(X);Q_{n-3}(X)Q_{n-2}%
(X)Q_{n-1}(X))}{G_{n-1}(X)Q_{n-3}(X)^{2}}\nonumber \\
=  &  \frac{Q_{n-3}(X)G_{n}(X)}{G_{n-1}(X)}-\frac{n}{2}\frac{H^{\prime
}(\varphi_{n-1}(X);Q_{n-3}(X)Q_{n-2}(X)Q_{n-1}(X))}{G_{n-1}(X)Q_{n-3}(X)^{2}},
\label{II-517}%
\end{align}
}{\allowdisplaybreaks%
\begin{equation}
R_{n}(X):=\frac{\varphi_{n-1}(X)Q_{n}(X)+\frac{n+1}{2}Q_{n-3}(X)G_{n}%
(X)}{Q_{n-3}(X)Q_{n-1}(X)}. \label{II-518}%
\end{equation}
We will prove that }$Q_{n}(X)$, $G_{n}(X)$, $R_{n}(X)$ are all
\emph{polynomials}; see Theorem \ref{thm-II-17} below.

Recalling $(p_{0}(X),q_{0}(X))$ defined in (\ref{II-46})-(\ref{II-513}), we
exploit (\ref{II-500})-(\ref{II-501}) to define rational function pairs
$(p_{n}(X),q_{n}(X))$ for all $n\geq1$ by induction:%
\begin{align}
p_{n}(X):=  &  p_{n-1}(X)-\frac{n}{2}\bigg(\frac{1}{q_{n-1}(X)+\frac
{n-1}{2p_{n-1}(X)}}\label{II-500-1}\\
&  +\frac{1}{q_{n-1}(X)+\frac{n-1}{2p_{n-1}(X)}-1}+\frac{1}{q_{n-1}%
(X)+\frac{n-1}{2p_{n-1}(X)}-t}\bigg),\nonumber
\end{align}%
\begin{equation}
q_{n}(X):=q_{n-1}(X)+\frac{n-1}{2p_{n-1}(X)}+\frac{n+1}{2p_{n}(X)}.
\label{II-501-1}%
\end{equation}
Here is the first main result of this section.

\begin{theorem}
\label{thm-II-17}Under the assumption $a(\tau)\not \in E_{\tau}[2]$ and the
above notations, for any $n\geq1$ we have{\allowdisplaybreaks%
\begin{align}
p_{n}(X)  &  =\frac{Q_{n-2}(X)Q_{n-1}(X)Q_{n}(X)}{G_{n}(X)},\label{II-514}\\
q_{n}(X)  &  =\frac{R_{n}(X)}{Q_{n-2}(X)Q_{n}(X)}, \label{II-514-1}%
\end{align}
}where $Q_{-2}(X)=Q_{-1}(X)=1$, $Q_{0}(X)$, $R_{0}(X)$, $G_{0}(X)$ are given
by (\ref{II-511})-(\ref{II-512}), and $Q_{n}(X)$, $G_{n}(X)$, $R_{n}(X)$ are
given by induction in (\ref{II-515})-(\ref{II-518}). Furthermore, $Q_{n}(X)$,
$G_{n}(X)$, $R_{n}(X)$ satisfy

\begin{itemize}
\item[(1-$n$)] $Q_{n}(X)$ is a polynomial of degree $\frac{(n+1)(n+2)}{2}$;
$G_{n}(X)$ is a polynomial of degree $\frac{3n(n+1)}{2}$; $R_{n}(X)$ is a
polynomial of degree $n(n+1)+1$. Their coefficients are all rational functions
of $e_{1}(\tau)$, $e_{2}(\tau)$, $e_{3}(\tau)$, $\wp(a(\tau)|\tau)$ and
$\wp^{\prime}(a(\tau)|\tau)$ with coefficients in $\mathbb{Q}$.

\item[(2-$n$)] any two of $\{Q_{n-2}(X),Q_{n-1}(X),Q_{n}(X),G_{n}(X)\}$ have
no common zeros; any two of $\{Q_{n-2}(X),Q_{n}(X),R_{n}(X)\}$ have no common
zeros; $G_{n-1}(X)$ and $G_{n}(X)$ have no common zeros.

\item[(3-$n$)] $Q_{n-2}(X)|\varphi_{n}(X)$ and $Q_{n}(X)|(R_{n}(X)Q_{n-1}%
(X)-\frac{n+1}{2}G_{n}(X))$.

\item[(4-$n$)] $\deg(R_{n}-Q_{n-2}Q_{n})=\deg(R_{n}-tQ_{n-2}Q_{n})=\deg
R_{n}=n(n+1)+1.$
\end{itemize}
\end{theorem}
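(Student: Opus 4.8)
The plan is to prove all the assertions by a single induction on $n$, establishing the rational-function identities (\ref{II-514})--(\ref{II-514-1}) and the structural properties (1-$n$)--(4-$n$) \emph{simultaneously}. This is essential because each step in the recursion (\ref{II-515})--(\ref{II-518}) divides by $G_{n-1}$ and powers of $Q_{n-3}$, so the very fact that $Q_n,G_n,R_n$ are polynomials of the claimed degrees can only be read off once the divisibility (3-$\cdot$), coprimality (2-$\cdot$) and non-cancellation (4-$\cdot$) at the two preceding levels are available; hence these must be transported along the induction rather than deduced at the end. The base case $n=1$ is a direct computation: with $Q_{-2}=Q_{-1}=1$ one has $\varphi_0=R_0$, and inserting the explicit data (\ref{II-511})--(\ref{II-512}) together with (1-$0$)--(4-$0$) into (\ref{II-515})--(\ref{II-518}) produces $Q_1,G_1,R_1$ and verifies (1-$1$)--(4-$1$) by hand.

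For the inductive step I would first extract the algebraic identity at the heart of the recursion. Assuming (\ref{II-514})--(\ref{II-514-1}) at level $n-1$, the combination inside (\ref{II-500-1})--(\ref{II-501-1}) collapses:
\[
q_{n-1}+\frac{n-1}{2p_{n-1}}=\frac{R_{n-1}}{Q_{n-3}Q_{n-1}}+\frac{(n-1)G_{n-1}}{2Q_{n-3}Q_{n-2}Q_{n-1}}=\frac{\varphi_{n-1}}{Q_{n-3}Q_{n-2}Q_{n-1}},
\]
the numerator being exactly $\varphi_{n-1}$ from (\ref{II-515}). Writing $w$ for this ratio and $D=Q_{n-3}Q_{n-2}Q_{n-1}$, the three-term sum in (\ref{II-500-1}) is $H'(w;1)/H(w;1)$, and the homogeneity relations (\ref{II-510-1}) convert $H(w;1),H'(w;1)$ into $H(\varphi_{n-1};D),H'(\varphi_{n-1};D)$ up to powers of $D$. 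Substituting $p_{n-1}=D/G_{n-1}$ and matching against the definitions (\ref{II-516})--(\ref{II-518}) then yields (\ref{II-514})--(\ref{II-514-1}) as identities of rational functions, with $Q_n,G_n,R_n$ the quantities produced by the recursion. This part is purely formal and uses no information about the zeros of the polynomials.

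The substantive content is to show that $Q_n,G_n,R_n$ are genuinely polynomials of the stated degrees and that (2-$n$)--(4-$n$) hold. Polynomiality of $G_n$ reduces to $Q_{n-3}^{3}G_{n-1}\mid H(\varphi_{n-1};D)$: the factor $Q_{n-3}^{3}$ is immediate because $Q_{n-3}\mid\varphi_{n-1}$ (the first half of (3-$(n-1)$)) divides each of the three factors of $H$, whereas the factor $G_{n-1}$ — and, for $Q_n$, a second-order refinement $G_{n-1}^2\mid(H-\tfrac{n}{2}G_{n-1}H')$ of the same divisibility — is the delicate point. The degree bookkeeping is forced by property (4): by (4-$(n-1)$) there is no leading-term cancellation in $\varphi_{n-1}-D$ or $\varphi_{n-1}-tD$, which guarantees $\deg H(\varphi_{n-1};D)=3\deg\varphi_{n-1}$ and hence $\deg G_n=\tfrac{3n(n+1)}{2}$; one then has to re-derive the level-$n$ instance (4-$n$), together with (2-$n$) and (3-$n$), to feed the next step.

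I expect the main obstacle to be precisely this propagation of the divisibility and coprimality statements, above all $G_{n-1}\mid H(\varphi_{n-1};D)$ and the analogous divisibility feeding $R_n$ in (\ref{II-518}). Working modulo $G_{n-1}$, which by (2-$(n-1)$) is coprime to every $Q_j$, one has $\varphi_{n-1}\equiv R_{n-1}Q_{n-2}$, so the claim becomes $G_{n-1}\mid R_{n-1}(R_{n-1}-Q_{n-3}Q_{n-1})(R_{n-1}-tQ_{n-3}Q_{n-1})$, which cannot be read off directly and must instead be traced back to the way $G_{n-1}$ itself arose as $H(\varphi_{n-2};Q_{n-4}Q_{n-3}Q_{n-2})/(G_{n-2}Q_{n-4}^{3})$ one level earlier. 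Since no closed form for these polynomials is available for general $n$, verifying this interlocking web of relations requires careful and somewhat lengthy bookkeeping; this is the technical induction I would relegate to Appendix~A, exactly as the paper anticipates.
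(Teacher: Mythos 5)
Your plan coincides with the paper's Appendix~A proof: a simultaneous induction on $n$ carrying (1-$n$)--(4-$n$) along with the identities (\ref{II-514})--(\ref{II-514-1}), the formal collapse of (\ref{II-500-1})--(\ref{II-501-1}) via the homogeneity of $H$, and the key divisibility $G_{n-1}\mid H(\varphi_{n-1};Q_{n-3}Q_{n-2}Q_{n-1})$ (with its second-order refinement for $Q_n$) obtained by rewriting $\varphi_{n-1}$ modulo $G_{n-1}$ through the level-$(n-1)$ recursion $G_{n-2}G_{n-1}=H(\varphi_{n-2}/Q_{n-4};Q_{n-3}Q_{n-2})$, which is exactly the ``trace back to how $G_{n-1}$ arose'' step you single out. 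The degree bookkeeping via (4-$(n-1)$) and the propagation of the coprimality statements are likewise as in the paper, so your proposal is correct and takes essentially the same route.
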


\begin{remark}
The proof of Theorem \ref{thm-II-17} is technical and will be given in Appendix \ref{appendix-A}. Although the expressions (\ref{II-515})-(\ref{II-518}) of
$Q_{n}(X)$, $G_{n}(X)$, $R_{n}(X)$ defined by induction look complicated, they
turn out to be very useful. For example, they will be applied to
calculate the asymptotics of $Z_{r,s}^{(n)}(\tau)$ in \S \ref{sec-asymptotics}.
\end{remark}

As an application of Theorem \ref{thm-II-17}, we have the following important
result, which is a generalization of Hitchin's formula (\ref{II-1}) to
solutions of PVI$(\tfrac{1}{2}(n+\tfrac{1}{2})^{2},\tfrac{-1}{8},\tfrac{1}%
{8},\tfrac{3}{8})$. Since for any fixed $(r,s)\in \mathbb{C}^{2}\backslash
\frac{1}{2}\mathbb{Z}^{2}$, Property (1-$n$) shows that the coefficients
depend on $\tau$ as $\tau \in \mathbb{H}$ deforms, we also denote $Q_{n}(X)$,
$G_{n}(X)$, $R_{n}(X)$ by $Q_{n}(X|\tau)$, $G_{n}(X|\tau)$ and $R_{n}(X|\tau)$ respectively.

\begin{theorem}
\label{thm-II-18}Fix any $(r,s)\in \mathbb{C}^{2}\backslash \frac{1}%
{2}\mathbb{Z}^{2}$ and let $a(\tau)=r+s\tau$, $Z_{r,s}(\tau)=\zeta
(a(\tau)|\tau)-r\eta_{1}(\tau)-s\eta_{2}(\tau)$ as before. Then for any
$n\geq0$ the following hold:

\begin{itemize}
\item[(i)] $\lambda_{r,s}^{(n)}(t)$, $\mu_{r,s}^{(n)}(t)$ and $p_{r,s}%
^{(n)}(\tau)$ are expressed by%
\begin{equation}
\lambda_{r,s}^{(n)}(t)=q_{n}(Z_{r,s}(\tau))=\frac{R_{n}(Z_{r,s}(\tau)|\tau
)}{Q_{n-2}(Z_{r,s}(\tau)|\tau)Q_{n}(Z_{r,s}(\tau)|\tau)}, \label{II-535}%
\end{equation}%
\begin{align}
\mu_{r,s}^{(n)}(t)  &  =p_{n}(Z_{r,s}(\tau))\label{II-536}\\
&  =\frac{Q_{n-2}(Z_{r,s}(\tau)|\tau)Q_{n-1}(Z_{r,s}(\tau)|\tau)Q_{n}%
(Z_{r,s}(\tau)|\tau)}{G_{n}(Z_{r,s}(\tau)|\tau)},\nonumber
\end{align}%
\begin{equation}
\wp(p_{r,s}^{(n)}(\tau)|\tau)=\frac{(e_{2}(\tau)-e_{1}(\tau))R_{n}%
(Z_{r,s}(\tau)|\tau)}{Q_{n-2}(Z_{r,s}(\tau)|\tau)Q_{n}(Z_{r,s}(\tau)|\tau
)}+e_{1}(\tau). \label{II-537}%
\end{equation}

\item[(ii)] Let $t_{0}=t(\tau_{0})$ such that $a(\tau_{0})\not \in E_{\tau
_{0}}[2]$, then $t_{0}$ is a positive pole of $\lambda_{r,s}^{(n)}(t)$ if and
only if $Q_{n-2}(Z_{r,s}(\tau_{0})|\tau_{0})=0$; $t_{0}$ is a negative pole of
$\lambda_{r,s}^{(n)}(t)$ if and only if $Q_{n}(Z_{r,s}(\tau_{0})|\tau_{0})=0$.
In particular, both $\lambda_{r,s}^{(0)}(t)$ and $\lambda_{r,s}^{(1)}(t)$ have
no positive poles in $\mathbb{C}\backslash \{0,1\}$ provided $(r,s)\in
\mathbb{R}^{2}\backslash \frac{1}{2}\mathbb{Z}^{2}$.
\end{itemize}
\end{theorem}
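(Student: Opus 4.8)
The plan is to derive both parts from Theorem~\ref{thm-II-17}, using the recursion of Lemma~\ref{thm-II-16} for Part~(i) and the residue criterion of Proposition~\ref{Prop-II-1} for Part~(ii). For Part~(i), I would first prove by induction on $n$ that
\[\lambda_{r,s}^{(n)}(t)=q_{n}(Z_{r,s}(\tau)),\qquad \mu_{r,s}^{(n)}(t)=p_{n}(Z_{r,s}(\tau)),\]
the base case $n=0$ being exactly (\ref{II-46}). The inductive step is immediate once one observes that the recursion (\ref{II-500-1})--(\ref{II-501-1}) defining $(p_{n},q_{n})$ is \emph{formally identical} to the recursion (\ref{II-500})--(\ref{II-501}) of Lemma~\ref{thm-II-16} for $(\mu^{(n)},\lambda^{(n)})$ under the substitution $X=Z_{r,s}(\tau)$ and the correspondence $p\leftrightarrow\mu$, $q\leftrightarrow\lambda$. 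Inserting the closed forms (\ref{II-514})--(\ref{II-514-1}) then yields (\ref{II-535})--(\ref{II-536}), and (\ref{II-537}) follows by inverting (\ref{II-130}), namely $\wp(p_{r,s}^{(n)}(\tau)|\tau)=(e_{2}(\tau)-e_{1}(\tau))\lambda_{r,s}^{(n)}(t)+e_{1}(\tau)$.

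For the pole classification in Part~(ii), note $\theta_{4}=n+\tfrac12>0$ by (\ref{I1I-175}), so Proposition~\ref{Prop-II-1} applies. By (\ref{II-535}), a pole of $\lambda_{r,s}^{(n)}$ at $t_{0}=t(\tau_{0})$ with $a(\tau_{0})\notin E_{\tau_{0}}[2]$ forces the denominator to vanish, i.e. $Q_{n-2}(Z_{0})=0$ or $Q_{n}(Z_{0})=0$ with $Z_{0}:=Z_{r,s}(\tau_{0})$; property~(2-$n$) shows $R_{n}$ cancels neither factor, so these are exactly the poles. To read off the type I would use (\ref{II-535})--(\ref{II-536}) to compute
\[\lambda_{r,s}^{(n)}(t)\,\mu_{r,s}^{(n)}(t)=\frac{R_{n}(Z_{r,s}(\tau))\,Q_{n-1}(Z_{r,s}(\tau))}{G_{n}(Z_{r,s}(\tau))},\]
which stays finite at $\tau_{0}$ because property~(2-$n$) forces $G_{n}(Z_{0})\neq0$ whenever $Q_{n-2}(Z_{0})=0$ or $Q_{n}(Z_{0})=0$. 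Property~(3-$n$) then supplies the value: if $Q_{n}(Z_{0})=0$ then $R_{n}(Z_{0})Q_{n-1}(Z_{0})=\tfrac{n+1}{2}G_{n}(Z_{0})$, giving $\lambda\mu\to\tfrac{n+1}{2}=-\theta_{0}$, a \emph{negative} pole; if $Q_{n-2}(Z_{0})=0$ then $\varphi_{n}(Z_{0})=0$ gives $R_{n}(Z_{0})Q_{n-1}(Z_{0})=-\tfrac{n}{2}G_{n}(Z_{0})$, giving $\lambda\mu\to-\tfrac{n}{2}=-(\theta_{0}+\theta_{4})$, a \emph{positive} pole. This matches (\ref{II-136}) since $\theta_{0}=-\tfrac{n+1}{2}$ and $\theta_{4}=n+\tfrac12$.

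For the final assertion, when $n\in\{0,1\}$ the factor controlling positive poles is $Q_{n-2}\in\{Q_{-2},Q_{-1}\}\equiv1$, which never vanishes; hence no positive poles, \emph{provided} the classification applies at every pole. The reality hypothesis $(r,s)\in\mathbb{R}^{2}\backslash\tfrac12\mathbb{Z}^{2}$ delivers exactly this: comparing imaginary parts shows $a(\tau)=r+s\tau\in E_{\tau}[2]$ would force $(r,s)\in\tfrac12\mathbb{Z}^{2}$, so $a(\tau)\notin E_{\tau}[2]$ for \emph{every} $\tau\in\mathbb{H}$ and there is no exceptional $\tau_{0}$ to worry about. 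The substantive content is thus entirely inside Theorem~\ref{thm-II-17}---especially the divisibility relations (3-$n$) and the coprimality (2-$n$)---whose inductive proof is deferred to Appendix~\ref{appendix-A}; granting these, the remaining work is the bookkeeping matching the two recursions in Part~(i) and the sign computation turning (3-$n$) into the residues demanded by Proposition~\ref{Prop-II-1}. The one point within this argument that genuinely needs care is checking that $\lambda\mu$ extends holomorphically across $\tau_{0}$, so that its limit equals its value there, which is precisely where $G_{n}(Z_{0})\neq0$ from property~(2-$n$) is indispensable.
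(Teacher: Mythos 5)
Your proposal is correct and follows essentially the same route as the paper's own proof: part (i) by matching the recursion (\ref{II-500-1})--(\ref{II-501-1}) defining $(p_n,q_n)$ with Lemma \ref{thm-II-16} and then reading off the closed forms from Theorem \ref{thm-II-17}, and part (ii) by computing $\lim_{t\to t_0}\lambda_{r,s}^{(n)}\mu_{r,s}^{(n)}=R_nQ_{n-1}/G_n$ via properties (2-$n$)--(3-$n$) and invoking Proposition \ref{Prop-II-1}. Your sign evaluations ($-n/2=-(\theta_0+\theta_4)$ for $Q_{n-2}=0$, $(n+1)/2=-\theta_0$ for $Q_n=0$) and the disposal of the case $n\in\{0,1\}$ via $Q_{-2}=Q_{-1}\equiv1$ coincide with the paper's argument.
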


\begin{remark}
For Hitchin's formula (\ref{II-1}), the result that any zero $\tau_{0}$ of
$Z_{r,s}(\tau)$ gives a negative pole $t(\tau_{0})$ of $\lambda_{r,s}%
^{(0)}(t)$ was proved by Hitchin in \cite[Proposition 9]{Hit1}. Theorem
\ref{thm-II-18}-(ii), which extends this result to the general case $n\geq1$,
will be applied to establish the relation $Q_{n}(Z_{r,s}(\tau
)|\tau)=\mathfrak{q}_{n}(\tau)Z_{r,s}^{(n+1)}(\tau)$ for some non-zero
constant $\mathfrak{q}_{n}(\tau)$ in \S \ref{expression-completely}.4.
\end{remark}

\begin{proof}
[Proof of Theorem \ref{thm-II-18}](i) For the formulae (\ref{II-535}%
)-(\ref{II-536}), the case $n=0$ follows from (\ref{II-46}), and the general
case $n\geq1$ follows directly from Lemma \ref{thm-II-16} and (\ref{II-500-1}%
)-(\ref{II-501-1}). Clearly (\ref{II-535}) implies (\ref{II-537}). This proves (i).

(ii) Let $t_{0}=t(\tau_{0})$ such that $a(\tau_{0})\not \in E_{\tau_{0}}[2]$.
Then $a(\tau)\not \in E_{\tau}[2]$ for $\tau$ in a small neighborhood $U$ of
$\tau_{0}$. Consequently, $Q_{n-2}(X|\tau)$, $Q_{n}(X|\tau)$ and $R_{n}%
(X|\tau)$ are all well-defined polynomials for each fixed $\tau \in U$ and
holomorphically depend on $\tau \in U$ (because the coefficients of these
polynomials are rational functions of $e_{1}(\tau)$, $e_{2}(\tau)$,
$e_{3}(\tau)$, $\wp(a(\tau)|\tau)$ and $\wp^{\prime}(a(\tau)|\tau)$ and take
finite values provided $a(\tau)\not \in E_{\tau}[2]$). Therefore, by property
(2-$n$) in Theorem \ref{thm-II-17}, we conclude that $t_{0}$ is a pole of
$\lambda_{r,s}^{(n)}(t)$ if and only if
\[
\text{either }Q_{n-2}(Z_{r,s}(\tau_{0})|\tau_{0})=0\text{ \  \ or \  \ }%
Q_{n}(Z_{r,s}(\tau_{0})|\tau_{0})=0.
\]
If $Q_{n-2}(Z_{r,s}(\tau_{0})|\tau_{0})=0$, then properties (2-$n$)-(3-$n$) in
Theorem \ref{thm-II-17} imply%
\[
\lim_{t\rightarrow t_{0}}\lambda_{r,s}^{(n)}\mu_{r,s}^{(n)}=\frac
{R_{n}(Z_{r,s}(\tau_{0})|\tau_{0})Q_{n-1}(Z_{r,s}(\tau_{0})|\tau_{0})}%
{G_{n}(Z_{r,s}(\tau_{0})|\tau_{0})}=\frac{-n}{2}=-(\theta_0+\theta_4),
\]
where we use (\ref{I1I-175}) to obtain the last equality.
This, together with Proposition \ref{Prop-II-1}, shows that
$t_{0}$ is a positive pole. If $Q_{n}(Z_{r,s}(\tau_{0})|\tau_{0})=0$, then
properties (2-$n$)-(3-$n$) in Theorem \ref{thm-II-17} imply%
\[
\lim_{t\rightarrow t_{0}}\lambda_{r,s}^{(n)}\mu_{r,s}^{(n)}=\frac
{R_{n}(Z_{r,s}(\tau_{0})|\tau_{0})Q_{n-1}(Z_{r,s}(\tau_{0})|\tau_{0})}%
{G_{n}(Z_{r,s}(\tau_{0})|\tau_{0})}=\frac{n+1}{2}=-\theta_0.
\]
By Proposition \ref{Prop-II-1} again, we see that $t_{0}$ is a negative pole.
Since $Q_{-2}=Q_{-1}\equiv1$, we see that the assertion (ii) holds. This completes
the proof.
\end{proof}

\begin{corollary}
\label{thm-II-18-1}For any $n\geq0$ and fixed real pair $(r,s)\in
\mathbb{R}^{2}\backslash \frac{1}{2}\mathbb{Z}^{2}$, $Q_{n}(Z_{r,s}(\tau
)|\tau)$ has only simple zeros as a holomorphic function of $\tau \in
\mathbb{H}$.
\end{corollary}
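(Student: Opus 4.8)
The plan is to translate the simple-zero statement for $Q_{n}(Z_{r,s}(\tau)|\tau)$ into the simple-pole statement for the associated Painlev\'{e} VI solution $\lambda_{r,s}^{(n)}$, for which the Painlev\'{e} property already does the work. First I would observe that since $(r,s)\in\mathbb{R}^{2}\setminus\frac{1}{2}\mathbb{Z}^{2}$, one has $a(\tau)=r+s\tau\notin E_{\tau}[2]$ for \emph{every} $\tau\in\mathbb{H}$: an identity $r+s\tau\equiv\frac{\omega_{k}}{2}\ (\operatorname{mod}\Lambda_{\tau})$ with $(r,s)$ real and $\operatorname{Im}\tau>0$ would force $r,s\in\frac{1}{2}\mathbb{Z}$, contradicting the hypothesis. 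Consequently the key assumption $a(\tau)\notin E_{\tau}[2]$ of Theorem \ref{thm-II-17} holds on all of $\mathbb{H}$, so (exactly as in the proof of Theorem \ref{thm-II-18}(ii)) $Q_{n}(Z_{r,s}(\tau)|\tau)$, $Q_{n-2}(Z_{r,s}(\tau)|\tau)$ and $R_{n}(Z_{r,s}(\tau)|\tau)$ are genuine holomorphic functions of $\tau\in\mathbb{H}$.

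Now take any zero $\tau_{0}$ of $Q_{n}(Z_{r,s}(\tau)|\tau)$ and set $t_{0}=t(\tau_{0})$, $X_{0}=Z_{r,s}(\tau_{0})$. By Theorem \ref{thm-II-18}(ii), $t_{0}$ is a negative pole of $\lambda_{r,s}^{(n)}(t)$. Because the relevant parameter is $\theta_{4}=n+\tfrac{1}{2}>0$ (equivalently $\alpha=\tfrac{1}{2}(n+\tfrac{1}{2})^{2}\neq0$), Proposition \ref{thm-2A} guarantees that this pole is \emph{simple} in the variable $t$. The next step is to pass from $t$ to $\tau$: the transformation $t(\tau)=\frac{e_{3}(\tau)-e_{1}(\tau)}{e_{2}(\tau)-e_{1}(\tau)}$ realizes $\mathbb{H}$ as an unramified covering of $\mathbb{C}\setminus\{0,1\}$, hence is a local biholomorphism with $\frac{dt}{d\tau}(\tau_{0})\neq0$. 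Therefore $\lambda_{r,s}^{(n)}$, viewed as a function of $\tau$ through $\lambda_{r,s}^{(n)}(t(\tau))$, still has a \emph{simple} pole at $\tau_{0}$.

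Finally I would read off the pole order from formula (\ref{II-535}), namely $\lambda_{r,s}^{(n)}(t(\tau))=\frac{R_{n}(Z_{r,s}(\tau)|\tau)}{Q_{n-2}(Z_{r,s}(\tau)|\tau)\,Q_{n}(Z_{r,s}(\tau)|\tau)}$. Since $X_{0}$ is a root of the polynomial $Q_{n}(\cdot|\tau_{0})$, property (2-$n$) of Theorem \ref{thm-II-17} (coprimality of $Q_{n}$ with $Q_{n-2}$ and with $R_{n}$) forces $Q_{n-2}(X_{0}|\tau_{0})\neq0$ and $R_{n}(X_{0}|\tau_{0})\neq0$. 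Hence in a neighbourhood of $\tau_{0}$ both $R_{n}(Z_{r,s}(\tau)|\tau)$ and $Q_{n-2}(Z_{r,s}(\tau)|\tau)$ are nonvanishing holomorphic functions, so the order of the pole of $\lambda_{r,s}^{(n)}$ at $\tau_{0}$ equals precisely the order of vanishing of $Q_{n}(Z_{r,s}(\tau)|\tau)$ at $\tau_{0}$. Combined with the previous paragraph this yields vanishing order $1$, i.e. a simple zero.

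I expect the only genuinely delicate point to be the passage from $t$ to $\tau$, i.e.\ checking that $\frac{dt}{d\tau}\neq0$ on $\mathbb{H}$ so that ``simple in $t$'' transfers faithfully to ``simple in $\tau$''; once this and the coprimality (2-$n$) are in hand the rest is routine. Everything else amounts to quoting the pole classification of Theorem \ref{thm-II-18}(ii), the simple-pole part of the Painlev\'{e} property in Proposition \ref{thm-2A}, and the structural coprimality of $Q_{n}$, $Q_{n-2}$, $R_{n}$ from Theorem \ref{thm-II-17}.
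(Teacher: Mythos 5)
Your proposal is correct and follows essentially the same route as the paper's proof: identify a zero $\tau_{0}$ of $Q_{n}(Z_{r,s}(\tau)|\tau)$ with a (negative) pole of $\lambda_{r,s}^{(n)}$ via Theorem \ref{thm-II-18}(ii), invoke the simple-pole statement of Proposition \ref{thm-2A}, and transfer back using $t'(\tau_{0})\neq0$ together with the coprimality in property (2-$n$). The only (harmless) difference is that you spell out the nonvanishing of both $R_{n}$ and $Q_{n-2}$ at the zero, while the paper records only $R_{n}\neq0$; both facts follow from the same property (2-$n$).
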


\begin{proof}
For any $(r,s)\in \mathbb{R}^{2}\backslash \frac{1}{2}\mathbb{Z}^{2}$,
$a(\tau)\not \in E_{\tau}[2]$ for all $\tau \in \mathbb{H}$, which implies that
$Q_{n}(X|\tau)$ is a well-defined polynomial for each fixed $\tau$ and
holomorphically depend on $\tau \in \mathbb{H}$. Thus $Q_{n}(Z_{r,s}(\tau
)|\tau)$ is holomorphic as a function of $\tau \in \mathbb{H}$. Let $\tau_{0}$
be any a zero of $Q_{n}(Z_{r,s}(\tau)|\tau)$, then $t(\tau_{0})$ is a pole of
$\lambda_{r,s}^{(n)}(t)$. By Theorem \ref{thm-2A} we know that $t(\tau_{0})$ is a
simple pole of $\lambda_{r,s}^{(n)}(t)$. Since $t^{\prime}(\tau_{0})\not =0$
and Property (2-$n$) in Theorem \ref{thm-II-17} shows $R_{n}(Z_{r,s}(\tau
_{0})|\tau_{0})\not =0$, we conclude that $\tau_{0}$ is a simple zero of
$Q_{n}(Z_{r,s}(\tau)|\tau)$.
\end{proof}

Now we give two examples. For convenience we write $\wp=\wp(a(\tau)|\tau)$,
$Z=Z_{r,s}(\tau)$ and $e_{k}=e_{k}(\tau)$.

\begin{theorem}
\label{thm-expression-1}Let $n=1$ in Theorem \ref{thm-II-18}. Then%
\begin{align}\label{expression-1-0}
\lambda_{r,s}^{(1)}(t)
=\frac{(\wp-e_{1})Z^{3}+\frac{3\wp^{\prime}}{2}Z^{2}+\frac
{6\wp^{2}+6e_{1}\wp-g_{2}}{2}Z+\frac{\wp+2e_{1}}{2}\wp^{\prime}}{(e_{2}-e_{1})(Z^{3}-3\wp Z-\wp^{\prime})},
\end{align}%
\[
\mu_{r,s}^{(1)}(t)=\frac{2(e_{2}-e_{1})Z(Z^{3}-3\wp Z-\wp^{\prime})}%
{2\wp^{\prime}Z^{3}+(12\wp^{2}-g_{2})Z^{2}+6\wp\wp^{\prime}Z+(\wp^{\prime})^{2}},
\]
\begin{equation}
\wp(p_{r,s}^{(1)}(\tau)|\tau)=\wp+\frac{3\wp^{\prime}Z^{2}+\left(
12\wp^{2}-g_{2}\right)  Z+3\wp\wp^{\prime}}{2(Z^{3}-3\wp
Z-\wp^{\prime})}. \label{expression-1}%
\end{equation}
That is, when $(r,s)\in \mathbb{C}^{2}\backslash \frac{1}{2}\mathbb{Z}^{2}$,
(\ref{expression-1}) gives solutions of EPVI$(\frac{9}{8},\frac{1}{8},\frac{1}{8},\frac{1}{8})$; or equivalently,
(\ref{expression-1-0}) gives solutions of
PVI$(\frac{9}{8},\frac{-1}{8},\frac{1}{8},\frac{3}{8})$.
\end{theorem}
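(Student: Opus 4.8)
The plan is to specialize the general formulas (\ref{II-535})--(\ref{II-537}) of Theorem \ref{thm-II-18} to $n=1$, which reduces the entire statement to computing the three polynomials $Q_1(X)$, $G_1(X)$, $R_1(X)$ from the base data $Q_0(X)=X$, $G_0(X)=\wp'/(e_2-e_1)$, $R_0(X)=\frac{(\wp-e_1)X+\frac12\wp'}{e_2-e_1}$ in (\ref{II-511})--(\ref{II-512}) via the recursion (\ref{II-515})--(\ref{II-518}). Since $Q_{-2}=Q_{-1}=1$, the $n=1$ recursion collapses to $\varphi_0(X)=R_0(X)$ together with
\[
G_1=\frac{H(R_0;X)}{G_0},\qquad Q_1=\frac{G_1}{G_0}-\frac12\frac{H'(R_0;X)}{G_0},\qquad R_1=\frac{R_0\,Q_1+G_1}{X},
\]
so the whole theorem is a finite, explicit computation.

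The key simplification, which turns this into a clean calculation rather than a brute-force expansion, is to exploit $t=\frac{e_3-e_1}{e_2-e_1}$ together with $e_1+e_2+e_3=0$. Writing $D:=e_2-e_1$ and using $e_1+D=e_2$ and $\frac{\wp-e_1}{D}-t=\frac{\wp-e_3}{D}$, one gets the symmetric factorizations
\[
R_0(X)=\frac{(\wp-e_1)X+\tfrac12\wp'}{D},\quad R_0(X)-X=\frac{(\wp-e_2)X+\tfrac12\wp'}{D},\quad R_0(X)-tX=\frac{(\wp-e_3)X+\tfrac12\wp'}{D}.
\]
This writes $H(R_0;X)$ as a product and $H'(R_0;X)$ as the elementary-symmetric sum over the three factors $(\wp-e_k)X+\tfrac12\wp'$, $k=1,2,3$, after which I would collapse them using the Weierstrass identities $\prod_k(\wp-e_k)=\tfrac14\wp'^2$, $\sum_k(\wp-e_k)=3\wp$ and $\sum_{i<j}(\wp-e_i)(\wp-e_j)=3\wp^2-\tfrac{g_2}{4}$ (the last from $\sum_{i<j}e_ie_j=-\tfrac{g_2}{4}$).

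Carrying this out should give $G_1(X)=\frac{1}{4D^2}\big[\wp'X^3+(6\wp^2-\tfrac{g_2}{2})X^2+3\wp\wp'X+\tfrac12\wp'^2\big]$ and, after the cancellation of the $X^2$-term, the compact form $Q_1(X)=\frac{1}{4D}(X^3-3\wp X-\wp')$; note this is $\frac{1}{4(e_2-e_1)}Z_{r,s}^{(2)}(\tau)$ by (\ref{z-n-1}), exactly as predicted by Theorem \ref{thm-II-18}(ii). Then $R_1=(R_0Q_1+G_1)/X$ produces the stated cubic numerator, and substituting $Q_{-1}=1$, $Q_0=X$, $Q_1$, $G_1$, $R_1$ into (\ref{II-535})--(\ref{II-537}) at $X=Z_{r,s}(\tau)$ yields the three displayed formulas. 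The $\wp(p_{r,s}^{(1)}|\tau)$ identity then follows from the $\lambda^{(1)}$ formula via $\wp(p^{(1)})=(e_2-e_1)\lambda^{(1)}_{r,s}+e_1$ and a one-line rearrangement subtracting $\wp$.

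The only genuine obstacle is bookkeeping: the intermediate polynomials really do involve $t$ through $H$ and $H'$, so a naive expansion is messy. The symmetrization over $e_1,e_2,e_3$ above is precisely what eliminates $t$ and renders every coefficient a polynomial in $\wp,\wp',g_2$ with rational constants, consistent with Property (1-$n$) of Theorem \ref{thm-II-17}. The one point to verify carefully is that the $X^2$ coefficient of $Q_1$ vanishes identically, since this is what reduces $Q_1$ to the recognizable form $X^3-3\wp X-\wp'$ and makes the announced denominator $(e_2-e_1)(Z^3-3\wp Z-\wp')$ appear.
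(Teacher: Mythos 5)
Your proposal is correct and follows exactly the route the paper takes: compute $G_1$, $Q_1$, $R_1$ from the base data (\ref{II-511})--(\ref{II-512}) via the recursion (\ref{II-515})--(\ref{II-518}) and substitute into (\ref{II-535})--(\ref{II-537}); the paper simply records the resulting three polynomials and says the rest is a direct computation. Your symmetrization over $e_1,e_2,e_3$ (using $\sum e_k=0$, $\sum_{i<j}e_ie_j=-g_2/4$, $\prod_k(\wp-e_k)=\tfrac14\wp'^2$) is a clean way to organize that computation, and the resulting $G_1$, $Q_1$, $R_1$ agree with the paper's.
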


\begin{proof}
Recalling (\ref{II-511}), (\ref{II-512}) and (\ref{II-515})-(\ref{II-518}), a
direct computation gives%
\[
G_{1}(X)=\frac{\wp^{\prime}X^{3}+(6\wp^{2}-\frac{g_{2}}{2})X^{2}%
+3\wp\wp^{\prime}X+\frac{(\wp^{\prime})^{2}}{2}}{4(e_{2}-e_{1})^{2}},
\]%
\[
Q_{1}(X)=\frac{X^{3}-3\wp X-\wp^{\prime}}{4(e_{2}-e_{1})},
\]
\[
R_{1}(X)=\frac{(\wp-e_{1})X^{3}+\frac{3\wp^{\prime}}{2}X^{2}+\frac
{6\wp^{2}+6e_{1}\wp-g_{2}}{2}X+\frac{\wp+2e_{1}}{2}\wp^{\prime}}{4(e_{2}-e_{1})^{2}}.
\]
Then this theorem follows readily from (\ref{II-535})-(\ref{II-537}).
\end{proof}

Formula (\ref{expression-1}), which is a generalization of Hitchin's formula
(\ref{II-1}) to PVI$(\frac{9}{8},\frac{-1}{8},\frac{1}{8},\frac{3}{8})$, was
first obtained by Takemura \cite{Takemura} via a different method.
The following formula
(\ref{expression-2}) for PVI$(\frac{25}{8},\frac{-1}{8},\frac{1}{8},\frac
{3}{8})$ can not be found in the literature and is new.

\begin{theorem}
\label{thm-expression-2}Let $n=2$ in Theorem \ref{thm-II-18}. Then%
\begin{equation}
\wp(p_{r,s}^{(2)}(\tau)|\tau)=\wp+\frac{\Xi^{(2)}(Z)}{8Z
Z_{r,s}^{(3)}(\tau)}, \label{expression-2}%
\end{equation}
where $Z_{r,s}^{(3)}(\tau)$ is given in (\ref{z-n-4}) and
{\allowdisplaybreaks%
\begin{align*}
\Xi^{(2)}(Z)=  &  28\wp^{\prime}Z^{6}+\left(  288\wp^{2}-24g_{2}\right)  Z^{5}+300\wp\wp^{\prime}Z
^{4}\\
&  +\left(  640\wp^{3}-88g_{2}\wp-52g_{3}\right)  Z^{3}\\
&  +(180\wp^{2}-3g_{2})\wp^{\prime}Z^{2}+24\wp(\wp^{\prime})^{2}Z+(\wp^{\prime})^{3}.
\end{align*}
}That is, when $(r,s)\in \mathbb{C}^{2}\backslash \frac{1}{2}\mathbb{Z}^{2}$,
(\ref{expression-2}) gives solutions of EPVI$(\frac{25}{8},\frac{1}{8},\frac{1}{8},\frac{1}{8})$.
\end{theorem}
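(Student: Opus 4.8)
The plan is to mirror the computation carried out for $n=1$ in Theorem \ref{thm-expression-1}, pushing the induction one step further. Concretely, I would start from the base data $Q_0(X)=X$, $G_0(X)=\wp'/(e_2-e_1)$ and $R_0(X)=\frac{\wp-e_1}{e_2-e_1}X+\frac12\frac{\wp'}{e_2-e_1}$ recorded in (\ref{II-511})-(\ref{II-512}), recall the degree-$3$ polynomials $Q_1$, $G_1$, $R_1$ already produced in the proof of Theorem \ref{thm-expression-1}, and then apply the recursion (\ref{II-515})-(\ref{II-518}) with $n=2$. Since $Q_{-1}\equiv 1$ and $Q_0=X$, these specialize to $\varphi_1=R_1X+\frac12 G_1$, $G_2=H(\varphi_1;Q_0Q_1)/G_1$, $Q_2=G_2/G_1-H'(\varphi_1;Q_0Q_1)/G_1$ and $R_2=(\varphi_1 Q_2+\frac32 G_2)/Q_1$, where $H$ and $H'$ are as in (\ref{II-510}). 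Theorem \ref{thm-II-17} guarantees \emph{in advance} that the resulting $Q_2$, $G_2$, $R_2$ are genuine polynomials of $X$-degrees $6$, $9$, $7$, so none of the divisions in (\ref{II-516})-(\ref{II-518}) need to be justified by hand; it suffices to carry out the substitution and simplify.

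Once $Q_2(X)$ and $R_2(X)$ are in hand, I would insert them into the $n=2$ instance of the master formula (\ref{II-537}),
\[
\wp(p_{r,s}^{(2)}(\tau)|\tau)=\frac{(e_2-e_1)R_2(Z|\tau)}{Q_0(Z|\tau)\,Q_2(Z|\tau)}+e_1,
\]
with $Q_0(Z)=Z$. The key structural fact to extract during the simplification is that $Q_2(Z|\tau)$ is, up to a nonzero $\tau$-dependent constant, exactly the pre-modular form $Z_{r,s}^{(3)}(\tau)$ of (\ref{z-n-4}); this is the $n=2$ case of the relation $Q_n(Z|\tau)=\mathfrak q_n(\tau)Z_{r,s}^{(n+1)}(\tau)$ anticipated in the remark following Theorem \ref{thm-II-18}, and it is the direct analogue of the identity $Q_1(Z)=Z_{r,s}^{(2)}(\tau)/(4(e_2-e_1))$ visible in Theorem \ref{thm-expression-1} (using $Z_{r,s}^{(2)}=Z^3-3\wp Z-\wp'$ from (\ref{z-n-1})). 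Recognizing this constant fixes the denominator as $8ZZ_{r,s}^{(3)}(\tau)$, and writing the difference $(e_2-e_1)R_2/(ZQ_2)-(\wp-e_1)$ over this common denominator yields the numerator. The degree-$7$ top terms of $(e_2-e_1)R_2$ and $(\wp-e_1)ZQ_2$ must cancel, leaving a degree-$6$ numerator, which is precisely $\Xi^{(2)}(Z)$; this degree drop is the manifestation of Property (4-$n$) and provides a first consistency check.

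The main obstacle is the sheer weight of the algebra rather than any conceptual difficulty. Already $\varphi_1$ has $X$-degree $4$, so $H(\varphi_1;Q_0Q_1)$ is a degree-$12$ polynomial, and reducing it, $G_2$, $Q_2$ and $R_2$ to the stated forms requires repeated use of the curve relations $e_1+e_2+e_3=0$, $e_1e_2+e_2e_3+e_3e_1=-g_2/4$, $e_1e_2e_3=g_3/4$ together with $(\wp')^2=4(\wp-e_1)(\wp-e_2)(\wp-e_3)=4\wp^3-g_2\wp-g_3$. These identities are exactly what is needed to eliminate the individual $e_k$, to confirm that the factors of $Q_0$ and $Q_1$ cancel so that $Q_2$ collapses to a constant multiple of $Z_{r,s}^{(3)}$, and to present every coefficient in terms of $\wp$, $\wp'$, $g_2$, $g_3$ as in the statement. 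Because the intermediate expressions are long, I would organize the work by tracking weighted-homogeneous degrees, assigning $Z$, $\wp$, $\wp'$, $g_2$, $g_3$ the weights $1,2,3,4,6$ respectively; this both guides the simplification and gives a final homogeneity check on every monomial in $\Xi^{(2)}(Z)$ and hence on formula (\ref{expression-2}).
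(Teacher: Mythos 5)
Your proposal follows exactly the route the paper takes for this theorem: the paper's proof consists precisely of the remark that one computes $G_{2}$, $Q_{2}$, $R_{2}$ from the recursion (\ref{II-515})--(\ref{II-518}) and substitutes into (\ref{II-537}), omitting the algebra, and your specializations of the recursion at $n=2$ (with $Q_{-2}=Q_{-1}\equiv1$, $Q_{0}=X$), the degrees $6,9,7$, and the identification $Q_{2}(Z|\tau)=\mathfrak{q}_{2}(\tau)Z_{r,s}^{(3)}(\tau)$ with $\mathfrak{q}_{2}=\tfrac{1}{16(e_{2}-e_{1})^{2}}$ producing the denominator $8ZZ_{r,s}^{(3)}(\tau)$ are all correct. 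One small correction: the cancellation of the degree-$7$ terms in $(e_{2}-e_{1})R_{2}-(\wp-e_{1})ZQ_{2}$ follows from the leading-coefficient formulas of Lemma \ref{coefficient-Qn} (which give $(e_{2}-e_{1})\mathfrak{r}_{2}=(\wp-e_{1})\mathfrak{q}_{2}$), not from Property (4-$n$), which asserts the opposite (non-cancellation) for the combinations $R_{n}-Q_{n-2}Q_{n}$ and $R_{n}-tQ_{n-2}Q_{n}$.
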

Theorem \ref{thm-expression-2} can be proved similarly as Theorem \ref{thm-expression-1};
we just need to calculate $G_{2}$, $Q_{2}$, $R_{2}$ by using Theorem \ref{thm-II-17}. We omit the details here.

Now we turn to the special case $(r,s)=(\frac{1}{4},0)$ and have the following
simple observation.

\begin{theorem}
\label{r=1/4}For any $n\geq0$, there holds%
\begin{equation}
\lambda_{\frac{1}{4},0}^{(n)}(t)=\frac{(-1)^{n}}{2n+1}t^{\frac{1}{2}},\text{
\ }\mu_{\frac{1}{4},0}^{(n)}(t)=\frac{1}{4\lambda_{\frac{1}{4},0}^{(n)}%
(t)}=(-1)^{n}\frac{2n+1}{4}t^{-\frac{1}{2}}. \label{vi-0}%
\end{equation}
In particular, $t=1$ is not a branch point of $\lambda_{\frac{1}{4},0}%
^{(n)}(t)$.
\end{theorem}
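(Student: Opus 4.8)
The plan is to prove both identities in (\ref{vi-0}) simultaneously by induction on $n$, using the Okamoto recursion of Lemma \ref{thm-II-16} for the inductive step and Hitchin's formula for the base case. Throughout I abbreviate $x=\wp(\tfrac14|\tau)$ and $\lambda^{(n)}=\lambda_{1/4,0}^{(n)}$, $\mu^{(n)}=\mu_{1/4,0}^{(n)}$. A useful bookkeeping device is the product: the claimed formulas give $\lambda^{(n)}\mu^{(n)}=\tfrac14$ for every $n$, so once the base case is settled it suffices to track $\lambda^{(n)}$ and read off $\mu^{(n)}=1/(4\lambda^{(n)})$. I will define the branch $t^{1/2}$ to be the value $\lambda^{(0)}$ produced below, so that the statement becomes $\lambda^{(n)}=\tfrac{(-1)^n}{2n+1}\lambda^{(0)}$ with $(\lambda^{(0)})^2=t$.

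For the base case $n=0$ I would specialize Hitchin's formula (\ref{II-126})--(\ref{II-127}) to $(r,s)=(\tfrac14,0)$, so that $a(\tau)=\tfrac14$ and $Z_{1/4,0}(\tau)=\zeta(\tfrac14|\tau)-\tfrac14\eta_1(\tau)$. The first input is the zeta-duplication identity $\zeta(2u)=2\zeta(u)+\tfrac12\wp''(u)/\wp'(u)$ at $u=\tfrac14$, combined with $\zeta(\tfrac12|\tau)=\tfrac12\eta_1(\tau)$, which yields the clean formula
\[
Z_{1/4,0}(\tau)=-\frac{\wp''(\tfrac14|\tau)}{4\,\wp'(\tfrac14|\tau)}.
\]
The second input is the classical quarter-period quadratic $x^2-2e_1x-e_1^2-e_2e_3=0$ (equivalently $x=e_1\mp\sqrt{(e_1-e_2)(e_1-e_3)}$); a one-line manipulation using $e_1+e_2+e_3=0$ rewrites it as $4(x-e_2)(x-e_3)=6x^2-\tfrac12 g_2$, i.e. the key identity
\[
\wp'(\tfrac14)^2=(x-e_1)\,\wp''(\tfrac14).
\]
Substituting the displayed value of $Z_{1/4,0}$ and this identity into (\ref{II-126})--(\ref{II-127}) makes everything collapse: one obtains $\lambda^{(0)}=-(x-e_1)/(e_2-e_1)$ and $\mu^{(0)}=-(e_2-e_1)/\bigl(4(x-e_1)\bigr)$, whence $(\lambda^{(0)})^2=(e_3-e_1)/(e_2-e_1)=t$ and $\lambda^{(0)}\mu^{(0)}=\tfrac14$. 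Fixing $t^{1/2}:=\lambda^{(0)}$ then gives $\lambda^{(0)}=t^{1/2}$ and $\mu^{(0)}=\tfrac14 t^{-1/2}$.

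For the inductive step I would assume (\ref{vi-0}) for $n-1$ and feed it into (\ref{II-500})--(\ref{II-501}). The crucial observation is that the shifted quantity occurring everywhere in the recursion is itself a square root of $t$:
\[
w:=\lambda^{(n-1)}+\frac{n-1}{2\mu^{(n-1)}}=(-1)^{n-1}t^{1/2},\qquad w^2=t.
\]
Because $w^2=t$ we have $w-t=-w(w-1)$, and hence the three-term sum in (\ref{II-500}) collapses, $\tfrac1w+\tfrac1{w-1}+\tfrac1{w-t}=\tfrac2w$. Plugging this in gives $\mu^{(n)}=\mu^{(n-1)}-n/w=(-1)^n\tfrac{2n+1}{4}t^{-1/2}$, and then (\ref{II-501}) gives $\lambda^{(n)}=w+\tfrac{n+1}{2\mu^{(n)}}=\tfrac{(-1)^n}{2n+1}t^{1/2}$, closing the induction; $\mu^{(n)}=1/(4\lambda^{(n)})$ is then immediate. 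Finally, since $t^{1/2}$ is single-valued and holomorphic near $t=1$, the explicit formula shows the branch $\lambda_{1/4,0}^{(n)}$ extends holomorphically around $t=1$, proving the last assertion.

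The hard part will be the base case, and within it the algebraic identity $\wp'(\tfrac14)^2=(x-e_1)\wp''(\tfrac14)$: this is precisely what forces Hitchin's formula to degenerate to a pure square root, and it relies on the quarter-period value of $\wp$ rather than on the trivial $2$-torsion values. The inductive step, by contrast, is short and self-contained once one notices the collapse $w^2=t$ producing $\tfrac1w+\tfrac1{w-1}+\tfrac1{w-t}=\tfrac2w$. The one point I would state carefully is the branch of $t^{1/2}$, which is pinned down once and for all by the base case and then propagates through the alternating signs $(-1)^n$ automatically.
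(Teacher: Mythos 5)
Your proof is correct, and the inductive step is exactly the argument the paper has in mind (it invokes Lemma \ref{thm-II-16} and omits the details as ``trivial''; your computation with $w=\lambda^{(n-1)}+\tfrac{n-1}{2\mu^{(n-1)}}=(-1)^{n-1}t^{1/2}$ and the collapse $\tfrac1w+\tfrac1{w-1}+\tfrac1{w-t}=\tfrac2w$ supplies precisely those details, and they check out). Where you genuinely diverge is the base case. The paper does \emph{not} evaluate Hitchin's formula in closed form: it applies the addition formula $\zeta(u+v)+\zeta(u-v)-2\zeta(u)=\wp'(u)/(\wp(u)-\wp(v))$ at $(u,v)=(\tfrac14,\tfrac12)$ to get $Z_{\frac14,0}=-\wp'(\tfrac14)/\bigl(4(\wp(\tfrac14)-e_1)\bigr)$, deduces only the scalar relation $\lambda^{(0)}\mu^{(0)}\equiv\tfrac14$, and then integrates the Hamiltonian system (\ref{vi-2})--(\ref{vi-3}) to conclude $(\lambda^{(0)})^2\equiv t$. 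You instead compute $Z_{\frac14,0}=-\wp''(\tfrac14)/(4\wp'(\tfrac14))$ from the duplication formula and use the quarter-period quadratic $\wp'(\tfrac14)^2=(\wp(\tfrac14)-e_1)\wp''(\tfrac14)$ to evaluate $\wp(p^{(0)}_{\frac14,0})=2e_1-\wp(\tfrac14)$ directly; the two expressions for $Z_{\frac14,0}$ are of course equivalent via that same quadratic. Your route is purely algebraic and avoids the differential equation entirely, at the cost of needing the classical quarter-period value of $\wp$; the paper's route needs only one addition formula but then leans on the ODE to pin down $\lambda^{(0)}$ up to sign. Both leave the branch of $t^{1/2}$ to be fixed later (the paper defers it to the expansion (\ref{iv-53}) in \S \ref{sec-asymptotics}), so your convention $t^{1/2}:=\lambda^{(0)}$ is at the same level of precision and is not a gap.
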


\begin{remark}
A simple computation shows that $\frac{(-1)^{n}}{2n+1}t^{\frac{1}{2}}$ is really
 a solution of
 PVI$(\frac12(n+\frac12)^2,\frac{-1}{8},\frac{1}{8},\frac{3}{8})$ for
\emph{any} $n\geq0$. Hitchin \cite{Hit2} already knew that $t^{\frac{1}{2}}$
is a solution of PVI$(\frac{1}{8},\frac{-1}{8},\frac{1}{2k^{2}},\frac{1}%
{2}-\frac{1}{2k^{2}})$ for any $k\in \mathbb{N}$ and hence a solution of
PVI$(\frac{1}{8},\frac{-1}{8},\frac{1}{8},\frac{3}{8})$. Here we need to prove
$t^{\frac{1}{2}}=\lambda_{\frac{1}{4},0}^{(0)}(t)$. Formula (\ref{vi-0}) will
play a key role in our proof of Theorem \ref{weak}-(3) in \S \ref{sec-asymptotics}.
\end{remark}

\begin{proof}
[Proof of Theorem \ref{r=1/4}]In this proof we always consider $(r,s)=(\frac
{1}{4},0)$ and write $\lambda^{(n)}(t)=\lambda_{\frac{1}{4},0}^{(n)}(t)$,
$\mu^{(n)}(t)=\mu_{\frac{1}{4},0}^{(n)}(t)$ for convenience. First we prove
(\ref{vi-0}) for $n=0$. Recalling the Hamiltonian system (\ref{aa})-(\ref{98})
and (\ref{I1I-175}), we know that $(\lambda^{(0)},\mu^{(0)})$ satisfies%
\begin{equation}
\frac{d\lambda^{(0)}}{dt}=\frac{(2\lambda^{(0)}\mu^{(0)}-\frac{1}{2}%
)(\lambda^{(0)}-1)(\lambda^{(0)}-t)+\frac{1}{2}(t-1)\lambda^{(0)}}{t(t-1)},
\label{vi-2}%
\end{equation}%
\begin{equation}
\frac{d\mu^{(0)}}{dt}=-\frac{[3(\lambda^{(0)})^{2}-2(t+1)\lambda^{(0)}%
+t](\mu^{(0)})^{2}-(\lambda^{(0)}-t)\mu^{(0)}}{t(t-1)}. \label{vi-3}%
\end{equation}

Recall the addition formula%
\[
\zeta(u+v)+\zeta(u-v)-2\zeta(u)=\frac{\wp^{\prime}(u)}{\wp(u)-\wp(v)}.
\]
Letting $u=\frac{1}{4}$ and $v=\frac{\omega_{1}}{2}=\frac{1}{2}$ leads to%
\begin{align*}
\frac{\wp^{\prime}(\frac{1}{4})}{\wp(\frac{1}{4})-e_{1}}  &  =\zeta \left(
3/4\right)  +\zeta \left(  -1/4\right)  -2\zeta \left(  1/4\right) \\
&  =\eta_{1}-4\zeta \left(  1/4\right)  =-4Z_{\frac{1}{4},0}(\tau).
\end{align*}
This identity, together with $a(\tau)=r+s\tau=\frac{1}{4}$ and (\ref{II-46}%
)-(\ref{II-512}), easily implies $4R_{0}(Z_{\frac{1}{4},0}(\tau)|\tau
)=G_{0}(Z_{\frac{1}{4},0}(\tau)|\tau)$ and so%
\begin{equation}
\lambda^{(0)}(t)\mu^{(0)}(t)\equiv \tfrac{1}{4}. \label{vi-1}%
\end{equation}
Inserting (\ref{vi-1}) into (\ref{vi-2}) gives $\frac{d\lambda^{(0)}}%
{dt}=\frac{\lambda^{(0)}}{2t}$ and then $\frac{d\mu^{(0)}}{dt}=\frac
{-1}{8t\lambda^{(0)}}$. Substituting this and (\ref{vi-1}) into (\ref{vi-3})
we easily obtain
\[
(\lambda^{(0)}(t))^{2}\equiv t.
\]
This proves (\ref{vi-0}) for $n=0$ (In \S \ref{sec-asymptotics} we will prove
$\lambda^{(0)}(t)\rightarrow1$ as $t=t(\tau)\rightarrow1$ by letting $F_{2}%
\ni \tau \rightarrow \infty$, where $F_{2}$ is a fundamental domain of
$\Gamma(2)$ defined in (\ref{funde}); see (\ref{iv-53})). Finally, formula
(\ref{vi-0}) for all $n\geq1$ can be proved via Lemma \ref{thm-II-16} by
induction. Since the proof is trivial, we omit the details here. This
completes the proof.
\end{proof}

\subsection{Connection between PVI and $Z_{r,s}^{(n)}(\tau)$}

In this section, we establish the precise connection of PVI
(\ref{46-0}) and the pre-modular form $Z_{r,s}^{(n)}(\tau)$. Consequently, we give the proof
of Theorem \ref{thm-II-18 copy(1)} and Theorem \ref{simple-zero con}.

Let $n\geq1$.
Recall Theorem \ref{thm-II-17} that $Q_{n}(X)$ is a polynomial of degree
$\frac{(n+1)(n+2)}{2}$ with coefficients being rational functions of
$e_{k}(\tau)$'s, $\wp(a(\tau)|\tau)$ and $\wp^{\prime}(a(\tau)|\tau)$,
provided that $a(\tau)\not \in E_{\tau}[2]$. In this section we denote it by
$Q_{n}(X)=Q_{n}(X;a(\tau),\tau)$. Then we have

\begin{theorem}
\label{q-n=z-n}Let $\sigma_{n+1}=a(\tau)\not \in \Lambda_{\tau}$ in
(\ref{k-ll}). Then for any $n\geq0$ there holds
\begin{equation}
Q_{n}(X;a(\tau),\tau)=\mathfrak{q}_{n}(\tau)W_{n+1}(X;a(\tau),\tau),
\label{caxi}%
\end{equation}
where $\mathfrak{q}%
_{n}(\tau)\neq 0$ denotes the coefficient of the leading
term $X^{\frac{(n+1)(n+2)}{2}}$ of the polynomial $Q_{n}%
(X;a(\tau),\tau)$, and its expression will be given in Lemma \ref{coefficient-Qn}.

\end{theorem}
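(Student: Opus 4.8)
The plan is to prove the identity by \emph{matching roots}. Both $Q_{n}(X;a(\tau),\tau)$ and $W_{n+1}(X;a(\tau),\tau)$ are polynomials in $X$ of the same degree $\frac{(n+1)(n+2)}{2}$ (Theorem \ref{thm-II-17}, property (1-$n$), and Theorem \ref{thm-5A}(1)), and $W_{n+1}$ is monic. Hence it suffices to show, for $a=a(\tau)$ in a dense open set, that every zero of $W_{n+1}(X;a,\tau)$ is a zero of $Q_{n}(X;a,\tau)$: since for generic $a$ (outside the branch loci of $\sigma_{n+1}$) Theorem \ref{thm-5A}(2) guarantees that $W_{n+1}$ has $\frac{(n+1)(n+2)}{2}$ \emph{distinct} zeros, the inclusion of root sets already forces $Q_{n}=\mathfrak{q}_{n}W_{n+1}$, with $\mathfrak{q}_{n}$ the leading coefficient of $Q_{n}$ obtained by comparing top-degree terms. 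The identity then extends to all admissible $a$ by continuity, since the coefficients of both polynomials are rational functions of $\wp(a|\tau)$, $\wp^{\prime}(a|\tau)$ and the $e_{k}(\tau)$.

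To realize each zero as a root of $Q_{n}$, I would fix $\tau$ and generic $a\notin E_{\tau}[2]$, and let $X_{0}$ be a zero of $W_{n+1}(X;a,\tau)$. By Theorem \ref{thm-5A}(2) there is $\boldsymbol{a}=\{[a_{1}],\dots,[a_{n+1}]\}\in\bar{Y}_{n+1}(\tau)$ with $\sigma_{n+1}(\boldsymbol{a})=a$ and $\boldsymbol{z}_{n+1}(\boldsymbol{a})=X_{0}$. I then solve the linear system $r+s\tau=a$, $r\eta_{1}(\tau)+s\eta_{2}(\tau)=\sum_{i}\zeta(a_{i})$ for $(r,s)$; this is uniquely solvable because $\tau\eta_{1}-\eta_{2}=2\pi i\neq 0$, and for generic $X_{0}$ we have $(r,s)\in\mathbb{C}^{2}\setminus\frac12\mathbb{Z}^{2}$. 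With this choice (and the lift $a:=\sum_{i}a_{i}$) one gets $Z_{r,s}(\tau)=\zeta(a|\tau)-\sum_{i}\zeta(a_{i})=X_{0}$ and $a(\tau)=a$, so that by Theorem \ref{thm-5A}(3), $Z_{r,s}^{(n+1)}(\tau)=W_{n+1}(X_{0};a,\tau)=0$.

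The key analytic input is the pole correspondence described in Remark \ref{remarkk}: the vanishing $Z_{r,s}^{(n+1)}(\tau)=0$ forces $\tau$ to be a pole of $\wp(p_{r,s}^{(n)}(\tau)|\tau)$, hence $t(\tau)$ to be a pole of $\lambda_{r,s}^{(n)}(t)$, and moreover this pole is \emph{negative}. Indeed, by \cite{Chen-Kuo-Lin0} the generalized Lam\'{e} equation attached to $p_{r,s}^{(n)}$ degenerates, as $\tau$ approaches such a point, to the Lam\'{e} equation of index $m\in\{n\pm1\}$ whose monodromy is generated by (\ref{iiii}) via $(r,s)$; the vanishing of $Z_{r,s}^{(n+1)}$ selects the branch $m=n+1$, which by Proposition \ref{Prop-II-1} and the residue computation in the proof of Theorem \ref{thm-II-18}(ii) is exactly the negative-pole case $Q_{n}(Z_{r,s}(\tau)|\tau)=0$. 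Feeding $X_{0}=Z_{r,s}(\tau)$ back then gives $Q_{n}(X_{0};a,\tau)=0$, completing the root inclusion.

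The main obstacle is precisely this last matching: proving that the \emph{negative} pole type (equivalently $Q_{n}=0$ rather than $Q_{n-2}=0$) corresponds to the index increasing to $m=n+1$ (equivalently $Z_{r,s}^{(n+1)}=0$ rather than $Z_{r,s}^{(n-1)}=0$). The degeneration analysis of \cite{Chen-Kuo-Lin0} only yields that \emph{one} of $Z_{r,s}^{(n\pm1)}(\tau)$ vanishes at a pole, while Theorem \ref{thm-II-18}(ii) only separates poles into positive and negative via the residue sign; correlating the two dichotomies requires tracking the local exponents of the degenerating equation and the sign in (\ref{II-133}), which is the genuinely delicate step. A secondary technical point is the passage from generic $a$ to all $a\notin\Lambda_{\tau}$ (in particular near the branch loci and, if needed, the $2$-torsion points), which I would handle through the rational dependence of both sides on $\wp(a|\tau)$ and $\wp^{\prime}(a|\tau)$.
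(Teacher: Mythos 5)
Your global strategy --- both polynomials have degree $\frac{(n+1)(n+2)}{2}$ in $X$, $W_{n+1}$ is monic with distinct zeros for generic $a$ by Theorem \ref{thm-5A}(2), so a root inclusion forces the proportionality, which then extends to all admissible $a$ by the rational dependence of the coefficients --- is exactly the paper's, as is the construction of $(r,s)$ from a zero $X_{0}$ of $W_{n+1}$ via the linear system $r+s\tau=a$, $r\eta_{1}+s\eta_{2}=\sum_{i}\zeta(a_{i})$. (A minor point: you do not need $X_{0}$ generic to get $(r,s)\notin\frac{1}{2}\mathbb{Z}^{2}$; this follows already from $a\notin E_{\tau}[2]$.)

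The gap is in the central step, namely showing $Q_{n}(X_{0};a,\tau)=0$. You assert that ``$Z_{r,s}^{(n+1)}(\tau)=0$ forces $\tau$ to be a pole of $\wp(p_{r,s}^{(n)}(\tau)|\tau)$, and moreover a negative one,'' citing Remark \ref{remarkk}; but that remark is a prose summary of the content of Theorem \ref{q-n=z-n} itself (combined with Theorem \ref{simple-zn}), so invoking it here is circular. The degeneration analysis of \cite{Chen-Kuo-Lin0} that you then describe runs in the wrong direction: it tells you what the GLE limits to at a point \emph{already known} to be a pole of $\lambda_{r,s}^{(n)}(t)$, but provides no mechanism for concluding that the specific $\tau_{0}$ produced by your zero of $W_{n+1}$ is a pole at all, let alone of the negative type. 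You correctly flag ``correlating the two dichotomies'' as the delicate step, but you leave it unresolved. The paper closes this gap constructively: by \cite[Lemma 3.1]{Chen-Kuo-Lin0}, for every $\tilde{h}\in\mathbb{C}$ there is a local solution $p(\tau)$ of the elliptic form with $p(\tau_{0})=0$ and expansion (\ref{515-5}); such a solution automatically has a \emph{negative} pole of $\lambda$ at $t(\tau_{0})$, and its associated GLE degenerates to a Lam\'{e} equation of index $n+1$ whose accessory parameter $\tilde{B}$ is an explicit affine function of $\tilde{h}$. Choosing $\tilde{h}$ so that $\tilde{B}=B_{0}=(2n+1)\sum_{i}\wp(a_{i})$, invoking \cite[Theorem 6.2]{Chen-Kuo-Lin} to transfer the monodromy (\ref{III-66-3}) from the limiting Lam\'{e} equation to the GLE, and then applying the uniqueness statement of Theorem \ref{thm-II-8} identifies $p(\tau)$ with $p_{r,s}^{(n)}(\tau)$; only at that point does Theorem \ref{thm-II-18}(ii) give $Q_{n}(Z_{r,s}(\tau_{0})|\tau_{0})=0$. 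Without this construction --- in particular the free parameter $\tilde{h}$ used to match the accessory parameter, and the monodromy-uniqueness input --- the root inclusion is not established.
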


To prove Theorem \ref{q-n=z-n},
 we need to apply the monodromy theory of the associated linear
ODE for $p_{r,s}^{(n)}(\tau)$. For EPVI$(\frac{1}{2}(n+\frac12)^2,\frac{1}{8},\frac{1}{8},\frac
{1}{8})$, one choice of its associated linear ODE is the generalized
Lam\'{e} equation
(denoted it by GLE$(n,p,A,\tau)$)
\begin{equation}
y^{\prime \prime}=\left[
\begin{array}
[c]{l}%
n(n+1)\wp(z)+\frac{3}{4}(\wp(z+p)+\wp(z-p))\\
+A(\zeta(z+p)-\zeta(z-p))+B
\end{array}
\right]  y=:I(z)y, \label{89-1}%
\end{equation}
where $\pm p\not \in E_{\tau}[2]$ are always assumed to be
\emph{apparent singularities} (i.e. non-logarithmic), which is equivalent
to (see \cite[Lemma 2.1]{Chen-Kuo-Lin0})
\begin{equation}
B=A^{2}-\zeta(2p)A-\tfrac{3}{4}\wp(2p)-n(n+1)\wp (
p)  . \label{101}%
\end{equation}

Fix any base point $q_{0}\in E_{\tau}\backslash(\{  \pm \lbrack
p]\}  \cup E_{\tau}[2])$. The monodromy representation of GLE
(\ref{89-1}) is a homomorphism $\rho:\pi_{1}(  E_{\tau}\backslash
( \{  \pm \lbrack p] \}  \cup E_{\tau}[2]),q_{0})  \rightarrow
SL(2,\mathbb{C})$. Since $n\in \mathbb{Z}_{\geq 0}$ and the local exponents
of (\ref{89-1}) at $0$ are $-n$ and $n+1$, the
local monodromy matrix at $0$ is $I_{2}$. Thus the
monodromy representation is reduced to $\rho:\pi
_{1}(  E_{\tau}\backslash \{  \pm \lbrack p]\}  ,q_{0})
\rightarrow SL(2,\mathbb{C})$. Let $\gamma_{\pm}\in \pi_{1}(  E_{\tau
}\backslash(\{ \pm \lbrack p]\} \cup E_{\tau}[2]),q_{0})  $ be a simple
loop encircling $\pm p$ counterclockwise respectively, and $\ell_{j}\in \pi
_{1}(  E_{\tau}\backslash(\{ \pm \lbrack p]\} \cup E_{\tau}[2]),q_{0}
)  $, $j=1,2$, be two fundamental cycles of $E_{\tau}$ connecting
$q_{0}$ with $q_{0}+\omega_{j}$ such that $\ell_{j}$ does not intersect with
$L+\Lambda_{\tau}$ (here $L$ is the straight segment connecting $\pm p$) and
satisfies%
\[
\gamma_{+}\gamma_{-}=\ell_{1}\ell_{2}\ell_{1}^{-1}\ell_{2}^{-1}\text{ in }%
\pi_{1}(  E_{\tau}\backslash\{  \pm \lbrack p] \}
,q_{0})  .
\]
Since the local exponents of (\ref{89-1}) at $\pm p$ are $
\{-\frac{1}{2}, \frac{3}{2}\}$ and $\pm p\not \in E_{\tau}[2]$ are apparent singularities, we
always have
$\rho(\gamma_{\pm})=-I_{2}$.
Denote by $N_{j}=\rho(\ell_j)$ the monodromy matrix along the loop $\ell_{j}$ of GLE
(\ref{89-1}) with respect to any linearly independent solutions. Then the monodromy group of GLE (\ref{89-1}) is
generated by $\{-I_{2},N_{1},N_{2}\}$, i.e. is always \emph{abelian and so reducible}.
It is known (cf. \cite{CKL1}) that expect finitely many $A$'s for given $(\tau, p)$,
$N_1$ and $N_2$ can be diagonalized simultaneously, and more precisely, there exists
$(r,s)\in\mathbb{C}^2\setminus\frac12\mathbb{Z}^2$ such that
\[N_{1}=%
\begin{pmatrix}
e^{-2\pi is} & 0\\
0 & e^{2\pi is}%
\end{pmatrix},\quad N_{2}=%
\begin{pmatrix}
e^{2\pi ir} & 0\\
0 & e^{-2\pi ir}%
\end{pmatrix}.\]

Let
$U$ be an open subset of $\mathbb{H}$ such that $p(\tau)\not \in E_{\tau}[2]$
for any $\tau \in U$. Then we proved in \cite{Chen-Kuo-Lin0} that
$p(\tau)$ \emph{is a solution of EPVI$(\frac{1}{2}(n+\frac12)^2,\frac{1}{8},\frac{1}{8},\frac
{1}{8})$ if and only
if there exist }$A(\tau)$\emph{ (and the corresponding $B(\tau)$ via (\ref{101}))
 such that the associated GLE$(n,p(\tau),A(\tau),\tau)$ is monodromy preserving
as $\tau \in U$ deforms}. We refer the reader to \cite{Chen-Kuo-Lin0}
for the more general statement, where we proved that $(p(\tau),A(\tau))$ sloves a new Hamiltonian system which is equivalent to EPVI.
Here we need the following result \cite{Chen-Kuo-Lin}.

\begin{theorem}(\cite[Theorem 5.3]{Chen-Kuo-Lin}) \label{thm-II-8}
For $n\in\mathbb{Z}_{\geq 0}$, let $p^{(n)}(\tau)$ be a
solution to EPVI$(\frac{1}{2}(n+\frac12)^2,\frac{1}{8},\frac{1}{8},\frac
{1}{8})$. Then the following hold:

\begin{itemize}
\item[(1)]  For any $\tau$ satisfying $p^{(n)}
(\tau)\not \in E_{\tau}[2]$, the monodromy group of the
associated GLE$(n$, $ p^{(n)}(\tau), A^{(n)}(\tau), \tau)$ is generated by
\begin{equation}
\rho(\gamma_{\pm})=-I_{2},\text{ }N_{1}=%
\begin{pmatrix}
e^{-2\pi is} & 0\\
0 & e^{2\pi is}%
\end{pmatrix}
\text{, }N_{2}=%
\begin{pmatrix}
e^{2\pi ir} & 0\\
0 & e^{-2\pi ir}%
\end{pmatrix}
 \label{II-101}%
\end{equation}
if and
only if $(r,s)  \in \mathbb{C}^{2}\backslash \frac
{1}{2}\mathbb{Z}^{2}$ and $p^{(n)}(\tau)=p_{r,s}^{(n)
}(\tau)$ in the sense of Remark \ref{identify}.

\item[(2)] $\wp(p^{(n)}_{r_{1},s_{1}}(\tau)|\tau)\equiv \wp
(p_{r_{2},s_{2}}^{(n)}(\tau)|\tau)\Longleftrightarrow(r_{1},s_{1})  \equiv \pm(r_{2},s_{2})  \operatorname{mod}$
$\mathbb{Z}^{2}$.
\end{itemize}
\end{theorem}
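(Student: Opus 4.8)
The plan is to prove both parts through the isomonodromy dictionary between EPVI$(\tfrac{1}{2}(n+\tfrac12)^2,\tfrac18,\tfrac18,\tfrac18)$ and GLE$(n,p,A,\tau)$ established in \cite{Chen-Kuo-Lin0}, treating $(r,s)$ as the monodromy invariant that stays frozen along an isomonodromic family. First I would recall that $p^{(n)}(\tau)$ solves EPVI if and only if there is a choice of $A^{(n)}(\tau)$ (with $B^{(n)}(\tau)$ given by (\ref{101})) making GLE$(n,p^{(n)}(\tau),A^{(n)}(\tau),\tau)$ monodromy preserving as $\tau$ deforms, so that along the family the monodromy representation is constant up to conjugation. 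Since $n\in\mathbb{Z}_{\geq0}$ the local monodromy at $z=0$ is $I_{2}$, and the apparent singularities $\pm p$ contribute $\rho(\gamma_{\pm})=-I_{2}$; hence the representation is abelian, and at a generic $\tau$ the commuting cycle matrices $N_{1}=\rho(\ell_{1})$, $N_{2}=\rho(\ell_{2})$ are simultaneously diagonalizable. Their eigenvalues then define a pair $(r,s)\in\mathbb{C}^{2}\backslash\tfrac12\mathbb{Z}^{2}$ as in (\ref{II-101}), and this pair (modulo the symmetries below) is the complete conjugacy invariant of the representation; being constant along the family, the diagonal form persists at every $\tau$ with $p^{(n)}(\tau)\not\in E_\tau[2]$.

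For the base case $n=0$ I would compute the monodromy of GLE$(0,p^{(0)}_{r,s},A,\tau)$ directly. Here the equation is of Darboux type and admits explicit Hermite--Halphen style solutions built from $\sigma$; reading off the multipliers of these solutions under $z\mapsto z+1$ and $z\mapsto z+\tau$ from the quasi-periodicity (\ref{82-2}) produces precisely the diagonal matrices (\ref{II-101}) with data $(r,s)$, where $p^{(0)}_{r,s}$ is Hitchin's solution (\ref{II-1}). This is in essence Hitchin's own computation in \cite{Hit1}, and it settles the forward direction of (1) for $n=0$.

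The passage from $n=0$ to general $n$ is where the real work lies. By construction $p^{(n)}_{r,s}$ is the image of $p^{(0)}_{r,s}$ under the Okamoto transformation $\kappa^{0,n}$, so the claim reduces to showing that $\kappa^{0,n}$ preserves the monodromy data $(r,s)$. The hard part will be to realize the Okamoto transformation at the level of the linear equation: it is implemented by Schlesinger/middle-convolution-type operations that shift only the local exponents at the singular points $0$ and $\pm p$. Because the corresponding local monodromies there are the scalars $I_{2}$ and $-I_{2}$, such integer shifts leave the cycle matrices $N_{1},N_{2}$ unchanged and therefore fix $(r,s)$. Granting this, the forward direction of (1) follows for every $n\geq0$: the monodromy of GLE$(n,p^{(n)}_{r,s},A^{(n)},\tau)$ is (\ref{II-101}) with data $(r,s)$.

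The converse in (1) and the whole of (2) then follow from uniqueness. The isomonodromy correspondence makes the assignment of a solution of EPVI to the conjugacy class of its monodromy injective, so any $p^{(n)}(\tau)$ whose associated GLE has monodromy (\ref{II-101}) must equal $p^{(n)}_{r,s}(\tau)$ in the sense of Remark \ref{identify}. For (2), $\wp(p^{(n)}_{r_{1},s_{1}}|\tau)\equiv\wp(p^{(n)}_{r_{2},s_{2}}|\tau)$ forces the two solutions to coincide, hence to share the same diagonal monodromy; the only ambiguities in recovering $(r,s)$ from (\ref{II-101}) are $(r,s)\mapsto(r,s)+\mathbb{Z}^{2}$, from the exponentials, and $(r,s)\mapsto-(r,s)$, from interchanging the two common eigenvectors of $N_{1},N_{2}$. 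This gives $(r_{1},s_{1})\equiv\pm(r_{2},s_{2})\bmod\mathbb{Z}^{2}$, and conversely these data yield the same $\wp(p^{(n)})$ because $\wp$ is even and (\ref{II-1}) is compatible with $(r,s)\mapsto-(r,s)$, $p\mapsto-p$.
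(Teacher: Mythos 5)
The first thing to note is that this paper does not prove Theorem \ref{thm-II-8} at all: it is imported verbatim from \cite[Theorem 5.3]{Chen-Kuo-Lin} (the authors' separate paper on the Riemann--Hilbert correspondence for the generalized Lam\'{e} equation), so there is no internal proof to measure your argument against. Your outline does reproduce the intended architecture --- the isomonodromy dictionary of \cite{Chen-Kuo-Lin0}, the reduction to Hitchin's $n=0$ computation, transport by the Okamoto transformation $\kappa^{0,n}$, and an injectivity statement for the Riemann--Hilbert map --- but the two steps that carry all the weight are asserted rather than proved, and neither is routine.

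First, the claim that $\kappa^{0,n}$ preserves the monodromy data $(r,s)$ because it is ``implemented by Schlesinger/middle-convolution-type operations that shift only the local exponents'' at points with central local monodromy is not an argument here: the associated linear equation is the GLE (\ref{89-1}) on the torus $E_\tau$, not a Fuchsian system on $\mathbb{CP}^1$, and one must actually realize $\kappa_5$ and $\kappa^{0,1}$ as explicit operations on (\ref{89-1}) and check that the cycle matrices $N_1,N_2$ (not merely the local data at $0$ and $\pm p$) are unchanged. Moreover $\theta_0$ is not a local exponent of the linear equation, so the ``integer exponent shift'' heuristic does not even apply to all the parameters being moved. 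Second, and more seriously, you invoke injectivity of the monodromy map (``the isomonodromy correspondence makes the assignment \ldots injective'') as if it were automatic. It is not: the monodromy here is abelian and reducible, which is exactly the degenerate regime where the Riemann--Hilbert correspondence can fail to be one-to-one, and establishing this injectivity is the main theorem (and the title) of \cite{Chen-Kuo-Lin}. Both the converse direction of (1) and the forward direction of (2) rest entirely on this unproved step, so as written the proposal is a plausible roadmap but not a proof.
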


Now we are in a position to prove Theorem \ref{q-n=z-n}.

\begin{proof}[Proof of Theorem \ref{q-n=z-n}]
It is known in \cite{LW2} that $W_{1}(X)=X$, so (\ref{II-511}) gives
$Q_{0}(X)=W_{1}(X)$. Therefore, we only need to prove this theorem for any
fixed $n\geq1$. Fix any $\tau_{0}\in \mathbb{H}$ and any $a_{0}\not \in
E_{\tau_{0}}[2]$ being outside the branch loci of $\sigma_{n+1}:\bar{Y}%
_{n+1}(\tau_{0})\rightarrow E_{\tau_{0}}$. Let $X_{0}$ be any zero of
$W_{n+1}(X;a_{0},\tau_{0})$. Our goal is to prove $Q_{n}(X_{0};a_{0},\tau
_{0})=0$.

By Theorem \ref{thm-5A}-(2), there is $\boldsymbol{a}=\{[a_{1}],\cdots,[a_{n+1}]\}
\in \bar{Y}_{n+1}(\tau_{0})$ such that $a_{0}=\sigma_{n+1}(\boldsymbol{a})$ and
$X_{0}=\boldsymbol{z}_{n+1}(\boldsymbol{a})$. Clearly $\boldsymbol{a}$ is not
a branch point of $\bar{Y}_{n+1}(\tau_{0})$ because $a_{0}\notin E_{\tau_0}[2]$.
Recalling (\ref{z--n}), we define
$(r,s)$ by%
\begin{align}
r+s\tau_{0}  &  =\sum_{i=1}^{n+1}a_{i}=a_{0}\notin E_{\tau_0}[2],\label{kkll-1}\\
r\eta_{1}(\tau_{0})+s\eta_{2}(\tau_{0})  &  =\zeta(a_{0}|\tau_{0})-X_{0}%
=\sum_{i=1}^{n+1}\zeta(a_{i}|\tau_{0}).\nonumber
\end{align}
Since $\tau_{0}\eta_{1}(\tau_{0})-\eta_{2}(\tau_{0})=2\pi i$, $(r,s)$ is
uniquely determined and $(r,s)\not \in \frac{1}{2}\mathbb{Z}^{2}$. Clearly
(\ref{kkll-1}) implies%
\begin{equation}
a_{0}=r+s\tau_{0}\text{ \  \ and \  \ }X_{0}=Z_{r,s}(\tau_{0}). \label{k-il}%
\end{equation}
By Proposition \ref{prop-B} and (\ref{iiii}), $y_{\{a_{i}\}}(z)$ and $y_{\{-a_{i}\}}(z)$
are linearly independent solutions to the integral Lam\'{e} equation%
\begin{equation}
y^{\prime \prime}=[  (n+1)(n+2)\wp(z|\tau_{0})+B_{0}]  y, \label{III-67}%
\end{equation}
where $B_{0}=(2n+1)\sum_{i=1}^{n+1}\wp(a_{i}|\tau_{0})$, and the monodromy group of
(\ref{III-67}) with respect to $(y_{\{a_{i}\}}(z),y_{\{-a_{i}
\}}(z))$ is generated by%
\begin{equation}
\rho(\ell_{1})=%
\begin{pmatrix}
e^{-2\pi is} & 0\\
0 & e^{2\pi is}%
\end{pmatrix}\;
\text{ and }\;\rho(\ell_{2})=%
\begin{pmatrix}
e^{2\pi ir} & 0\\
0 & e^{-2\pi ir}%
\end{pmatrix}.
\label{III-66-3}%
\end{equation}

On the other hand, by \cite[Lemma 3.1]{Chen-Kuo-Lin0} we see that
given any $\tilde{h}\in\mathbb{C}$,
 EPVI$(\tfrac{1}{2}(n+\tfrac{1}{2})^{2},\frac{1}{8},\frac{1}{8},\frac{1}{8})$
has a solution $p(\tau)$ such that $p(\tau_{0})=0$ and%
\begin{equation}
p(\tau)=c_{0}(\tau-\tau_{0})^{\frac{1}{2}}(1+\tilde{h}(\tau-\tau_{0}%
)+O(\tau-\tau_{0})^{2})\text{ as }\tau \rightarrow \tau_{0}, \label{515-5}%
\end{equation}
where $c_{0}^{2}=i\frac{2n+1}{2\pi}$ (note $\pm p(\tau)$ corresponds to the
same $\lambda(t)$, so $c_{0}$ is determined up to a sign $\pm$). Consequently,
the corresponding%
\[
\lambda(t)=\frac{\wp(p(\tau)|\tau)-e_{1}(\tau)}{e_{2}(\tau)-e_{1}(\tau)}
\]
is a solution of PVI$(\tfrac{1}{2}(n+\tfrac{1}{2})^{2},\frac{-1}{8},\frac
{1}{8},\frac{3}{8})$ and has a \emph{negative pole} at $t(\tau_{0})$; see
\cite[Lemma 3.1]{Chen-Kuo-Lin0}. Furthermore, it follows from \cite[Theorem 1.5]{Chen-Kuo-Lin0} that the
potential $I(z;\tau)$ of the associated GLE$(n,p(\tau),A(\tau),\tau)$
\begin{align}
y^{\prime \prime}  &  =\left[
\begin{array}
[c]{l}%
n(n+1) \wp(z|\tau) +\frac{3}{4}( \wp( z+p(\tau)|\tau) +\wp( z-p(\tau)|\tau)
)\\
+A(\tau)\left(  \zeta( z+p(\tau)|\tau) -\zeta( z-p(\tau)|\tau) \right)
+B(\tau)
\end{array}
\right]  y\nonumber \\
&  =:I(z;\tau)y \label{60-4}%
\end{align}
which is monodromy preserving as $\tau$ deforms, converges to $(n+1)(n+2)\wp(z|\tau_{0}%
)+\tilde{B}$ uniformly for $z$ bounded away from lattice points
$\Lambda_{\tau_{0}}$ as $\tau \rightarrow \tau_{0}$, where
\[
\tilde{B}:=\lim_{\tau \rightarrow \tau_{0}}B(\tau)=2\pi ic_{0}^{2}\left(
4\pi i\tilde{h}-\eta_{1}(\tau_{0})\right).
\]
Now letting
\[
\tilde{h}=\frac{(2n+1)\eta_{1}(\tau_{0})-B_{0}}{4\pi i(2n+1)}
\]
in (\ref{515-5}), we obtain $\tilde{B}=B_0$, namely the
potential $I(z;\tau)$ of GLE (\ref{60-4}) converges to exactly the potential $(n+1)(n+2)\wp(z|\tau_{0})+B_{0}$
of the Lam\'{e} equation (\ref{III-67})
as $\tau \rightarrow \tau_{0}$. Then we proved in \cite[Theorem 6.2]{Chen-Kuo-Lin} that the monodromy of GLE (\ref{60-4}) is also given by
\[N_1=\begin{pmatrix}
e^{-2\pi is} & 0\\
0 & e^{2\pi is}%
\end{pmatrix},\quad N_2=\begin{pmatrix}
e^{2\pi ir} & 0\\
0 & e^{-2\pi ir}%
\end{pmatrix},\]
the same as the Lam\'{e} equation (\ref{III-67}).
From here and Theorem \ref{thm-II-8}, we obtain $\wp(p(\tau)|\tau)=\wp
(p_{r,s}^{(n)}(\tau)|\tau)$ and so $\lambda(t)=\lambda_{r,s}^{(n)}(t)$. Recalling that
$t(\tau_{0})$ is a negative pole of $\lambda(t)$, we conclude from Theorem
\ref{thm-II-18} that $Q_{n}(Z_{r,s}(\tau_{0});r+s\tau_{0},\tau_{0})=0$. Then
by (\ref{k-il}) we finally obtain $Q_{n}(X_{0};a_{0},\tau_{0})=0$.

Therefore, we have proved that%
\[
W_{n+1}(X;a_{0},\tau_{0})=0\text{ \ }\Rightarrow \text{ \ }Q_{n}(X;a_{0}%
,\tau_{0})=0.
\]
Since $\deg_{X}Q_{n}=\deg_{X}W_{n+1}={(n+1)(n+2)}/{2}$ and Theorem \ref{thm-5A}-(2)
says that $W_{n+1}(X;a_{0},\tau_{0})$ has ${(n+1)(n+2)}/{2}$ distinct
zeros, we conclude that
\begin{equation}
Q_{n}(X;a_{0},\tau_{0})=\mathfrak{q}_{n}(\tau_{0})W_{n+1}(X;a_{0},\tau_{0})
\label{ivi}%
\end{equation}
for any $\tau_{0}$ and $a_{0}\not \in E_{\tau_{0}}[2]$ being outside the
branch loci of $\sigma_{n+1}:\bar{Y}_{n+1}(\tau_{0})\rightarrow E_{\tau_{0}}$.
By $W_{n+1}(X;\sigma_{n+1},\tau)\in \mathbb{Q}[\wp(\sigma_{n+1}|\tau)$,
$\wp^{\prime}(\sigma_{n+1}|\tau)$, $g_{2}(\tau)$, $g_{3}(\tau)][X]$ and the
fact that coefficients of $Q_{n}(X;a(\tau),\tau)$ are rational functions of
$e_{1}(\tau)$, $e_{2}(\tau)$, $e_{3}(\tau)$, $\wp(a(\tau)|\tau)$ and
$\wp^{\prime}(a(\tau)|\tau)$ provided $a(\tau)\not \in E_{\tau}[2]$, it
follows from (\ref{ivi}) and analytic continuation that (\ref{caxi}) holds for
any $\tau$ and $\sigma_{n+1}=a(\tau)\not \in \Lambda_{\tau}$. This completes
the proof.
\end{proof}

A direct consequence of Theorem \ref{thm-5A}-(3), Theorem \ref{q-n=z-n} and Corollary
\ref{thm-II-18-1} is (note from Lemma \ref{coefficient-Qn} that
$\mathfrak{q}_{n}(\tau)\not =0$ for any $\tau \in \mathbb{H}$)

\begin{theorem}
\label{simple-zn}For any $n\geq1$
and $(r,s)\in \mathbb{C}^{2}\backslash \frac{1}{2}\mathbb{Z}^{2}$, there holds
\[
Z_{r,s}^{(n)}(\tau)=\frac{Q_{n-1}(Z_{r,s}(\tau);r+s\tau,\tau)}{\mathfrak{q}%
_{n-1}(\tau)}\text{ \ for any }\tau \in \mathbb{H}.
\]
In particular, for any $(r,s)\in \mathbb{R}^{2}\backslash \frac{1}{2}%
\mathbb{Z}^{2}$, $Z_{r,s}^{(n)}(\tau)$ has only simple zeros as a holomorphic
function of $\tau \in \mathbb{H}$.
\end{theorem}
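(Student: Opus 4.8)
The plan is to obtain the theorem as a direct corollary of the identification of $Q_{n-1}$ with the Lam\'e-curve minimal polynomial established in Theorem \ref{q-n=z-n}, together with the simple-pole analysis of Corollary \ref{thm-II-18-1}; no new hard input is needed, and the argument is essentially a bookkeeping assembly. First I would recall from Theorem \ref{thm-5A}(3) that, by definition, $Z_{r,s}^{(n)}(\tau)=W_{n}(Z_{r,s}(\tau);r+s\tau,\tau)$. Applying Theorem \ref{q-n=z-n} with $n$ replaced by $n-1$ (legitimate since $n\geq1$) gives, for every $\tau$ with $a(\tau)=r+s\tau\notin\Lambda_{\tau}$,
\[
Q_{n-1}(X;a(\tau),\tau)=\mathfrak{q}_{n-1}(\tau)\,W_{n}(X;a(\tau),\tau).
\]
Setting $X=Z_{r,s}(\tau)$ and dividing by $\mathfrak{q}_{n-1}(\tau)$, which is nonzero by Lemma \ref{coefficient-Qn}, yields the displayed identity $Z_{r,s}^{(n)}(\tau)=Q_{n-1}(Z_{r,s}(\tau);r+s\tau,\tau)/\mathfrak{q}_{n-1}(\tau)$ on the locus $\{r+s\tau\notin\Lambda_{\tau}\}$.

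Next I would promote this to an identity valid for \emph{all} $\tau\in\mathbb{H}$. For $(r,s)\in\mathbb{C}^{2}\setminus\frac12\mathbb{Z}^{2}$ the exceptional set $\{\tau\in\mathbb{H}\mid r+s\tau\in\Lambda_{\tau}\}$ is discrete (for fixed $(r,s)\notin\mathbb{Z}^{2}$ the equation $r+s\tau=m_{1}+m_{2}\tau$ pins $\tau$ down to a discrete set, and has no solution when $s\in\mathbb{Z}$, $r\notin\mathbb{Z}$). Both sides are meromorphic in $\tau$ and agree on the complement of this discrete set; since the pre-modular form $Z_{r,s}^{(n)}(\tau)$ is holomorphic on $\mathbb{H}$ and $\mathfrak{q}_{n-1}(\tau)$ never vanishes, analytic continuation gives the identity throughout $\mathbb{H}$, exactly as in the passage from (\ref{ivi}) to (\ref{caxi}).

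Finally, for the simple-zero statement I would specialize to $(r,s)\in\mathbb{R}^{2}\setminus\frac12\mathbb{Z}^{2}$. The key preliminary observation is that for such real $(r,s)$ one has $r+s\tau\notin E_{\tau}[2]$ for \emph{every} $\tau\in\mathbb{H}$: if $r+s\tau\equiv\frac{\omega_{k}}{2}\pmod{\Lambda_{\tau}}$, then comparing the real part and the coefficient of $\tau$ forces $(r,s)\in\frac12\mathbb{Z}^{2}$, a contradiction. Hence $Q_{n-1}(Z_{r,s}(\tau);r+s\tau,\tau)$ is a genuine holomorphic function of $\tau$ on all of $\mathbb{H}$, and Corollary \ref{thm-II-18-1} (with index $n-1$) tells us it has only simple zeros. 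Because $\mathfrak{q}_{n-1}(\tau)\neq0$ everywhere, dividing by this nonvanishing factor preserves the order of every zero, so $Z_{r,s}^{(n)}(\tau)$ has only simple zeros in $\mathbb{H}$, as claimed.

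I expect no genuine obstacle here: all the substantive work has already been carried out upstream, namely the monodromy/isomonodromy identification behind Theorem \ref{q-n=z-n} and the Painlev\'e-property simple-pole count behind Corollary \ref{thm-II-18-1}. The only points requiring a moment's care are verifying that real pairs $(r,s)$ avoid $E_{\tau}[2]$ for all $\tau$ (so that $Q_{n-1}(Z_{r,s}(\tau);r+s\tau,\tau)$ is honestly holomorphic and Corollary \ref{thm-II-18-1} applies), and invoking the global nonvanishing of $\mathfrak{q}_{n-1}$ from Lemma \ref{coefficient-Qn}; both are routine.
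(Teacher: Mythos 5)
Your proposal is correct and follows exactly the route the paper takes: the paper derives Theorem \ref{simple-zn} as a direct consequence of Theorem \ref{thm-5A}(3), Theorem \ref{q-n=z-n} (with index shifted to $n-1$), Corollary \ref{thm-II-18-1}, and the nonvanishing of $\mathfrak{q}_{n-1}(\tau)$ from Lemma \ref{coefficient-Qn}. The extra details you supply (discreteness of the exceptional locus for the analytic continuation, and the check that real pairs $(r,s)\notin\frac12\mathbb{Z}^2$ give $r+s\tau\notin E_{\tau}[2]$ for all $\tau$) are the same points the paper relies on implicitly or states in the proof of Corollary \ref{thm-II-18-1}.
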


In general, it is too difficult to write down the explicit formula of
 $Z_{r,s}^{(n)}(\tau)$ and so it seems impossible to prove the
simple zero property directly. Theorem \ref{simple-zn}, which answers an open
question raised in \cite{Dahmen0,LW2}, is a beautiful application of
Painlev\'{e} VI equation to pre-modular forms.

Now we are in the position to prove Theorem \ref{thm-II-18 copy(1)}.

\begin{proof}
[Proof of Theorem \ref{thm-II-18 copy(1)}]Clearly Theorem
\ref{thm-II-18 copy(1)} follows readily from Theorems \ref{thm-II-17},
\ref{thm-II-18} and \ref{simple-zn}.
\end{proof}

\section{Asymptotics of pre-modular forms}

\label{sec-asymptotics}

In this section, we study the asymptotics of $Z_{r,s}^{(n)}(\tau)$ and give the proof of
Theorem \ref{weak} (1) \& (3).

\subsection{The case $\operatorname{Re}s\in(0,\frac{1}{2})$}

First we study the asymptotic behaviors of solution $\lambda
_{r,s}^{(n)}(t)$ in terms of free parameters $(r,s)$ at the branch point
$t=1$. By Theorem \ref{thm-II-8}-(2), we can always assume $\operatorname{Re}%
s\in \lbrack0,\frac{1}{2}]$ for any solution
$\lambda_{r,s}^{(n)}(t)$. In this section we only consider the case
$\operatorname{Re}s\in(0,\frac{1}{2})$. Since $t=t(\tau)=\frac{e_{3}%
(\tau)-e_{1}(\tau)}{e_{2}(\tau)-e_{1}(\tau)}$ maps the fundamental domain%
\begin{equation}
F_{2}:=\{ \tau \in \mathbb{H}\text{ }|\text{ }0\leq \operatorname{Re}%
\tau<2,\text{ }|\tau-1/2|\geq1/2,\text{ }|\tau-3/2|>1/2\} \label{funde}%
\end{equation}
of $\Gamma(2)$ onto $\mathbb{C}\backslash \{0,1\}$, without loss of generality
we may only consider $\tau \in F_{2}$. Then $t\rightarrow1$ if $\tau
\rightarrow \infty$. The first result of this section is

\begin{theorem}
\label{solution-asymptotic}Fix $n\in \mathbb{N\cup \{}0\mathbb{\}}$ and
$(r,s)\in \mathbb{C}^{2}\backslash \frac{1}{2}\mathbb{Z}^{2}$ such that
$\operatorname{Re}s\in(0,\frac{1}{2})$. Then as $F_{2}\ni \tau \rightarrow
\infty$ there holds
\begin{equation}
\lambda_{r,s}^{(n)}(t)=1+C^{(n)}(s)e^{2\pi ir}\left(  \frac{1-t}{16}\right)
^{2s}+o\left(  |1-t|^{2s}\right)  , \label{iv6}%
\end{equation}
where $t=t(\tau)\rightarrow1$ and%
\begin{equation}
C^{(n)}(s)=\left \{
\begin{array}
[c]{l}%
\frac{8s}{2s-1}\prod_{k=0}^{m-1}\frac{(s+k)(s+k+\frac{1}{2})}%
{(s-k-1)(s-k-\frac{3}{2})}\text{ if }n=2m,\\
\frac{8s^{2}}{(2s-1)(s-1)}\prod_{k=0}^{m-1}\frac{(s+k+\frac{1}{2}%
)(s+k+1)}{(s-k-\frac{3}{2})(s-k-2)}\text{ if }n=2m+1.
\end{array}
\right.  \label{iv7}%
\end{equation}
Here we set $\prod_{k=0}^{-1}\ast=1$.
\end{theorem}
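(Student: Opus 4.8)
The plan is to prove the asymptotics by induction on $n$, propagating the expansions of \emph{both} $\lambda_{r,s}^{(n)}$ and its Hamiltonian partner $\mu_{r,s}^{(n)}$ through the Okamoto recursion of Lemma \ref{thm-II-16}. First I would settle the base case $n=0$ directly from Hitchin's formula (Theorem B) and the $q$-expansions of $\wp$, $\wp'$, $\zeta$, $\eta_1$, $\eta_2$ and the $e_k$. Writing $a(\tau)=r+s\tau$ and $x:=e^{2\pi i a}=e^{2\pi ir}q^{s}$, the hypothesis $\operatorname{Re}s>0$ forces $x\to0$, so $\cot(\pi a)=-i(1+2x)+O(x^{2})$, from which I read off
\[
\wp(a|\tau)=-\tfrac{\pi^{2}}{3}-4\pi^{2}e^{2\pi ir}q^{s}+o(q^{s}),\qquad \wp'(a|\tau)=-8\pi^{3}i\,e^{2\pi ir}q^{s}+o(q^{s}),
\]
\[
Z_{r,s}(\tau)=\pi i(2s-1)-2\pi i\,e^{2\pi ir}q^{s}+o(q^{s}).
\]
Here $\operatorname{Re}s\in(0,\tfrac12)$ is used to discard the competing terms of order $q^{1-s}$ (exponent $>\tfrac12$) against the leading $q^{s}$ (exponent $<\tfrac12$). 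Substituting into $\lambda_{r,s}^{(0)}=\bigl(\wp(a)+\tfrac{\wp'(a)}{2Z_{r,s}}-e_{1}\bigr)/(e_{2}-e_{1})$ and into $\mu_{r,s}^{(0)}=\tfrac{(e_{2}-e_{1})Z_{r,s}}{\wp'(a)}$ of (\ref{II-127}) yields
\[
\lambda_{r,s}^{(0)}=1+\tfrac{8s}{2s-1}e^{2\pi ir}q^{s}+o(q^{s}),\qquad \mu_{r,s}^{(0)}=\tfrac{2s-1}{8}e^{-2\pi ir}q^{-s}\bigl(1+o(1)\bigr),
\]
so $C^{(0)}(s)=\tfrac{8s}{2s-1}$ matches (\ref{iv7}) (empty product).

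Second, I would record the bridge between $q$ and $t$: since $1-t=\tfrac{e_{2}-e_{3}}{e_{2}-e_{1}}=16q^{1/2}\bigl(1+O(q^{1/2})\bigr)$, one has $\left(\tfrac{1-t}{16}\right)^{2s}=q^{s}\bigl(1+o(1)\bigr)$, and because $|1-t|^{2s}\asymp|q|^{\operatorname{Re}s}$ the remainder $o(q^{s})$ coincides with $o(|1-t|^{2s})$. Thus proving the expansion in the form $\lambda^{(n)}_{r,s}=1+C^{(n)}(s)e^{2\pi ir}q^{s}+o(q^{s})$ is equivalent to the stated conclusion.

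Third, the inductive step. Assuming $\lambda^{(n-1)}_{r,s}=1+C^{(n-1)}e^{2\pi ir}q^{s}+o(q^{s})$ and $\mu^{(n-1)}_{r,s}=D^{(n-1)}e^{-2\pi ir}q^{-s}+o(q^{-s})$ with $C^{(n-1)},D^{(n-1)}\neq0$, I substitute into (\ref{II-500})--(\ref{II-501}). Setting $\Lambda:=\lambda^{(n-1)}+\tfrac{n-1}{2\mu^{(n-1)}}$, the key point is $\Lambda\to1$ with $\Lambda-1=E^{(n-1)}e^{2\pi ir}q^{s}+o(q^{s})$, where $E^{(n-1)}:=C^{(n-1)}+\tfrac{n-1}{2D^{(n-1)}}$; hence $\tfrac1\Lambda\to1$ while $\tfrac1{\Lambda-1}$ and $\tfrac1{\Lambda-t}$ both blow up like $q^{-s}$. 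Extracting leading orders gives the coupled recursion
\[
D^{(n)}=D^{(n-1)}-\frac{n}{E^{(n-1)}},\qquad C^{(n)}=E^{(n-1)}+\frac{n+1}{2D^{(n)}}.
\]
It then remains to verify that the closed form $C^{(n)}(s)$ in (\ref{iv7}), together with the accompanying $D^{(n)}(s)$ (whose product form one reads off from the first few steps), satisfies this recursion -- a routine algebraic induction -- and that $C^{(n)}(s),D^{(n)}(s),E^{(n-1)}(s)$ stay finite and nonzero throughout $\operatorname{Re}s\in(0,\tfrac12)$, so the hypotheses of the induction persist. The latter is transparent from the shape of (\ref{iv7}): every numerator factor $s+k$, $s+k+\tfrac12$, $s+k+1$ has positive real part, the factor $2s-1$ has $\operatorname{Re}(2s-1)=2\operatorname{Re}s-1<0$, and every remaining denominator factor is of the form $s-c$ with $c\ge1$ and hence real part $<-\tfrac12$; no factor can vanish, so the induction never degenerates.

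The main obstacle will be the analytic bookkeeping of the error terms in the inductive step, not the algebra of the recursion. The delicate feature is that $t\to1$ and $\Lambda\to1$ \emph{simultaneously}, so in $\Lambda-t=(\Lambda-1)+(1-t)$ one compares a term of order $q^{s}$ against a term of order $q^{1/2}$; it is exactly the hypothesis $\operatorname{Re}s<\tfrac12$ that renders the $q^{s}$ contribution dominant and forces $\tfrac1{\Lambda-t}\sim\tfrac1{\Lambda-1}$ to leading order. One must also confirm that no cancellation drives $E^{(n-1)}$ or $D^{(n)}$ to zero (covered by the non-vanishing above) and that the subleading remainders genuinely remain $o(q^{s})$, resp. $o(q^{-s})$, after division by the large quantity $\mu^{(n)}$ and after the subtraction of large quantities in (\ref{II-500}); carrying a uniform remainder estimate through the recursion is the step that requires the most care.
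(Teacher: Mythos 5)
Your proposal is correct and would go through, but it organizes the induction differently from the paper. The paper does not iterate the $(\lambda,\mu)$ recursion of Lemma \ref{thm-II-16} directly; instead it passes through the polynomial machinery of Theorem \ref{thm-II-17}, writing $\lambda^{(n)}_{r,s}-1=(R_n-Q_{n-2}Q_n)/(Q_{n-2}Q_n)$ and $\mu^{(n)}_{r,s}=Q_{n-2}Q_{n-1}Q_n/G_n$, and then proves (Lemma \ref{Q-asymp}) by induction on the recursion (\ref{II-515})--(\ref{II-518}) that $Q_n\to\check Q_n(s)$, $G_n=\check G_n(s)x+o(|x|)$ and $R_n-Q_{n-2}Q_n=\check R_n(s)x+o(|x|)$, with explicit product formulas for $\check Q_n,\check G_n,\check R_n$ verified against the four key identities (\ref{iv-13})--(\ref{iv-15-1}). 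Your version collapses this to a two-term scalar recursion for the leading coefficients $C^{(n)}$ of $\lambda^{(n)}-1\sim C^{(n)}x$ and $D^{(n)}$ of $\mu^{(n)}\sim D^{(n)}x^{-1}$ (with $x=e^{2\pi ir}q^{s}$), obtained by feeding the induction hypothesis straight into (\ref{II-500})--(\ref{II-501}); the analytic content -- the base case from Hitchin's formula, the fact that $\operatorname{Re}s\in(0,\tfrac12)$ makes $1-t\asymp q^{1/2}=o(|x|)$ so that $\Lambda-t\sim\Lambda-1$, and the non-vanishing of all leading constants -- is identical. Your route is leaner for this one theorem (indeed the recursion closes very cleanly: one finds $E^{(n-1)}=(s+\tfrac{n-1}{2})/D^{(n-1)}$, hence $D^{(n)}=D^{(n-1)}\tfrac{s-(n+1)/2}{s+(n-1)/2}$ and $C^{(n)}=s/D^{(n)}$, from which (\ref{iv7}) and the non-degeneracy on $\operatorname{Re}s\in(0,\tfrac12)$ drop out at once, making the ``routine algebraic induction'' you defer genuinely routine). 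What the paper's heavier bookkeeping buys is reuse: the same Lemma \ref{Q-asymp}, combined with $Z^{(n+1)}_{r,s}=Q_n/\mathfrak q_n$ from Theorem \ref{simple-zn}, is what yields the asymptotics of the pre-modular form itself (Theorem \ref{asymp-Zn copy(1)}, i.e.\ Theorem \ref{weak}-(1)), which your scalar recursion for $(C^{(n)},D^{(n)})$ alone does not deliver. The only point you should make explicit is the closed form of $D^{(n)}(s)$ and its non-vanishing, since ``covered by the non-vanishing above'' as written refers only to the factors of $C^{(n)}$; but as noted this is a one-line computation.
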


\begin{remark}
If we use the notation $(\alpha)_{m}:=\alpha(\alpha+1)\cdots(\alpha+m-1)$,
then (\ref{iv7}) can be written as%
\[
C^{(n)}(s)=\left \{
\begin{array}
[c]{l}%
\frac{8s}{2s-1}\cdot \frac{(s)_{m}(s+\frac{1}{2})_{m}}{(1-s)_{m}(\frac{3}%
{2}-s)_{m}}\text{ if }n=2m,\\
\frac{8s^{2}}{(2s-1)(s-1)}\cdot \frac{(s+\frac{1}{2})_{m}(s+1)_{m}}{(\frac
{3}{2}-s)_{m}(2-s)_{m}}\text{ if }n=2m+1.
\end{array}
\right.
\]
Theorem \ref{solution-asymptotic} for the simplest case $n=0$ was known, see
e. g. \cite[Appendix A]{CKLW}, where the remaining case $s\in \{0,\frac{1}%
{2}\}$ was also discussed. We can compare Theorem \ref{solution-asymptotic}
with Guzzetti's works \cite{Guzzetti,Guzzetti1} for asymptotics of generic
solutions $\lambda(t)$ to Painlev\'{e} VI equation with generic parameters:%
\[
\lambda(t)=1+C(1-t)^{2s}+o\left(  |1-t|^{2s}\right)
\]
as $t\rightarrow1$ along some special paths. See \cite{Guzzetti,Guzzetti1} for
his precise statements. It does not seem that he wrote down the explicit
formula for the coefficient $C$ in terms of the monodromy data $(r,s)$ for our
concerning case PVI$(\frac12(n+\frac12)^2,\frac{-1}{8},\frac18,\frac38)$.
\end{remark}

We prove Theorem \ref{solution-asymptotic} by applying Theorems
\ref{thm-II-17} and \ref{thm-II-18}. To this goal, we have to study the
asymptotic behaviors of $R_{n}(X)-Q_{n-2}(X)Q_{n}(X)$ and $Q_{n}(X)$ with
$X=Z_{r,s}(\tau)$.
To do this, we recall the following $q=e^{2\pi i\tau}$ expansion for
$\wp(z|\tau)$ (see e. g. \cite[p.46]{Lang}): For $|q|<|e^{2\pi iz}|<|q|^{-1}$,
\begin{align}
&  \wp(z|\tau)\label{iv-8}\\
=  &  -\frac{\pi^{2}}{3}-4\pi^{2}\left[  \frac{e^{2\pi iz}}{(1-e^{2\pi
iz})^{2}}+\sum_{n=1}^{\infty}\frac{nq^{n}}{1-q^{n}}\left(  e^{2\pi
inz}+e^{-2\pi inz}-2\right)  \right]  .\nonumber
\end{align}
Now we put
\begin{equation}
z=a(\tau)=r+s\tau \text{ \ and denote }x=e^{2\pi i(r+s\tau)}. \label{iv-9}%
\end{equation}
Since $\operatorname{Re}s\in(0,\frac{1}{2})$ and $\tau \in F_{2}$, we have
$q\rightarrow0$ as $\tau \rightarrow \infty$ and
\[
|q|^{-1}>|x|=e^{-2\pi(\operatorname{Im}r+\operatorname{Im}s\operatorname{Re}%
\tau)}|q|^{\operatorname{Re}s}>|q|^{\frac{1}{2}}\text{\  \ and }|q|^{\frac
{1}{2}}=o(|x|)
\]
for $|\tau|$ large. Hence (\ref{iv-8})-(\ref{iv-9}) give%
\[
\wp(a(\tau)|\tau)=-\tfrac{\pi^{2}}{3}-4\pi^{2}x+o(|x|),
\]%
\[
\wp^{\prime}(a(\tau)|\tau)=-8\pi^{3}ix+o(|x|).
\]
Letting $z=\frac{1}{2},\frac{\tau}{2},\frac{1+\tau}{2}$ in (\ref{iv-8})
respectively, we easily obtain%
\begin{equation}
e_{1}(\tau)=\tfrac{2\pi^{2}}{3}+16\pi^{2}q+O(|q|^{2}), \label{iv-50}%
\end{equation}%
\[
e_{2}(\tau)=-\tfrac{\pi^{2}}{3}-8\pi^{2}q^{\frac{1}{2}}-8\pi^{2}q+O(|q|^{\frac
{3}{2}}),
\]%
\begin{equation}
e_{3}(\tau)=-\tfrac{\pi^{2}}{3}+8\pi^{2}q^{\frac{1}{2}}-8\pi^{2}q+O(|q|^{\frac
{3}{2}}), \label{iv-51}%
\end{equation}
and so%
\begin{equation}
t-1=\frac{e_{3}(\tau)-e_{2}(\tau)}{e_{2}(\tau)-e_{1}(\tau)}=-16q^{\frac{1}{2}%
}+O(|q|)=o(|x|), \label{iv-10}%
\end{equation}%
\begin{equation}
e_{2}(\tau)-e_{1}(\tau)=-\pi^{2}+o(|x|). \label{iv-10-1}%
\end{equation}
In fact, it is easy to obtain the well-known $q$-expansion of $t$:%
\begin{align}
t=  &  1-16q^{\frac{1}{2}}+128q-704q^{\frac{3}{2}}+3072q^{2}-11488q^{\frac
{5}{2}}\label{t-expansion}\\
&  +38400q^{3}-117632q^{\frac{7}{2}}+335872q^{4}+\cdots.\nonumber
\end{align}
On the other hand, it was proved in \cite[(5.3)]{CKLW} that%
\[
Z_{r,s}(\tau)=\pi i(2s-1)-2\pi ix+o(|x|).
\]
These, together with (\ref{II-511})-(\ref{II-512}), easily imply%
\begin{equation}
G_{0}(Z_{r,s}(\tau)|\tau)=8\pi ix+o(|x|), \label{iv-17}%
\end{equation}%
\begin{equation}
Q_{0}(Z_{r,s}(\tau)|\tau)=\pi i(2s-1)-2\pi ix+o(|x|), \label{iv-18}%
\end{equation}%
\begin{align}
& R_{0}(Z_{r,s}(\tau)|\tau)-tQ_{0}(Z_{r,s}(\tau)|\tau) \nonumber\\
=&R_{0}(Z_{r,s}%
(\tau)|\tau)-Q_{0}(Z_{r,s}(\tau)|\tau)+o(|x|)
=8\pi isx+o(|x|). \label{iv-19}%
\end{align}
Starting from these formulae, we want to prove

\begin{lemma}
\label{Q-asymp}Given any $n\geq0$, there holds%
\[
Q_{n}(Z_{r,s}(\tau)|\tau)=\check{Q}_{n}(s)+o(1)\text{ \ as }F_{2}\ni
\tau \rightarrow \infty,
\]
where $\check{Q}_{n}(s)$ is a polynomial of $s$ defined as follows:

\begin{itemize}
\item[(1)] if $n=2m$ with $m\in \mathbb{N}\cup \{0\}$,%
\begin{align}
\check{Q}_{n}(s):=  &  \left[  \pi i(2s-1)\right]  ^{m+1}\prod_{k=0}%
^{m-1}\left[  (s+k)\left(  s+k+\tfrac{1}{2}\right)  \right. \nonumber \\
&  \left.  (s-k-1)\left(  s-k-\tfrac{3}{2}\right)  \right]  ^{m-k};
\label{Q-expansion}%
\end{align}

\item[(2)] if $n=2m+1$ with $m\in \mathbb{N}\cup \{0\}$,%
\begin{align}
\check{Q}_{n}(s):=  &  \left[  \pi i(2s-1)\right]  ^{m+1}s^{m+1}%
(s-1)^{m+1}\prod_{k=0}^{m-1}\left[  \left(  s+k+\tfrac{1}{2}\right)  \right.
\nonumber \\
&  \left.  (s+k+1)\left(  s-k-\tfrac{3}{2}\right)  (s-k-2)\right]  ^{m-k}.
\label{Q-expansion2}%
\end{align}

\end{itemize}
\end{lemma}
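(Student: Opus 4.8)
The plan is to prove Lemma~\ref{Q-asymp} by induction on $n$, using the explicit recursion (\ref{II-515})--(\ref{II-518}) that defines $Q_n$, $G_n$, $R_n$ and carrying through it a controlled asymptotic expansion of all three polynomials evaluated at $X=Z_{r,s}(\tau)$ as $F_{2}\ni\tau\to\infty$. The inputs are the base-case expansions (\ref{iv-17})--(\ref{iv-19}) for $G_0$, $Q_0$, $R_0-tQ_0$, together with the scaling facts that $x=e^{2\pi i(r+s\tau)}\to 0$, that $t\to 1$ with $t-1=O(q^{1/2})$ by (\ref{iv-10}), and that $q^{1/2}=o(|x|)$ on $F_2$ when $\operatorname{Re}s\in(0,\tfrac12)$. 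This last fact is what makes the argument tractable: wherever a difference of the form $\varphi_{n-1}-ty$ enters $H$ and $H'$ in (\ref{II-510}), the term $(1-t)y$ is of order $q^{1/2}=o(|x|)$ and hence negligible against the $O(x)$ contributions, so to the orders we need we may replace $t$ by $1$, turning $H(x;y)$ into $x(x-y)^2$ and $H'(x;y)$ into $(x-y)(3x-y)$.

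The induction hypothesis I would carry is a three-fold expansion: $Q_n=\check Q_n(s)+O(x)$ with $\check Q_n(s)\neq 0$; $G_n=\check G_n(s)\,x+o(|x|)$, so $G_n$ vanishes to first order; and $R_n-Q_{n-2}Q_n=\rho_n(s)\,x+o(|x|)$, so $R_n$ matches $Q_{n-2}Q_n$ at leading order and separates only at order $x$. The base case $n=0$ holds by (\ref{iv-17})--(\ref{iv-19}), giving $\check Q_0=\pi i(2s-1)$, $\check G_0=8\pi i$ and $\rho_0=8\pi i s$ (converting $R_0-tQ_0$ to $R_0-Q_0$ via $t\to 1$). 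For the inductive step one first observes that $\varphi_{n-1}=R_{n-1}Q_{n-2}+\tfrac{n-1}{2}G_{n-1}$ tends to the nonzero constant $\Pi:=\check Q_{n-3}\check Q_{n-2}\check Q_{n-1}$ and that $\varphi_{n-1}-Q_{n-3}Q_{n-2}Q_{n-1}=\psi_{n-1}x+o(|x|)$ with $\psi_{n-1}=\rho_{n-1}\check Q_{n-2}+\tfrac{n-1}{2}\check G_{n-1}$. Feeding this into (\ref{II-516})--(\ref{II-518}) and keeping leading orders yields closed coupled recursions for the triple $(\check Q_n,\check G_n,\rho_n)$: namely $\check G_n=\check Q_{n-2}\check Q_{n-1}\psi_{n-1}^2/(\check G_{n-1}\check Q_{n-3}^2)$, a companion formula $\check Q_n=\check Q_{n-3}\check G_n/\check G_{n-1}-n\,\check Q_{n-2}\check Q_{n-1}\psi_{n-1}/(\check G_{n-1}\check Q_{n-3})$, and $\rho_n=(\psi_{n-1}\check Q_n+\tfrac{n+1}{2}\check Q_{n-3}\check G_n)/(\check Q_{n-3}\check Q_{n-1})$.

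The remaining work is to solve these recursions and verify the closed forms (\ref{Q-expansion})--(\ref{Q-expansion2}). I would guess clean product forms for $\check G_n$ and $\psi_n$ as well, their shape being dictated by the $n=0,1$ data and by the requirement that the $\check Q_n$ recursion reproduce the claimed products, and then check all three identities simultaneously by induction; the parity split $n=2m$ versus $n=2m+1$ emerges because the increments in the product index track the step-$2$ structure of the Okamoto shift $\kappa_5$ underlying (\ref{II-515})--(\ref{II-518}). As a sanity check, the formula matches the explicit $n=1$ value of $Q_1$ from Theorem~\ref{thm-expression-1}, which gives $Q_1\to\pi i(2s-1)s(s-1)=\check Q_1(s)$. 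The main obstacle is precisely this bookkeeping: since the $\check Q_n$ recursion consumes the order-$x$ coefficients $\check G_{n-1}$ and $\rho_{n-1}$ of the previous stage, one cannot propagate the constant term of $Q_n$ in isolation but must carry the full triple $(\check Q_n,\check G_n,\rho_n)$ together, checking at each stage that $\check Q_{n-3}$, $\check G_{n-1}$ and $\psi_{n-1}$ are not identically zero so that the rational recursions are legitimate. Establishing the product identities and their nonvanishing is where essentially all the effort lies.
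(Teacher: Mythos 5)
Your proposal follows essentially the same route as the paper's proof: the paper also inducts on the triple consisting of the constant term of $Q_n$, the order-$x$ coefficient of $G_n$, and the order-$x$ coefficient of $R_n-Q_{n-2}Q_n$ (its $\check R_n$ is your $\rho_n$), uses $q^{1/2}=o(|x|)$ to replace $t$ by $1$, and derives exactly your coupled recursions from (\ref{II-515})--(\ref{II-518}). The bookkeeping you defer is carried out in the paper by writing down explicit product formulas for $\check G_n$ and $\check R_n$ and verifying the four closed-form identities (\ref{iv-13})--(\ref{iv-15-1}), which collapse the recursion (in particular $\psi_{n-1}=(s+\tfrac{n-1}{2})\check G_{n-1}$); your plan is correct, with that verification being the remaining work.
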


\begin{proof}
In this proof we write $Q_{n}=Q_{n}(Z_{r,s}(\tau)|\tau)$, $G_{n}=G_{n}%
(Z_{r,s}(\tau)|\tau)$ and $R_{n}=R_{n}(Z_{r,s}(\tau)|\tau)$ for convenience.
We want to prove%
\begin{equation}
Q_{n}=\check{Q}_{n}+o(1),\text{ \  \ }G_{n}=\check{G}_{n}x+o(|x|),
\label{iv-11}%
\end{equation}%
\begin{equation}
R_{n}-tQ_{n-2}Q_{n}=R_{n}-Q_{n-2}Q_{n}+o(|x|)=\check{R}_{n}x+o(|x|)
\label{iv-12}%
\end{equation}
by induction, where $\check{G}_{n}$ and $\check{R}_{n}$ are polynomials of $s$
defined as follows: if $n=2m$ with $m\in \mathbb{N\cup \{}0\mathbb{\}}$,%
\begin{align}
\check{G}_{n}:=  &  8\pi i\left[  \pi i(2s-1)\right]  ^{3m}\prod_{k=0}%
^{m-1}\left[  (s+k)^{3(m-k)}\left(  s+k+\tfrac{1}{2}\right)  ^{3(m-k)-1}\right.
\nonumber \\
&  \left.  (s-k-1)^{3(m-k)-2}\left(  s-k-\tfrac{3}{2}\right)  ^{3(m-k-1)}%
\right]  , \label{G-expansion}%
\end{align}%
\begin{align}
\check{R}_{n}:=  &  8\pi i\left[  \pi i(2s-1)\right]  ^{2m}s^{2m+1}\left(
s+\tfrac{1}{2}\right)  ^{2m}\nonumber \\
&  \prod_{k=1}^{m-1}\left[  (s+k)\left(  s+k+\tfrac{1}{2}\right)  (s-k)\left(
s-k-\tfrac{1}{2}\right)  \right]  ^{2(m-k)}; \label{R-expansion}%
\end{align}
if $n=2m+1$ with $m\in \mathbb{N}\cup \{0\}$,%
\begin{align}
\check{G}_{n}:=  &  8\pi i\left[  \pi i(2s-1)\right]  ^{3m+1}s^{3m+2}%
(s-1)^{3m}\prod_{k=0}^{m-1}\left[  \left(  s+k+\tfrac{1}{2}\right)
^{3(m-k)}\right. \nonumber \\
&  \left.  (s+k+1)^{3(m-k)-1}\left(  s-k-\tfrac{3}{2}\right)  ^{3(m-k)-2}%
(s-k-2)^{3(m-k-1)}\right]  , \label{G-expansion2}%
\end{align}%
\begin{align}
\check{R}_{n}:=  &  8\pi i\left[  \pi i(2s-1)\right]  ^{2m}s^{2m+3}\left(
s+\tfrac{1}{2}\right)  ^{2m}(s+1)^{2m}(s-1)^{2m}\nonumber \\
\prod_{k=1}^{m-1}  &  \left[  \left(  s+k+\tfrac{1}{2}\right)  (s+k+1)\left(
s-k-\tfrac{1}{2}\right)  (s-k-1)\right]  ^{2(m-k)}. \label{R-expan2}%
\end{align}
Here we set $\prod_{k=i}^{j}\ast=1$ for $j<i$
as before. Remark by the definitions of $\check{Q}_{n}$, $\check{G}_{n}$ and
$\check{R}_{n}$, we have $\check{Q}_{n}\check{G}_{n}\check{R}_{n}\not =0$
because of $\operatorname{Re}s\in(0,\frac{1}{2})$. Furthermore, a direct
computation shows that (set $\check{Q}_{-2}=\check{Q}_{-1}=1$)%
\begin{equation}
\check{R}_{n}\check{Q}_{n-1}=s\check{G}_{n}, \label{iv-13}%
\end{equation}%
\begin{equation}
\check{G}_{n}\check{Q}_{n-3}^{2}=\left(  s+\tfrac{n-1}{2}\right)  ^{2}\check
{G}_{n-1}\check{Q}_{n-2}\check{Q}_{n-1}, \label{iv-14}%
\end{equation}%
\begin{equation}
\check{Q}_{n}\check{Q}_{n-3}=\left(  s+\tfrac{n-1}{2}\right)  \left(
s-\tfrac{n+1}{2}\right)  \check{Q}_{n-2}\check{Q}_{n-1}, \label{iv-15}%
\end{equation}%
\begin{equation}
\check{R}_{n}\check{Q}_{n-3}^{2}=s\left(  s+\tfrac{n-1}{2}\right)  ^{2}%
\check{G}_{n-1}\check{Q}_{n-2}, \label{iv-15-1}%
\end{equation}
hold for any $n\geq1$. We remark that these four identities are the key points
of this proof. By (\ref{iv-17})-(\ref{iv-19}) we see that (\ref{iv-11}%
)-(\ref{iv-12}) hold for $n=0$.\medskip

\textbf{Step 1}. We prove (\ref{iv-11})-(\ref{iv-12}) for $n=1$.

Recalling (\ref{II-515}) that $\varphi_{0}=R_{0}$, we deduce from
(\ref{iv-18})-(\ref{iv-19}) that%
\[
H(\varphi_{0};Q_{0})=-64\pi^{3}i(2s-1)s^{2}x^{2}+o(|x|^{2}),
\]%
\[
H^{\prime}(\varphi_{0};Q_{0})=-16\pi^{2}(2s-1)sx+o(|x|).
\]
Together with (\ref{iv-17}) and (\ref{II-516})-(\ref{II-518}), we easily
obtain%
\[
G_{1}=\frac{H(\varphi_{0};Q_{0})}{G_{0}}=-8\pi^{2}(2s-1)s^{2}x+o(|x|)=\check
{G}_{1}x+o(|x|),
\]%
\[
Q_{1}=\frac{G_{1}-\frac{1}{2}H^{\prime}(\varphi_{0};Q_{0})}{G_{0}}=\pi
i(2s-1)s(s-1)+o(1)=\check{Q}_{1}+o(1),
\]%
\[
R_{1}-Q_{1}=\frac{(R_{0}-Q_{0})Q_{1}+G_{1}}{Q_{0}}=8\pi is^{3}x+o(|x|)=\check
{R}_{1}x+o(|x|),
\]%
\[
R_{1}-tQ_{1}=R_{1}-Q_{1}+o(|x|)=\check{R}_{1}x+o(|x|).
\]
Therefore, (\ref{iv-11})-(\ref{iv-12}) hold for $n=1$.\medskip

\textbf{Step 2}. Assume that (\ref{iv-11})-(\ref{iv-12}) hold for any
$\ell \leq n-1$ where $n\geq2$, we prove (\ref{iv-11})-(\ref{iv-12}) for $n$.

By our assumption that (\ref{iv-11})-(\ref{iv-12}) hold for $n-1$ and
(\ref{iv-13}), we have%
\begin{align*}
(R_{n-1}-Q_{n-3}Q_{n-1})Q_{n-2}  &  =\left(  \check{R}_{n-1}x+o(|x|)\right)
\left(  \check{Q}_{n-2}+o(1)\right) \\
&  =s\check{G}_{n-1}x+o(|x|).
\end{align*}
Recalling (\ref{II-515}) that $\varphi_{n-1}=R_{n-1}Q_{n-2}+\frac{n-1}%
{2}G_{n-1}$, we obtain{\allowdisplaybreaks%
\begin{align*}
&  \varphi_{n-1}-tQ_{n-3}Q_{n-2}Q_{n-1}\\
=  &  \varphi_{n-1}-Q_{n-3}Q_{n-2}Q_{n-1}+o(|x|)\\
=  &  (R_{n-1}-Q_{n-3}Q_{n-1})Q_{n-2}+\tfrac{n-1}{2}G_{n-1}+o(|x|)\\
=  &  \left(  s+\tfrac{n-1}{2}\right)  \check{G}_{n-1}x+o(|x|),
\end{align*}
}and so%
\[
\varphi_{n-1}=Q_{n-3}Q_{n-2}Q_{n-1}+o(1)=\check{Q}_{n-3}\check{Q}_{n-2}%
\check{Q}_{n-1}+o(1).
\]
Applying (\ref{II-516}) and (\ref{iv-14}) leads to{\allowdisplaybreaks%
\begin{align*}
G_{n}  &  =\frac{H(\varphi_{n-1};Q_{n-3}Q_{n-2}Q_{n-1})}{G_{n-1}Q_{n-3}^{3}}\\
&  =\frac{[\check{Q}_{n-3}\check{Q}_{n-2}\check{Q}_{n-1}+o(1)][(s+\frac
{n-1}{2})\check{G}_{n-1}x+o(|x|)]^{2}}{[\check{G}_{n-1}x+o(|x|)][\check
{Q}_{n-3}+o(1)]^{3}}\\
&  =\frac{(s+\frac{n-1}{2})^{2}\check{G}_{n-1}\check{Q}_{n-2}\check{Q}_{n-1}%
}{\check{Q}_{n-3}^{2}}x+o(|x|)=\check{G}_{n}x+o(|x|).
\end{align*}
}Furthermore,%
\begin{align*}
H^{\prime}  &  (\varphi_{n-1};Q_{n-3}Q_{n-2}Q_{n-1})=2\varphi_{n-1}%
(\varphi_{n-1}-Q_{n-3}Q_{n-2}Q_{n-1})+o(|x|)\\
&  =2\left(  s+\tfrac{n-1}{2}\right)  \check{G}_{n-1}\check{Q}_{n-3}\check
{Q}_{n-2}\check{Q}_{n-1}x+o(|x|).
\end{align*}
Then we derive from (\ref{II-517}) and (\ref{iv-15}) that{\allowdisplaybreaks%
\begin{align*}
Q_{n}=  &  \frac{Q_{n-3}G_{n}}{G_{n-1}}-\frac{n}{2}\frac{H^{\prime}%
(\varphi_{n-1};Q_{n-3}Q_{n-2}Q_{n-1})}{G_{n-1}Q_{n-3}^{2}}\\
=  &  \frac{(\check{Q}_{n-3}+o(1))\left(  \frac{(s+\frac{n-1}{2})^{2}\check
{G}_{n-1}\check{Q}_{n-2}\check{Q}_{n-1}}{\check{Q}_{n-3}^{2}}x+o(|x|)\right)
}{\check{G}_{n-1}x+o(|x|)}\\
&  -n\frac{\left(  s+\frac{n-1}{2}\right)  \check{G}_{n-1}\check{Q}%
_{n-3}\check{Q}_{n-2}\check{Q}_{n-1}x+o(|x|)}{(\check{G}_{n-1}x+o(|x|))(\check
{Q}_{n-3}+o(1))^{2}}\\
=  &  \left(  s+\tfrac{n-1}{2}\right)  \left(  s-\tfrac{n+1}{2}\right)
\frac{\check{Q}_{n-2}\check{Q}_{n-1}}{\check{Q}_{n-3}}+o(1)=\check{Q}%
_{n}+o(1).
\end{align*}
}So (\ref{iv-11}) holds for $n$. Finally,{\allowdisplaybreaks%
\begin{align*}
&  \left(  \varphi_{n-1}-Q_{n-3}Q_{n-2}Q_{n-1}\right)  Q_{n}+\tfrac{n+1}%
{2}Q_{n-3}G_{n}\\
=  &  \left[  \left(  s+\tfrac{n-1}{2}\right)  \check{G}_{n-1}x+o(|x|)\right]
\\
&  \times \left[  \left(  s+\tfrac{n-1}{2}\right)  \left(  s-\tfrac{n+1}%
{2}\right)  \frac{\check{Q}_{n-2}\check{Q}_{n-1}}{\check{Q}_{n-3}}+o(1)\right]
\\
&  +\tfrac{n+1}{2}\left[  \check{Q}_{n-3}+o(1)\right]  \left[  \frac
{(s+\frac{n-1}{2})^{2}\check{G}_{n-1}\check{Q}_{n-2}\check{Q}_{n-1}}{\check
{Q}_{n-3}^{2}}x+o(|x|)\right] \\
=  &  s\left(  s+\tfrac{n-1}{2}\right)  ^{2}\frac{\check{G}_{n-1}\check
{Q}_{n-2}\check{Q}_{n-1}}{\check{Q}_{n-3}}x+o(|x|).
\end{align*}
}This, together with (\ref{II-518}) and (\ref{iv-15-1}),
gives{\allowdisplaybreaks%
\begin{align*}
R_{n}-tQ_{n-2}Q_{n}  &  =R_{n}-Q_{n-2}Q_{n}+o(|x|)\\
&  =\frac{\left(  \varphi_{n-1}-Q_{n-3}Q_{n-2}Q_{n-1}\right)  Q_{n}+\frac
{n+1}{2}Q_{n-3}G_{n}}{Q_{n-3}Q_{n-1}}+o(|x|)\\
&  =s\left(  s+\tfrac{n-1}{2}\right)  ^{2}\frac{\check{G}_{n-1}\check{Q}_{n-2}%
}{\check{Q}_{n-3}^{2}}x+o(|x|)\\
&  =\check{R}_{n}x+o(|x|),
\end{align*}
}namely (\ref{iv-12}) holds for $n$.
The proof is complete.
\end{proof}

As an application of Lemma \ref{Q-asymp}, we prove Theorem
\ref{solution-asymptotic}.

\begin{proof}
[Proof of Theorem \ref{solution-asymptotic}]Recall from (\ref{II-535}) in
Theorem \ref{thm-II-18} that%
\[
\lambda_{r,s}^{(n)}(t)-1=\frac{R_{n}(Z_{r,s}(\tau)|\tau)-Q_{n-2}(Z_{r,s}%
(\tau)|\tau)Q_{n}(Z_{r,s}(\tau)|\tau)}{Q_{n-2}(Z_{r,s}(\tau)|\tau
)Q_{n}(Z_{r,s}(\tau)|\tau)}.
\]
Furthermore, (\ref{iv-9}) and (\ref{iv-10}) imply%
\[
x=e^{2\pi ir}q^{s}=e^{2\pi ir}\left(  \frac{1-t}{16}\right)  ^{2s}%
+o(|1-t|^{2s})\text{ as }\tau \rightarrow \infty.
\]
Then Theorem \ref{solution-asymptotic} follows readily from (\ref{iv-11}%
)-(\ref{iv-12}) and the definitions (\ref{Q-expansion}), (\ref{Q-expansion2}),
(\ref{R-expansion}), (\ref{R-expan2}) of $\check{Q}_{n}$, $\check{R}_{n}$.
\end{proof}

To obtain the asymptotics of $Z_{r,s}^{(n)}(\tau)$, we need to calculate the
coefficient $\mathfrak{q}_n(\tau)$ of the leading term
$X^{\frac{(n+1)(n+2)}{2}}$ of $Q_{n}(X)$.

\begin{lemma}
\label{coefficient-Qn}Denote the coefficient of the leading term
$X^{\frac{(n+1)(n+2)}{2}}$ of $Q_{n}(X)$ by $\mathfrak{q}_{n}=\mathfrak{q}%
_{n}(\tau)$. Then for all $n\geq0$, there holds%
\begin{equation}
\mathfrak{q}_{n}(\tau)=\left \{
\begin{array}
[c]{l}%
2^{-\frac{n(n+2)}{2}}(e_{2}(\tau)-e_{1}(\tau))^{-\frac{n(n+2)}{4}}\text{ \ if
}n\text{ even,}\\
2^{-\frac{(n+1)^{2}}{2}}(e_{2}(\tau)-e_{1}(\tau))^{-\frac{(n+1)^{2}}{4}}\text{
\ if }n\text{ odd.}%
\end{array}
\right.  \label{iv-1}%
\end{equation}

\end{lemma}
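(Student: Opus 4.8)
The plan is to track the top-degree (leading) coefficients of the three polynomials $Q_n(X)$, $G_n(X)$, $R_n(X)$ through the inductive recursion (\ref{II-515})--(\ref{II-518}), using that by Theorem \ref{thm-II-17}(1-$n$) their $X$-degrees are known exactly. Write $E:=e_2(\tau)-e_1(\tau)$, and (as before) $\wp=\wp(a(\tau)|\tau)$, $\wp'=\wp'(a(\tau)|\tau)$; let $\mathfrak{q}_n,\mathfrak{g}_n,\mathfrak{r}_n$ denote the leading coefficients of $Q_n,G_n,R_n$ (equivalently, their behaviour as $X\to\infty$). By (\ref{II-511})--(\ref{II-512}) the base data are $\mathfrak{q}_0=1$, $\mathfrak{g}_0=\wp'/E$, $\mathfrak{r}_0=(\wp-e_1)/E$. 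First I would carry out the degree bookkeeping: in $\varphi_{n-1}=R_{n-1}Q_{n-2}+\tfrac{n-1}{2}G_{n-1}$ the top term comes from $R_{n-1}Q_{n-2}$; moreover $\varphi_{n-1}$ and $P_n:=Q_{n-3}Q_{n-2}Q_{n-1}$ have the same degree; and in (\ref{II-517}) the term $Q_{n-3}G_n/G_{n-1}$ dominates the $H'$-term by strictly one degree. This yields the two relations
\begin{equation*}
\mathfrak{q}_n=\mathfrak{q}_{n-3}\,\mathfrak{g}_n/\mathfrak{g}_{n-1},\qquad \mathfrak{r}_n=\frac{\mathfrak{r}_{n-1}\mathfrak{q}_{n-2}\mathfrak{q}_n}{\mathfrak{q}_{n-3}\mathfrak{q}_{n-1}}.
\end{equation*}

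The heart of the argument is a structural identity, which I would prove by induction in tandem with the coefficient formula:
\begin{equation*}
\mathfrak{r}_n=\frac{\wp-e_1}{E}\,\mathfrak{q}_{n-2}\mathfrak{q}_n,\qquad\text{equivalently}\qquad \mathfrak{r}_{n-1}\mathfrak{q}_{n-2}=\frac{\wp-e_1}{E}\,\mathfrak{q}_{n-3}\mathfrak{q}_{n-2}\mathfrak{q}_{n-1}.
\end{equation*}
The second form says the leading coefficient of $\varphi_{n-1}$ is $\frac{\wp-e_1}{E}$ times that of $P_n$, and it propagates at once through the $R_n$-relation above. Granting it, the three top-degree factors of $H(\varphi_{n-1};P_n)=\varphi_{n-1}(\varphi_{n-1}-P_n)(\varphi_{n-1}-tP_n)$ become proportional to $\wp-e_1$, $\wp-e_2$, $\wp-e_3$ respectively, where I use $tE=e_3-e_1$ to identify the third factor; their product then collapses, via $(\wp')^2=4(\wp-e_1)(\wp-e_2)(\wp-e_3)$, to $\tfrac14(\wp')^2$ times a power of the leading coefficient of $P_n$. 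The hypothesis $a(\tau)\notin E_\tau[2]$ is exactly what guarantees $\wp-e_k\neq0$ and $\wp'\neq0$, so that no leading term cancels and every degree count above is sharp. Feeding this into (\ref{II-516}) gives
\begin{equation*}
\mathfrak{g}_n\mathfrak{g}_{n-1}=\frac{(\wp')^2}{4E^3}\,\mathfrak{q}_{n-1}^3\mathfrak{q}_{n-2}^3.
\end{equation*}

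Finally I would eliminate $\mathfrak{g}$ between this identity and $\mathfrak{q}_n=\mathfrak{q}_{n-3}\mathfrak{g}_n/\mathfrak{g}_{n-1}$. A short manipulation (take the quotient of the displayed identity at consecutive indices and substitute $\mathfrak{g}_n/\mathfrak{g}_{n-1}=\mathfrak{q}_n/\mathfrak{q}_{n-3}$) makes $\wp$, $\wp'$ and $e_1$ all disappear and leaves the clean scalar recursion
\begin{equation*}
\mathfrak{q}_n=\frac{\mathfrak{q}_{n-1}^2\,\mathfrak{q}_{n-4}}{\mathfrak{q}_{n-3}^2}\qquad(n\geq4),
\end{equation*}
whose terms are pure powers of $2$ and $E$. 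With the base values $\mathfrak{q}_0=1$, $\mathfrak{q}_1=\tfrac{1}{4E}$ (read off from $Q_0=X$ and Theorem \ref{thm-expression-1}), and $\mathfrak{q}_2=\tfrac{1}{16E^2}$, $\mathfrak{q}_3=\tfrac{1}{256E^4}$ (from two further applications of the $\mathfrak{g}_n,\mathfrak{q}_n$ relations), writing $\mathfrak{q}_n=2^{-\alpha_n}E^{-\beta_n}$ reduces the claim to the constant-coefficient linear recursion $x_n=2x_{n-1}-2x_{n-3}+x_{n-4}$ satisfied by both $\alpha_n$ and $\beta_n$; solving with the above initial data gives $\alpha_n=\frac{n(n+2)}{2}$ (resp. $\frac{(n+1)^2}{2}$) and $\beta_n=\alpha_n/2$ according to the parity of $n$, which is precisely (\ref{iv-1}).

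I expect the main obstacle to be the structural identity together with the accompanying collapse of the three factors of $H(\varphi_{n-1};P_n)$ into $\tfrac14(\wp')^2$: this is the step that removes all dependence on $\wp$, $\wp'$ and $t$ and is what forces the answer to involve only $e_2-e_1$. Establishing it requires showing, through the induction, that the leading coefficient of $\varphi_{n-1}$ is \emph{exactly} $\frac{\wp-e_1}{E}$ times that of $P_n$, with the precise constant. Once this is in hand, everything else is degree bookkeeping and the solution of a linear recursion.
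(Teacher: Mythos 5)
Your proposal is correct and takes essentially the same route as the paper's proof: both arguments track the leading coefficients of $Q_n$, $G_n$, $R_n$ through the recursion (\ref{II-515})--(\ref{II-518}), show that the leading coefficient of $\varphi_{n-1}$ is $\frac{\wp-e_1}{e_2-e_1}$ times that of $Q_{n-3}Q_{n-2}Q_{n-1}$ so that the three factors of $H(\varphi_{n-1};Q_{n-3}Q_{n-2}Q_{n-1})$ carry $\wp-e_1$, $\wp-e_2$, $\wp-e_3$, and collapse the product via $(\wp')^2=4(\wp-e_1)(\wp-e_2)(\wp-e_3)$. The only difference is in the endgame and is cosmetic: the paper writes down closed-form expressions for $\mathfrak{g}_n$ and $\mathfrak{r}_n$ and verifies all three formulas simultaneously by induction, whereas you carry the same content as a structural invariant $\mathfrak{r}_n=\frac{\wp-e_1}{e_2-e_1}\mathfrak{q}_{n-2}\mathfrak{q}_n$ and finish by solving a four-term linear recursion for the exponents.
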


\begin{proof}
Denote the coefficient of the leading term $X^{\frac{3n(n+1)}{2}}$ of
$G_{n}(X)$ by $\mathfrak{g}_{n}$, and the coefficient of the leading term
$X^{n(n+1)+1}$ of $R_{n}(X)$ by $\mathfrak{r}_{n}$. We prove (\ref{iv-1}) and
the following two formulae%
\begin{equation}
\mathfrak{g}_{n}=\left \{
\begin{array}
[c]{l}%
2^{-\frac{3n^{2}}{2}}(e_{2}(\tau)-e_{1}(\tau))^{-\frac{3n^{2}}{4}-1}%
\wp^{\prime}(a(\tau)|\tau)\text{ \ if }n\text{ even,}\\
2^{-\frac{3n^{2}+1}{2}}(e_{2}(\tau)-e_{1}(\tau))^{-\frac{3n^{2}+1}{4}-1}%
\wp^{\prime}(a(\tau)|\tau)\text{ if }n\text{ odd,}%
\end{array}
\right.  \label{iv-2}%
\end{equation}%
\begin{equation}
\mathfrak{r}_{n}=\left \{
\begin{array}
[c]{l}%
2^{-n^{2}}(e_{2}(\tau)-e_{1}(\tau))^{-\frac{n^{2}}{2}-1}\left(  \wp
(a(\tau)|\tau)-e_{1}(\tau)\right)  \text{ \ if }n\text{ even,}\\
2^{-n^{2}-1}(e_{2}(\tau)-e_{1}(\tau))^{-\frac{n^{2}+1}{2}-1}\left(  \wp
(a(\tau)|\tau)-e_{1}(\tau)\right)  \text{ if }n\text{ odd}%
\end{array}
\right.  \label{iv-3}%
\end{equation}
by induction. Recall $Q_{-1}(X)\equiv1$. Then (\ref{iv-1}) holds for $Q_{-1}%
$.\medskip

\textbf{Step 1}. We consider $n=0,1$. This follows directly from
(\ref{II-511})-(\ref{II-512}) and Theorem \ref{thm-expression-1}.\medskip

\textbf{Step 2.} Assume that (\ref{iv-1})-(\ref{iv-3}) hold for all $0\leq
m\leq n-1$ where $n\geq2$. We prove that (\ref{iv-1})-(\ref{iv-3}) also hold
for $n$. Without loss of generality, we may assume that $n$ is even. The case
that $n$ is odd can be proved similarly and we omit the details here.

Recalling property (4-$(n-1)$), we have%
\begin{align*}
\deg R_{n-1}Q_{n-2}  &  =\deg(R_{n-1}-tQ_{n-3}Q_{n-1})Q_{n-2}\\
&  =\deg(R_{n-1}-Q_{n-3}Q_{n-1})Q_{n-2}\\
&  =\tfrac{3n(n-1)}{2}+1>\deg G_{n-1}.
\end{align*}
Then the leading term of $\varphi_{n-1}=R_{n-1}Q_{n-2}+\frac{n-1}{2}G_{n-1}$
comes from that of $R_{n-1}Q_{n-2}$, so the coefficient of the leading term of
$\varphi_{n-1}$ is (using (\ref{iv-1})-(\ref{iv-3}) and recalling $n$ even)%
\begin{align}
\mathfrak{r}_{n-1}\mathfrak{q}_{n-2}  &  =\frac{\wp(a)-e_{1}}{2^{(n-1)^{2}%
+1}(e_{2}-e_{1})^{\frac{(n-1)^{2}+1}{2}+1}}\cdot \frac{1}{2^{\frac{n(n-2)}{2}%
}(e_{2}-e_{1})^{\frac{n(n-2)}{4}}}\nonumber \\
&  =\frac{\wp(a)-e_{1}}{2^{\frac{3n^{2}-6n}{2}+2}(e_{2}-e_{1})^{\frac
{3n^{2}-6n}{4}+2}}. \label{iv-5}%
\end{align}
Similarly, the coefficient of the leading term of $\varphi_{n-1}%
-Q_{n-3}Q_{n-2}Q_{n-1}=(R_{n-1}-Q_{n-3}Q_{n-1})Q_{n-2}+\frac{n-1}{2}G_{n-1}$
is%
\[
(\mathfrak{r}_{n-1}-\mathfrak{q}_{n-3}\mathfrak{q}_{n-1})\mathfrak{q}%
_{n-2}=\frac{\wp(a)-e_{2}}{2^{\frac{3n^{2}-6n}{2}+2}(e_{2}-e_{1}%
)^{\frac{3n^{2}-6n}{4}+2}},
\]
and the coefficient of the leading term of $\varphi_{n-1}-tQ_{n-3}%
Q_{n-2}Q_{n-1}$ is (using $t=\frac{e_{3}-e_{1}}{e_{2}-e_{1}}$)%
\[
(\mathfrak{r}_{n-1}-t\mathfrak{q}_{n-3}\mathfrak{q}_{n-1})\mathfrak{q}%
_{n-2}=\frac{\wp(a)-e_{3}}{2^{\frac{3n^{2}-6n}{2}+2}(e_{2}-e_{1}%
)^{\frac{3n^{2}-6n}{4}+2}}.
\]
Consequently, we see from (\ref{II-510}) and $(\wp^{\prime})^{2}=4(\wp
-e_{1})(\wp-e_{2})(\wp-e_{3})$ that the coefficient $\mathfrak{h}_{n-1}$ of
the leading term of $H(\varphi_{n-1};Q_{n-3}Q_{n-2}Q_{n-1})$ is%
\[
\mathfrak{h}_{n-1}=\frac{\wp^{\prime}(a)^{2}}{2^{\frac{9(n^{2}-2n)}{2}%
+8}(e_{2}-e_{1})^{\frac{9(n^{2}-2n)}{4}+6}}.
\]
Therefore, (\ref{II-516}) leads to%
\[
\mathfrak{g}_{n}=\frac{\mathfrak{h}_{n-1}}{\mathfrak{g}_{n-1}\mathfrak{q}%
_{n-3}^{3}}=\frac{\wp^{\prime}(a)}{2^{\frac{3n^{2}}{2}}(e_{2}-e_{1}%
)^{\frac{3n^{2}}{4}+1}},
\]
which proves (\ref{iv-2}) for $n$. Recalling from (\ref{II-530-2}%
)-(\ref{II-530-3}) that%
\[
\deg H(\varphi_{n-1};Q_{n-3}Q_{n-2}Q_{n-1})>\deg G_{n-1}H^{\prime}%
(\varphi_{n-1};Q_{n-3}Q_{n-2}Q_{n-1}),
\]
we see from (\ref{II-517}) that{%
\[
\mathfrak{q}_{n}=\frac{\mathfrak{h}_{n-1}}{\mathfrak{g}_{n-1}^{2}%
\mathfrak{q}_{n-3}^{2}}=\frac{1}{2^{\frac{n(n+2)}{2}}(e_{2}-e_{1}%
)^{\frac{n(n+2)}{4}}},
\]
which proves (\ref{iv-1}) for }$n$. Finally, we see from%
\[
\deg \varphi_{n-1}Q_{n}=2n^{2}+2>\deg Q_{n-3}G_{n}%
\]
and (\ref{II-518}), (\ref{iv-5}) that%
\[
\mathfrak{r}_{n}=\frac{\mathfrak{r}_{n-1}\mathfrak{q}_{n-2}\cdot
\mathfrak{q}_{n}}{\mathfrak{q}_{n-3}\mathfrak{q}_{n-1}}=\frac{\wp(a)-e_{1}%
}{2^{n^{2}}(e_{2}-e_{1})^{\frac{n^{2}}{2}+1}},
\]
which proves (\ref{iv-3}) for $n$.
The proof is complete.
\end{proof}

Now we can prove Theorem \ref{weak}-(1) as stated in the following theorem.
We set $\prod_{k=0}^{-1}\ast=1$ for convenience.

\begin{theorem}[=Theorem \ref{weak}-(1)] \label{asymp-Zn copy(1)}Given any $n\geq1$ and
$(r,s)\in \mathbb{C}^{2}\backslash \frac{1}{2}\mathbb{Z}^{2}$ such that
$\operatorname{Re}s\in(0,\frac{1}{2})$, there holds%
\[
Z_{r,s}^{(n)}(\tau)=\check{Z}^{(n)}(s)+o(1)\text{ as }F_{2}\ni \tau
\rightarrow \infty,
\]
where $\check{Z}^{(n)}(s)$ is a polynomial of $s$ given as follows:

\begin{itemize}
\item[(1)] if $n=2m+1$ with $m\in \mathbb{N}\cup \{0\}$,%
\begin{align*}
\check{Z}^{(n)}(s) =  &  (2\pi)^{2m(m+1)}\left[  \pi i(2s-1)\right]
^{m+1}\prod_{k=0}^{m-1}\left[  (s+k)\left(  s+k+\tfrac{1}{2}\right)  \right. \\
&  \left.  (s-k-1)\left(  s-k-\tfrac{3}{2}\right)  \right]  ^{m-k};
\end{align*}

\item[(2)] if $n=2m$ with $m\in \mathbb{N}$,%
\begin{align*}
\check{Z}^{(n)}(s) =  &  (-1)^{m^{2}}(2\pi)^{2m^{2}}\left[  \pi
i(2s-1)\right]  ^{m}s^{m}(s-1)^{m}\prod_{k=0}^{m-2}\left[  \left(
s+k+\tfrac{1}{2}\right)  \right. \\
&  \left.  (s+k+1)\left(  s-k-\tfrac{3}{2}\right)  (s-k-2)\right]  ^{m-1-k}.
\end{align*}

\end{itemize}

\noindent In particular,
\begin{equation}
\lim_{F_{2}\ni \tau \rightarrow \infty}Z_{r,s}^{(n)}(\tau)\not =0\text{ as long
as }\operatorname{Re}s\in (0,1/2)  \cup(1/2,1). \label{n1on-zero}%
\end{equation}

\end{theorem}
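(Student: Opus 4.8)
The plan is to read off the asymptotics of $Z_{r,s}^{(n)}(\tau)$ directly from the formula
\[
Z_{r,s}^{(n)}(\tau)=\frac{Q_{n-1}(Z_{r,s}(\tau);r+s\tau,\tau)}{\mathfrak{q}_{n-1}(\tau)}
\]
supplied by Theorem \ref{simple-zn}. The numerator and the denominator have already been understood separately: Lemma \ref{Q-asymp} gives $Q_{n-1}(Z_{r,s}(\tau)|\tau)=\check{Q}_{n-1}(s)+o(1)$ as $F_{2}\ni\tau\to\infty$, while Lemma \ref{coefficient-Qn} gives the leading coefficient $\mathfrak{q}_{n-1}(\tau)$ as an explicit power of $2$ times a power of $e_{2}(\tau)-e_{1}(\tau)$. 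Since the $q$-expansions (\ref{iv-50})--(\ref{iv-51}) yield $e_{2}(\tau)-e_{1}(\tau)=-\pi^{2}+o(1)$, the denominator converges to a nonzero constant. Dividing, the limit $\lim_{\tau\to\infty}Z_{r,s}^{(n)}(\tau)$ exists and equals $\check{Q}_{n-1}(s)/\lim_{\tau\to\infty}\mathfrak{q}_{n-1}(\tau)$, manifestly a polynomial in $s$ independent of $r$ (the $r$-dependence of every ingredient sits in the factor $x=e^{2\pi i(r+s\tau)}\to0$).

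It then remains to identify this limit with the stated $\check{Z}^{(n)}(s)$ through a parity case analysis. When $n=2m+1$ the relevant index $n-1=2m$ is even, so $\mathfrak{q}_{n-1}(\tau)=2^{-2m(m+1)}(e_{2}-e_{1})^{-m(m+1)}\to(2\pi)^{-2m(m+1)}$, where the sign from $(-\pi^{2})^{-m(m+1)}$ disappears because $m(m+1)$ is even; substituting (\ref{Q-expansion}) for $\check{Q}_{2m}(s)$ then reproduces case (1). When $n=2m$ the index $n-1=2m-1$ is odd, so $\mathfrak{q}_{n-1}(\tau)=2^{-2m^{2}}(e_{2}-e_{1})^{-m^{2}}\to(-1)^{m}(2\pi)^{-2m^{2}}$, the factor $(-\pi^{2})^{-m^{2}}$ now contributing $(-1)^{m^{2}}=(-1)^{m}$; reindexing (\ref{Q-expansion2}) with its $m$ replaced by $m-1$ to write $\check{Q}_{2m-1}(s)$ reproduces case (2), with prefactor $(-1)^{m}(2\pi)^{2m^{2}}=(-1)^{m^{2}}(2\pi)^{2m^{2}}$. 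The only delicate point here is this sign bookkeeping: tracking $(-1)^{m^{2}}=(-1)^{m}$ together with the parity of the exponent of $e_{2}-e_{1}$.

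For the nonvanishing claim (\ref{n1on-zero}) with $\operatorname{Re}s\in(0,\tfrac12)$, I would inspect the real roots of $\check{Z}^{(n)}$: running through the linear factors in cases (1)--(2), each is $2s-1$ or of the form $s-\rho$ with $\rho$ real, and every such root satisfies $\rho\le0$ or $\rho\ge\tfrac12$, hence lies outside the open strip $\{0<\operatorname{Re}s<\tfrac12\}$. Since $\operatorname{Re}s\in(0,\tfrac12)$ forces $s$ to differ from every (necessarily real) root, $\check{Z}^{(n)}(s)\neq0$. To cover $\operatorname{Re}s\in(\tfrac12,1)$ I would invoke the symmetry (\ref{iv-20}) with $(m_{1},m_{2})=(1,1)$, i.e. $Z_{r,s}^{(n)}(\tau)=(-1)^{n(n+1)/2}Z_{1-r,1-s}^{(n)}(\tau)$: here $\operatorname{Re}(1-s)\in(0,\tfrac12)$ and $(1-r,1-s)\notin\tfrac12\mathbb{Z}^{2}$, so the already-proved case yields $\lim_{\tau\to\infty}Z_{r,s}^{(n)}(\tau)=(-1)^{n(n+1)/2}\check{Z}^{(n)}(1-s)\neq0$.

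Because the Painlev\'e VI input is entirely packaged into Lemma \ref{Q-asymp}, Theorem \ref{simple-zn} and Lemma \ref{coefficient-Qn}, this statement is essentially an assembly step, so the hard part is not conceptual but the precise verification that the combinatorial products and the accumulated powers of $2\pi$ and $-1$ match the closed forms in cases (1)--(2) exactly; I would settle that matching by the reindexing indicated above rather than by any fresh estimate.
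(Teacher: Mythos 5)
Your proposal is correct and follows exactly the paper's own route: the paper's proof is the one-line assembly of Theorem \ref{simple-zn}, Lemmas \ref{Q-asymp} and \ref{coefficient-Qn}, the expansion $e_{2}(\tau)-e_{1}(\tau)=-\pi^{2}+o(1)$, and the symmetry (\ref{iv-20}) for the range $\operatorname{Re}s\in(\tfrac12,1)$, and your sign and reindexing bookkeeping (in particular $(-1)^{m(m+1)}=1$ and $(-1)^{m^{2}}=(-1)^{m}$) checks out against (\ref{Q-expansion})--(\ref{Q-expansion2}) and (\ref{iv-1}).
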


\begin{proof}
Since $e_{2}(\tau)-e_{1}(\tau)=-\pi^{2}+o(1)$ as
$F_{2}\ni \tau \rightarrow \infty$, Theorem \ref{asymp-Zn copy(1)} is a direct consequence of Lemmas
\ref{Q-asymp}-\ref{coefficient-Qn}, Theorem \ref{simple-zn} and (\ref{iv-20}).
\end{proof}

\subsection{The case $s=0$}

In this section, we give the proof of Theorem
\ref{weak}-(3).
When
$F_{2}\ni \tau \rightarrow \infty$, we use notation%
\[
f(\tau)\sim q^{a}\text{ to denote }f(\tau)=Cq^{a}+o(|q|^{a})
\]
with some constant $C\not =0$.

\begin{proof}[Proof of Theorem \ref{weak}-(3)]
By (\ref{iv-39-1}) it suffices for us to prove $\tilde{a}_{n}=a_{n}$ for all
$n$, which holds true already for $n\leq4$. For general $n$, Theorem \ref{modular-zero} shows that we can determine $\tilde{a}_{n}$ by computing the
asymptotics of $Z_{\frac{1}{4},0}^{(n)}(\tau)$ as $F_{2}\ni
\tau \rightarrow \infty$. Therefore in this proof, we always consider
$(r,s)=(\frac{1}{4},0)$. We also write $Q_{n}=Q_{n}(Z_{\frac{1}{4},0}%
(\tau)|\tau)$, $G_{n}=G_{n}(Z_{\frac{1}{4},0}(\tau)|\tau)$, $R_{n}%
=R_{n}(Z_{\frac{1}{4},0}(\tau)|\tau)$, $\varphi_{n}=\varphi_{n}(Z_{\frac{1}%
{4},0}(\tau)|\tau)$, $\lambda^{(n)}=\lambda_{\frac{1}{4},0}^{(n)}(t(\tau))$
and $\mu^{(n)}=\mu_{\frac{1}{4},0}^{(n)}(t(\tau))$ for convenience. We want to
prove for any $n\geq0$ that%
\begin{equation}
G_{n}\sim q^{\frac{n(n+1)}{2}-a_{n}},\text{ }\varphi_{n}\sim q^{\frac
{n(n+1)}{2}-a_{n}}\text{ and }Q_{n}\sim q^{a_{n+1}} \label{iv-45}%
\end{equation}
as $F_{2}\ni \tau \rightarrow \infty$, where
\begin{equation}
a_{2k}=a_{2k+1}=k(k+1)/2\text{ \ for all }k\in \mathbb{N\cup \{}0\mathbb{\}}.
\label{iv-45-1}%
\end{equation}
Once (\ref{iv-45}) is proved, we immediately obtain from Theorem
\ref{simple-zn} that%
\[
Z_{\frac{1}{4},0}^{(n)}(\tau)=\frac{Q_{n-1}}{\mathfrak{q}_{n-1}}\sim q^{a_{n}%
}\text{ \ for all }n\geq1
\]
and so $\tilde{a}_{n}=a_{n}$ for all $n$.

Now we turn to prove (\ref{iv-45}) by induction.\medskip

\textbf{Step 1.} We prove (\ref{iv-45}) for $n=0$.

Recall from (\ref{iv-35-2})-(\ref{iv-35}) that%
\[
\wp^{\prime}\left(  \tfrac{1}{4}|\tau \right)  =-4\pi^{3}+16\pi^{3}q+16\pi
^{3}q^{2}+O(|q|^{3}),
\]%
\[
\wp \left(  \tfrac{1}{4}|\tau \right)  =\frac{5}{3}\pi^{2}+8\pi^{2}q+40\pi
^{2}q^{2}+O(|q|^{3}),
\]%
\[
Q_{0}=Z_{\frac{1}{4},0}(\tau)=\pi+4\pi q+4\pi q^{2}+O(|q|^{3}).
\]
Together with (\ref{iv-50})-(\ref{iv-51}), (\ref{II-46})-(\ref{II-512}) and
(\ref{II-515}), a straightforward computation leads to%
\[
G_{0}=4\pi-32\pi q^{\frac{1}{2}}+O(|q|),
\]%
\[
\varphi_{0}=R_{0}=\pi-8\pi q^{\frac{1}{2}}+O(|q|),
\]%
\begin{equation}
\lambda_{0}=1-8q^{\frac{1}{2}}+O(|q|). \label{iv-53}%
\end{equation}
In particular, (\ref{iv-45}) holds for $n=0$.\medskip

\textbf{Step 2.} We claim that for any $n\geq0$, there hold%
\begin{equation}
R_{n}Q_{n-1}=\frac{1}{4}G_{n}, \label{iv-56}%
\end{equation}%
\begin{align}
&  Q_{n-2}Q_{n-1}Q_{n}\label{iv-65-1}\\
=  &  (-1)^{n}\frac{2n+1}{4}G_{n}\left(  1+8q^{\frac{1}{2}}+32q+96q^{\frac
{3}{2}}+O(|q|^{2})\right)  ,\nonumber
\end{align}%
\begin{equation}
\varphi_{n}=R_{n}Q_{n-1}+\frac{n}{2}G_{n}=\frac{2n+1}{4}G_{n}. \label{iv-57}%
\end{equation}
Remark that (\ref{iv-56})-(\ref{iv-57}) are the key points of this proof.

Indeed, recall Theorem \ref{r=1/4} that%
\begin{equation}
\lambda^{(n)}=\frac{(-1)^{n}}{2n+1}t^{\frac{1}{2}},\  \mu^{(n)}=\frac
{1}{4\lambda^{(n)}}=(-1)^{n}\frac{2n+1}{4}t^{-\frac{1}{2}}. \label{iv-54}%
\end{equation}
It follows from (\ref{t-expansion}) that%
\begin{equation}
\mu^{(n)}=(-1)^{n}\frac{2n+1}{4}\left(  1+8q^{\frac{1}{2}}+32q+96q^{\frac
{3}{2}}+O(|q|^{2})\right)  . \label{iv-55}%
\end{equation}
Since Theorem \ref{thm-II-18} gives%
\[
\lambda^{(n)}=\frac{R_{n}}{Q_{n-2}Q_{n}},\text{ }\mu^{(n)}=\frac
{Q_{n-2}Q_{n-1}Q_{n}}{G_{n}},
\]
it follows from (\ref{iv-54}) and (\ref{iv-55}) that (\ref{iv-56}) and
(\ref{iv-65-1}) hold. Finally, (\ref{II-515}) and (\ref{iv-56}) yield
(\ref{iv-57}).\medskip

\textbf{Step 3.} Assume that (\ref{iv-45}) holds for any $k\leq n-1$ for some
$n\geq1$, we prove (\ref{iv-45}) for $n$. We consider two cases separately.

\textbf{Case 1}. $n=2m$ for some $m\in \mathbb{N}$.

Then (\ref{t-expansion}) and (\ref{iv-65-1})-(\ref{iv-57}) with $n-1$ imply%
\[
\varphi_{n-1}-Q_{n-3}Q_{n-2}Q_{n-1}=\frac{2n-1}{2}G_{n-1}\left(
1+O(|q|^{\frac{1}{2}})\right)  ,
\]%
\[
\varphi_{n-1}-tQ_{n-3}Q_{n-2}Q_{n-1}=\frac{2n-1}{2}G_{n-1}\left(
1+O(|q|^{\frac{1}{2}})\right)  .
\]
It follows from (\ref{II-510}) that%
\[
H(\varphi_{n-1};Q_{n-3}Q_{n-2}Q_{n-1})=\frac{(2n-1)^{3}}{16}G_{n-1}^{3}\left(
1+O(|q|^{\frac{1}{2}})\right)  ,
\]%
\[
H^{\prime}(\varphi_{n-1};Q_{n-3}Q_{n-2}Q_{n-1})=\frac{(2n-1)^{2}}{2}%
G_{n-1}^{2}\left(  1+O(|q|^{\frac{1}{2}})\right)  .
\]
Therefore, we derive from (\ref{II-516})-(\ref{II-517}) and (\ref{iv-45}%
)-(\ref{iv-45-1}) that{\allowdisplaybreaks%
\begin{align*}
G_{n}  &  =\frac{H(\varphi_{n-1};Q_{n-3}Q_{n-2}Q_{n-1})}{G_{n-1}Q_{n-3}^{3}%
}=\frac{\frac{(2n-1)^{3}}{16}G_{n-1}^{2}(  1+O(|q|^{\frac{1}{2}}))
}{Q_{n-3}^{3}}\\
&  \sim \frac{q^{n(n-1)-2a_{n-1}}}{q^{3a_{n-2}}}=q^{\frac{3m^{2}+m}{2}%
}=q^{\frac{n(n+1)}{2}-a_{n}},
\end{align*}
}{\allowdisplaybreaks%
\begin{align*}
Q_{n}  &  =\frac{Q_{n-3}^{3}G_{n}-\frac{n}{2}H^{\prime}(\varphi_{n-1}%
;Q_{n-3}Q_{n-2}Q_{n-1})}{G_{n-1}Q_{n-3}^{2}}\\
&  =\frac{\frac{(2n-1)^{3}}{16}G_{n-1}^{2}(1+O(|q|^{\frac{1}{2}
}))  -\frac{n(2n-1)^{2}}{4}G_{n-1}^{2}(  1+O(|q|^{\frac{1}{2}
}))  }{G_{n-1}Q_{n-3}^{2}}\\
&  =-\frac{(2n+1)(2n-1)^{2}G_{n-1}(  1+O(|q|^{\frac{1}{2}}))
}{16Q_{n-3}^{2}}\\
&  \sim \frac{q^{\frac{n(n-1)}{2}-a_{n-1}}}{q^{2a_{n-2}}}=q^{\frac{m(m+1)}{2}%
}=q^{a_{n+1}},
\end{align*}
and so }$\varphi_{n}\sim q^{\frac{n(n+1)}{2}-a_{n}}${ by (\ref{iv-57}). }This
proves (\ref{iv-45}) for even $n$.

\textbf{Case 2}. $n=2m+1$ for some $m\in \mathbb{N\cup \{}0\mathbb{\}}$.
Differently from Case 1, there are \emph{cancellations} in the following computations.

Again by (\ref{t-expansion}) and (\ref{iv-65-1})-(\ref{iv-57}) with $n-1$, we
have{\allowdisplaybreaks%
\begin{align*}
&  \varphi_{n-1}-Q_{n-3}Q_{n-2}Q_{n-1}\\
=  &  \frac{2n-1}{4}G_{n-1}-\frac{2n-1}{4}G_{n-1}\left(  1+8q^{\frac{1}{2}%
}+32q+96q^{\frac{3}{2}}+O(|q|^{2})\right) \\
=  &  -2(2n-1)q^{\frac{1}{2}}G_{n-1}\left(  1+4q^{\frac{1}{2}}%
+12q+O(|q|^{\frac{3}{2}})\right)  ,
\end{align*}
}{\allowdisplaybreaks%
\begin{align*}
&  \varphi_{n-1}-tQ_{n-3}Q_{n-2}Q_{n-1}\\
=  &  \frac{2n-1}{4}G_{n-1}-\left(  1-16q^{\frac{1}{2}}+128q-704q^{\frac{3}%
{2}}+O(|q|^{2})\right) \\
&  \times \frac{2n-1}{4}G_{n-1}\left(  1+8q^{\frac{1}{2}}+32q+96q^{\frac{3}{2}%
}+O(|q|^{2})\right) \\
=  &  2(2n-1)q^{\frac{1}{2}}G_{n-1}\left(  1-4q^{\frac{1}{2}}+12q+O(|q|^{\frac
{3}{2}})\right)  .
\end{align*}
}Consequently,%
\[
H(\varphi_{n-1};Q_{n-3}Q_{n-2}Q_{n-1})=-(2n-1)^{3}qG_{n-1}^{3}\left(
1+O(|q|)\right)  ,
\]
{\allowdisplaybreaks%
\begin{align*}
&  H^{\prime}(\varphi_{n-1};Q_{n-3}Q_{n-2}Q_{n-1})\\
=  &  \varphi_{n-1}(\varphi_{n-1}-Q_{n-3}Q_{n-2}Q_{n-1}+\varphi_{n-1}%
-tQ_{n-3}Q_{n-2}Q_{n-1})\\
&  +(\varphi_{n-1}-Q_{n-3}Q_{n-2}Q_{n-1})(\varphi_{n-1}-tQ_{n-3}Q_{n-2}%
Q_{n-1})\\
=  &  -8(2n-1)^{2}qG_{n-1}^{2}(1+O(|q|)).
\end{align*}
}Therefore,{\allowdisplaybreaks%
\begin{align*}
G_{n}  &  =\frac{H(\varphi_{n-1};Q_{n-3}Q_{n-2}Q_{n-1})}{G_{n-1}Q_{n-3}^{3}%
}=\frac{-(2n-1)^{3}qG_{n-1}^{2}\left(  1+O(|q|)\right)  }{Q_{n-3}^{3}}\\
&  \sim \frac{q^{1+n(n-1)-2a_{n-1}}}{q^{3a_{n-2}}}=q^{\frac{3m^{2}+5m+2}{2}%
}=q^{\frac{n(n+1)}{2}-a_{n}},
\end{align*}
}{\allowdisplaybreaks%
\begin{align*}
Q_{n}  &  =\frac{Q_{n-3}^{3}G_{n}-\frac{n}{2}H^{\prime}(\varphi_{n-1}%
;Q_{n-3}Q_{n-2}Q_{n-1})}{G_{n-1}Q_{n-3}^{2}}\\
&  =\frac{-(2n-1)^{3}qG_{n-1}^{2}\left(  1+O(|q|)\right)  +4n(2n-1)^{2}%
qG_{n-1}^{2}\left(  1+O(|q|)\right)  }{G_{n-1}Q_{n-3}^{2}}\\
&  =\frac{(2n+1)(2n-1)^{2}qG_{n-1}\left(  1+O(|q|)\right)  }{Q_{n-3}^{2}}\\
&  \sim \frac{q^{1+\frac{n(n-1)}{2}-a_{n-1}}}{q^{2a_{n-2}}}=q^{\frac
{(m+1)(m+2)}{2}}=q^{a_{n+1}},
\end{align*}
and so }$\varphi_{n}\sim q^{\frac{n(n+1)}{2}-a_{n}}${ by (\ref{iv-57}). }This
proves (\ref{iv-45}) for odd $n$.

The proof is complete.
\end{proof}

\begin{remark}
During the proof of Theorem \ref{weak}-(3), we can actually compute explicitly
the coefficient of the leading term $q^{a_{n+1}}$ of $Q_{n}(Z_{\frac{1}{4}%
,0}(\tau)|\tau)$ by induction. We leave the details to the interested reader. In
particular, applying Lemma \ref{coefficient-Qn} and Theorem \ref{simple-zn} one can
obtain the explicit asymptotics
\[
Z_{\frac{1}{4},0}^{(n)}(\tau)=(-1)^{C_{n}}16^{a_{n}}\pi^{\frac{n(n+1)}{2}%
}\prod_{k=1}^{n-1}(2k+1)^{n-k}q^{a_{n}}+O(|q|^{a_{n}+1}),
\]
where
\[
C_{n}=\left \{
\begin{array}
[c]{l}%
\frac{n^{2}-1}{4}\text{ \ if \ }n\text{ is odd,}\\
\frac{n^{2}}{4}\text{ \ if \ }n\text{ is even.}%
\end{array}
\right.
\]
The interesting thing is that \emph{every odd} positive integer will appear in
the coefficient of the leading term $q^{a_{n}}$ as $n$ increases. This
phenomenon is mysterious to us; it indicates that the pre-modular form $Z_{r,s}^{(n)}(\tau)$ should possess
many interesting unknown properties that are worthy to study in a future work.
\end{remark}

\appendix

\section{Proof of Theorem \ref{thm-II-17}}

\label{appendix-A}

\begin{proof}
[Proof of Theorem \ref{thm-II-17}]We prove this theorem by induction. By using
(\ref{II-500-1}) and (\ref{II-501-1}), the proof of (\ref{II-514}%
)-(\ref{II-514-1}) and (\ref{II-515})-(\ref{II-518}) is simple. The difficult
part is to prove the validity of properties (1-$n$)-(4-$n$).\medskip

\textbf{Step 1.} we consider $n=1$.

By (\ref{II-513}), (\ref{II-510}) and (\ref{II-500-1}), we have%
\begin{align}
p_{1}  &  =p_{0}-\frac{1}{2}\left(  \frac{1}{q_{0}}+\frac{1}{q_{0}-1}+\frac
{1}{q_{0}-t}\right) \nonumber \\
&  =Q_{0}\frac{H(R_{0};Q_{0})-\frac{1}{2}G_{0}H^{\prime}(R_{0};Q_{0})}%
{G_{0}H(R_{0};Q_{0})}=\frac{Q_{0}Q_{1}}{G_{1}}, \label{II-521}%
\end{align}
where%
\begin{align}
G_{1}  &  =\frac{H(R_{0};Q_{0})}{G_{0}},\label{II-519}\\
Q_{1}  &  =\frac{H(R_{0};Q_{0})-\frac{1}{2}G_{0}H^{\prime}(R_{0};Q_{0})}%
{G_{0}^{2}}=\frac{G_{1}-\frac{1}{2}H^{\prime}(R_{0};Q_{0})}{G_{0}}.
\label{II-520}%
\end{align}
Since $G_{0}(X)$ is a non-zero constant, by properties (1-$0$), (4-$0$) and
(\ref{II-510}), it is easy to see that $G_{1}$ and $Q_{1}$ are both
polynomials of degree $3$.

If $Q_{0}(X)=0$, then property (3-$0$) gives $R_{0}(X)=\frac{1}{2}%
G_{0}(X)\not =0$, which implies%
\[
H(R_{0}(X);Q_{0}(X))=\frac{1}{8}G_{0}(X)^{3},\text{ \ }H^{\prime}%
(R_{0}(X);Q_{0}(X))=\frac{3}{4}G_{0}(X)^{2}.
\]
From here, it is easy to see that
\begin{equation}
G_{1}(X)=\frac{1}{8}G_{0}(X)^{2}\not =0,\text{ \ }Q_{1}(X)=-\frac{1}{4}%
G_{0}(X)\not =0. \label{II-522}%
\end{equation}

If both $G_{1}(X)=0$ and $Q_{1}(X)=0$, then (\ref{II-519})-(\ref{II-520}) give%
\[
H(R_{0}(X);Q_{0}(X))=H^{\prime}(R_{0}(X);Q_{0}(X))=0,
\]
which implies $R_{0}(X)=Q_{0}(X)=0$, a contradiction with property (2-$0$).
Thus we have proved that any two of $\{Q_{-1},Q_{0},Q_{1},G_{1}\}$ have no
common zeros. Clearly, $G_{1}$ and $G_{0}$ have no common zeros.

By (\ref{II-513}), (\ref{II-501-1}) and (\ref{II-521}) we have%
\[
q_{1}=q_{0}+\frac{1}{p_{1}}=\frac{R_{0}Q_{1}+G_{1}}{Q_{0}Q_{1}}=\frac{R_{1}%
}{Q_{1}},
\]
where%
\begin{equation}
R_{1}=\frac{1}{Q_{0}}(R_{0}Q_{1}+G_{1}). \label{II-523}%
\end{equation}
Recall $Q_{0}(X)=X$. If $Q_{0}(X)=0$, then $R_{0}(X)=\frac{1}{2}G_{0}(X)$ and
(\ref{II-522}) gives $R_{0}(X)Q_{1}(X)+G_{1}(X)=0$. Thus, $R_{1}$ is a
polynomial of degree $3$, namely property (1-$1$) holds.

If $Q_{1}(X)=0$, then $Q_{0}(X)\not =0$ and $G_{1}(X)\not =0$, which implies
$R_{1}(X)=\frac{G_{1}(X)}{Q_{0}(X)}\not =0$. Thus, any two of $\{Q_{-1}%
,Q_{1},R_{1}\}$ have no common zeros, namely property (2-$1$) holds.

Define $\varphi_{1}=R_{1}Q_{0}+\frac{1}{2}G_{1}$ by (\ref{II-515}). Then
property (3-$1$) follows directly from $Q_{-1}=1$ and (\ref{II-523}).

Finally, by property (4-$0$) and%
\[
R_{1}-tQ_{1}=\frac{(R_{0}-tQ_{0})Q_{1}+G_{1}}{Q_{0}},
\]
it is easy to see that property (4-$1$) holds. This completes the proof for
$n=1$.\medskip

\textbf{Step 2.} Assume that this theorem holds for $(p_{m},q_{m})$, where
$1\leq m$ $\leq n-1$ and $n\geq2$, we prove that it also holds for
$(p_{n},q_{n})$.

By (\ref{II-500-1}) and (\ref{II-515}) we have{\allowdisplaybreaks%
\begin{align*}
p_{n}=  &  \frac{Q_{n-3}Q_{n-2}Q_{n-1}}{G_{n-1}}-\frac{n}{2}\Bigg(\frac
{1}{\frac{R_{n-1}}{Q_{n-3}Q_{n-1}}+\frac{(n-1)G_{n-1}}{2Q_{n-3}Q_{n-2}Q_{n-1}%
}-1}\\
&  +\frac{1}{\frac{R_{n-1}}{Q_{n-3}Q_{n-1}}+\frac{(n-1)G_{n-1}}{2Q_{n-3}%
Q_{n-2}Q_{n-1}}}+\frac{1}{\frac{R_{n-1}}{Q_{n-3}Q_{n-1}}+\frac{(n-1)G_{n-1}%
}{2Q_{n-3}Q_{n-2}Q_{n-1}}-t}\Bigg)\\
=  &  \frac{Q_{n-3}Q_{n-2}Q_{n-1}}{G_{n-1}H(\varphi_{n-1};Q_{n-3}%
Q_{n-2}Q_{n-1})}\times \\
&  \Big(H(\varphi_{n-1};Q_{n-3}Q_{n-2}Q_{n-1})-\frac{n}{2}G_{n-1}H^{\prime
}(\varphi_{n-1};Q_{n-3}Q_{n-2}Q_{n-1})\Big)\\
=  &  \frac{Q_{n-2}Q_{n-1}Q_{n}}{G_{n}},
\end{align*}
}where $G_{n}$ and $Q_{n}$ are given by (\ref{II-516}) and (\ref{II-517})
respectively. This proves (\ref{II-514}). By (\ref{II-501-1}) we
have{\allowdisplaybreaks%
\[
q_{n}=\frac{R_{n-1}}{Q_{n-3}Q_{n-1}}+\frac{(n-1)G_{n-1}}{2Q_{n-3}%
Q_{n-2}Q_{n-1}}+\frac{(n+1)G_{n}}{2Q_{n-2}Q_{n-1}Q_{n}}=\frac{R_{n}}%
{Q_{n-2}Q_{n}},
\]
}where $R_{n}$ is given by (\ref{II-518}). This proves (\ref{II-514-1}).

Now we need to prove properties (1-$n$)-(4-$n$). Since the proof is long, we
divide it into several steps.\medskip

\textbf{Step 2-1.} We prove that $G_{n}$ and $Q_{n}$ are polynomials.

Since properties (2-$(n-1)$)-(3-$(n-1)$) show that $G_{n-1}$ and $Q_{n-3}$
have no common zeros and $Q_{n-3}|\varphi_{n-1}$, it suffices to prove%
\begin{equation}
G_{n-1}|H(\varphi_{n-1};Q_{n-3}Q_{n-2}Q_{n-1}),\text{ \  \ }G_{n-1}|H_{n},
\label{II-526}%
\end{equation}
where%
\begin{equation}
H_{n}:=\frac{H(\varphi_{n-1};Q_{n-3}Q_{n-2}Q_{n-1})}{G_{n-1}}-\frac{n}%
{2}H^{\prime}(\varphi_{n-1};Q_{n-3}Q_{n-2}Q_{n-1}). \label{II-526-1}%
\end{equation}

By the formulae (\ref{II-516})-(\ref{II-518}) for $n-1$, we have%
\begin{equation}
G_{n-2}G_{n-1}=\frac{H(\varphi_{n-2};Q_{n-4}Q_{n-3}Q_{n-2})}{Q_{n-4}^{3}%
}=H\Big(\frac{\varphi_{n-2}}{Q_{n-4}};Q_{n-3}Q_{n-2}\Big), \label{II-524-0}%
\end{equation}%
\begin{equation}
G_{n-2}Q_{n-1}=Q_{n-4}G_{n-1}-\frac{n-1}{2}H^{\prime}\Big(\frac{\varphi_{n-2}%
}{Q_{n-4}};Q_{n-3}Q_{n-2}\Big), \label{II-524-1}%
\end{equation}%
\begin{equation}
R_{n-1}Q_{n-2}=\frac{\varphi_{n-2}}{Q_{n-4}}Q_{n-1}+\frac{n}{2}G_{n-1}.
\label{II-524}%
\end{equation}
Notice that $\frac{\varphi_{n-2}}{Q_{n-4}}$ is a polynomial by property
(3-$(n-2)$). Substituting (\ref{II-524}) into (\ref{II-515}) leads to%
\begin{align}
\varphi_{n-1}  &  =\frac{\varphi_{n-2}}{Q_{n-4}}Q_{n-1}+\frac{2n-1}{2}%
G_{n-1}\label{II-525}\\
&  \equiv \frac{\varphi_{n-2}}{Q_{n-4}}Q_{n-1}\text{ \  \ }\operatorname{mod}%
G_{n-1}.\nonumber
\end{align}
Here for polynomials $f_{j}(X)$, the notation $f_{1}\equiv f_{2}%
\operatorname{mod}f_{3}$ means $f_{3}|(f_{1}-f_{2})$. Therefore, we derive
from (\ref{II-510}), (\ref{II-524-0}) and (\ref{II-525})
that{\allowdisplaybreaks%
\begin{align}
&  H(\varphi_{n-1};Q_{n-3}Q_{n-2}Q_{n-1})\label{II-527}\\
\equiv &  \frac{2n-1}{2}G_{n-1}H^{\prime}\Big(\frac{\varphi_{n-2}}{Q_{n-4}%
}Q_{n-1};Q_{n-3}Q_{n-2}Q_{n-1}\Big)\nonumber \\
&  +H\Big(\frac{\varphi_{n-2}}{Q_{n-4}}Q_{n-1};Q_{n-3}Q_{n-2}Q_{n-1}%
\Big)\text{ \  \  \ }\operatorname{mod}G_{n-1}^{2}\nonumber \\
=  &  \left(  \frac{2n-1}{2}H^{\prime}\Big(\frac{\varphi_{n-2}}{Q_{n-4}%
};Q_{n-3}Q_{n-2}\Big)+G_{n-2}Q_{n-1}\right)  Q_{n-1}^{2}G_{n-1},\nonumber
\end{align}
}which proves the first assertion in (\ref{II-526}). Furthermore, by
(\ref{II-526-1}), (\ref{II-527}), (\ref{II-525}) and (\ref{II-524-1}), we
have{\allowdisplaybreaks%
\begin{align*}
H_{n}\equiv &  \left(  \frac{2n-1}{2}H^{\prime}\Big(\frac{\varphi_{n-2}%
}{Q_{n-4}};Q_{n-3}Q_{n-2}\Big)+G_{n-2}Q_{n-1}\right)  Q_{n-1}^{2}\\
&  -\frac{n}{2}H^{\prime}\Big(\frac{\varphi_{n-2}}{Q_{n-4}}Q_{n-1}%
;Q_{n-3}Q_{n-2}Q_{n-1}\Big)\text{ \  \ }\operatorname{mod}G_{n-1}\\
=  &  \left(  \frac{n-1}{2}H^{\prime}\Big(\frac{\varphi_{n-2}}{Q_{n-4}%
};Q_{n-3}Q_{n-2}\Big)+G_{n-2}Q_{n-1}\right)  Q_{n-1}^{2}\\
=  &  Q_{n-4}Q_{n-1}^{2}G_{n-1},
\end{align*}
}namely $G_{n-1}|H_{n}$. This proves (\ref{II-526}).\medskip

\textbf{Step 2-2.} We prove that $R_{n}$ is a polynomial.

Since properties (2-$(n-1)$)-(3-$(n-1)$) show that $Q_{n-3}|\varphi_{n-1}$ and
$Q_{n-3}$ has no common zeros with $Q_{n-1}$, it suffices to prove%
\begin{equation}
Q_{n-1}\left \vert \varphi_{n-1}Q_{n}+\frac{n+1}{2}Q_{n-3}G_{n}.\right.
\label{II-528}%
\end{equation}

Recall from (\ref{II-525}) that%
\[
\varphi_{n-1}\equiv \frac{2n-1}{2}G_{n-1}\text{ }\operatorname{mod}Q_{n-1}.
\]
This, together with the formulae (\ref{II-516})-(\ref{II-517}) and
(\ref{II-510}), implies{\allowdisplaybreaks%
\begin{align*}
&  \left(  \varphi_{n-1}Q_{n}+\frac{n+1}{2}Q_{n-3}G_{n}\right)  Q_{n-3}%
^{2}G_{n-1}\\
\equiv &  \left(  \frac{2n-1}{2}G_{n-1}Q_{n}+\frac{n+1}{2}Q_{n-3}G_{n}\right)
Q_{n-3}^{2}G_{n-1}\text{ }\operatorname{mod}Q_{n-1}\\
=  &  \frac{3n}{2}H(\varphi_{n-1};Q_{n-3}Q_{n-2}Q_{n-1})\\
&  -\frac{n(2n-1)}{4}G_{n-1}H^{\prime}(\varphi_{n-1};Q_{n-3}Q_{n-2}Q_{n-1})\\
\equiv &  \frac{3n}{2}\left(  \frac{2n-1}{2}G_{n-1}\right)  ^{3}%
-\frac{3n(2n-1)}{4}G_{n-1}\left(  \frac{2n-1}{2}G_{n-1}\right)  ^{2}\text{
}\operatorname{mod}Q_{n-1}\\
=  &  0.
\end{align*}
}This proves $Q_{n-1}|(\varphi_{n-1}Q_{n}+\frac{n+1}{2}Q_{n-3}G_{n}%
)Q_{n-3}^{2}G_{n-1}$. Since property (2-$(n-1)$) shows that any two of
$\{Q_{n-1},G_{n-1},Q_{n-3}\}$ have no common zeros, we obtain (\ref{II-528}%
).\medskip

\textbf{Step 2-3.} We prove that $\deg Q_{n}=\frac{(n+1)(n+2)}{2}$, $\deg
G_{n}=\frac{3n(n+1)}{2}$ and $\deg R_{n}=n(n+1)+1$. Therefore, property
(1-$n$) holds.

Recall from properties (1-$(n-1)$) and (4-$(n-1)$) that $\deg G_{n-1}%
=\frac{3n(n-1)}{2}$ and{\allowdisplaybreaks%
\begin{align}
\deg(R_{n-1}-Q_{n-3}Q_{n-1})  &  =\deg(R_{n-1}-tQ_{n-3}Q_{n-1}) \label{II-530}%
\\
&  =\deg R_{n-1}=n(n-1)+1.\nonumber
\end{align}
}Moreover, $\deg Q_{m}=\frac{(m+1)(m+2)}{2}$ for any $-2\leq m\leq n-1$ by our
induction. Hence,%
\[
\deg Q_{n-3}Q_{n-2}Q_{n-1}=\deg R_{n-1}Q_{n-2}=\frac{3n(n-1)}{2}+1>\deg
G_{n-1},
\]
by which and (\ref{II-515}), we have {\allowdisplaybreaks%
\begin{align*}
&  \deg(\varphi_{n-1}-tQ_{n-3}Q_{n-2}Q_{n-1})\\
=  &  \deg \left(  R_{n-1}-tQ_{n-3}Q_{n-1}\right)  Q_{n-2}=\frac{3n(n-1)}{2}+1,
\end{align*}
}and similarly,{\allowdisplaybreaks%
\begin{align}
&  \deg(\varphi_{n-1}-Q_{n-3}Q_{n-2}Q_{n-1})\label{II-530-2}\\
=  &  \deg \varphi_{n-1}=\deg Q_{n-3}Q_{n-2}Q_{n-1}=\frac{3n(n-1)}%
{2}+1,\nonumber
\end{align}
}namely{\allowdisplaybreaks%
\begin{align}
\deg H(\varphi_{n-1};Q_{n-3}Q_{n-2}Q_{n-1})  &  =3\deg \varphi_{n-1}%
,\nonumber \\
\deg H^{\prime}(\varphi_{n-1};Q_{n-3}Q_{n-2}Q_{n-1})  &  \leq2\deg
\varphi_{n-1}. \label{II-530-3}%
\end{align}
}From here and (\ref{II-516})-(\ref{II-518}), it is easy to see that $\deg
G_{n}=\frac{3n(n+1)}{2}$, $\deg Q_{n}=\frac{(n+1)(n+2)}{2}$ and $\deg
R_{n}=n(n+1)+1$.\medskip

\textbf{Step 2-4.} We prove that property (4-$n$) holds.

It is easy to see that{\allowdisplaybreaks%
\begin{align*}
\deg R_{n-1}Q_{n-2}Q_{n}  &  =\deg Q_{n-3}Q_{n-2}Q_{n-1}Q_{n}\\
&  >\deg G_{n-1}Q_{n}=\deg G_{n}Q_{n-3}\text{,}%
\end{align*}
}so we derive from (\ref{II-518}) and (\ref{II-530}) that{\allowdisplaybreaks%
\begin{align*}
&  \deg(R_{n}-tQ_{n-2}Q_{n})\\
=  &  \deg(R_{n-1}Q_{n-2}Q_{n}-tQ_{n-3}Q_{n-2}Q_{n-1}Q_{n})-\deg
Q_{n-3}Q_{n-1}\\
=  &  \deg(R_{n-1}-tQ_{n-3}Q_{n-1})\deg Q_{n-2}Q_{n}-\deg Q_{n-3}Q_{n-1}\\
=  &  \deg R_{n-1}\deg Q_{n-2}Q_{n}-\deg Q_{n-3}Q_{n-1}=\deg R_{n}.
\end{align*}
}Similarly, $\deg(R_{n}-Q_{n-2}Q_{n})=\deg R_{n}$.\medskip

\textbf{Step 2-5.} We prove that $G_{n-1}$ and $G_{n}$ have no common zeros.

Recall from (\ref{II-516}) and (\ref{II-527}) that%
\[
Q_{n-3}^{3}G_{n}\equiv \left(  \tfrac{2n-1}{2}H^{\prime}\Big(\tfrac
{\varphi_{n-2}}{Q_{n-4}};Q_{n-3}Q_{n-2}\Big)+G_{n-2}Q_{n-1}\right)
Q_{n-1}^{2}\operatorname{mod}G_{n-1}.
\]
If both $G_{n}(X)=0$ and $G_{n-1}(X)=0$, then property (2-$(n-1)$) shows that
$G_{n-2}(X)\not =0$, $Q_{n-1}(X)\not =0$ and $Q_{n-3}(X)\not =0$. Thus,%
\begin{equation}
\frac{2n-1}{2}H^{\prime}\Big(\frac{\varphi_{n-2}(X)}{Q_{n-4}(X)}%
;Q_{n-3}(X)Q_{n-2}(X)\Big)+G_{n-2}(X)Q_{n-1}(X)=0. \label{II-531}%
\end{equation}
On the other hand, (\ref{II-524-1}) and $n\geq2$ give%
\[
H^{\prime}\Big(\frac{\varphi_{n-2}(X)}{Q_{n-4}(X)};Q_{n-3}(X)Q_{n-2}%
(X)\Big)=\frac{-2}{n-1}G_{n-2}(X)Q_{n-1}(X)\not =0,
\]
which yields a contradiction with (\ref{II-531}).\medskip

\textbf{Step 2-6.} We prove that any two of $\{Q_{n-2},Q_{n-1},Q_{n},G_{n}\}$
have no common zeros.

If $Q_{n-2}(X)=0$, then property (2-$(n-1)$) gives $Q_{n-1}(X)\not =0$,
$Q_{n-3}(X)\not =0$ and $G_{n-1}(X)\not =0$. By (\ref{II-515}) we have
$\varphi_{n-1}(X)=\frac{n-1}{2}G_{n-1}(X)$, which implies{\allowdisplaybreaks%
\begin{align}
H(\varphi_{n-1}(X);Q_{n-3}(X)Q_{n-2}(X)Q_{n-1}(X))  &  =\frac{(n-1)^{3}}%
{8}G_{n-1}(X)^{3},\label{II-532}\\
H^{\prime}(\varphi_{n-1}(X);Q_{n-3}(X)Q_{n-2}(X)Q_{n-1}(X))  &  =\frac
{3(n-1)^{2}}{4}G_{n-1}(X)^{2}.\nonumber
\end{align}
}From here and (\ref{II-516})-(\ref{II-517}), it is easy to see
that{\allowdisplaybreaks%
\begin{align}
G_{n}(X)  &  =\frac{(n-1)^{3}}{8Q_{n-3}(X)^{3}}G_{n-1}(X)^{2}\not =%
0,\label{II-533}\\
Q_{n}(X)  &  =-\frac{(2n+1)(n-1)^{2}}{8Q_{n-3}(X)^{2}}G_{n-1}(X)\not =%
0.\nonumber
\end{align}
}This proves that $Q_{n-2}$ has no common zeros with any of $\{G_{n}%
,Q_{n},Q_{n-1}\}$.

If $Q_{n-1}(X)=0$, then property (2-$(n-1)$) implies both $Q_{n-3}(X)\not =0$
and $G_{n-1}(X)\not =0$. By (\ref{II-525}) we have $\varphi_{n-1}%
(X)=\frac{2n-1}{2}G_{n-1}(X)$. Similarly as (\ref{II-532})-(\ref{II-533}), we
obtain{\allowdisplaybreaks%
\begin{align*}
G_{n}(X)  &  =\frac{(2n-1)^{3}}{8Q_{n-3}(X)^{3}}G_{n-1}(X)^{2}\not =0,\\
Q_{n}(X)  &  =-\frac{(n+1)(2n-1)^{2}}{8Q_{n-3}(X)^{2}}G_{n-1}(X)\not =0.
\end{align*}
}This proves that $Q_{n-1}$ has no common zeros with any of $\{G_{n},Q_{n}\}$.

If both $Q_{n}(X)=0$ and $G_{n}(X)=0$, then $Q_{n-1}(X)Q_{n-2}(X)\not =0$ and
$G_{n-1}(X)\not =0$. Recalling from property (3-$(n-1)$) that $\frac
{\varphi_{n-1}}{Q_{n-3}}$ is a polynomial, it is easy to see from
(\ref{II-516})-(\ref{II-517}) that{\allowdisplaybreaks%
\begin{align*}
H\Big(\frac{\varphi_{n-1}(X)}{Q_{n-3}(X)};Q_{n-1}(X)Q_{n-2}(X)\Big)  &  =0,\\
H^{\prime}\Big(\frac{\varphi_{n-1}(X)}{Q_{n-3}(X)};Q_{n-1}(X)Q_{n-2}(X)\Big)
&  =0,
\end{align*}
}which implies $Q_{n-1}(X)Q_{n-2}(X)=0$, a contradiction. This proves that
$Q_{n}$ has no common zeros with $G_{n}$.\medskip

\textbf{Step 2-7.} We prove that any two of $\{Q_{n-2},Q_{n},R_{n}\}$ have no
common zeros. Then property (2-$n$) holds.

If $Q_{n}(X)=0$, then $Q_{n-1}(X)\not =0$ and $G_{n}(X)\not =0$. Since
$\frac{\varphi_{n-1}}{Q_{n-3}}$ is a polynomial by property (3-$(n-1)$), we
have%
\[
R_{n}(X)=\frac{n+1}{2}\frac{G_{n}(X)}{Q_{n-1}(X)}\not =0.
\]
This proves that $Q_{n}$ has no common zeros with $R_{n}$.

If $Q_{n-2}(X)=0$, as in Step 2-6 we have $Q_{n}(X)\not =0$, $Q_{n-1}%
(X)\not =0$, $Q_{n-3}(X)\not =0$ and $\varphi_{n-1}(X)=\frac{n-1}{2}%
G_{n-1}(X)\not =0$. Then (\ref{II-516})-(\ref{II-518})
imply{\allowdisplaybreaks%
\begin{align}
&  Q_{n-3}(X)Q_{n-1}(X)R_{n}(X)\label{II-534}\\
=  &  \frac{n-1}{2}G_{n-1}(X)Q_{n}(X)+\frac{n+1}{2}Q_{n-3}(X)G_{n}%
(X)\nonumber \\
=  &  n\frac{H(\frac{n-1}{2}G_{n-1}(X);0)}{G_{n-1}(X)Q_{n-3}(X)^{2}}%
-\frac{n(n-1)}{4Q_{n-3}(X)^{2}}H^{\prime}\Big(\frac{n-1}{2}G_{n-1}%
(X);0\Big)\nonumber \\
=  &  -\frac{n(n-1)^{3}}{16Q_{n-3}(X)^{2}}G_{n-1}(X)^{2}\not =0,\nonumber
\end{align}
}namely $R_{n}(X)\not =0$. This proves that $Q_{n-2}$ has no common zeros with
$R_{n}$.\medskip

\textbf{Step 2-8.} We prove $Q_{n-2}|\varphi_{n}$ and $Q_{n}|(R_{n}%
Q_{n-1}-\frac{n+1}{2}G_{n})$. Then property (3-$n$) holds.

Since $\frac{\varphi_{n-1}}{Q_{n-3}}$ is a polynomial by property (3-$(n-1)$),
by (\ref{II-518}) we have%
\[
R_{n}Q_{n-1}-\frac{n+1}{2}G_{n}=\frac{\varphi_{n-1}}{Q_{n-3}}Q_{n}.
\]
This proves $Q_{n}|(R_{n}Q_{n-1}-\frac{n+1}{2}G_{n})$.

Recall from (\ref{II-515}) that $\varphi_{n-1}\equiv \frac{n-1}{2}G_{n-1}$
$\operatorname{mod}Q_{n-2}$. This, together with $\varphi_{n}=R_{n}%
Q_{n-1}+\frac{n}{2}G_{n}$ and (\ref{II-516})-(\ref{II-518}),
implies{\allowdisplaybreaks%
\begin{align*}
&  \varphi_{n}G_{n-1}Q_{n-3}^{3}\\
=  &  \left(  R_{n}Q_{n-1}+\frac{n}{2}G_{n}\right)  G_{n-1}Q_{n-3}^{3}\\
=  &  \left(  \varphi_{n-1}Q_{n}+\frac{2n+1}{2}Q_{n-3}G_{n}\right)
G_{n-1}Q_{n-3}^{2}\\
\equiv &  \left(  \frac{n-1}{2}G_{n-1}Q_{n}+\frac{2n+1}{2}Q_{n-3}G_{n}\right)
G_{n-1}Q_{n-3}^{2}\operatorname{mod}Q_{n-2}\\
=  &  \frac{3n}{2}H(\varphi_{n-1};Q_{n-3}Q_{n-2}Q_{n-1})\\
&  -\frac{n(n-1)}{4}G_{n-1}H^{\prime}(\varphi_{n-1};Q_{n-3}Q_{n-2}Q_{n-1})\\
\equiv &  \frac{3n}{2}\left(  \frac{n-1}{2}G_{n-1}\right)  ^{3}-\frac
{3n(n-1)}{4}G_{n-1}\left(  \frac{n-1}{2}G_{n-1}\right)  ^{2}\operatorname{mod}%
Q_{n-2}\\
=  &  0.
\end{align*}
}Together with property (2-$(n-1)$) that any two of $\{Q_{n-3},G_{n-1}%
,Q_{n-2}\}$ have no common zeros, we conclude that $Q_{n-2}|\varphi_{n}$.

Therefore, the proof is complete.
\end{proof}

\medskip

{\bf Acknowledgements} The research of Z. Chen was supported NSFC (Grant No. 12071240).


\begin{thebibliography}{99}                                                                                               %


\bibitem {AMM}H. Airault, H.P. McKean and J. Moser; \textit{Rational and
elliptic solutions of the Korteweg-de Vries equation and a related many-body
problem}. Comm. Pure Appl. Math. \textbf{30} (1977), 95-148.

\bibitem {Babich-Bordag}M.V. Babich and L.A. Bordag; \textit{Projective differential geometrical structure of the Painlev\'e equations}. J. Differ. Equ. \textbf{157} (1999), 452-485.

\bibitem {Bal}F. Baldassarri; \textit{On Algebraic solutions of Lam\'{e}'s
differential equation}. J. Differ. Equ. \textbf{41} (1981), 44-58.

\bibitem {BD}F. Baldassarri and B. Dwork; \textit{On second order differential
equations with algebraic solutions}. Amer. J. Math. \textbf{101} (1979), 42-76.

\bibitem {Beukers-Waall}F. Beukers and A. Waall; \textit{Lam\'{e} equations
with algebraic solutions}. J. Differ. Equ. \textbf{197} (2004), 1-25.

\bibitem {Boalch}P. Boalch; \textit{From Klein to Painlev\'{e} via Fourier,
Laplace and Jimbo}. Proc. Lond. Math. Soc. \textbf{90} (2005), 167-208.

\bibitem {Cal}F. Calogero; \textit{Exactly solvable one-dimensional many-body
problem}. Lett. Nuovo Cim. \textbf{13} (1975), 411-416.

\bibitem {CLW}C.L. Chai, C.S. Lin and C.L. Wang; \textit{Mean field equations,
Hyperelliptic curves, and Modular forms: I}. {Camb. J. Math. \textbf{3}
(2015), 127-274}.

\bibitem {Chen-Kuo-Lin0}Z. Chen, T.J. Kuo and C.S. Lin; \textit{Hamiltonian
system for the elliptic form of Painlev\'{e} VI equation}. J. Math. Pures
Appl. \textbf{106} (2016), 546-581.

\bibitem {Chen-Kuo-Lin2016}Z. Chen, T.J. Kuo and C.S. Lin; \textit{Unitary
monodromy implies the smoothness along the real axis for some Painlev\'{e} VI
equation, I}. J. Geom. Phys. \textbf{116} (2017), 52-63.

\bibitem {CKL1} Z. Chen, T.J. Kuo and C.S. Lin; \textit{The geometry of generalized Lam\'{e} equation, I}. J. Math. Pures
Appl. \textbf{127} (2019), 89-120.




\bibitem {Chen-Kuo-Lin}Z. Chen, T.J. Kuo and C.S. Lin;
\textit{The geometry of generalized Lam\'{e} equation, III: One to one of the Riemann-Hilbert correspondence}. Pure Appl. Math. Q. to appear. arXiv: 2009.00840v1 [math.CA]


\bibitem {CKLW}Z. Chen, T.J. Kuo, C.S. Lin and C.L. Wang; \textit{Green
function, Painlev\'{e} VI equation, and Eisenstein series of weight one}. J.
Differ. Geom. \textbf{108} (2018), 185--241.



\bibitem {Chi}B. Chiarellotto; \textit{On Lam\'{e} operators which are
pull-backs of hypergeometric ones}. Trans. Amer. Math. Soc. \textbf{347}
(1995), 2753-2780.

\bibitem {Dahmen0}S. Dahmen; \textit{Counting integral Lam\'{e} equations with
finite monodromy by means of modular forms}. Master Thesis, Supervisor: F.
Beukers. Utrecht University 2003.

\bibitem {Dahmen}S. Dahmen; \textit{Counting integral Lam\'{e} equations by
means of dessins d'enfants}. Trans. Amer. Math. Soc. \textbf{359} (2007), 909-922.



\bibitem {Dubrovin-Mazzocco}B. Dubrovin and M. Mazzocco; \textit{Monodromy of
certain Painlev\'{e}-VI transcendents and reflection groups}. Invent. Math.
\textbf{141} (2000), 55-147.

\bibitem{GH-Book} F. Gesztesy and H. Holden; \textit{Soliton equations and their algebro-geometric solutions. Vol. I. $(1+1)$-dimensional continuous models.} Cambridge Studies in Advanced Mathematics, vol. 79, Cambridge University Press, Cambridge, 2003. xii+505 pp.


\bibitem {Gesztesy-Weikard}F. Gesztesy and R. Weikard; \textit{Lam\'{e}
potentials and the stationary (m)KdV hierarchy}. Math. Nachr. \textbf{176}%
(1995), 1453-1476.



\bibitem {Guzzetti}D. Guzzetti; \textit{The elliptic representation of the
general Painlev\'{e} VI equation}. Comm. Pure Appl. Math. \textbf{55} (2002), 1280-1363.

\bibitem {Guzzetti1}D. Guzzetti; \textit{Tabulation of Painlev\'{e} }%
$6$\textit{ transcendents}. Nonlinearity. \textbf{25} (2012), 3235-3276.

\bibitem {Halphen}G.H. Halphen; \textit{Trait\'{e} des Fonctions Elliptique
II}, 1888.


\bibitem {Hit1}N.J. Hitchin; \textit{Twistor spaces, Einstein metrics and
isomonodromic deformations}. J. Differ. Geom. \textbf{42} (1995), no.1, 30-112.

\bibitem {Hit2}N.J. Hitchin; \textit{Poncelet polygons and the Painlev\'{e}
transcendents}. Geometry and analysis, Tata Inst. Fund. Res., Bombay,
151--185, 1995.

\bibitem {Husemoller}D.\ Husem\"{o}ller; \textit{Elliptic Curves, second
edition}. Springer-Verlag, New York, 2004.


\bibitem {Ince}E.L. Ince; \textit{Further investigations into the periodic
Lam\'{e} functions}. Proc. Roy. Soc. Edinburgh. \textbf{60} (1940), 83-99.

\bibitem {Its}A.R. Its and V.Z. Enol'skii; \textit{Dynamics of the
Calogero-Moser system and the reduction of hyperelliptic integrals to elliptic
integrals}. Funct. Anal. Appl. \textbf{20}(1986), 62-64.

\bibitem {GP}K. Iwasaki, H. Kimura, S. Shimomura and M. Yoshida; \textit{From
Gauss to Painlev\'{e}: A Modern Theory of Special Functions}. Springer vol.
E16, 1991.


\bibitem {Krichever}I.M. Krichever; \textit{Elliptic solutions of the
Kadomtsev-Petviashvili equation and integrable systems of particles}. Funct.
Anal. Appl. \textbf{14}(1980), 282-290.

\bibitem {Lang}S. Lang; \textit{Elliptic Functions}. Graduate Text in
Mathematics \textbf{112}, Springer-Verlag, 1987.

\bibitem {Lin-CDM}C.S. Lin; \textit{Green function, mean field equation and
Painlev\'{e} VI equation}. Current Developments in Mathematics 2015, 137-188, Int. Press, Somerville, MA, 2016.

\bibitem {LW}C.S. Lin and C.L. Wang; \textit{Elliptic functions, Green
functions and the mean field equations on tori}. Ann. Math. \textbf{172}
(2010), no.2, 911-954.

\bibitem {LW2}C.S. Lin and C.L. Wang; \textit{Mean field equations,
Hyperelliptic curves, and Modular forms: II. With Appendix A written by Y.C.
Chou}. J. \'{E}c. polytech. Math. \textbf{4} (2017), 557-593.

\bibitem {Lit}R. Litcano; \textit{Counting Lam\'{e} Differential Operators},
Rend. Sem. Mat. Univ. Padova. \textbf{107} (2002), 191-208.

\bibitem {Lit1}R. Litcano; \textit{Lam\'{e} operators with finite monodromy-a
combinatorial approach}. J. Differ. Equ. \textbf{207} (2004), 93-116.

\bibitem {Lisovyy-Tykhyy}O. Lisovyy and Y. Tykhyy; \textit{Algebraic solutions
of the sixth Painlev\'{e} equation}. J. Geom. Phys. \textbf{85} (2014), 124-163.

\bibitem{LPU} F. Loray, M. van der Put and F. Ulmer; \textit{The Lam\'{e} family of connections on the projective line}. Ann. Fac. Sci. Toulouse Math.  \textbf{17} (2008), 371-409.

\bibitem {Y.Manin}Y. Manin; \textit{Sixth Painlev\'{e} quation, universal
elliptic curve, and mirror of} $\mathbb{P}^{2}$. Amer. Math. Soc. Transl. (2),
\textbf{186} (1998), 131--151.

\bibitem {Mazzocco}M.\ Mazzocco, \textit{Picard and Chazy solutions to the
Painlev\'{e} VI equation}, Math.\ Ann.\  \textbf{321} (2001), 157--195.

\bibitem{Maier} R. Maier; \textit{Algebraic solutions of the Lam\'{e} equation}. J. Differ. Equ. \textbf{198} (2004), 16-34.

\bibitem {Okamoto2}K. Okamoto; \textit{Isomonodromic deformation and
Painlev\'{e} equations, and the Garnier system}. J. Fac. Sci.\ Univ. Tokyo
Sec. IA Math. \textbf{33} (1986), 575-618.

\bibitem {Okamoto1}K. Okamoto; \textit{Studies on the Painlev\'{e} equations.
I. Sixth Painlev\'{e} equation} $P_{VI}$. Ann. Mat. Pura Appl. \textbf{146}
(1986), 337-381.

\bibitem {OP}M.A. Olshanetsky and A.M. Perelomov; \textit{Classical integrable
finite-dimensional systems related to Lie algebras}. Phys. Rep. \textbf{71}%
(1981), 313-400.

\bibitem {Painleve}P. Painlev\'{e}; \textit{Sur les \'{e}quations
diff\'{e}rentialles du second ordre \`{a} points critiques fixes}. C. R.
Acad.\ Sic. Paris S\'{e}r. I \textbf{143} (1906), 1111-1117.

\bibitem {Perelomov}A.M. Perelomov; \textit{Integrable systems of classical
mechanics and Lie algebras}. Vol.1, Birkh\"{a}user, Basel, 1990.

\bibitem {Poole}E.G.C. Poole; \textit{Introduction to the theory of linear
differential equations}. Oxford University Press, 1936.

\bibitem {Takemura}K. Takemura; \textit{The Hermite-Krichever Ansatz for
Fuchsian equations with applications to the sixth Painlev\'{e} equation and to
finite gap potentials}. Math. Z. \textbf{263} (2009), 149-194.

\bibitem {Tsuda-Okamoto-Sakai}T. Tsuda, K. Okamoto and H. Sakai;
\textit{Folding transformations of the Painlev\'{e} equations}. Math. Ann.
\textbf{331} (2005), 713-738.

\bibitem {vdW}A.V.D. Waall; \textit{Lam\'{e} equations with finite monodromy}.
PhD Thesis, Supervisor: F.
Beukers. Utrecht University 2002.

\bibitem {Whittaker-Watson}E.T. Whittaker and G.N. Watson; \textit{A course of
modern analysis}, \textit{4th edition}. Cambridge University Press, 1927.
\end{thebibliography}
\end{document}